\newcommand{\Mod}[1]{\mathbf{Mod}_{#1}}
\newcommand{\spt}{\mathbf{Spt}}
\newcommand{\Ho}{\mathrm{Ho}\,}
\newcommand{\sm}[1]{\mathbf{Sm}_{#1}}
\newcommand{\bb}{\mathbb}
\renewcommand{\bf}{\mathbf}
\newcommand{\cal}{\mathcal}
\newcommand{\fk}{\mathfrak}
\renewcommand{\rm}{\mathrm}
\DeclareMathOperator\Hom{Hom}
\DeclareMathOperator\spec{Spec}
\DeclareMathOperator\im{Im}
\newcommand{\coker}{\text{coker }}
\newcommand{\ra}{\longrightarrow}
\newcommand{\bc}[1]{\big <#1 \big >}
\DeclareMathOperator\hocolim{hocolim}
\DeclareMathOperator\holim{holim}
\DeclareMathOperator\hofib{hofib}
\DeclareMathOperator\hocofib{hocofib}
\DeclareMathOperator\colim{colim}
\newcommand{\longlongrightarrow}{}
\DeclareRobustCommand{\longlongrightarrow}{\relbar\joinrel \relbar\joinrel\relbar\joinrel\rightarrow}
\newcommand{\rra}{\longlongrightarrow}
\DeclareRobustCommand{\longlonglongrightarrow}{\relbar\joinrel \relbar\joinrel \relbar\joinrel\relbar\joinrel\rightarrow}
\newcommand{\rrra}{\longlonglongrightarrow}
\newcommand{\medslant}[2]{{\raisebox{.15em}{$#1$}\left/\raisebox{-.15em}{$#2$}\right.}}
\theoremstyle{plain}
\newtheorem{thm}[subsubsection]{Theorem}
\newtheorem{prop}[subsubsection]{Proposition}
\newtheorem{lemma}[subsubsection]{Lemma}
\newtheorem{cor}[subsubsection]{Corollary}
\theoremstyle{definition}
\newtheorem{defin}[subsubsection]{Definition}
\newtheorem{notat}[subsubsection]{Notation}
\newtheorem{const}[subsubsection]{Construction}
\theoremstyle{remark}
\newtheorem{rmk}[subsubsection]{Remark}
\newtheorem{ex}[subsubsection]{Example}
\DeclareRobustCommand{\gobblefive}[5]{}
\title{Localizations and completions in Motivic homotopy theory}
\author{Lorenzo Mantovani}
\subjclass[2010]{14F42}
\begin{document}

\begin{abstract}
  Let $K$ be a perfect field and let $E$ be a homotopy commutative ring spectrum in the Morel-Voevodsky stable motivic homotopy category $\mathcal{SH}(K)$. In this work we investigate the relation between the $E$-homology localization and $E$-nilpotent completion of a spectrum X. Under reasonable assumptions on $E$ and $X$ we show that these two operations coincide and can be expressed in terms of formal completions or localizations in the usual sense of commutative algebra. We deduce convergence criteria for the $E$-based motivic Adams-Novikov spectral sequence.
\end{abstract}

\maketitle

\setcounter{tocdepth}{1}
\tableofcontents

\section{Introduction} 
  \label{sec:introduction}
  \subsubsection*{Motivation}
  One of the most long-standing open problems in algebraic topology is that of describing the stable homotopy groups of the sphere spectrum $\pi_k(\bb S)=\varinjlim_n \pi_{k+n} \bb S^n$. For small values of $k$ it is possible to run ad-hoc geometric arguments, but for higher values of $k$ the difficulties increase very quickly. Adams observed that one could try to attack the problem by studying the $p$-primary torsion one prime at the time, by using the \emph{mod $p$ Adams spectral sequence}:
  \[\mathrm{Ext}_{A_\ast}^{\ast,\ast}(\bb Z/p,\bb Z/p) \Rightarrow \pi_\ast(\bb S)\otimes \bb Z_p.\]
  The spectral sequence converges completely to the $p$-adic completion of $\pi_\ast(\bb S)$, and takes as an input the singular homology of the one point space with its natural structure of co-algebra over the dual Steenrod algebra $A_\ast$. It is worth mentioning that thanks to the work of Milnor and many others we possess a very explicit presentation of $A_\ast$ and of the $E_2$ term of the above spectral sequence.

  Soon after it became clear that this was only an example of a much wider set of tools. Indeed, for any spectrum $X$ and a well behaved generalized ring homology theory $E$ one could produce a similar spectral sequence
  \[\mathrm{Ext}_{\pi_\ast(E\wedge E)}^{\ast,\ast}(\pi_\ast(E),\pi_\ast(E\wedge X)) \Rightarrow \pi_\ast(X^{\wedge}_E),\]
  called the \emph{$E$-based Adams-Novikov spectral sequence}.
  Even when $X=\bb S$ is fixed, different choices of $E$ give very different spectral sequences.
  One of the reasons for considering these spectral sequences is that, at the cost of having a less explicit description of the $E_2$ term and of the target, one would obtain better structural properties of the pages. For instance, when $E=KU$ is non-connective complex topological K-theory and $X=\bb S$ the spectral sequence collapses at page $E_3$ as suggested by Adams in \cite{MR0339178}.

  For a general $E$ it is hard to say what $ X^{\wedge}_E$ is. This spectrum is called the \emph{$E$-nilpotent completion of $X$}, and by design the spectral sequence converges conditionally to its homotopy groups, but we lack of a concrete description of this gadget. However, thanks to the seminal work of Adams \cite{MR0402720} and Bousfield \cite{MR543337,MR551009}, we know that in many good cases there is an equivalence  between the target $ X^{\wedge}_E$ of the spectral sequence and the $E$-homology localization of $X$. We wish to briefly explain this point. Given a spectrum $E$, we can formally invert all the maps in the stable homotopy category $\cal{SH}$ that induce isomorphisms in $E$-homology. We thus have a calculus of fractions and a localization functor $\cal{SH}\ra \cal{SH}[\{E-\textrm{homology isomorphisms}\}^{-1}]$. The localized category can then be embedded fully-faithfully into $\cal{SH}$ with essential image being the subcategory of $E$-local objects. The localization functor and the embedding of the localized category form an adjoint pair of functors and the unit of the adjunction $X\ra X_E$ is called the $E$-localization of $X$. In the work cited above the authors argue that, under some even more restrictive assumptions on $E$ and $X$, one can actually make $X_E$ really explicit: it takes the form either of a formal completion or of a localization in the standard commutative algebra meaning of these terms. The fact that the mod $p$ Adams spectral sequence converges to the $p$-adic completion of $\pi_\ast(\bb S)$ is an example of such results.

  \medskip

  Let us now turn to the realm of algebraic geometry and fix a perfect field $K$. In their seminal work \cite{MR1813224,MR2257774,morel:stabA1ht,MR2175638},  Morel and Voevodsky construct analogues of the unstable category $\cal H$ and of the stable category $\cal{SH}$ for smooth algebraic $K$-varieties. We denote these categories by $\cal H(K)$ and $\cal{SH}(K)$ respectively. One of the main novelties of these constructions is that they allow one to manipulate algebraic varieties and their cohomology theories in a way which is unarguably much closer to algebraic topology than any other previous approach. 
  For instance there is a sphere spectrum over $K$, which we simply denote by $\bb S_K$. It is defined as the infinite suspension of the space $S^0$, which is obtained by adding a disjoint base point to the base scheme $\spec K$. The stable homotopy groups of $\bb S_K$ are now bi-graded and, for keeping the best topological intuition, we can organize them into a family of graded groups $\pi_k(\bb S_K)_\ast:=\bigoplus_{n\in \bb Z}\pi_k(\bb S_K)_{n}$ (see \ref{sub:motivic_homotopy_cateogory} for a precise definition). It is thus natural to ask about some structural properties of the homotopy group of the motivic sphere spectrum. 
  
  In his pioneering work \cite{MR2240215} Morel proved that $\pi_k(\bb S_K)_\ast=0$ if $k<0$. In \cite{MR2175638} he gave a wonderful explicit presentation of $\pi_0(\bb S_K)_\ast$ in terms of the Milnor-Witt K-theory $K^{MW}_\ast(K)$: this is a graded ring, close to Milnor's K-theory, and whose weight zero part is canonically isomorphic to the Grothendieck-Witt ring of quadratic spaces over $K$. Moreover in \cite{MR1696188} Morel set up the basics for a motivic version of the mod $2$ Adams spectral sequence, where the homotopy groups $\pi_k(\bb S_K)_\ast$ are approximated by Voevodsky's motivic cohomology with mod $2$ coefficients. As an application he showed that, by only looking at the contributions to $\pi_0(\bb S_K)_0$, Voevodsky's computation of the motivic Steenrod algebra implies the Milnor's conjecture on the structure of quadratic forms over $K$. More recently several people have used this spectral sequence. Much work has been carried out by Dugger and Isaksen who have pioneered computations of $\pi_\ast(\bb S_K)_\ast$ in a certain range over the fields $K=\bb C, \bb R$, see for instance \cite{MR2629898}, \cite{MR3652813} and \cite{MR3590344}. Analogous computations have been carried out for $\bb S_K[1/\eta]$ in the work of Guillou-Isaksen \cite{MR3346515} \cite{MR3572357} and of Andrews-Miller \cite{MR3743072}. In a different but related direction, Ormsby and \O stv\ae r \cite{MR2811717,MR3073932} use the motivic Adams spectral sequence for determining the homotopy groups of the truncations of the algebraic Brown-Peterson spectra $BP_K$ over the field $K=\bb Q$ and its archimedean and non-archimedean completions.

  \medskip

  \subsubsection*{Main Content}

  Given a homotopy commutative ring spectrum $E$ in $\cal{SH}(K)$, we investigate the relation between $E$-localizations and $E$-nilpotent completions of motivic spectra. The notion of $E$-localization of a motivic spectrum is a close generalization of its topological counterpart and satisfies a similar formalism. $E$-localizations have been considered already a number of times in motivic homotopy theory. The existence of a $E$-local category was addressed in the appendix of \cite{MR2399164} and this has been used explicitly in \cite{MR2197578} and in \cite{Joachimi:MTI} in a chromatic perspective. Furthermore the notion is more or less explicitly used in many other works that need the formalism of $\ell$-adic completion of motivic spectra (see Section \ref{sec:moore_spectra} and in particular Proposition \ref{prop:localization_at_mod_x_moore_spectrum}). In algebraic topology, when $E$ is connective, a Theorem of Bousfield (see Theorem $3.1$ in \cite{MR551009}) explicitly describes the $E$-localization map $\lambda_E(X): X \ra X_E$ of a connective spectrum $X$. Our first result is a motivic analogue of such a theorem, albeit under some very strong assumptions on $E$. Roughly speaking we ask that the $0$-th homotopy module of our cohomology theory $E$ is of the form $\underline \pi_0 E=\underline \pi_0 (\bb S_K)/(f_1,\dots,f_r)$ (see \ref{sub:homotopy_t_structure} for a precise definition of homotopy modules), where $(f_1,\dots,f_r)$ is the unramified ideal generated by elements $f_i \in K^{MW}_{q_i}(K)$.

  \begin{thm}[\ref{thm:red_to_moore_spt} and \ref{prop:localization_at_mod_x_1-x_n_moore_spectrum}]
    \label{thm:1}
    Let $E$ be a homotopy commutative ring spectrum in $\cal {SH}(K)$. Assume that $E$ satisfies assumption \ref{sub:assumption_A1} in the special case of $J=\emptyset$. Then for every connective spectrum $X$ in $\cal{SH}(K)$ there is a canonical isomorphism $X_E\simeq X^{\wedge}_{f_1} {\cdots}^\wedge _{f_r}$ under which the localization map $\lambda_E(X): X\ra X_E$ and the completion map $\chi_{\underline f}(X): X \ra X^{\wedge}_{f_1} {\cdots}^\wedge _{f_r}$ coincide.
  \end{thm}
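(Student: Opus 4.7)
The plan is to split the theorem into two independent pieces that match the two cited labels. Write $M := \bb S_K/f_1 \wedge \cdots \wedge \bb S_K/f_r$ for the iterated Moore spectrum on the sequence $\und f = (f_1,\dots,f_r)$. The two steps are: (i) show that on connective spectra the $E$-localization functor agrees with the $M$-localization functor, and (ii) identify the $M$-localization of a connective $X$ with the iterated formal $f_i$-completion $X^{\wedge}_{f_1}\cdots{}^{\wedge}_{f_r}$, matching the natural comparison maps.

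For step (i), the input is the hypothesis $\und\pi_0 E \simeq \und\pi_0 \bb S_K/(f_1,\dots,f_r)$ coming from assumption \ref{sub:assumption_A1} in the case $J=\emptyset$. Using the unit $\bb S_K\to E$ together with the fact that each $f_i$ acts trivially on $\und\pi_0 E$, I would successively lift across the defining cofiber sequences of the spectra $\bb S_K/f_i$ to obtain a factorization $\bb S_K \to M \to E$ inducing an isomorphism on $\und\pi_0$. To upgrade this into an equality of Bousfield classes on connective spectra, I would argue using Morel's homotopy $t$-structure: for connective $X$, the smashes $X\wedge M$ and $X\wedge E$ are both connective, and a homotopy-module computation using the $f_i$-action and the $E$-module structure shows that they vanish simultaneously. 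This identifies $X_E$ and $X_M$ functorially and compatibly with the localization units; this is the content I would expect from \ref{thm:red_to_moore_spt}.

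For step (ii), I would argue by induction on $r$. For $r=1$ one interprets $X^{\wedge}_{f_1}$ as the homotopy limit of the tower $\{X\wedge \bb S_K/f_1^n\}$, and verifies (a) that this homotopy limit is $\bb S_K/f_1$-local and (b) that the natural map $X\to X^{\wedge}_{f_1}$ is a $\bb S_K/f_1$-homology equivalence; the universal property of Bousfield localization then identifies it with $X_{\bb S_K/f_1}$. The inductive step reduces $(f_1,\dots,f_r)$ to $(f_1,\dots,f_{r-1})$ after observing that $f_r$-completion preserves $(f_1,\dots,f_{r-1})$-completeness (closure of complete objects under homotopy limits). Combining (i) and (ii) gives the displayed equivalence, and the coincidence of $\lambda_E(X)$ with $\chi_{\und f}(X)$ is then formal, since both are the unit of the same Bousfield reflection.

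The main obstacle is step (i). Building a map $M\to E$ from the $\und\pi_0$-level hypothesis is mechanical, but promoting it to an equivalence of Bousfield classes on connective spectra requires controlling the higher homotopy modules of $E$ via the $f_i$-action in the bigraded setting where each $f_i$ has nonzero weight $q_i$. The connectivity of $X$ is essential: without it one cannot induct downward along the $t$-structure. In practice the argument must leverage that, for connective objects, the homotopy $t$-structure cleanly separates the contributions of the $f_i$, so that vanishing of $X\wedge M$ forces vanishing of every $\und\pi_n(X\wedge E)$ by induction on $n$, a step that genuinely uses the module-style hypothesis on $\und\pi_0 E$ rather than merely a map of ring spectra.
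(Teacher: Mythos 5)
Your split into (i) comparing $E$- and $M$-localization on connective spectra and (ii) identifying $M$-localization with the $\underline f$-adic completion is the right decomposition, and your sketch of (ii) is essentially what Propositions~\ref{prop:localization_at_mod_x_moore_spectrum} and~\ref{prop:localization_at_mod_x_1-x_n_moore_spectrum} do. But in (i) you have misidentified where the difficulty lies. The implication "$X\wedge M=0\Rightarrow X\wedge E=0$", i.e. $\bc{E}\leq\bc{M}$, is the \emph{unconditional} inequality: the paper proves it without any connectivity hypothesis on $X$ (Proposition~\ref{prop:fund_ineq_of_loc}), using that $\bc{M}$ has complement $\bigvee_i\bc{\bb S[f_i^{-1}]}$ and that each $f_i$ annihilates every $\underline\pi_k E$, these being $\underline\pi_0 E$-modules. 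So the "induction on $\underline\pi_n(X\wedge E)$" you flag as the connectivity-sensitive step is aimed at the easy direction, and no map $M\to E$ needs to be built. What actually requires connectivity, and what your plan does not touch, is showing that $X_M$ is $E$-\emph{local}. Proving "$X\wedge M$ and $X\wedge E$ vanish simultaneously for connective $X$" is neither the same statement nor sufficient: $E$-locality of $X_M$ is tested against $E$-acyclic spectra $C$ which need not be connective, and ${}_M X$ need not be connective either. The paper moreover remarks explicitly that the full Bousfield-class equality $\bc{E}=\bc{M}$ fails in general.

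The missing key step, which the paper singles out as the most involved part of the argument, is this: a connective $X$ is automatically $H\cal K^{MW}_\ast$-local (Lemma~\ref{prop:con_implies_HK^MW-loc}), so $X_M\simeq(X_{H\cal K^{MW}_\ast})_M\simeq X_{H\cal K^{MW}_\ast\wedge M}$ by the iterated fracture identity of Corollary~\ref{cor:E/x,y-loc_is_E_cofx_cof_y_loc}. The separately proven chain $\bc{H\cal K^{MW}_\ast\wedge M}\leq\bc{H\underline\pi_0 E}\leq\bc{E}$ (Corollary~\ref{cor:HK^MW_mod_stuff_vs_H_KMW_mod_stuff} together with Proposition~\ref{prop:fund_ineq_of_loc}) then shows that $X_M=X_{H\cal K^{MW}_\ast\wedge M}$ is automatically $E$-local. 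Nothing in your sketch produces this chain, and I do not see how the homotopy-module induction you outline would substitute for it, since it targets Bousfield-class equality rather than the $E$-locality of $X_M$.
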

  
  The proof of such result is subdivided in two main parts. In the first part we find a spectrum $\bb S\underline \pi_0E$ such that $(-)_{\bb S\underline \pi_0E} \simeq (-)^{\wedge}_{f_1} {\cdots}^\wedge _{f_r}$. The spectrum $\bb S\underline \pi_0E$ is called, by analogy with Topology, a Moore spectrum associated to $\underline \pi_0 E$. In fact, such spectrum is not well defined, as it depends on choices that have to be done for performing its construction. However the dependence on these choices disappears when we consider the localization functor $(-)_{\bb S \underline \pi_0 E}$ defined by $\bb S \underline \pi_0 E$. In Proposition \ref{prop:localization_at_mod_x_1-x_n_moore_spectrum} we give the expected explicit form for the natural transformation $X \ra X_{\bb S\underline \pi_0 E}$ in terms of an adic completion. The second part of the proof of \ref{thm:1} is more involved and is morally subdivided in two steps. The first step consists in proving that we have a chain of inequalities of Bousfield classes
  \[  
    \bc{H\underline \pi_0 \bb S\wedge \bb S \underline \pi_0 E} \leq \bc{H\underline \pi_0 E} \leq \bc{E} \leq \bc{\bb S\underline \pi_0 E}.
  \]
  The notation and formalism of Bousfield classes is introduced in \ref{sub:bousfield_classes}. For the time being, the above chain  of inequalities can be understood as a chain of deeper and deeper Bousfield localizations where $\bc{\bb S\underline \pi_0 E}$ is the one with fewer local equivalences.
  Proving the existence of such a chain turns out to be not hard. The second step consists in showing that for a connective spectrum $X$, the $\bb S\underline \pi_0 E$-localization $X_{\bb S\underline \pi_0 E}$ is already $H\underline \pi_0 \bb S\wedge \bb S \underline \pi_0 E$-local. We find this part to be the most involved. 

  We now state an application of \ref{thm:1}. Recall that we have functors $\bb Z_{tr}(-)$ (resp. $\tilde {\bb Z}_{tr}(-)$) associating with every smooth algebraic variety $V$ over $K$, the presheaf with transfers (resp. with generalized transfers) represented by $X$. This functors lift to derived functors
  \[M:\cal{SH}(K) \ra \cal{DM}(K)\]
  \[\tilde M: \cal{SH}(K) \ra \widetilde{\cal{DM}}(K).\] 
  In \ref{ssub:ex_on_motives_of_spectra} and \ref{ex:Loc-at_MWMC} we deduce from \ref{thm:1} the following results.
  \begin{cor}
    \label{thm:cons}
    Let $K$ be a field of characteristic $0$, and let $X$ be a connective spectrum. 
    \begin{enumerate}
      \item Assume that $-1$ is a sum of squares in $K$. If $M(X)=0$ then $X[1/2]=0$.
      \item If $\tilde M(X)=0$ then $X=0$.
    \end{enumerate}
  \end{cor}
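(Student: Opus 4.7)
The plan is to recognize both $M$ and $\tilde M$ as $E$-homology functors for suitable motivic ring spectra $E$, reduce the vanishing hypotheses to vanishing of appropriate $E$-nilpotent completions of $X$ via Theorem \ref{thm:1}, and then unwind what those completions look like.

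Part (2) is the cleaner case. Under the standard identification $\widetilde{\cal{DM}}(K)\simeq\Mod{H\tilde{\bb Z}}(\cal{SH}(K))$, the functor $\tilde M$ is computed as $H\tilde{\bb Z}\wedge(-)$. By Morel's computation $\underline\pi_0 H\tilde{\bb Z}=\underline K^{MW}_\ast=\underline\pi_0\bb S_K$, so assumption \ref{sub:assumption_A1} is satisfied with the empty tuple $\underline f=\emptyset$. The hypothesis $\tilde M(X)=0$ amounts to $H\tilde{\bb Z}\wedge X=0$, forcing $X_{H\tilde{\bb Z}}=0$; Theorem \ref{thm:1} then identifies this localization with the empty completion, namely $X$ itself, yielding $X=0$.

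For part (1), the parallel strategy uses the Voevodsky Eilenberg--MacLane spectrum $H\bb Z$, whose zeroth homotopy module is $\underline\pi_0 H\bb Z=\underline K^{M}_\ast=\underline K^{MW}_\ast/\eta$, so assumption \ref{sub:assumption_A1} holds with the single generator $\eta\in K^{MW}_{-1}(K)$. The vanishing $M(X)=0$ gives $X_{H\bb Z}=0$, and Theorem \ref{thm:1} then yields $X^{\wedge}_\eta=0$.

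The remaining and main step is to deduce $X[1/2]=0$ from $X^{\wedge}_\eta=0$ under the hypothesis that $-1$ is a sum of squares in $K$. The key observation is that by Pfister's theorem this hypothesis makes $W(K)$ $2$-primary torsion, hence $W(K)[1/2]=0$; combined with Morel's identification of $\pi_{-n,-n}\bb S_K$ with $W(K)$ in negative weights, this yields $\bb S_K[1/2,\eta^{-1}]=0$. Equivalently, on the category of $2$-inverted spectra, inverting $\eta$ is the zero functor, so via the standard fibre sequence relating $\eta$-inversion and $\eta$-adic completion the canonical map $Y[1/2]\ra Y[1/2]^{\wedge}_\eta$ is an equivalence for every $Y$. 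Using that inverting $2$ commutes with $\eta$-adic completion on connective spectra, we conclude $X[1/2]\simeq (X[1/2])^{\wedge}_\eta \simeq (X^{\wedge}_\eta)[1/2]=0$.

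The main obstacle is precisely this last compatibility step: verifying the Bousfield-class identity $\bc{\bb S_K[1/2,\eta^{-1}]}=\bc{0}$ over nonreal fields and the interchange of $[1/2]$ with $\eta$-adic completion on connective spectra. Once these are in place the conclusion is essentially formal.
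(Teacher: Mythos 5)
Your treatment of part (2) matches the paper's: identify $\tilde M$ with $\tilde{H\bb Z}\wedge(-)$ via the equivalence $\widetilde{\cal{DM}}(K)\simeq D(\Mod{\tilde{H\bb Z}})$ in characteristic $0$, observe that $\tilde{H\bb Z}$ satisfies Assumption \ref{sub:assumption_A1} with $I=J=\emptyset$ because the unit induces the identity on $\underline\pi_0$, and apply Theorem \ref{thm:1} to conclude $X\simeq X_{\tilde{H\bb Z}}=0$.

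For part (1) you have the right ingredients — $\underline\pi_0 H\bb Z\simeq\cal K^{MW}_\ast/(\eta)$ so that $(-)_{H\bb Z}\simeq(-)^\wedge_\eta$ on connective spectra, and the relation $2^n\eta=0$ in $K^{MW}_\ast(K)$ coming from the Witt ring being $2$-primary torsion — but the order in which you deploy them opens a genuine gap. You apply Theorem \ref{thm:1} to $X$ to get $X^\wedge_\eta=0$ and then need to transport this through inverting $2$, asserting that "inverting $2$ commutes with $\eta$-adic completion on connective spectra." This is not a formal fact: inverting $2$ is a filtered homotopy colimit and $\eta$-adic completion is a homotopy inverse limit of a tower, and these do not commute in general (the $\varprojlim^1$ terms in the Milnor sequence are the obstruction and inverting $2$ need not kill them). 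The paper avoids the issue entirely by running the argument on $Y:=X[1/2]$ from the start: $Y$ is still connective, $H\bb Z\wedge Y\simeq(H\bb Z\wedge X)[1/2]=0$ since smashing commutes with colimits, Theorem \ref{thm:1} applied directly to $Y$ gives $Y^\wedge_\eta\simeq Y_{H\bb Z}=0$, and Lemma \ref{lem:Xonehalf_is_eta_complete} then gives $Y\simeq Y^\wedge_\eta=0$ with no commutation claim required. A secondary issue is your appeal to "the standard fibre sequence relating $\eta$-inversion and $\eta$-adic completion": the fibre of $\chi_\eta(X)$ is $\holim_n\Sigma^{n\alpha}X$, an \emph{inverse} limit, not the colimit $X[\eta^{-1}]$, so the comparison you want requires the fracture square \ref{prop:fracture_square_for_mod_eta_cohomologies} (as used in Lemma \ref{lemma:compact_is_eta_complete} for the dualizable case). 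The paper's Lemma \ref{lem:Xonehalf_is_eta_complete} is in fact simpler than either route: $2^n\eta=0$ shows $\eta$ acts by \emph{zero}, not merely nilpotently, on $X[1/2]$, so the completion tower has vanishing transition maps and $\chi_\eta(X[1/2])$ is an isomorphism outright.
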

  Similar results have been obtained by Bachmann in \cite{Bacmann_Conservativity}; we discuss the relation with his work in \ref{rmk:our_cons_res_vs_Bachmann}.

  \bigskip
  The second main statement of this paper is about the comparison between the $E$-localization of a connective spectrum $X$ and the $E$-nilpotnent completion of $X$. 
  The $E$-nilpotent completion of a spectrum $X$ is the spectrum $X^\wedge_E$ obtained as homotopy inverse limit of the cosimplicial spectrum
  \[
    \xymatrix{
      E\wedge X  \ar@<.4ex>[r] \ar@<-.4ex>[r] & E\wedge E\wedge X \ar@<.8ex>[r] \ar@<-.8ex>[r] \ar[r] &  E\wedge E \wedge E \wedge X \cdots \\
    }
  \]
  Such a gadget is obtained from the ring structure of $E$ using the insertion of the unit $e:\bb S\ra E$ as cofaces and the multiplication $\mu : E\wedge E \ra E$ as codegeneracies. Also the $E$-nilpotent completion $X^\wedge _E$ is the natural generalization of its topological counterpart. In particular, given a motivic spectrum $X$, the $E$-nilpotent completion $X^\wedge_E$ is the natural target of the $E$-based motivic Adams-Novikov spectral sequence. Such a spectral sequence is a generalization of those used so far by several authors. When $E=H\bb Z/2$ is Voevodsky's motivic cohomology with $\bb Z/2$-coefficients, it recovers the homological version of the Adams spectral sequence used by Dugger-Isaksen and Guillou-Isaksen in the works cited above. For $E=BP^\wedge_\ell$, a $\ell$-completed version of the spectral sequence has been used in works of Ormsby-\O stv\ae r \cite{MR3255457}, to compute $\pi_\ast(\bb S_{K})_\ast$ in a certain range over the field $K=\bb Q$.

  Given the definition of $E$-nilpotent completion $X^\wedge_E$, it follows by formal nonsense that the natural map $\alpha_E(X): X \ra X^\wedge_E$ factors canonically through the $E$-localization. As a consequence we have a natural map $\beta_E(X): X_E \ra X^\wedge _E$. Our second result provides some control over the map $\beta_E(X)$.

  \begin{thm}[\ref{thm:convergence_mod_stuff}]
     \label{thm:2}
     Let $E$ be an homotopy commutative ring spectrum in $\cal {SH}(K)$ and assume that $E$ satisfies assumption \ref{sub:assumption_A1} in the special case of $J=\emptyset$. Then for every connective spectrum $X$ in $\cal{SH}(K)$ the map $\beta_E(X): X_E \ra X^{\wedge}_E$ is an isomorphism in $\cal{SH}(K).$
   \end{thm}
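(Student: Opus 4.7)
The plan is to combine Theorem \ref{thm:1}, which identifies $X_E$ with the explicit iterated adic completion $X^{\wedge}_{f_1}\cdots^{\wedge}_{f_r}$, with a direct identification of $X^{\wedge}_E$ in the same form via the canonical comparison $\beta_E(X)$.

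First I would check that $\beta_E(X)$ is well-defined. Every cosimplicial level $E^{\wedge(n+1)}\wedge X$ of the Amitsur diagram defining $X^{\wedge}_E$ is an $E$-module spectrum, hence $E$-local, and the full subcategory of $E$-local objects is closed under homotopy limits; thus $X^{\wedge}_E$ is itself $E$-local, and $\alpha_E(X)$ factors canonically through $\lambda_E(X)$. Moreover, since $\lambda_E(X)$ is an $E$-equivalence, the induced map of cosimplicial objects $E^{\wedge(\bullet+1)}\wedge X \to E^{\wedge(\bullet+1)}\wedge X_E$ is a levelwise equivalence, so $X^{\wedge}_E \simeq (X_E)^{\wedge}_E$. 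It therefore suffices to show that $X_E$ is its own $E$-nilpotent completion.

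For this, I would use the description $X_E \simeq \holim_{\underline n} X/(f_1^{n_1},\dots,f_r^{n_r})$ coming from Theorem \ref{thm:1}. Each finite quotient $X/\underline f^{\underline n}$ lies in the thick subcategory generated by the $X\wedge \bb S/f_i$'s, and in turn each $\bb S/f_i$ is $E$-nilpotent (that is, in the thick subcategory of $\mathcal{SH}(K)$ generated by $E$-module spectra) as a consequence of the Bousfield bound $\bc{E}\leq \bc{\bb S\underline \pi_0 E}$ established in the proof of Theorem \ref{thm:1}, combined with the ring structure of $E$. For any $E$-nilpotent $Z$, the Amitsur cosimplicial object $E^{\wedge(\bullet+1)}\wedge Z$ admits an extra degeneracy coming from the $E$-module structure, so $Z \simeq Z^{\wedge}_E$; this property propagates through the thick-subcategory operations. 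It then remains to commute the functor $(-)^{\wedge}_E$ past the adic homotopy limit defining $X_E$.

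This commutation is the main obstacle: $E$-nilpotent completion is itself a homotopy limit and does not commute with arbitrary homotopy limits. I expect to handle this via a connectivity / Mittag-Leffler argument, exploiting that $X$ is connective and each $f_i$ has strictly positive Milnor--Witt weight (guaranteed by Assumption \ref{sub:assumption_A1}): the transition maps in the adic tower $\{X/\underline f^{\underline n}\}$ become increasingly connected as the exponents $\underline n$ grow, so in each fixed bi-degree only finitely many stages contribute to both the Amitsur totalization and the adic limit, and they can be exchanged. This yields $(X_E)^{\wedge}_E \simeq \holim_{\underline n} (X/\underline f^{\underline n})^{\wedge}_E \simeq \holim_{\underline n} X/\underline f^{\underline n} \simeq X_E$, completing the proof.
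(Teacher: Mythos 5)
Your overall strategy --- factor $\alpha_E(X)$ through $X_E$, reduce to showing that $X_E$ is its own $E$-nilpotent completion, and exploit the explicit adic description of $X_E$ --- is the right general shape, and your opening observations (that $X^\wedge_E$ is $E$-local and that $X^\wedge_E\simeq (X_E)^\wedge_E$) are correct and match the paper's \ref{subs:Relation between localization and nilpotent completion}. But there are two genuine gaps in the middle of the argument.

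First, the claim that $\bb S/f_i$ is $E$-nilpotent does not follow from the Bousfield class inequality $\bc{E}\leq \bc{\bb S\underline\pi_0 E}$. That inequality is a statement about acyclicity, i.e.\ about $\rm{Loc}(E)$, while $E$-nilpotence is membership in the thick tensor ideal generated by $E$, and the paper is explicit (Remark \ref{rmk:havin_a_nilp_loc}) that $\rm{Nilp}(E)\subsetneq\rm{Loc}(E)$ in general. Worse, $C(f_i)$ typically has infinitely many nonzero homotopy modules, so it is not even $\underline\pi_0E$-nilpotent in the sense of Definition \ref{defin:S-nilpotent_hom_module_and_spectra}, which requires only finitely many nonvanishing $\underline\pi_k$. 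The paper evades exactly this problem by passing to the Postnikov truncations $P^n(C(f_1^n)\wedge\cdots\wedge C(f_r^n)\wedge X)$ (Lemma \ref{lemma:PnMoore_tower_is_an_S-nilp_resol}); these are $\underline\pi_0 E$-nilpotent, hence $E$-nilpotent by Lemma \ref{lemma:S-nilp_implies_E-nilp}, but they form a \emph{different} tower from the one you propose to use, which changes the shape of the limit-interchange problem.

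Second, the proposed Mittag-Leffler/connectivity argument cannot work as stated. Under Assumption \ref{sub:assumption_A1} with $J=\emptyset$ the elements $f_i$ live in $\pi_0(\bb S)_{q_i}(K)$, i.e.\ they have $p_i=0$: multiplication by $f_i$ only shifts the $\bb G_m$-weight and does \emph{not} raise $S^1$-connectivity, which is what the homotopy $t$-structure sees. The transition maps $X/\underline f^{\underline n+\underline 1}\ra X/\underline f^{\underline n}$ are therefore not increasingly connected, and the exchange of the adic $\holim$ with the Amitsur $\holim$ has no reason to hold degreewise. The paper's actual route is to avoid this interchange entirely: it establishes a pro-isomorphism of $\underline\pi_0E$-nilpotent resolutions via the uniqueness statement \ref{prop:S-nilp_resol_are_unique_and_com_the_nilp_comp}, and then works at the level of pro-spectra (using \ref{lemma:E_wedge_preserves_pro_equiv}, \ref{cor:2-3prozero}) to show directly that $\alpha_E(X)\wedge E$ is an isomorphism, from which $\beta_E(X)$ being an isomorphism follows. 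Any repair of your argument would, I think, have to rediscover those ingredients: you need the Postnikov truncations to get genuine nilpotence, and you need the pro-object calculus (rather than a connectivity bound) to control the interchange of limits.
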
 

  Though the present literature about $E$-based Motivic Adams-Novikov spectral sequences is quite vast, at the moment the main reference about the convergence of such spectral sequences is \cite{MR2811716}. It was the reading of this specific work that stimulated our interest in the topic. Our results partially overlap with those of \cite{MR2811716} since the motivic ring spectrum $E=H\bb Z/p$, representing motivic cohomology with $\mod p$ coefficients, satisfies our assumptions. Even in this very specific example, our results actually generalize those of \cite{MR2811716}: the only requirement we ask on the spectrum $X$, in order to describe $X^\wedge _{H\bb Z/p}$, is the connectivity. In other words, we have no assumptions on the existence of a cell presentation of $X$ of any specific form. Our approach to the problem is in fact very different in spirit from that of \cite{MR2811716} and we refer the reader to \ref{rmk:HKOcomparison} for a detailed comparison of our work with  \cite{MR2811716}.

  \medskip

  \subsubsection*{Organization of the paper}
  Section \ref{sec:a_reminder_on_motivic_categories} recalls some well known properties we need about $\cal {SH}(K)$ that can be found in the current literature, some basics about Bousfield's homology localizations and the formalism of Bousfield classes. In section \ref{sec:moore_spectra} we introduce a motivic analogue of Moore spectra and study the localizations they define. In section \ref{sec:localizations} we prove Theorem \ref{thm:1} which, for some very special spectra $E$, allows to reduce the study of $E$-localizations to the study of localizations at the Moore spectrum associated to $\underline \pi_0(E)$. Section \ref{sec:examples} is dedicated to some examples and to some immediate corolloraries of Theorem \ref{thm:1}.
  At this point we turn to the $E$-based Adams-Novikov spectral sequence. The construction is outlined in section \ref{sec:the_E_based_MANSS}, where we also revise the possible convergence properties of the spectral sequence. In section \ref{sec:nilpotent_resolutions} we address $E$-nilpotent resolutions and completions, which allow more flexible constructions of the $E$-based motivic Adams-Novikov spectral sequence. These tools provide enough technology to prove Theorem \ref{thm:2} up to some facts on pro-spectra, to whom we dedicate the Appendix \ref{sec:pro_spectra}.

  \subsubsection*{Aknowledgements} This is essentially the content of my Ph.D. thesis. I wish to heartily thank my advisor Marc N. Levine for the constant encouragement, his interest and for the many conversations about the content of this paper. A special thank goes to T. Bachmann for the interest and the conversations on the topic of this paper. Another special thank goes to J. I. Kylling for the interest and the rigorous spell checking. I wish to thank my colleagues and the whole ESAGA group at the university of Duisburg-Essen for providing a comfortable and productive working environment. This work was supported by the Humbolt Foundation and the DFG Schwerpunkt Programme 1786 \emph{Homotopy Theory and Algebraic Geometry}.


\section{A reminder on motivic categories} 
  \label{sec:a_reminder_on_motivic_categories}

  	In this section we review some well-known facts about the motivic stable homotopy category $\cal{SH}(S)$ of a base scheme $S$, and we take the occasion to introduce the notation that we will need later on. In \ref{sub:motivic_homotopy_cateogory} we recall some basic structural properties of the category $\cal{SH}(S)$ and briefly review its construction. In \ref{sub:homotopy_t_structure} we review Morel's homotopy $t$-structure, which is by far the most important tool we need. After this we introduce homology localizations, which are the main subject of study in this work. Given a spectrum $E \in \cal{SH}(S)$, we introduce $E$-homology localizations and co-localizations, we briefly address the existence of such constructions (both in the triangulated  and in the model category setting), and finally deduce some formal properties of these constructions. This will take place in \ref{sec:homology_localizations}.  We conclude with a review of the formalism of Bousfield classes in \ref{sub:bousfield_classes}. Bousfield classes are an abstract tool which is very useful to keep track of the mutual relations between the localization functors $(-)_E$, defined for the various $E \in \cal{SH}(S)$. Our choice of symbols and nomenclature follows closely that of \cite{MR543337} and \cite{MR551009}.

  \subsection{Motivic Stable Homotopy Cateogory} 
    \label{sub:motivic_homotopy_cateogory}
    
    A \emph{base scheme} is a Noetherian scheme of finite Krull dimension. Given a a base scheme $S$, we denote by $\cal{SH}(S)$ the Morel-Voevodsky stable homotopy category. For the uses of the present paper we will be mostly interested in the case where $S=\spec K$ is the spectrum of a perfect field $K$. In this case the category was constructed in \cite{morel:stabA1ht,MR2175638}. More generally, over any base scheme $S$, the category $\cal {SH}(S)$ was constructed by Jardine in \cite{jard:motsym}. Namely $\cal{SH}(S)$ is the homotopy category associated with the the \emph{stable motivic model structure} (see \cite[Section 4.2]{jard:motsym}) on the category $\spt^\Sigma_T(S)$ of symmetric $T$-spectra over $S$; here $T$ is the Thom space associated to the trivial bundle of rank one over $S$. The stable  motivic model structure is simplicial, stable, proper, cofibrantly generated and combinatorial, as proved in \cite[Theorem 4.15]{jard:motsym}. Furthermore the stable motivic model structure on $\spt_T^\Sigma(S)$ is compatible with the smash product of symmetric spectra (see \cite[Proposition 4.19]{jard:motsym}) and makes $\spt_T^\Sigma(S)$ into a closed symmetric monoidal model category. As a result the homotopy category $\cal{SH}(S)$ is a closed symmetric-monoidal triangulated category. We denote by $[X,Y]$ the abelian group of maps between $X$ and $Y$ in $\cal {SH}(S)$. The triangulated category $\cal {SH}(S)$ is actually compactly generated by the set 
      \[ \{\Sigma ^{p+q\alpha}\Sigma^\infty X_+ : \; X \in \sm K, \; p,q \in \bb Z \}, \]
      where $\Sigma^{p+q\alpha}$ is defined as $\Sigma_{S^1}^{p}\Sigma_{\bb G_m}^{q}$.

  \subsection{Homotopy t-structure} 
    \label{sub:homotopy_t_structure}
    
    Let $S$ be a base scheme. To every motivic spectrum $X$ we associate homotopy sheaves $\underline \pi_p(X)_q$ which are defined as the Nisnevich sheafification of the presheaves
    \[\pi_p(X)_q: U \mapsto [ \Sigma^{p-q\alpha} \Sigma^\infty U_+,X], \]
    defined on the category $\sm S$ of smooth $S$-schemes, and with values in abelian groups.
    We will denote by $\underline \pi_p(X):= \bigoplus_{q \in \bb Z} \underline\pi_p(X)_q$ and regard them as homotopy modules in the sense of Morel \cite[Deinition 5.2.4]{MR2175638}. A spectrum $X$ is called $k$-connective if for every $p\leq k$, $\underline \pi_p(X)=0$; $X$ is called connective if it is $k$-connective for some integer $k \in \bb Z$. 

    Under a different point of view, we can define $\cal {SH}(S)_{\geq n}$ as the localizing subcategory of $\cal{SH}(S)$ generated by the set 
    \[\{\Sigma ^{p+q\alpha}\Sigma ^\infty X_+\; : X \in \sm K, \; p \geq n,\; q\in \bb Z \}.\]
    The general formalism of accessible localizations, which in fact works for every base scheme $S$, implies that the inclusion $i_n: \cal {SH}(S)_{\geq n} \subseteq \cal {SH}(S)$ has a right adjoint $\tau_{\geq n}: \cal {SH}(S) \ra \cal {SH}(S)_{\geq n}$. We denote the composition $i_n\circ \tau_{\geq n}$ by the  symbol $P_n$. Similarly we denote by $\cal{SH}(S)_{\leq (n-1)}$ the right orthogonal of $\cal {SH}(S)_{n}$; i.e. $\cal {SH}(S)_{\leq n-1}$ is the full subcategory of those objects $X\in \cal {SH}(S)$ with the property that, for every $A \in \cal {SH}(S)_{\geq n}$, $[A,X]=0$. In particular the pair of subcategories $\big (\cal{SH}(S)_{\geq 0},\cal{SH}(S)_{\leq -1} \big )$ define an accessible $t$-structure on $\cal {SH}(S)$.

    We thus get a cofibre sequence
    \begin{equation}
    \label{eqn:postfibseq}
    P_n(X) \overset{\delta_n}{\ra} X \overset{\pi_{n-1}}{\ra} P^{n-1}(X)
    \end{equation}
    which is functorial in $X \in \cal {SH}(S)$, where $\delta_n$ is the co-unit of the adjunction $(i_n,\tau_{\geq n})$.
    
    If $S=\spec K$ and $K$ is a (perfect) field, then a spectrum $X$ is $n$-connective if and only if $X$
    lies in the subcategory $\cal{SH}(S)_{\geq n}$: this is proven for instance in Theorem 2.3 of \cite{hoyois:from_algebraic_cobordism_to_motivic_cohomology} and is a consequence of Morel's stable $\bb A^1$-connectivity Theorem (see \cite{MR2240215}). In such situation, the map $\delta_n$ (resp. $\pi_n$) induces an isomoprhism on the homotopy sheaves $\underline\pi_k$ for every $k\geq n$ (resp. on $\underline \pi_k$ for $k\leq n-1$). In addition $P_n(X)$ is $(n-1)$-connective, while the homotopy sheaves $ \underline \pi_k (P^{n-1}(X))$  are trivial for $k \geq n$. In this case the $t$-structure defined above is called \emph{homotopy $t$-structure}.

    Morel identified the heart of the homotopy $t$-structure with the category of homotopy modules. A homotopy module $\cal F$ is a collection $\{\cal F_n, \sigma ^{\cal F}_n\}_{n \in \bb Z}$, where $\cal F_n$ is a strictly $\bb A^1$-invariant Nisnevich sheaf on $\sm K$, and $\sigma_n: \cal F_n \overset{\simeq}{\ra} (\cal F_{n+1})_{-1}$ is an isomorphism of $\cal F_n$ with Voevodsky's contraction of $\cal F_{n+1}$. A morphism of homotopy modules $\phi: \cal F \ra \cal G$ is a collection of maps of abelian sheaves $\phi_n: \cal F_n \ra \cal G_n$ that is compatible with the bonding maps $\sigma_n^{\cal F}, \sigma_n^{\cal G}$. We adopt Morel's notation and denote the category of homotopy modules over the field $K$ by $\Pi_\ast(K)$. For every spectrum $X \in \cal{SH}(K)$ and every integer $p$, the collection $\{\underline \pi_p(X)_q\}_{q\in \bb Z}$ has a natural structure of homotopy module and the restriction of the functor $\underline \pi_0$ to the heart
    \[\underline \pi_0: \cal{SH}(K)_{\geq 0} \cap \cal{SH}(K)_{\leq 0}=:\cal {SH}(K)^{\heartsuit} \ra \Pi_\ast(K)\]
    is an equivalence \cite[Theorem 5.2.6]{MR2175638}. We denote by $H$ a quasi-inverse, in analogy with the Eilenberg-MacLane functor in Algebraic Topology. It should not be confused with Voevodsky's Eilenberg-MacLane functor, associating with a simplicial abelian group $A$ the motivic cohomology spectrum with coefficients in $A$. 
     
    The category $\cal {SH}(K)^{\heartsuit}$, and thus $\Pi_\ast(K)$, is a symmetric monoidal category with respect to the functor
    \[-\otimes -:  (X, Y) \mapsto (X\wedge Y)_{\tau_{\leq 0}}.\]
    The Postnikov tower $P_n(-)$ satisfies the axiomatic framework of \cite[Section 2]{MR2971612}, and thus from \cite[Theorem 5.1]{MR2971612} we deduce that $H\underline \pi_0\bb S$ is a $\bf E_\infty$-ring spectrum. The functor $H$ is fully faithful, exact, and identifies the heart of the homotopy $t$-structure with the category of discrete $H\underline \pi_0 \bb S$-modules, i.e. those $H\underline \pi_0 \bb S$-modules in $\cal{SH}(K)$ for which $\underline \pi_0$ is the only possibly non-trivial homotopy module.

    Recall that when $K$ is a perfect field of characteristic not $2$ we can construct a map (see \cite[Section 6.1]{MR2061856})
    \[ \sigma:K^{MW}_\ast(K) \ra [\bb S, \bb G_m^{\wedge \ast}] \]
    by defining it on the generators and checking that it passes to the quotient through the relations of $K^{MW}_\ast(K)$ (See \cite[Definition 3.1 pg. 49]{MR2934577} for precise formulas).
    We set $K^{MW}_{-1} \ni \eta \mapsto \sigma(\eta)$ where $\sigma(\eta): \bb S \ra \Sigma^{-\alpha}\bb G_m$ is the desuspension of the infinite $T$-suspension of the algebraic Hopf map $\bb A^2 \setminus 0 \ra \bb P^1$ which maps $(x,y)$ to $[x,y]$. Furthermore we set, for every element $u \in K^\times$, $\sigma(u): \bb S \ra \bb G_m$ to be the natural map induced on infinite $T$-suspensions. The compatibility of $\sigma$ with the relations defining $K^{MW}_\ast(K)$ is checked in \cite[Section 6.1]{MR2061856} and in \cite[Theorem 6.2.1]{MR2061856} it is proven that $\sigma$ is actually an isomorphism, and it extends uniquely to an isomorphism of homotopy modules $\sigma: \cal K^{MW}_\ast \overset{\simeq}{\ra} \underline \pi_0 \bb S$.
    
      More generally let $E$ be a $(-1)$-connective $\bf E_\infty$-ring spectrum. Then the definitions of connectivity above induce on $\Ho(\Mod E)$ a $t$-structure whose heart is equivalent to the category of $\underline \pi_0(E)$-modules in $\Pi_\ast(K)$. Similarly the heart of such a $t$-structure is equivalent to the category of discrete $H\underline \pi_0 E$-modules.


  \subsection{Homology Localizations} 
  \label{sec:homology_localizations}
    
    \begin{defin}
      \label{defin:E-local_stuff}
      Let $E$ a spectrum in $\cal{SH}(S)$. We say that a map in $\cal{SH}(S)$ of motivic spectra $f:X\ra Y$ is an $E$-homology equivalence (or, shortly, an $E$-equivalence) if the induced map $f\wedge \mathrm{id} :X\wedge E \ra Y\wedge E$ is an isomorphism in $\cal{SH}(S)$. We say that a spectrum $C$ is \emph{$E$-acyclic} if $E\wedge C$ is isomorphic to $0$ in $\cal {SH}(S)$. Finally we say that a spectrum $X\in \cal{SH}(S)$ is \emph{$E$-local} if for every $E$-acyclic spectrum $C$, we have that $[C,X]=0$.
    \end{defin}
    \begin{rmk}
      \label{rmk:prop_of_acyc_obj}
      One sees immediately that the class $\rm{Ac}(E)$ of $E$-acyclic objects is closed under arbitrary (small) homotopy colimits and retracts, and that $E$-acyclic objects have the 2-out-of-3 property in fibre sequences. We abuse the notation and write $\rm{Ac}(E)$ for the full triangulated subcategory of $\cal{SH}(S)$ whose objects are the $E$-acyclic spectra. Then $\rm{Ac}(E)$ is a thick localizing triangulated subcategory of $\cal{SH}(S)$ and the inclusion $\rm{Ac}(E)\subseteq \cal{SH}(S)$ respects homotopy colimits. Note that $\rm{Ac}(E)$ is also closed under smashing with an arbitrary spectrum, i.e. it is a thick ideal.
    \end{rmk}
    \begin{rmk}
      \label{rmk:prop_of_loc_obj}
      Similarly it is immediate to see that the class $\rm{Loc}(E)$ of $E$-local objects is closed under arbitrary (small) homotopy limits and retracts, and that $E$-local objects have the 2-out-of-3 property in fibre sequences. Again we abuse the notation and write $\rm{Loc}(E)$ for the full triangulated subcategory of $\cal{SH}(S)$ whose objects are the $E$-local spectra. Then $\rm{Loc}(E)$ is a thick triangulated subcategory of $\cal{SH}(S)$ and the inclusion $\rm{Loc}(E)\subseteq \cal{SH}(S)$ respects homotopy limits. An object $X$ is $E$-local if and only if for every $E$-equivalence $A\ra B$ the induced map $[B,X]\ra [A,X]$ is an isomorphism.
    \end{rmk}

    \begin{rmk}
      \label{rmk:E-hom_eq_vs_E-loc_eq}
      Note that we could define an, a priori, more general notion of equivalence: we call a map $A\ra B$ an \emph{$E$-local equivalence} if, for every $E$-local object $X$ the natural map $[B,X]\ra [A,X]$ is an isomorphism. Clearly all $E$-equivalences are $E$-local equivalences. The reverse holds if and only if the class $\rm{Ac}(E)$ of $E$-acyclic objects coincides with its double orthogonal 
      \[
      {}^{\perp} (\rm{Ac}(E)^{\perp}):=\{X\in \cal{SH}(S) \; s.t. \; \forall \; C\in \rm{Loc}(E), [X,C]=0\}.
      \]
      In view of the general theory of Bousfield localizations of triangulated categories (see for instance Chapter $9$ of \cite{MR1812507} or Sections $4$ and $5$ of \cite{MR2681709}) the equality of full subcategories ${}^{\perp} (\rm{Ac}(E)^{\perp})=\rm{Ac}(E)$ is implied by the existence of a Bousfield localization functor $(-)_E: \cal{SH}(S) \ra \cal{SH}(S)$ having $\rm{Ac}(E)$ as kernel.
     \end{rmk}
    \begin{defin}
    \label{defin:bousf_loc_func}
      Given a triangulated category $\cal T$ a Bousfield localization functor is an exact functor $L:\cal T \ra \cal T$ together with a natural transformation $\lambda: \rm{id} \ra L$ such that the natural transformations $L\circ \lambda, \lambda \circ L: L(-)\ra L^2(-)$ coincide and are isomorphisms.

      Given an exact functor $F: \cal T\ra \cal T$ of triangulated categories, we call \emph{Kernel} of $F$ the full subcategory $\ker F$ of $\cal T$ spanned by the objects $X\in \cal T$ such that $F(X)\simeq 0$.
    \end{defin}
    \begin{prop}[see $4.9.1$ of \cite{MR2681709}]
      \label{prop:eq_cond_for_ex_triang_loc}
      Let $S$ be a base scheme, $E$ be a spectrum and $\cal A:=\rm{Ac}(E)$ be the full subcategory of $\cal{SH}(S)$ spanned by $E$-acyclic objects. The the following are equivalent:
      \begin{enumerate}[label=L.\arabic*]
        \item \label{loc_func} there exists a localization functor $(-)_E:\cal{SH}(S)\ra \cal{SH}(S)$ having $\cal A$ as kernel;
        \item \label{acc_loc} the inclusion functor $\cal A \subset \cal{SH}(S) $ has a right adjoint;
        \item \label{ref_loc} the quotient functor $\cal{SH}(S) \ra \cal{SH}(S)/\cal A$ has a right adjoint;
        \item  the inclusion $\rm{Loc}(E) \subset \cal{SH}(S)$ has a left adjoint and $\cal A = {}^{\perp}(\cal A^{\perp})$;
        \item \label{funct_fibseq} for every spectrum $X \in \cal{SH}(S)$ there is an exact triangle ${}_E X \ra X \ra X_E$ with ${}_E X \in \cal A$ and $X_E$ in $\rm{Loc}(E)$.
      \end{enumerate}
    \end{prop}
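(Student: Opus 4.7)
The plan is to prove the statement as a cycle of implications, viewing it as a concrete instance of the general theory of Bousfield localization of triangulated categories. All five conditions are formal in nature; no specific feature of $\cal{SH}(S)$ beyond the triangulated structure and the fact that $\cal A$ is a thick localizing subcategory intervenes, and the arguments are entirely standard (cf.\ Sections $4$--$5$ of \cite{MR2681709}).

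First I would prove the cycle (L.1) $\Rightarrow$ (L.2) $\Rightarrow$ (L.5) $\Rightarrow$ (L.1). For (L.1) $\Rightarrow$ (L.2), given a Bousfield localization $(-)_E$ with unit $\lambda$, send $X$ to $r(X) := \mathrm{fib}(\lambda_X)$: applying $(-)_E$ to the triangle $r(X) \to X \to X_E$ and using that $(\lambda_X)_E$ is invertible shows $r(X) \in \cal A$. For $C \in \cal A$ one has $[C, X_E]=0$ (because $C_E \simeq 0$ and $X_E$ lies in the essential image of $(-)_E$, which coincides with $\mathrm{Loc}(E)$), hence the adjunction isomorphism $[i(C), X] \cong [C, r(X)]$ follows from the long exact sequence. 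For (L.2) $\Rightarrow$ (L.5), if $r$ is right adjoint to $i$, define ${}_E X := i r(X)$ and complete the co-unit to a triangle ${}_E X \to X \to X_E$; the same long exact sequence argument, together with $[C, X] \cong [C, i r(X)]$ for $C \in \cal A$, shows $X_E \in \mathrm{Loc}(E)$. The implication (L.5) $\Rightarrow$ (L.1) amounts to promoting the triangle to functorial data: the universal property of $\lambda_X$ (any map from $X$ to an $E$-local object factors uniquely through it) produces functoriality of $(-)_E$ and $\lambda$, while idempotency follows because $\lambda_{X_E}$ is an isomorphism, $X_E$ being already $E$-local.

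Next, (L.1) $\Leftrightarrow$ (L.3) is a formal consequence of the universal property of Verdier quotients: the existence of a right adjoint to $\cal{SH}(S) \to \cal{SH}(S)/\cal A$ is equivalent to having, functorially in $X$, a triangle $X' \to X \to X''$ with $X' \in \cal A$ and $X''$ in the essential image of the section, which in turn recovers the Bousfield localization of (L.1). The most delicate equivalence is (L.5) $\Leftrightarrow$ (L.4). The left adjoint to $\mathrm{Loc}(E) \subseteq \cal{SH}(S)$ is immediate from (L.5) by the universal property of $X_E$; the content sits in showing $\cal A = {}^{\perp}(\cal A^{\perp})$. Since the containment $\cal A \subseteq {}^{\perp}(\cal A^{\perp})$ is automatic, take $Y \in {}^{\perp}\mathrm{Loc}(E)$ and apply the triangle ${}_E Y \to Y \to Y_E$: since $Y_E \in \mathrm{Loc}(E)$ and $[Y, Y_E]=0$, the map $Y \to Y_E$ is zero, so the triangle splits and $Y$ becomes a direct summand of ${}_E Y \in \cal A$, whence $Y \in \cal A$ by thickness. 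Conversely, given (L.4) with left adjoint $L$, form the fibre triangle $F(X) \to X \to LX$ and verify $F(X) \in {}^{\perp}\mathrm{Loc}(E) = \cal A$ via the long exact sequence of $[-, Z]$ for $Z \in \mathrm{Loc}(E)$, using the adjunction iso $[X, Z] \cong [LX, Z]$.

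The main subtle point is precisely the double-orthogonality condition in (L.4): without it, the left adjoint to $\mathrm{Loc}(E) \hookrightarrow \cal{SH}(S)$ could in principle have a kernel strictly larger than $\cal A$, in which case the associated Bousfield localization would invert a coarser class of equivalences than those induced by $E$ (so the distinction raised in Remark \ref{rmk:E-hom_eq_vs_E-loc_eq} becomes the obstruction). All other implications reduce to routine manipulations with adjunctions and exact triangles, so I expect the splitting argument in the double-orthogonality step to be where essentially all the content of the proposition lives.
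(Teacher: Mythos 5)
The paper does not prove this proposition: it simply cites Proposition 4.9.1 of Krause's lecture notes \cite{MR2681709}, where exactly this cycle of equivalent characterizations of Bousfield localization for triangulated categories is established. Your argument is a correct rendition of that standard proof. The chosen cycle (L.1) $\Rightarrow$ (L.2) $\Rightarrow$ (L.5) $\Rightarrow$ (L.1) together with the bridges (L.1) $\Leftrightarrow$ (L.3) and (L.5) $\Leftrightarrow$ (L.4) does cover all five statements, and each individual implication is sound.

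One small caution: in the step (L.1) $\Rightarrow$ (L.2), the parenthetical justification that ``the essential image of $(-)_E$ coincides with $\mathrm{Loc}(E)$'' is slightly circular at that point in the argument, since that identification is part of what the proposition is establishing. The direct route to $[C,X_E]=0$ for $C\in\cal A$ is cleaner: a map $f\colon C\to X_E$ satisfies $\lambda_{X_E}\circ f = f_E\circ\lambda_C$ by naturality of $\lambda$, and $f_E\colon C_E\to (X_E)_E$ vanishes because $C_E\simeq 0$, while $\lambda_{X_E}$ is invertible because $X_E$ is in the image of $(-)_E$ and $\lambda\circ L$ is an isomorphism by the localization axiom; hence $f=0$. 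Your concluding remark — that the genuine content sits in the double-orthogonality $\cal A={}^\perp(\cal A^\perp)$ and the retract argument for $Y\in{}^\perp\mathrm{Loc}(E)$ — is accurate and matches the emphasis in Krause's treatment, and it also explains why (L.4) needs that extra clause to be equivalent to the others (cf.\ Remark~\ref{rmk:E-hom_eq_vs_E-loc_eq}).
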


    \begin{defin}
    \label{defin:E-localization}
      Assume that any of the equivalent conditions of \ref{prop:eq_cond_for_ex_triang_loc} is satisfied. Then the functor $(-)_E$ of \ref{loc_func} is called \emph{$E$-localization functor}, while the fucntor ${}_E(-)$ in \ref{funct_fibseq} is called \emph{$E$-acyclicization functor}. We denote by $\lambda_E$ the natural transformation $X \mapsto (\lambda_E(X): X\ra X_E)$ of \ref{funct_fibseq}, and we call it \emph{the $E$-localization map}. Similarly the natural transformation $\nu_E: X\mapsto (\nu_E(X): {}_E X \ra X)$ of \ref{funct_fibseq} is called $E$-acyclicization map.    
    \end{defin}

    \subsubsection{}
    The existence of a $E$-localization functor is not a completely formal statement. It would follow from the adjoint functor theorem if we knew in advance that the subcategory of $E$-acyclic objects is the localizing subcategory generated by a \emph{small set} of $E$-acyclic objects. This point, however, is the central difficulty that one encounters in the construction of the $E$-local stable motivic model structure $\spt_T^{\Sigma}(S)_E$ on motivic spectra; see \cite[Appendix]{MR2399164} for the definition. The fibrant replacement in $\spt_T^{\Sigma}(S)_E$ is indeed designed to produce, functorially in $X$, an $E$-equivalence $X \ra L_E(X)$ with an $E$-local target $L_E(X)$. Thus, if we assume the existence of $\spt_T^{\Sigma}(S)_E$, the Quillen adjunction
      \[ id: \spt_T^{\Sigma}(S) \rightleftarrows \spt_T^{\Sigma}(S)_E: id\]
    induces a derived adjunction
      \[ \bf L id: \cal{SH}(S) \rightleftarrows  \cal{SH}(S)_E: \bf R id, \]
    and the unit natural transformation $id \ra \bf R id \circ \bf L id$ satisfies the condition \eqref{loc_func} of \ref{prop:eq_cond_for_ex_triang_loc}. In particular, thanks to Theorem \ref{thm:existence_of_loc_mod_str}, we gain the full formalism described in \ref{prop:eq_cond_for_ex_triang_loc}.

    \begin{thm}[See Appendix of \cite{MR2399164}]
      \label{thm:existence_of_loc_mod_str}
      Let $E$ be a motivic symmetric $T$-spectrum over a base scheme $S$. Then the category $\spt_T^\Sigma(S)$ of motivic symmetric $T$-spectra over $S$ together with the classes of stable cofibrations, $E$-equivalences, and $E$-local fibrations satisfies the axioms of a left proper, combinatorial simplicial monoidal model category
    \end{thm}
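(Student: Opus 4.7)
The plan is to apply the general machinery of left Bousfield localization of combinatorial left proper model categories (in the form due to J.~Smith, see e.g.\ Barwick or Hirschhorn). Since the stable motivic model structure on $\spt_T^\Sigma(S)$ is already known by \cite{jard:motsym} to be left proper, combinatorial, simplicial and symmetric monoidal, it suffices to exhibit a small set $\cal S$ of morphisms whose local objects coincide with the $E$-local spectra (and whose local equivalences coincide with the $E$-equivalences), and then to verify that the resulting localized model structure remains simplicial and monoidal.

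First I would fix a cofibrant replacement $QE$ of $E$ and characterize $E$-acyclicity as the vanishing of $-\wedge QE$ up to stable motivic weak equivalence. Pick a regular cardinal $\kappa$ bigger than the presentability rank of $QE$ and of the domains and codomains of the chosen generating (trivial) cofibrations of the stable motivic structure. Define $\cal S$ to be a set of representatives, up to isomorphism, of those $E$-equivalences $A \ra B$ between cofibrant $\kappa$-presentable symmetric $T$-spectra. One then checks that an object is $\cal S$-local if and only if it is $E$-local: one inclusion is tautological, and the other uses the standard filtered colimit decomposition of any map as a $\kappa$-filtered colimit of maps between $\kappa$-presentable subobjects, together with the fact that $E$-equivalences are detected by smashing with $QE$ and that this operation commutes with $\kappa$-filtered colimits in $\spt_T^\Sigma(S)$. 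Once the existence of the localized structure is granted, left properness and the combinatorial property are automatic, and the simplicial enrichment persists because smashing with a simplicial set preserves both stable cofibrations and $E$-equivalences (the latter because $S_+\wedge(-\wedge E)\simeq (S_+\wedge -)\wedge E$).

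I expect the main delicate point to be verifying that the localization is monoidal, i.e.\ the pushout-product axiom for the new structure. The pushout-product of two stable cofibrations is again a stable cofibration, so one only has to show that if $i$ is a stable cofibration and $j$ is an $E$-trivial stable cofibration, then the pushout-product $i\Box j$ is an $E$-equivalence. Writing things out, this reduces to checking that for every cofibrant $X$ the functor $X\wedge(-)$ preserves $E$-equivalences, which is immediate from $(X\wedge f)\wedge E\simeq X\wedge(f\wedge E)$ and reflects the fact, recorded in Remark \ref{rmk:prop_of_acyc_obj}, that $\rm{Ac}(E)$ is a thick ideal closed under smashing with an arbitrary spectrum. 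A parallel argument takes care of the unit axiom and of compatibility with the internal hom, so one obtains the claimed symmetric monoidal enrichment.
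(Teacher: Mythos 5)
The paper does not prove this result; it cites it from the appendix of \cite{MR2399164}, where the argument is carried out along the lines you sketch (left Bousfield localization à la Jeff Smith of the combinatorial stable motivic structure). So your overall strategy is the expected one. However, there is a genuine gap at the central step, namely the verification that $\cal S$-local objects coincide with $E$-local objects.

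The issue is in the sentence ``the other uses the standard filtered colimit decomposition of any map as a $\kappa$-filtered colimit of maps between $\kappa$-presentable subobjects, together with the fact that $E$-equivalences are detected by smashing with $QE$ and that this operation commutes with $\kappa$-filtered colimits.'' Suppose $i\colon A\hookrightarrow B$ is an $E$-trivial cofibration and one writes $B=\colim_\alpha B_\alpha$ with each $B_\alpha$ a $\kappa$-presentable subobject and $A_\alpha := A\cap B_\alpha$. The pieces $A_\alpha\hookrightarrow B_\alpha$ need \emph{not} be $E$-equivalences, even though $A\hookrightarrow B$ is, precisely because $E$-acyclicity of $B/A$ gives no control over the subquotients $B_\alpha/A_\alpha$ individually. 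Commutativity of $(-)\wedge QE$ with filtered colimits tells you that $E_*(B/A)=\colim_\alpha E_*(B_\alpha/A_\alpha)$ vanishes, but that only says each class in $E_*(B_\alpha/A_\alpha)$ eventually dies, not that the individual terms vanish. Hence the maps you want to test $\cal S$-locality against are not visibly in $\cal S$, and the implication $\cal S$-local $\Rightarrow$ $E$-local does not follow from what you have written.

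What is actually needed is Bousfield's bounded cofibration (``cardinality'') lemma: for a suitable $\kappa$, given an $E$-trivial cofibration $A\hookrightarrow B$ and a $\kappa$-presentable subobject $D\subseteq B$, there is a $\kappa$-presentable $C$ with $D\subseteq C\subseteq B$ and $C\cap A\hookrightarrow C$ again an $E$-trivial cofibration. Its proof is an iterated closure argument: starting from $D_0=D$, one uses that every class of $E_*(D_n/(D_n\cap A))$ maps to zero in $E_*(B/A)$, hence already dies at some finite stage $E_*(D_{n+1}/(D_{n+1}\cap A))$ for a carefully enlarged $\kappa$-small $D_{n+1}$, and then one takes $C=\colim_n D_n$. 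This is where the commutation of $(-)\wedge QE$ with $\kappa$-filtered colimits genuinely enters, but only after the iteration; the single decomposition you invoke is insufficient. Alternatively one can reach the same conclusion abstractly by observing that the class of $E$-equivalences is the inverse image of the accessible class of stable weak equivalences under the accessible functor $f\mapsto \mathrm{cofib}(f)\wedge QE$, and then invoking the fact that the preimage of an accessibly embedded accessible subcategory under an accessible functor is accessible; but you must actually invoke one of these two mechanisms, and your sketch does neither. The discussion of the monoidal pushout-product axiom, via $\mathrm{cofib}(i\Box j)\simeq (B/A)\wedge (D/C)$ and the fact that $\rm{Ac}(E)$ is a thick smash ideal, is correct.
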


  \subsection{Bousfield classes} 
    \label{sub:bousfield_classes}

      \subsubsection{}
      We introduce an equivalence relation on the class of isomorphism classes of spectra: we set that $E\sim_B F$ if, for every motivic spectrum $X$, we have that $E\wedge X=0$ if and only if $F\wedge X=0$. By theorem \ref{thm:existence_of_loc_mod_str} localization functors at $E$ and $F$ exist and they are (canonically) isomorphic exactly when $E\sim_BF$. We denote by $\cal A(S)$ the class of Bousfield classes in $\cal {SH}(S)$ and by $\bc E$ the element in $\cal A(S)$ represented by a spectrum $E$. On $\cal A(S)$ we introduce a partial ordering by setting $\bc E \leq \bc F$ if every $F$-acyclic spectrum is $E$-acyclic.
      
      \subsubsection{}
      Given a possibly infinite collection of Bousfield classes $\bc {E_i}_{i \in I}$ we have a \emph{join} operation which is defined as $\vee_{i \in I}\bc {E_i}:=\bc{\vee_{i \in I}E_i}$. We note that the join is always the minimal upper bound of its summands. 

      Similarly, given a finite collection of Bousfield classes $\bc {E_i}_{i \in I}$ we have a \emph{meet} operation which is defined as $\wedge_{i \in I}\bc {E_i}:=\bc{\wedge_{i \in I}E_i}$. We note that the meet operation is a lower bound for its factors, but in general doesn't need to be the maximal lower bound.

      \subsubsection{}
      The operations of join and meet are both commutative and associative. For every pair of spectra $E,F \in \cal{SH}(S)$       
      the \emph{first absorption law}
      \begin{equation}
        \label{eqn:vee_absorbes_a_smash}
        \bc{E}\vee \bc{E\wedge F}=\bc{E}
      \end{equation}
       is always verified. On the other hand we believe that the \emph{second absorption law}
       \begin{equation}
        \label{eqn:smash_absorbes_a_vee}
         \bc{E} \wedge \bc{E\vee F}=\bc{E}
       \end{equation}
        is not always satisfied. Indeed, if both the absorption laws \eqref{eqn:vee_absorbes_a_smash} and \eqref{eqn:smash_absorbes_a_vee} were verified for every spectrum $E$, then by combining them together we could deduce that both the \emph{first idempotence law}
        \begin{equation}
          \label{eqn:idemp_of_vee}
          \bc{E} \vee \bc{E} =\bc{E}
        \end{equation}
        and the \emph{second idempotence law}
        \begin{equation}
          \label{eqn:idemp_of_smash}
          \bc{E} \wedge \bc{E}=\bc{E}
        \end{equation}
       hold true. However, while \eqref{eqn:idemp_of_vee} does indeed hold true, we expect to have counterexamples for the second \eqref{eqn:idemp_of_smash}. More precisely, in the classical topological setting of $\cal{SH}$, Proposition 2.5 of \cite{MR543337} gives an example of failure of \eqref{eqn:idemp_of_smash}. We finally note that, since both \eqref{eqn:vee_absorbes_a_smash} and \eqref{eqn:idemp_of_vee} hold true in general, then \eqref{eqn:smash_absorbes_a_vee} holds if and only if \eqref{eqn:idemp_of_smash} holds.

       The meet operation distributes with respect to the join of possibly infinitely many classes:
       \begin{equation}
        \label{eqn:wedge_dist_over_vee}
         \bc{E} \wedge (\vee_i\bc{F_i})=\vee_i \bc{E\wedge F_i}.
       \end{equation}
       We note furthermore that if both the absorption laws \eqref{eqn:vee_absorbes_a_smash} and \eqref{eqn:smash_absorbes_a_vee} above were satisfied, then the distributivity of the meet operation over the join would imply the distributivity property of the join over the meet, i.e. that 
       \begin{equation}
         \label{eqn:vee_dist__over_wedge}
         \bc{E} \vee \bc{F_1\wedge F_2}=\bc{E\vee F_1}\wedge \bc{E\vee F_2}.
       \end{equation}

        The collection of Bousfield classes has an upper bound $\bc{\bb S}$ and a lower bound $\bc{0}$, and for every spectrum $E$ the following relations are satisfied: $$\bc{E} \vee \bc{0}=\bc{E}, \;\; \bc{E}\wedge \bc{0}=\bc{0}, \;\; \bc{E}\vee \bc{\bb S}=\bc{\bb S}, \;\; \bc{E} \wedge \bc{\bb S}=\bc{E}.$$

        We finally note that the join and meet operations are compatible with the partial order $\leq$. More explicitly: given spectra $E_i$ and $F_i$ for $i=1,2$ with the property that $\bc {E_1} \leq \bc{E_2}$ and $\bc{F_1} \leq \bc{F_2}$, then 
        \[\bc{E_1\vee F_1}\leq \bc{E_1\vee F_2} \leq \bc{E_2\vee F_2}\]
        and
        \[\bc{E_1\wedge F_1}\leq \bc{E_1\wedge F_2} \leq \bc{E_2\wedge F_2}.\] 
       
        \subsubsection{}
        Following \cite{MR543337} we denote by $\cal{DL}(S)$ the subclass of $\cal A(S)$ of those Bousfield classes satisfying the second idempotence law \eqref{eqn:idemp_of_smash}. The operations of meet and join restrict to $\cal{DL}(S)$ and the observations above imply that both the absorption laws and the distributive laws hold true in $\cal{DL}(S)$. This collection satisfies the axioms of a distributive lattice. The partial ordering $\leq$ on $\cal A(S)$ restricts to a partial ordering on $\cal{DL}(S)$. We wish to observe that for given $\bc E,\bc F \in \cal{DL}(S)$ their meet $\bc{E\wedge F}$ is actually their greatest upper bound. Most of the spectra we will consider later actually belong to this subclass: for instance, every homotopy ring spectrum $E$ belongs to $\cal {DL}(S)$, since $E$ is a retract of $E\wedge E$.

        \subsubsection{}
        We say that a Bousfield class $\bc{E} \in \cal A(S)$ has a complement if there is another Bousfield class $\bc{F}$ such that $\bc{E}\wedge\bc{F}=\bc{0}$ and $\bc{E}\vee \bc{F}=\bc{\bb S}$. It follows from the first distributive law \eqref{eqn:wedge_dist_over_vee} that if $\bc{E}$ has a complement then such complement is unique, and we denote it by $\bc{E}^c$. The same law implies that when $\bc E$ has a complement then $\bc{E} \in \cal{DL}(S)$. We denote by $\cal{BA}(S)$ the sublattice of $\cal{DL}(S)$ of those Bousfield classes admitting a complement. We do not expect every element of $\cal{DL}(S)$ to have a complement since the topological analogue of this statement has a counterexample, see Lemma 2.7 of \cite{MR543337}. Assume that both $\bc E, \bc F$ have complements, then the following equalities are satisfied:
        \begin{equation}
          \label{eqn:bc_complements_formulas}
          \bc E^{cc}=\bc E, \;\;\; \bc{E\vee F}^c=\bc{E}^c\wedge \bc{F}^c, \;\;\; \bc{E\wedge F}^c=\bc{E}^c\vee \bc{F}^c.
        \end{equation}
             


\section{Moore spectra} 
  \label{sec:moore_spectra}

  In algebraic topology given an abelian group $A$ one can construct a Moore spectrum $\bb S A$ associated to $A$. More precisely $\bb S A$ is a $(-1)$-connective spectrum with the property that all but its $0$-th singular homology groups vanish and whose $0$-th homotopy group is isomorphic to $A$. These three properties characterize $\bb S A$ up to non-canonical isomorphism. Furthermore one can consider the Bousfield class $\bc{\bb S A}$ and it turns out that $\bc{\bb S A}$ depends quite loosely on $A$. 
  
  More precisely we say that two abelian groups $A$  and $B$ have the same \emph{acyclicity type} if they satisfy the following requirements:
  \begin{enumerate}
    \item $A$ and $B$ are either both torsion groups or both non-torsion groups;
    \item for every prime number $l$, $A$ is uniquely $l$-divisible exactly when $B$ is uniquely $l$-divisible.
  \end{enumerate}
  In Proposition $2.3$ of \cite{MR551009} Bousfield shows that $\bc{\bb S A}=\bc{\bb S B} \in \cal{A}$ if and only if $A$ has the same acyclicity class of $B$.
  
  Let us turn to algebraic geometry and fix a perfect field $K$. In this section we introduce a weak algebraic version of Moore spectra and we give an explicit form to the localization functor they define. Our construction goes as follows. Starting with a homotopy module $\cal R_\ast \in \Pi_\ast(K)$ of a very special form, we construct a $(-1)$-connective spectrum $\bb S \cal R_\ast$ with the correct homotopy (and homology) in non-positive degrees: we call it Moore spectrum associated to $\cal R_\ast$. Unfortunately $\bb S \cal R_\ast$ strongly depends on choices that have to be made to perform its construction, and different choices might result in non-isomorphic Moore spectra, so that $\bb S \cal R_\ast$ is not well defined as a symbol. However, once we pass to the associated Bousfield class $\bc{\bb S \cal R_\ast}$, the dependence on the choices disappears, so that $\bc{\bb S \cal R_\ast}$ really only depends on $\cal R_\ast$.

 \subsection{Coning off homotopy elements} 
    \label{sub:coning_off_homotopy_elements}
  
    \begin{defin}
    \label{defin:kill_one_homot_elt}
      Let $f\in \Hom_{\spt_T^{\Sigma}(K)}(\Sigma^{p+q\alpha}\bb S, \bb S)$ be map representing an element $f \in \pi_p(\bb S)_{-q}(K)$. We define the \emph{mod $f$ Moore spectrum} as $C(f):=\hocofib(x: \Sigma^{p+q\alpha}\bb S \ra \bb S )$. Sometimes we use the alternative notation $ M(f):=C(f)$. Let $r>1$ be an integer and for $i=1,\dots, r$ assume that $f_i \in  \Hom_{\spt_T^{\Sigma}(K)}(\Sigma^{p_i+q_i\alpha}\bb S, \bb S)$ is a map representing an element $f_i \in \pi_{p_i}(\bb S)_{-q_i}(K)$. We call \emph{mod $\underline f$ Moore spectrum} the spectrum $C(f_1)\wedge \cdots \wedge C(f_r)$ and sometimes we will use the alternative notation $M(\underline f).$
    \end{defin}
    \begin{rmk}
      \label{rmk:killing_x_commutes_with_smashing}
      Observe that $f: \Sigma^{p+q\alpha} \bb S\ra \bb S$ acts on every spectrum $X\in \cal{SH}(K)$ by setting $f\cdot$ as the composition $\Sigma^{p+q\alpha}X\simeq \Sigma^{p+q\alpha} \bb S \wedge X \overset{f\wedge \rm{id}_X}{\ra} \bb S\wedge X \simeq X$. Moreover the derived smash product commutes with homotopy colimits so that $\hocofib(x\cdot: \Sigma^{p+q\alpha} X\ra X)\simeq C(x)\wedge X$ in $\cal {SH}(K)$. 
    \end{rmk}
    \begin{rmk}
      \label{rmk:diff_wrt_top_moore_spt}
      Let $f_i \in \pi_{0}(\bb S)_{q_i}(K)=K^{MW}_{q_i}(K)$ for $i=1,\dots,r$. Then multiplying by $f_i$ gives an exact sequence of homotopy modules 
      \[ \bigoplus\cal K^{MW}_{\ast - q_i} \overset{\cdot f_i}{\ra} \cal K^{MW}_{\ast} \ra \cal K^{MW}_\ast/(f_1,\dots f_r) \ra 0\]
      and it is tempting to call $M(\underline f)$ the Moore spectrum associated to the homotopy module $$\cal K^{MW}_\ast/(f_1,\dots,f_r).$$ However another choice of generators $g_1,\dots ,g_s$ for the ideal $(f_1,\dots, f_r) \subseteq K^{MW}_\ast(K)$ gives another Moore spectrum $M(\underline g)$ which in general will not be equivalent to $M(\underline f)$. Indeed consider the spectrum $H\cal K^{MW}_\ast \in \cal{SH}(K)$: by design its only non trivial homotopy module is $\underline\pi_0(H\cal K^{MW}_\ast)=\cal K^{MW}_\ast$. Let $f$ be an element of $\pi_{0}(\bb S)_{q}(K)$ and consider the Moore spectra $M(f)$ and $M(f,f)$. From the cofiber sequences 
      \[ H\cal K^{MW}_\ast \overset{\cdot f}{\ra} H\cal K^{MW}_\ast \ra H\cal K^{MW}_\ast\wedge M(f)  \]
      and 
      \[H\cal K^{MW}_\ast\wedge M(f) \overset{\cdot f}{\ra} H\cal K^{MW}_\ast\wedge M(f) \ra H\cal K^{MW}_\ast\wedge M(f,f)\]
      we immediately deduce that $H\cal K^{MW}_\ast\wedge M(f)$ has at most two non-trivial homotopy modules, namely
      \begin{equation*}
          \underline \pi_k ( H\cal K^{MW}_\ast\wedge M(f) )=
          \begin{cases}
            \cal K^{MW}_\ast/(f) & \text{if $k=0$,}\\
            \cal K^{MW}_\ast[f] & \text{if $k=1$, }\\
            0 & \text{else.}
          \end{cases}
        \end{equation*}
      On the other hand for $M(f,f)$ we have that 
      \begin{equation*}
          \underline \pi_k ( H\cal K^{MW}_\ast\wedge M(f,f) )=
          \begin{cases}
            \cal \cal K^{MW}_\ast/(f) & \text{if $k=0$,}\\
            \cal \cal K^{MW}_\ast[f] & \text{if $k=2$, }\\
            0 & \text{if $K\not = 0,1,2$}
          \end{cases}
        \end{equation*}
        and a short exact sequence
        \[ 0\ra \cal K^{MW}_\ast[f] \ra \underline \pi_1(H\cal K ^{MW}_\ast\wedge M(f,f)) \ra \cal K^{MW}_\ast/(f) \ra 0.\]
        In particular, as soon as $f$ is not invertible $M(f)\not\simeq M(f,f)$, hence with our definition we can not really talk about the Moore spectrum associated to $\cal K^{MW}_\ast/(f_1,\dots,f_r)$, but only of the Moore spectrum associated to a family of generators $f_1,\dots,f_r$ of $(f_1,\dots, f_r)$. For the applications we are interested in, $H\cal K^{MW}_\ast=H\underline \pi_0(\bb S)$ plays the role of $H\bb Z \in \cal{SH}$ in algebraic topology. It follows that, according to our definition, $M(f)$ might have non-trivial homology in degrees greater that $0$ which depends on the choice of the $f_i$'s. Nevertheless the two remaining properties of topological Moore spectra still hold in our setting.
      \end{rmk}
      \begin{lemma}
      \label{lemma:Moore_is_conn_and_with_corr_pi0}
        If $f_1,\dots, f_r$ are global sections of $\cal K^{MW}_\ast$ then $M(\underline f)$ is $(-1)$-connective,  
        \[\underline \pi_0 M(\underline f)=\cal K^{MW}_\ast/(f_1,\dots,f_r),\]
        and the canonical map $\bb S \ra M(\underline f)$ induces on $\underline\pi_0$ the quotient map $\cal K^{MW}_\ast \ra \cal K^{MW}_\ast/(f_1,\dots,f_r)$. 
      \end{lemma}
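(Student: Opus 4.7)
The plan is to argue by induction on $r$, using Morel's stable $\bb A^1$-connectivity theorem and the long exact sequences in homotopy sheaves attached to the cofiber sequences defining each $C(f_i)$.

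Each $f_i\in K^{MW}_{q_i}(K)\cong [\Sigma^{-q_i\alpha}\bb S,\bb S]$ fits into a cofiber sequence $\Sigma^{-q_i\alpha}\bb S\xrightarrow{f_i} \bb S \to C(f_i)$ whose first two terms lie in $\cal{SH}(K)_{\geq 0}$, because by construction this subcategory is generated by arbitrary $\bb G_m$-suspensions of $\Sigma^\infty X_+$ in $S^1$-degree zero. Morel's theorem (recalled in \ref{sub:homotopy_t_structure}) identifies membership in $\cal{SH}(K)_{\geq 0}$ with vanishing of $\underline\pi_p$ in negative $S^1$-degree, so the long exact sequence immediately forces $C(f_i)\in \cal{SH}(K)_{\geq 0}$. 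Since $\cal{SH}(K)_{\geq 0}$ is closed under smash products (checked on compact generators, where $p_1+p_2\geq 0$ when both $p_i\geq 0$, and propagated by closure under colimits), the spectrum $M(\underline f)$ is in particular $(-1)$-connective in the sense of the lemma.

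For the identification of $\underline\pi_0$ I induct on $r$. For $r=1$, the right-exact tail of the long exact sequence read in weight $q$ takes the form
\[K^{MW}_{q+q_1}\xrightarrow{\,\cdot f_1\,} K^{MW}_q \longrightarrow \underline\pi_0 C(f_1)_q \longrightarrow 0,\]
the weight shift reflecting $\underline\pi_0(\Sigma^{-q_1\alpha}\bb S)_q=\underline\pi_0(\bb S)_{q+q_1}$. This identifies $\underline\pi_0 C(f_1)$ with $\cal K^{MW}_\ast/(f_1)$ and exhibits $\bb S\to C(f_1)$ as the corresponding quotient. For the inductive step I smash the defining cofiber sequence of $C(f_{r+1})$ with $M(f_1,\dots,f_r)$ (legitimate by \ref{rmk:killing_x_commutes_with_smashing}), obtaining
\[\Sigma^{-q_{r+1}\alpha} M(f_1,\dots,f_r)\xrightarrow{f_{r+1}} M(f_1,\dots,f_r)\longrightarrow M(\underline f),\]
and the same long-exact-sequence argument, applied to the inductive description $\underline\pi_0 M(f_1,\dots,f_r)\cong \cal K^{MW}_\ast/(f_1,\dots,f_r)$, yields $\underline\pi_0 M(\underline f)\cong \cal K^{MW}_\ast/(f_1,\dots,f_{r+1})$. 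Naturality of the cofiber sequences and their long exact sequences identifies the canonical map $\bb S\to M(\underline f)$ with the total quotient projection.

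I expect no serious obstacle: the argument is a routine unwinding of long exact sequences. The only care required is in the weight bookkeeping, since $\Sigma^{-q_i\alpha}$ shifts the weight of $\underline\pi_0$ upward by $q_i$, so that multiplication by $f_i\in K^{MW}_{q_i}$ realizes the expected ideal generator in the standard grading on $K^{MW}_\ast(K)$.
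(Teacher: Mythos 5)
Your proof is correct and is exactly the route the paper intends: the published proof is a one-line pointer to \ref{rmk:diff_wrt_top_moore_spt} saying the argument ``can be made by induction along the lines'' of that remark, which is precisely the long-exact-sequence induction you carried out (with the connectivity of $M(\underline f)$ supplied by Morel's theorem and closure of $\cal{SH}(K)_{\geq 0}$ under smash, as you note). One bookkeeping slip: with the convention $\pi_p(X)_q(U)=[\Sigma^{p-q\alpha}\Sigma^\infty U_+,X]$, one has $\underline\pi_0(\Sigma^{-q_1\alpha}\bb S)_q=\underline\pi_0(\bb S)_{q-q_1}$, not $\underline\pi_0(\bb S)_{q+q_1}$, and the right-exact tail should read $\cal K^{MW}_{q-q_1}\xrightarrow{\cdot f_1}\cal K^{MW}_q\to\underline\pi_0 C(f_1)_q\to 0$, matching the $\cal K^{MW}_{\ast-q_i}\to\cal K^{MW}_\ast$ displayed in \ref{rmk:diff_wrt_top_moore_spt} (multiplication by $f_1\in K^{MW}_{q_1}$ raises the weight by $q_1$, so its source in weight $q$ must be weight $q-q_1$). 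This sign does not affect the identification of the cokernel, so the conclusion stands.
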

      \begin{proof}
        The proof can be made by induction along the lines of \ref{rmk:diff_wrt_top_moore_spt}.
      \end{proof}

    \begin{const}
      \label{const:modx_1_x_n_Moore_sp}
      Let $r$ be a non-negative integer, and for $i\in \{ 1,\dots ,r\}$ let $f_i: \Sigma^{p_i +q_i\alpha} \bb S \ra \bb S$ be maps of symmetric spectra lifting elements $f_i \in \pi_{p_i}(\bb S)_{-q_i}(K)$. We wish to recall the construction of the adic completion of a spectrum at the elements $f_1,\dots,f_r$. We start by assuming $r=1$ and choosing maps $f^n:\Sigma^{n(p+q\alpha)}\bb S \ra \bb S$ representing the powers $f^n \in \pi_{np}(\bb S)_{-nq}(K)$. For every integer $n\geq 1$ we consider the fibre sequence
      \[ \Sigma^{n(p+q\alpha)}\bb S \overset{f^n \cdot}{\ra}\bb S \ra C(f^n);\]
      and chose maps $p_n: C(f^n) \ra C(f^{n-1})$ making the diagram of fibre sequences 
      \begin{equation}
          \label{eqn:mod_x^n_mod_x^n-1_fib_seqs}
          \xymatrix{
            \Sigma ^{n(p+q\alpha)}\bb S \ar[rrr]^{f^{n}\cdot }\ar[d]_{\Sigma^{(n-1)(p+q\alpha)}f \cdot } & & & \bb S \ar[rr] \ar@{=}[d] & & C(f^n) \ar[d]_{p_n} \\
            \Sigma ^{(n-1)(p+q\alpha)} \bb S \ar[rrr]^{f^{(n-1)}\cdot} & & & \bb S \ar[rr] & & C(f^{n-1}).\\ 
          }
        \end{equation}
      to commute in $\cal {SH}(K)$. Note that we can choose equivalences
      \begin{equation}
          \label{eqn:mod_x^n/mod_x^n-1_is_mod_x}
        \begin{split}
          \hofib \Big (C(f^n) \overset{p_n}{\ra} C(f^{n-1}) \Big) & \approx \hocofib \Big(\Sigma ^{n(p+q\alpha)}\bb S \overset{\Sigma^{(n-1)(p+q\alpha)}f \cdot }{\ra} \Sigma ^{(n-1)(p+q\alpha)} \bb S \Big )\\
          & \approx \Sigma ^{(n-1)(p+q\alpha)}C(f)
        \end{split}
        \end{equation}
        and once we make this choice we obtain a fibre sequence
        \begin{equation}
            \label{eqn:mod_x_mod_x^2_mod_x_cof_seq}
            \Sigma^{(n-1)(p+q\alpha)}C(f)\ra C(f^n) \overset{p_n}{\ra} C(f^{n-1}).
          \end{equation}
        The $f$-adic completion $X^{\wedge}_f$ of a spectrum $X$ is thus defined as the homotopy inverse limit
        \begin{equation}
        \label{eqn:def_adic_comp}
          X^{\wedge}_f:=\holim_n\big (\cdots \ra X\wedge C(f^n) \overset{id_X\wedge p_n}{\rra} \cdots \ra X\wedge C(f)\big ).
        \end{equation}
        Since the operations of taking homotopy inverse limits and of smashing with a fixed $X$ commute with finite homotopy limits, the tower of fibre sequences introduced in \eqref{eqn:mod_x^n_mod_x^n-1_fib_seqs} yelds the fibre sequence
        \begin{equation}
        \label{eqn:def_seq_of_adic_comp}
          \holim_n \Sigma ^{n(p+q\alpha)}X \rra X \ra X^{\wedge}_f.     
         \end{equation} 
        The map on the right hand side of \ref{eqn:def_seq_of_adic_comp} is denoted $\chi_f(X)$ and is referred to as the $f$-adic completion map. A spectrum $X$ is called $f$-complete if $\chi_f(X)$ is an isomorphism in $\cal {SH}(K)$. We observe that when $X$ is a connective spectrum and $p>0$ then $X$ is $f$-complete.

        If $r\geq 1$ then for every spectrum $X$ we call $\underline f$-adic completion of $X$ the spectrum 
        \[X^{\wedge}_{\underline f}:=X^{\wedge}_{f_1}{}^{\wedge}_{f_2}{\cdots}^{\wedge}_{f_r}.\] The $\underline f$-adic completion map $\chi_{\underline f}(X)$ is defined as the composition of the natural maps 
        \[\chi_{f_r}(X^{\wedge}_{f_1,\dots,f_{r-1}}) \circ \cdots \circ \chi_{f_2}(X^\wedge_{f_1})\circ\chi_{f_1}(X).\]

        We observe that since homotopy inverse limits commute with finite limits we have a chain of equivalences
        \begin{equation}
            \label{eqn:big_diag_for_coomp_x1n_compl}
            \begin{split}
              {X^{\wedge}_{f_1}} \cdots^{\wedge}_{f_r} & \approx \holim_{j_r\in \bb N_{>0}} \big ( X^{\wedge}_{f_1} \cdots^{\wedge}_{f_{r-1}}\big ) \wedge C(f_r^{j_r}) \\
                & \approx \holim_{j_r\in \bb N_{>0}} \big ( \holim_{j_{r-1}\in \bb N_{>0}} \big (X^{\wedge}_{f_1} \cdots^{\wedge}_{f_{r-2}}\big )\wedge  C(f_{r-1}^{j_{r-1}}) \big ) \wedge  C(f_r^{j_r})\\
                & \approx \holim_{j_r\in \bb N_{>0}} \big ( \cdots \big ( \holim_{j_1\in \bb N_{>0}} X\wedge C(f_1^{j_1}) \big) \wedge \cdots \big )\wedge C(f_r^{j_r})\\
                & \approx \holim_{j_r\in \bb N_{>0}}  \cdots  \holim_{j_1\in \bb N_{>0}} X\wedge C(f_1^{j_1})  \wedge \cdots \wedge C(f_r^{j_r})
            \end{split}
          \end{equation}
          and by a co-finality argument we conclude that 
          \[{X^{\wedge}_{f_1}}\cdots^{\wedge}_{f_r}\simeq \holim_j X\wedge C(f_1^j)\wedge \cdots \wedge C(f_r^j).\]

    \end{const}

    \begin{prop}
      \label{prop:localization_at_mod_x_moore_spectrum}
      Let $f: \Sigma^{p+q\alpha}\bb S \ra \bb S$ be a map of symmetric spectra representing an element $f \in \pi_p(\bb S)_{-q}(K)$. Then for every spectrum $X$ the natural map $\chi_f(X): X\ra X^{\wedge}_f$ presents the $f$-completion of $X$ as the $C(f)$-localization of $X$ in $\cal {SH}(S)$.
      \end{prop}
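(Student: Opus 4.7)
The plan is to verify the two defining properties of the Bousfield $C(f)$-localization separately: that $X^\wedge_f$ is $C(f)$-local, and that $\chi_f(X)$ is a $C(f)$-equivalence. Together with the formalism recalled in \ref{prop:eq_cond_for_ex_triang_loc}, these facts identify $\chi_f(X)$ with the $C(f)$-localization map.

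For the locality of $X^\wedge_f$, let $A$ be a $C(f)$-acyclic spectrum. Smashing the cofibre triangle $\Sigma^{p+q\alpha}\bb S \overset{f}{\ra} \bb S \ra C(f)$ with $A$ shows that $f \wedge \mathrm{id}_A$ is an isomorphism, hence multiplication by $f^n$ acts invertibly on the bigraded group $[A,\Sigma^\ast X]$ for every spectrum $X$ and every $n \geq 1$. Applying $[A,-]$ to the defining triangle $\Sigma^{n(p+q\alpha)}X \overset{f^n\cdot}{\ra} X \ra X \wedge C(f^n)$ therefore yields $[A, X\wedge C(f^n)] = 0$ for every $n$, and replacing $A$ by $\Sigma A$ shows the same for shifts. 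The Milnor short exact sequence associated to the presentation $X^\wedge_f = \holim_n X \wedge C(f^n)$ then gives $[A, X^\wedge_f] = 0$, and $X^\wedge_f$ is $C(f)$-local.

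For the $C(f)$-equivalence statement, the triangle \eqref{eqn:def_seq_of_adic_comp} reduces the problem to showing that $F := \holim_n \Sigma^{n(p+q\alpha)}X$, whose structure maps are the smash products with $f$, is $C(f)$-acyclic. Smashing the cofibre triangle of $f$ with $F$ identifies $F \wedge C(f)$ with the cofibre of $f \wedge \mathrm{id}_F\colon \Sigma^{p+q\alpha}F \ra F$, so it suffices to prove that this map is an isomorphism in $\cal{SH}(S)$. For this, I use that the inclusion $\bb N_{\geq 1} \subseteq \bb N_{\geq 0}$ is cofinal, so the restriction map $F \ra \holim_{n \geq 1}\Sigma^{n(p+q\alpha)}X$ is an isomorphism; reindexing the target by $m = n-1$ rewrites it as $\Sigma^{p+q\alpha}F$, and unwinding the identifications exhibits the resulting composite $F \overset{\sim}{\ra} \Sigma^{p+q\alpha}F$ as the inverse of $f \wedge \mathrm{id}_F$.

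The delicate step of the argument is this last identification: one must check that the cofinality-plus-reindexing isomorphism $F \to \Sigma^{p+q\alpha}F$ really is inverse to the smash-with-$f$ map $f \wedge \mathrm{id}_F$, as opposed to some other self-equivalence of $F$. This amounts to a diagram chase comparing the transition maps of the tower (which are smash products with $f$) with the levelwise action of $f \wedge \mathrm{id}_{X_n}$, and tacitly uses that the suspension $\Sigma^{p+q\alpha}$, being an invertible endofunctor of $\cal{SH}(S)$, commutes with arbitrary homotopy limits. Once this identification is in place, the rest of the proof is formal.
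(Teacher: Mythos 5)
Your proof is correct, but the first half follows a genuinely different route from the paper's. To show that $X^\wedge_f$ is $C(f)$-local, the paper first establishes that $C(f)\wedge F$ is $C(f)$-local for \emph{every} spectrum $F$, using the fact that $C(f)$ is Spanier–Whitehead self-dual (up to a shift), and then runs an induction over the cofibre sequences $\Sigma^{(n-1)(p+q\alpha)}C(f)\wedge X \ra C(f^n)\wedge X \ra C(f^{n-1})\wedge X$ to conclude that each $X\wedge C(f^n)$ is $C(f)$-local, closing with stability of local objects under homotopy inverse limits. Your argument bypasses duality entirely: you test directly against a $C(f)$-acyclic $A$, observe that $f\wedge\mathrm{id}_A$ is invertible so $f$ acts invertibly on $[A,\Sigma^\ast X]$ (via the standard compatibility between the left and right $\pi_{\ast,\ast}(\bb S)$-module structures), deduce $[A,X\wedge C(f^n)]=0$ from the defining triangle, and finish with the $\lim{}^1$ sequence. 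This is more elementary and closer to the raw definition of locality; the paper's version, on the other hand, records the reusable statement ``$C(f)\wedge F$ is always $C(f)$-local,'' which it then invokes again in Proposition~\ref{prop:localization_at_mod_x_1-x_n_moore_spectrum} for the multi-element Moore spectrum. For the second half, showing $\chi_f(X)$ is a $C(f)$-equivalence, your cofinality-and-reindexing argument and the paper's appeal to pro-isomorphism invariance of $\holim$ are the same observation in two dialects: the levelwise action of $f$ on the tower $\{\Sigma^{n(p+q\alpha)}X\}_n$ coincides with the tower's own shift map, hence is an automorphism of the pro-object, hence an isomorphism after $\holim$. One small slip: the tower in \eqref{eqn:def_seq_of_adic_comp} is indexed over $n\geq 1$, so the cofinal inclusion you want is $\bb N_{\geq 2}\subseteq\bb N_{\geq 1}$, not $\bb N_{\geq 1}\subseteq\bb N_{\geq 0}$; this does not affect the argument.
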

      \begin{proof}
        Let $\phi: A\ra B$ be a map in $\cal{SH}(K)$. For every spectrum $F$ we get an induced map
        \begin{equation}
          [C(f)\wedge B,F] \ra [C(f)\wedge A,F],
        \end{equation}
        and since $C(f)$ has a Spanier-Whitehead dual $D(C(f))$, we get a natural map 
        \begin{equation}
        \label{eqn:mapdual}
          [B, D(C(f))\wedge F] \ra [A,D(C(f))\wedge F]
        \end{equation}
        functorially in $F \in \cal{SH}(K)$. $\phi$ is a $C(f)$-equivalence if and only if the map \eqref{eqn:mapdual} is an isomorphism for every $F$. In particular, for every $F$ the spectrum $D(C(f))\wedge F$ is $C(f)$-local. Furthermore it is an immediate check that $D(C(f))\simeq \Sigma ^{-(p+1)-q\alpha}C(f)$ in $\cal{SH}(K)$. Since local objects are stable under suspensions and de-suspensions, we conclude that for every spectrum $F$ the spectrum $C(f)\wedge F$ is $C(f)$-local. 

        Now we are ready to show that $X^{\wedge}_f$ is $C(f)$-local. As recalled above $X^{\wedge}_f$ is defined as an homotopy inverse limit of the tower \eqref{eqn:def_adic_comp} and since $C(f)$-local objects are stable under homotopy inverse limits we only need to know that each of the spectra $X\wedge C(f^n)$ is $C(f)$-local. This easily follows by induction: the base case being that $X\wedge C(f)$ is $C(f)$-local, which was observed above. Assume we know that $X\wedge C(f^{n-1})$ is $C(f)$-local. Using the fibre sequence 
        \begin{equation}
          \label{eqn:C_x^n/C_c^n-1_is_Cx}
          \Sigma^{(n-1)(p+q\alpha)}C(f) \wedge X \ra C(f^{n}) \wedge X \ra C(f^{n-1})\wedge X
        \end{equation}
        deduced from \eqref{eqn:mod_x_mod_x^2_mod_x_cof_seq} and the 2-out-of-3 property of $C(f)$-local objects in fibre sequences we conclude.

        In order to show that the canonical map $X \ra X^{\wedge}_f$ is a $C(f)$-local equivalence it suffices to show that $C(f)\wedge F \simeq 0$ in $\cal {SH}(S)$, where $F:=\hofib(X \ra X^{\wedge}_f)$. For this note that
        \begin{equation}
          \label{eqn:tower_desc_fibre_of_x_completion}
          F\approx \holim \Big ( \cdots \ra   \Sigma^{n(p+q\alpha)} X \overset{\Sigma^{(n-1)(p+q\alpha)}\cdot f}{\rra} \Sigma^{(n-1)(p+q\alpha)} X \ra \cdots \ra \Sigma^{p+q\alpha} X \big)
        \end{equation}
        and that $C(f)\wedge F\approx \hocofib(f\cdot : \Sigma^{p+q\alpha}F \ra F)$. However the multiplication by $f$ on $F$ is induced by the multiplication by $f$ on each component of the tower \eqref{eqn:tower_desc_fibre_of_x_completion}. Since the homotopy limit of a tower only depends on the pro-isomorphism class of the tower defining it (see Appendix \ref{sec:pro_spectra}), and since the multiplication by $f$ is clearly an isomorphism of towers, we conclude that the multiplication by $f$ on $F$ is actually an isomorphism in $\cal {SH}(S)$ and so its homotopy cofibre is zero.
        \end{proof}
        \begin{prop}
          \label{prop:localization_at_mod_x_1-x_n_moore_spectrum}
          Let $r$ be a non-negative integer and for every $i=1, \dots, r$ let $f_i: \Sigma^{p_i+q_i\alpha}\bb S \ra \bb S$ be maps of symmetric spectra representing elements $f_i \in \pi_{p_i}(\bb S)_{-q_i}(K)$. Then for every spectrum $X$ the natural map $\chi_{\underline f}(X): X\ra {X^{\wedge}_{f_1}} \cdots^{\wedge}_{f_r}$ is a $M(\underline f)$-localization of $X$ in $\cal {SH}(S)$.
        \end{prop}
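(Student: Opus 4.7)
The plan is to establish the two defining properties of a Bousfield localization separately: first, that the target $X^{\wedge}_{f_1}\cdots{}^{\wedge}_{f_r}$ is $M(\underline{f})$-local, and second, that the map $\chi_{\underline f}(X)$ is an $M(\underline{f})$-equivalence. The case $r=1$ has been done in Proposition \ref{prop:localization_at_mod_x_moore_spectrum} and suggests that induction on $r$ (combined with the duality argument from that proof) should carry everything through.

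For locality I would first upgrade the duality observation from the case $r=1$: each $C(f_i)$ is strongly dualizable with $D(C(f_i))\simeq \Sigma^{-(p_i+1)-q_i\alpha}C(f_i)$, so $M(\underline f)$ is strongly dualizable and $D(M(\underline f))\simeq \Sigma^{-\sum(p_i+1)-(\sum q_i)\alpha}M(\underline f)$. The adjunction $[A,M(\underline f)\wedge Y]\simeq[M(\underline f)\wedge A,\Sigma^{\star}Y]$ then shows that $M(\underline f)\wedge Y$ is $M(\underline f)$-local for every $Y$. Next, using the fibre sequences \eqref{eqn:mod_x_mod_x^2_mod_x_cof_seq} inductively in each slot, each $C(f_1^{j_1})\wedge\cdots\wedge C(f_r^{j_r})$ lies in the thick ideal generated by $M(\underline f)$, hence each smash product $X\wedge C(f_1^{j_1})\wedge\cdots\wedge C(f_r^{j_r})$ is $M(\underline f)$-local (recalling that $M(\underline f)$-local objects form a thick subcategory by Remark \ref{rmk:prop_of_loc_obj}). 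Finally, the description at the end of Construction \ref{const:modx_1_x_n_Moore_sp} identifies $X^{\wedge}_{f_1}\cdots{}^{\wedge}_{f_r}$ with $\holim_j X\wedge C(f_1^{j})\wedge\cdots\wedge C(f_r^{j})$, and homotopy inverse limits of local objects are local.

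For the equivalence statement I would induct on $r$, using the factorisation
\[
\chi_{\underline f}(X)\;=\;\chi_{f_r}\bigl(X^{\wedge}_{f_1}\cdots{}^{\wedge}_{f_{r-1}}\bigr)\;\circ\;\chi_{(f_1,\dots,f_{r-1})}(X).
\]
By the inductive hypothesis the second factor is an $M(f_1,\dots,f_{r-1})$-equivalence; smashing with $C(f_r)$ then shows it is also an $M(\underline f)=M(f_1,\dots,f_{r-1})\wedge C(f_r)$-equivalence. By Proposition \ref{prop:localization_at_mod_x_moore_spectrum} applied to $Y=X^{\wedge}_{f_1}\cdots{}^{\wedge}_{f_{r-1}}$, the first factor is a $C(f_r)$-equivalence and hence a fortiori an $M(\underline f)$-equivalence. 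The composite is therefore an $M(\underline f)$-equivalence.

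Combining the two steps, $\chi_{\underline f}(X)$ is an $M(\underline f)$-equivalence into an $M(\underline f)$-local target, so by the universal property reviewed in Proposition \ref{prop:eq_cond_for_ex_triang_loc} it presents $X^{\wedge}_{f_1}\cdots{}^{\wedge}_{f_r}$ as the $M(\underline f)$-localization of $X$. I expect the main subtlety to be the locality step: one must be careful that the iterated cofibres $C(f_i^{j_i})$ do not introduce new non-local behaviour, which is precisely what the dualizability of $M(\underline f)$ together with the thick-subcategory stability of local objects is designed to handle. Everything else is a routine reduction to the $r=1$ case.
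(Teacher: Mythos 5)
Your proof is correct and follows essentially the same route as the paper's: establish locality of the iterated completion via dualizability of $M(\underline f)$, the homotopy-limit description from Construction \ref{const:modx_1_x_n_Moore_sp}, and the 2-out-of-3 (thick subcategory) property of local objects; then factor $\chi_{\underline f}(X)$ through the individual $\chi_{f_i}$'s and use that each is a $C(f_i)$-equivalence (hence an $M(\underline f)$-equivalence, since $M(\underline f)$ has $C(f_i)$ as a smash factor). The only cosmetic difference is that you phrase the equivalence step as an induction on $r$ whereas the paper observes directly that each factor in the composite is an $M(\underline f)$-equivalence; the substance is identical.
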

        \begin{proof}
          Let us first set the notation $M(\underline f)=C(f_1)\wedge \cdots \wedge C(f_r)$. The spectrum $M(\underline f)$ has a Spanier-Whitehead dual which is explicitly given by 
          \begin{equation}
            D(M(\underline f)\big) \simeq \Sigma^{-P-Q\alpha}D(M(\underline f)),
          \end{equation}
          where $P=p_1 +\cdots + p_r+r$ and $Q=q_1 +\cdots + q_r$. Hence by running the same argument as in the proof of  \ref{prop:localization_at_mod_x_moore_spectrum} we deduce that the spectrum $F\wedge M(\underline f)$ is $M(\underline f)$-local for every $F\in \cal{SH}(K)$. In order to show that ${X^{\wedge}_{f_1}} \cdots^{\wedge}_{f_n}$ is $M(\underline f)$-local, thanks to the identification \eqref{eqn:big_diag_for_coomp_x1n_compl}
          \[{X^{\wedge}_{f_1}} \cdots^{\wedge}_{f_r} \simeq \holim_{(j_1,\dots,j_r)\in \bb N^{r}}(X\wedge C(f_1^{j_1})\wedge \cdots \wedge C(f_r^{j_r})),\]
          we only need to prove that each of the spectra $X\wedge C(f_1^{n})\wedge \cdots \wedge C(f_r^n)$ is $M(\underline f)$-local. This can be done by induction using iteratively the fibre sequence \eqref{eqn:C_x^n/C_c^n-1_is_Cx} with $f=f_1,f_2,\dots,f_r$ and the fact that $M(\underline f)$-local objects satisfy the $2$-out-of-$3$ property in fibre sequences.

           The natural map 
           \begin{equation}
             X\overset{\chi_{f_1}}{\rra} X^{\wedge}_{f_1} \overset{\chi_{f_2}}{\rra} X^{\wedge}_{f_1}{}^{\wedge}_{f_2}\overset{\chi_{f_3}}{\rra} \cdots \overset{\chi_{f_r}}{\rra} {X^{\wedge}_{f_1}} \cdots^{\wedge}_{f_r} 
           \end{equation}
           is a composition of $M(\underline f)$-equivalences since $\bc{M(\underline f)}\leq \bc{C(f_i)}$ and since $\chi_{f_i}$ is a $C(f_i)$-equivalence for every $i=1,\dots, r$ by \ref{prop:localization_at_mod_x_moore_spectrum}.
        \end{proof}
        
        \begin{lemma}
          \label{lemma:moore_loc_is_well_def}
          Let $I\subseteq K^{MW}_\ast(K)$ be an ideal and assume that it admits two finite sets of generators $\{f_1,\dots,f_r \}\subseteq I$ and $\{g_1,\dots g_s\} \subseteq I$ so that $(f_1,\dots,f_r)=I=(g_1,\dots,g_s)\subseteq K^{MW}_\ast$. Then $\bc{M(\underline f)}=\bc{M(\underline g)}$. 
        \end{lemma}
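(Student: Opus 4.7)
It suffices to prove the sub-claim: for every $h \in I$, $\bc{M(\underline{f}) \wedge C(h)} = \bc{M(\underline{f})}$. Granting this, iteratively adjoining $g_1,\dots,g_s \in I$ to $\{f_1,\dots,f_r\}$ preserves the Bousfield class, yielding $\bc{M(\underline{f})} = \bc{M(\underline{f}) \wedge M(\underline{g})}$. The symmetric iteration starting from $\{g_1,\dots,g_s\}$ and adjoining each $f_i \in I$ gives $\bc{M(\underline{g})} = \bc{M(\underline{f}) \wedge M(\underline{g})}$, and the two together yield the lemma.

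\textbf{Sub-claim via nilpotence.} The inequality $\bc{M(\underline{f}) \wedge C(h)} \leq \bc{M(\underline{f})}$ is the first absorption law \eqref{eqn:vee_absorbes_a_smash}. For the reverse, assume $X \wedge M(\underline{f}) \wedge C(h) \simeq 0$ and set $Y := X \wedge M(\underline{f})$; the hypothesis amounts to multiplication by $h$ being an equivalence on $Y$, so $Y \simeq Y[h^{-1}]$. I will show that $h$ also acts nilpotently on $Y$; then $Y[h^{-1}] \simeq 0$ by the standard telescope argument, hence $Y \simeq 0$.

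\textbf{Double factorization and conclusion.} The key fact is $f_i^2 \cdot 1_{C(f_i)} = 0$ in $\mathcal{SH}(K)$. From the cofiber triangle
\[
\Sigma^{p_i+q_i\alpha} \bb S \xrightarrow{f_i} \bb S \xrightarrow{\iota_i} C(f_i) \xrightarrow{\partial_i} \Sigma^{p_i+q_i\alpha+1}\bb S,
\]
the consecutive composition $\iota_i \circ f_i = 0$ gives $(f_i \cdot 1_{C(f_i)}) \circ \Sigma^{p_i+q_i\alpha}\iota_i = \iota_i \circ f_i = 0$, so by exactness $f_i \cdot 1_{C(f_i)}$ factors as $\tilde{\gamma}_i \circ \Sigma^{p_i+q_i\alpha}\partial_i$ for some $\tilde{\gamma}_i$. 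Rotating the triangle yields $\Sigma f_i \circ \partial_i = 0$, whence $\partial_i \circ (f_i \cdot 1_{C(f_i)}) = \Sigma^{p_i+q_i\alpha+1}f_i \circ \Sigma^{p_i+q_i\alpha}\partial_i = 0$, and $f_i \cdot 1_{C(f_i)}$ also factors as $\iota_i \circ \tilde{\beta}_i$ for some $\tilde{\beta}_i$. Chaining the two factorizations,
\[
f_i^2 \cdot 1_{C(f_i)} = (\tilde{\gamma}_i \circ \Sigma^{p_i+q_i\alpha}\partial_i) \circ \Sigma^{p_i+q_i\alpha}(\iota_i \circ \tilde{\beta}_i) = \tilde{\gamma}_i \circ \Sigma^{p_i+q_i\alpha}(\partial_i \circ \iota_i) \circ \Sigma^{p_i+q_i\alpha}\tilde{\beta}_i = 0,
\]
since $\partial_i \circ \iota_i = 0$. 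Thus $f_i^2 \cdot 1_{M(\underline{f})} = 0$ (the action factors through the $i$th smash factor), so $f_i^2 \cdot 1_Y = 0$ for each $i$. Writing $h = \sum_i a_i f_i$ with $a_i \in K^{MW}_\ast(K)$ and expanding $h^{r+1}$ via the graded-commutative multinomial formula (the actions of the $f_i$ on $Y$ commute up to signs), every monomial contains some $f_i^{k_i}$ with $k_i \geq 2$ by pigeonhole, hence vanishes on $Y$. So $h^{r+1} \cdot 1_Y = 0$, as required. The main subtle point is the double factorization of $f_i \cdot 1_{C(f_i)}$ through both $\iota_i$ and $\partial_i$, which makes available the cancellation $\partial_i \circ \iota_i = 0$ from the cofiber triangle.
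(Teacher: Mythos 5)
Your argument is correct. Its overall shape matches the paper's—reduce to adjoining a single generator $h \in I$ to $\{f_1,\dots,f_r\}$, then exploit that $h$ must act on $Y := X \wedge M(\underline f)$ both invertibly and nilpotently—but you carry out the crucial nilpotence step more explicitly and finish at a different level. The paper works with homotopy modules: it asserts \emph{without proof} that each $f_i$ acts nilpotently on the homotopy modules of $M(\underline f)\wedge X$, deduces the same for $g$, and concludes that all homotopy modules vanish, which implicitly invokes the non-degeneracy of Morel's homotopy $t$-structure to get $M(\underline f)\wedge X = 0$. You instead make explicit the precise nilpotence fact the paper leaves tacit—namely $f^2\cdot 1_{C(f)}=0$, obtained by factoring $f\cdot 1_{C(f)}$ through both $\iota$ and $\partial$ and cancelling via $\partial\circ\iota=0$—and then argue entirely at the spectrum level, using $\epsilon$-graded commutativity of $K^{MW}_\ast(K)$ together with pigeonhole to get $h^{r+1}\cdot 1_Y=0$, closing with the telescope observation $Y\simeq Y[h^{-1}]=0$. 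This route is self-contained and avoids any reliance on the $t$-structure. One minor point to flag: the iterative adjoining of $g_1,\dots,g_s$ needs the sub-claim for the progressively enlarged Moore spectra $M(\underline f)\wedge C(g_1)\wedge\cdots\wedge C(g_{j-1})$, not just for $M(\underline f)$; this is fine either by re-running your argument with the enlarged generating family $\{f_1,\dots,f_r,g_1,\dots,g_{j-1}\}$ (which still generates $I$), or by smashing the single-generator equality $\bc{M(\underline f)\wedge C(g_j)}=\bc{M(\underline f)}$ with $C(g_1)\wedge\cdots\wedge C(g_{j-1})$ using compatibility of the meet with $\leq$ on Bousfield classes.
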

        \begin{proof}
          Note that by symmetry we only need to show that $\bc{M(\underline f)} =\bc{ M(\underline f)\wedge M(\underline g)}$ and by induction we reduce to the case $s=1$. For this consider the cofibre sequence
          \[M(\underline f) \ra M(\underline f) \ra M(\underline f)\wedge M(g). \]
          It is clear that $\bc{M(\underline f)\wedge M(g)} \leq \bc{M(\underline f)}$. To show the converse, take a spectrum $X$ such that $M(\underline f)\wedge M(g)\wedge X=0$. On one hand this condition implies that $g$ acts as an isomorphism on all the homotopy modules of $M(\underline f)\wedge X$. On the other hand we have that each of the $f_i$ acts nilpotently on all such homotopy modules and since $g$ is a linear combination of such $f_i$'s we deduce that $g$ acts nilpotently too. As a consequence all the homotopy modules of $M(\underline f)\wedge X$ must be zero and hence the spectrum has to be zero.
        \end{proof}

        \begin{rmk}
          \label{rmk:Moore_loc_well_def}
          Let $\cal R_\ast$ be a commutative monoid in homotopy modules of the form $\cal K^{MW}_\ast/\cal I$ where $\cal I$ is the unramified ideal generated by a finite set of global sections $\{f_1,\dots, f_r\} \subseteq \cal I(\spec K)\subseteq K^{MW}_\ast$. Lemma \ref{lemma:moore_loc_is_well_def} implies that, although our definition of Moore spectrum $M(\underline f)$ depends on the choice of generators $f_i$, the Bousfield class $\bc{M(\underline f)}$ actually does not. So it makes sense to define $\bc{\bb S\cal R_\ast}$ as $\bc{M(\underline f)}$ for some choice $\{f_1,\dots,f_r\}$ of a finite set of generators of $\cal I(\spec K)$.
        \end{rmk}

  \subsection{A remark on $\eta$-completions}
      
    Let $\eta \in \pi_0(\bb S)_{-1}(\spec K)$ be the algebraic Hopf map introduced in \ref{sub:homotopy_t_structure}. We have proven above in \ref{prop:localization_at_mod_x_moore_spectrum} that for every spectrum $X$ the $\eta$-completion map $\chi_\eta(X):X\ra X^\wedge_\eta$ is the $M(\eta)$-localization map in $\cal{SH}(K)$. We want to bring the discussion on $\eta$-completions a bit further.
    \begin{lemma}
      \label{lem:Xonehalf_is_eta_complete}
      Assume that the base field $K$ is not formally real. Then for every spectrum $X \in \cal {SH}(K)$ the spectrum $X[1/2]$ is $\eta$-complete.
    \end{lemma}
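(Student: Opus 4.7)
My plan is to reduce $\eta$-completeness of $X[1/2]$ to the nullity of multiplication by $\eta$ on $X[1/2]$, and then to deduce this vanishing from Morel's relation $\eta h = 0$ together with the classical fact that $W(K)[1/2] = 0$ for non-formally-real $K$.

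For the first reduction, I would apply Proposition \ref{prop:localization_at_mod_x_moore_spectrum} to $f = \eta$, with $p = 0$ and $q = 1$. This gives the fiber sequence \eqref{eqn:def_seq_of_adic_comp}
\[
\holim_n \Sigma^{n\alpha} Y \ra Y \overset{\chi_\eta(Y)}{\ra} Y^\wedge_\eta,
\]
whose transition maps are multiplication by $\eta$. Hence $Y$ is $\eta$-complete whenever the action of $\eta$ on $Y$ is null-homotopic, since the homotopy limit of a tower with null transition maps vanishes. Specializing to $Y = X[1/2] = X \wedge \bb S[1/2]$, the $\eta$-action on $X[1/2]$ factors through the $\eta$-action on $\bb S[1/2]$, so it suffices to prove $\eta = 0$ in $[\bb S[1/2], \Sigma^{-\alpha}\bb S[1/2]]$. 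By Morel's identification $\underline \pi_0 \bb S \simeq \cal K^{MW}_\ast$ and the coincidence of sheaf and presheaf values at $\spec K$, this group is $K^{MW}_{-1}(K)[1/2]$.

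To show $\eta = 0$ in $K^{MW}_{-1}(K)[1/2]$, I would invoke Morel's relation $\eta h = 0$ in $K^{MW}_\ast(K)$, where $h = 2 + \eta[-1] \in K^{MW}_0(K) = GW(K)$ is the class of the hyperbolic plane $\langle 1 \rangle + \langle -1 \rangle$, and then argue that $h$ is a unit in $GW(K)[1/2]$. Since $K$ is not formally real, Pfister's classical theorem says that the level $s(K)$ is a power of $2$ and $W(K)$ is annihilated by a power of $2$; in particular $W(K)[1/2] = 0$. Inverting $2$ in the short exact sequence $0 \ra \bb Z \cdot h \ra GW(K) \ra W(K) \ra 0$ therefore gives $GW(K)[1/2] = \bb Z[1/2] \cdot h$, and the rank computation $\mathrm{rank}(h) = 2$ yields $\langle 1 \rangle = \tfrac{1}{2} h$ in $GW(K)[1/2]$, exhibiting $h$ as a unit with inverse $\tfrac{1}{2}\langle 1 \rangle$. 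Then $\eta = \eta \cdot h \cdot h^{-1} = 0$ in $K^{MW}_{-1}(K)[1/2]$, completing the proof. The one non-formal ingredient is Pfister's theorem on fields of finite level, which I view as the main (though standard) obstacle.
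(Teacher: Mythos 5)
Your proposal is correct and follows essentially the same route as the paper: reduce to showing that $\eta$ acts as zero on $X[1/2]$, obtain this from the fact that $W(K)$ is annihilated by a power of $2$ when $K$ is not formally real (the paper cites Lam's \emph{Algebraic Theory of Quadratic Forms}, you invoke Pfister -- the same underlying result), together with the Milnor--Witt relation $\eta h = 0$. Your framing via invertibility of $h$ in $GW(K)[1/2]$ is a mild repackaging of the paper's direct computation $2^n\eta = h\omega\eta = 0$, but the content is identical.
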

    \begin{proof}
      It follows from \cite[Ch. 2, Theorem 7.9]{MR770063} that there exists an integer $n$ such that $2^n$ acts as $0$ on the Witt ring of $K$. In particular we deduce that in $GW(K)$ the relation $2^n=h\omega$ holds, where $h$ is the rank $2$ hyperbolic space and $\omega$ is some element of $GW(K)$. It follows that $2^n\eta=h\omega\eta=0$ in $K^{MW}_\ast(K)$. It follows that on $X[1/2]$ the multiplication by $\eta$ is the zero map, which in view of \ref{const:modx_1_x_n_Moore_sp} is enough to conclude. 
    \end{proof}
    \begin{lemma}
      \label{lemma:compact_is_eta_complete}
      Assume that the base field has finite $2$-cohomological dimension. Then every dualizable object of $\cal {SH}(K)$ is $\eta$-complete.
    \end{lemma}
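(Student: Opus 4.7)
The plan is to reduce to the $\eta$-completeness of the sphere spectrum itself, and then to exploit the slice filtration of $\bb S$ together with the vanishing of $\eta$ on motivic Eilenberg--MacLane spectra. Since $X$ is dualizable, the functor $X \wedge -$ admits $D(X) \wedge -$ as a right adjoint and so commutes with arbitrary homotopy limits. Recalling the description of $\eta$-completion from \ref{const:modx_1_x_n_Moore_sp}, this yields
\[
X^{\wedge}_\eta \simeq X \wedge \bb S^{\wedge}_\eta.
\]
Consequently, once $\bb S \to \bb S^\wedge_\eta$ is shown to be an equivalence, smashing with $X$ gives the desired equivalence $X \simeq X^\wedge_\eta$. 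Equivalently, by the fibre sequence \eqref{eqn:def_seq_of_adic_comp}, it suffices to prove that
\[
F := \holim_m \Sigma^{m\alpha} \bb S \simeq 0,
\]
where the transitions in the tower are given by multiplication by $\eta$.

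To establish this, I would appeal to Voevodsky's slice filtration. Recall that the truncations $c^n \bb S$ fit into cofibre sequences $s_n \bb S \to c^n \bb S \to c^{n-1} \bb S$ with $c^0 \bb S \simeq s_0 \bb S \simeq H\bb Z$, and that each slice $s_n \bb S$ is an $H\bb Z$-module by work of Levine and R\"ondigs--\O stv\ae r. Since the image of $\eta \in \cal K^{MW}_{-1}$ under the quotient $\cal K^{MW}_\ast \to \cal K^M_\ast = \underline{\pi}_0 H\bb Z$ is zero, multiplication by $\eta$ vanishes on every $H\bb Z$-module, and in particular on each $s_n \bb S$. A short induction on $n$, using the cofibre sequence together with naturality of multiplication by $\eta$, then shows that $\eta^{n+1}$ acts as zero on $c^n \bb S$: in the inductive step, $\eta$ on $c^n\bb S$ factors through the fibre $\Sigma^{-\alpha}s_n\bb S$ of $\Sigma^{-\alpha}c^n\bb S \to \Sigma^{-\alpha}c^{n-1}\bb S$ (by the inductive hypothesis applied to $c^{n-1}\bb S$), while one further multiplication by $\eta$ kills $\Sigma^{-\alpha}s_n\bb S$ by the $H\bb Z$-module property. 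Consequently, for each fixed $n$, the tower $\{\Sigma^{m\alpha} c^n \bb S\}_m$ with transitions $\eta$ is pro-zero, so its homotopy limit vanishes.

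Lastly, I would invoke Levine's slice-convergence theorem for $\bb S$, which under the hypothesis $\mathrm{cd}_2(K) < \infty$ (possibly combined with Lemma \ref{lem:Xonehalf_is_eta_complete} to dispatch the odd-prime part) provides the equivalence $\bb S \simeq \holim_n c^n \bb S$. Since $\Sigma^{m\alpha}$ is smashing with the dualizable object $\bb G_m^{\wedge m}$ and hence preserves homotopy limits, and since homotopy limits commute with each other,
\[
F \simeq \holim_m \holim_n \Sigma^{m\alpha} c^n \bb S \simeq \holim_n \holim_m \Sigma^{m\alpha} c^n \bb S \simeq 0,
\]
completing the proof. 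I expect the main obstacle to be the invocation of slice convergence under only $\mathrm{cd}_2$-finiteness: handling this rigorously may force a separate analysis of the $2$-local and $1/2$-localized parts, with the latter handled via Lemma~\ref{lem:Xonehalf_is_eta_complete} since $\mathrm{cd}_2(K) < \infty$ implies $K$ is not formally real.
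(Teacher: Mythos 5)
Your proof opens with exactly the same reduction the paper uses: dualizability of $X$ lets you commute $X\wedge -$ past the homotopy inverse limit, so $X^\wedge_\eta \simeq X\wedge \bb S^\wedge_\eta$, and the problem reduces to showing $\bb S$ is $\eta$-complete. After that the two routes diverge. The paper fractures $\bb S$ at the prime $2$ via Proposition \ref{prop:fracture_square_for_mod_eta_cohomologies}, handles the two $1/2$-inverted corners of the square with Lemma \ref{lem:Xonehalf_is_eta_complete} (using that $\mathrm{cd}_2(K)<\infty$ forces $K$ not formally real), and cites Hu--Kriz--Ormsby (Proposition 4 and Lemma 21 of \cite{MR2811716}) for the $\eta$-completeness of $\bb S^\wedge_2$. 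You instead propose a direct slice-filtration argument: $\eta$ acts by zero on the $H\bb Z$-modules $s_n\bb S$, hence $\eta^{n+1}=0$ on $c^n\bb S$, hence $\holim_m\Sigma^{m\alpha}c^n\bb S=0$ for each $n$, and exchanging the two homotopy limits (granting $\bb S\simeq\holim_n c^n\bb S$) kills the fibre $F$ of $\bb S\to\bb S^\wedge_\eta$. The $\eta$-nilpotence mechanism on slice truncations is correct and is, morally, the engine behind the HKO result the paper cites, so your argument can be read as unpacking that citation.

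The gap, which you yourself flag, is the slice-convergence input $\bb S\simeq\holim_n c^n\bb S$. Under only $\mathrm{cd}_2(K)<\infty$ this is not available: Levine's slice-convergence theorems require finite cohomological dimension at all relevant primes, and the version quoted elsewhere in this paper (\cite{RoSpOst}, Theorem 3.5, reproduced in \ref{ssub:ex_of_loc_HZ}) further assumes characteristic zero and a finite-type cell structure, and only delivers $X^\wedge_\eta\simeq X_{sc}$ rather than $X\simeq X_{sc}$ --- taking that statement at face value would make your argument circular, since $\bb S\simeq\bb S_{sc}$ is then equivalent to the claim being proved. Your fallback plan (fracture at $2$, dispatch the $1/2$-inverted pieces via Lemma \ref{lem:Xonehalf_is_eta_complete}, and run the slice argument only on $\bb S^\wedge_2$) is the right move and reproduces the paper's structure exactly; but what you then need, namely slice convergence of the $2$-completed sphere under $\mathrm{cd}_2(K)<\infty$, is precisely the nontrivial content that the HKO citation supplies. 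So the route is illuminating and correct in outline, but it does not avoid the one external input the paper relies on; you would either cite HKO as the paper does, or reprove the $2$-complete slice convergence from scratch.
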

    \begin{proof}
      If $C$ be a dualizable object of $\cal{SH}(K)$, the operation of smashing with $C$ commutes to homotopy inverse limits. In particular $C^\wedge_\eta \simeq C\wedge \bb S^\wedge_\eta$ and thus we reduce to show that $\bb S$ is $\eta$-complete. In view of \ref{prop:fracture_square_for_mod_eta_cohomologies} we just need to show that the spectra $\bb S[1/2]$, $(\bb S^\wedge _2)[1/2]$ and $\bb S^\wedge_2$ are $\eta$-complete. For the two former spectra the previous claim follows from \ref{lem:Xonehalf_is_eta_complete}, while for $\bb S^\wedge_2$  the claim follows from the combination of Proposition 4 and Lemma 21 of \cite{MR2811716}.
    \end{proof}

 \subsection{Inverting homotopy elements} 
   \label{sub:inverting_homotopy_elements}
   
    \begin{defin}
      \label{defin:inverting_one_hom_element}
     Let $x: \Sigma^{p+q\alpha}\bb S \ra \bb S$ be a map of symmetric spectra representing an element $\pi_q(\bb S)_{-q}(K)$. The \emph{$x$-inverted Moore spectrum}, which we denote by $\bb S[x^{-1}]$ is the spectrum
     \[\xymatrix{
       \hocolim \big( \bb S \ar[rr]^{\cdot \Sigma^{-p,-q\alpha}x} & &  \Sigma^{-p,-q\alpha} \bb S \ar[rr]^{\cdot \Sigma^{-2p,- 2q\alpha}x} & & \Sigma^{-2p,-2q\alpha} \bb S \ar[rr]^{\cdot \Sigma^{-3p,-3q\alpha}x} & & \cdots \big)
      }.
      \]
      We denote the canonical map $\bb S \ra \bb S[x^{-1}]$ by $\iota_x(\bb S)$.
      \end{defin}
  
   \begin{rmk}
     \label{rmk:inv_one_elt_moore_is_ring_sp}
      Let $X$ be a spectrum. Recall that $x$ acts on $X$, as we observed in Remark \ref{rmk:killing_x_commutes_with_smashing}. Let us define the spectrum $X[x^{-1}]$ as 
      \[\xymatrix{
        \hocolim \big( X \ar[rr]^{\cdot \Sigma^{-p,-q\alpha}x} & &  \Sigma^{-p,-q\alpha} X \ar[rr]^{\cdot \Sigma^{-2p,- 2q\alpha}x} & & \Sigma^{-2p,-2q\alpha} X \ar[rr]^{\cdot \Sigma^{-3p,-3q\alpha}x} & & \cdots \big)
      }
      \]
      and call $\iota_x(X)$ the natural map $X \ra X[x^{-1}]$. Since the derived smash product commutes with homotopy colimits we have a canonical identification $X[x^{-1}]\simeq X\wedge \bb S[x^{-1}]$ in $\cal {SH}(K)$ under which the diagram
      \begin{equation}
        \label{eqn:X_inv_vs_XwedgeS_inv}
      \xymatrix{
      X \ar[d]^{\simeq} \ar[rr]^{\iota_x(X)} & &  X[x^{-1}]\ar[d]^{\simeq}\\
      X\wedge \bb S \ar[rr]^{id_X\wedge \iota_x(X)} & & X\wedge \bb S[x^{-1}]\\
      }
      \end{equation}
      commutes.
   \end{rmk}
   \begin{rmk}
     \label{rmk:sym_ring_sp_model}
      Assume that $R$ is an associative (resp. commutative) monoid in $\spt_T^{\Sigma}(K)$. We believe that it should be possible to construct $R[x^{-1}]$ as an associative (resp. commutative) monoid in $\spt_T^{\Sigma}(K)$ too. Similarly it should be possible to produce the map $\iota_x(R): X\ra X[x^{-1}]$ as a map of monoids in $\spt_T^{\Sigma}(K)$. However this is probably technical and in any case not needed for our purposes.
   \end{rmk}
   
   \begin{const} 
     \label{constr:inv_many_hom_elt}
     Let $n$ be a natural number and let us assume that we are given, for every $j=0,\dots, n$, an element $s_j \in \Hom_{\spt_T^{\Sigma}(S)}(\Sigma ^{p_j+q_j\alpha}\bb S,\bb S)$, where $p_j$ and $q_j$ are suitable integers. We denote by $S$ the ordered set of the $s_j$'s for $j=0,\dots, n$ with the order induced by the standard ordering $\{ 0 \leq 1 \leq \cdots \leq n\}$. We thus define for any given spectrum $X$ the spectrum
      \[ X[S^{-1}]:=X[s_0^{-1}]\cdots[s_n^{-1}].\]

      Note that an inductive application of remark \ref{rmk:inv_one_elt_moore_is_ring_sp} shows that we have a canonical equivalence $X[S^{-1}]\overset{\approx}{\ra} X \wedge \bb S[S^{-1}]$. Note that a priori the spectrum $\bb S[S^{-1}]$ might depend on the order chosen on $S$.
      
      Suppose now we are given a countable collection $S$ of elements $ x_i \in \Hom_{\spt_T^{\Sigma}(S)}(\Sigma ^{p_i+q_i\alpha}\bb S,\bb S)$ for $i\in \bb N$. We chose a total ordering on $S$ so that $x_n\leq x_{n+1}$ for every natural number $n$. The sets $S_i=\{x_0,\dots, x_i\}\subseteq \bb S$ have thus an induced total order. The above construction associates to every natural number $j$ a spectrum 
      $$X[S_j^{-1}].$$
      Moreover for every such $j$ we have a canonical map 
      \[
      \xymatrix{
      X[S_j^{-1}] \ar[rrr]^{\iota_{x_{j+1}}(X[S_j^{-1}])} & && X[S_j^{-1}][x_{j+1}^{-1}] \ar@{=}[r] & X[S_{j+1}^{-1}].\\
      }
       \]
      We define the spectrum $X[S^{-1}]$ as the homotopy colimit of the diagram we have just described:
      \[ X[S^{-1}]:=\hocolim_{j \in \bb N} \Big (X[S_0] \ra X[S_1^{-1}] \ra \cdots \ra X[S_{j+1}^{-1}] \ra \cdots\Big).\]

     \end{const}
     Once again the construction apparently depends on the choice of an ordering on the set $S$.

    \begin{prop}
      \label{prop:localization_at_S_inverted_moore_spectrum}

      Let $J$ be a countable set and assume we are given, for every $j\in J$, an element $s_j \in \Hom_{\spt_T^{\Sigma}(S)}(\Sigma ^{p_j+q_j\alpha}\bb S,\bb S)$, where $p_j$ and $q_j$ are suitable integers. Let $X$ be any spectrum and $\iota_S(X): X \ra  X[S^{-1}]$ be the map defined in \ref{defin:inverting_one_hom_element}. Then:
        \begin{enumerate}[label=M.\arabic*]
          \item \label{locstS1} For every integer $p$ there is a canonical isomorphism $\underline \pi_p(X)_\ast[S^{-1}]\simeq \underline \pi_p(X[S^{-1}])_\ast$ making the following diagram to commute;
          \begin{equation*}
            \xymatrix{
              \underline \pi_p(X)_\ast \ar[dr] \ar[rr]^{\underline \pi_p(\iota_S(X))_\ast} & & \underline \pi_p(X[S^{-1}])_\ast \\
                &  \underline \pi_p(X)_\ast [S^{-1}] \ar[ur]^{\simeq} & \\
            }
          \end{equation*}
          \item \label{locstS2} $\bb S[S^{-1}]$ has a natural structure of homotopy commutative ring spectrum with unit $e_{\bb S[S^{-1}]}=\iota_S(\bb S)$;
          \item \label{locstS3} The map $:\iota_S(X): X \ra  X[S^{-1}]$ exhibits $X[S^{-1}]$ as the $\bb S[S^{-1}]$-localization of $X$ in $\cal {SH}(S)$.
          \item \label{locstS4} A spectrum $X \in \cal{SH}(S)$ is $\bb S[S^{-1}]$-local if and only if each element of $S$ acts invertibly on all its homotopy modules $\underline\pi_p(X)_\ast$. If the elements of $S$ act by zero on the homotopy modules of a spectrum $X$, then $X$ is $\bb S[S^{-1}]$-acyclic.
          \end{enumerate}
    \end{prop}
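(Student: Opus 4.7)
The plan is to establish (M.1) first as the main computational input, and then to deduce (M.2)--(M.4) by feeding it into a short module-theoretic argument exploiting the ring structure of $\bb S[S^{-1}]$ together with the standard principle that free modules over a ring spectrum are local.

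For (M.1) I would first use a cofinality argument to rewrite the transfinitely iterated construction of \ref{constr:inv_many_hom_elt} as a single countable sequential homotopy colimit of shifted copies of $X$, whose bonding maps are products of the $x_j$'s. Since the generators $\Sigma^{p+q\alpha}\Sigma^\infty U_+$ of $\cal{SH}(K)$ are compact, the presheaves $U\mapsto [\Sigma^{p-q\alpha}\Sigma^\infty U_+,X[S^{-1}]]$ agree with the filtered colimits of the corresponding presheaves for $X$ under these same bonding maps, i.e. with the algebraic localizations of $\pi_p(X)_q$ at $S$. Nisnevich sheafification is exact and commutes with filtered colimits, so the resulting homotopy module is precisely $\underline \pi_p(X)_*[S^{-1}]$. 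Applied to $X=\bb S[S^{-1}]$ this shows that $S$ already acts invertibly on $\underline\pi_*(\bb S[S^{-1}])$, whence $\iota_S(\bb S[S^{-1}]):\bb S[S^{-1}]\to \bb S[S^{-1}]\wedge \bb S[S^{-1}]$ is a $\underline\pi_*$-isomorphism, hence an equivalence in $\cal{SH}(K)$. A homotopy inverse yields the multiplication $\mu$ for (M.2); unitality, associativity and commutativity coherences all become $\underline\pi_*$-isomorphism statements on spectra where $S$ acts invertibly and are automatic by (M.1).

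For (M.3) and the backward direction of (M.4), I invoke the general fact that if $R$ is a homotopy ring spectrum and $A$ is $R$-acyclic, then every map $f:A\to R\wedge Y$ factors as
\begin{equation*}
A\xrightarrow{e_R\wedge A} R\wedge A\xrightarrow{R\wedge f} R\wedge R\wedge Y\xrightarrow{\mu\wedge Y}R\wedge Y
\end{equation*}
by the right-unit axiom for $R\wedge Y$ as an $R$-module, and hence vanishes since $R\wedge A\simeq 0$. Taking $R=\bb S[S^{-1}]$ this shows that $X[S^{-1}]\simeq X\wedge \bb S[S^{-1}]$ is $\bb S[S^{-1}]$-local. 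That $\iota_S(X)$ is a $\bb S[S^{-1}]$-equivalence follows by smashing with $\bb S[S^{-1}]$ and applying (M.1) to $X\wedge \bb S[S^{-1}]$, on which $S$ acts invertibly. The backward direction of (M.4) is the same argument applied to $Y$ itself: if $S$ acts invertibly on $\underline\pi_*(Y)$ then by (M.1) the map $\iota_S(Y)$ is a $\underline\pi_*$-isomorphism, hence an equivalence, and so $Y\simeq Y\wedge \bb S[S^{-1}]$ is local.

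For the forward direction of (M.4), (M.1) applied to $C(s_i)$ shows that each $C(s_i)$ is $\bb S[S^{-1}]$-acyclic; smashing the cofibre sequence $\Sigma^{p_i+q_i\alpha}\bb S\to\bb S\to C(s_i)$ with the generators $\Sigma^{p+q\alpha}\Sigma^\infty U_+$ and mapping into a local $Y$ then produces long exact sequences in which multiplication by $s_i$ is an isomorphism on every presheaf $\pi_p(Y)_q$, hence on its Nisnevich sheafification $\underline\pi_p(Y)_q$. The final acyclicity claim is immediate from (M.1): if $S$ acts by zero on $\underline\pi_*(X)$ then $\underline\pi_*(X\wedge \bb S[S^{-1}])=0$, so $X$ is $\bb S[S^{-1}]$-acyclic. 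The main technical obstacle is concentrated in (M.1), specifically in the cofinal rewriting of the iterated inversion as a single sequential hocolim and in the careful justification of the interchange of filtered presheaf-level colimits with Nisnevich sheafification at this level of generality; all the other parts are essentially formal consequences once (M.1) is in hand.
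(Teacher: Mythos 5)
Your proposal is correct and follows essentially the same route as the paper's own proof: establish (M.1) by commuting $\underline\pi_p$ and sheafification with the filtered homotopy colimit defining $X[S^{-1}]$, deduce that $\iota_S(\bb S[S^{-1}])$ is an equivalence to produce the multiplication $\mu$, obtain locality of $X\wedge\bb S[S^{-1}]$ from its $\bb S[S^{-1}]$-module structure, and identify $\iota_S(X)$ with $\mathrm{id}_X\wedge\iota_S(\bb S)$ to get the $\bb S[S^{-1}]$-equivalence. The only difference is that you spell out (M.4) in detail where the paper dismisses it as immediate, and you make explicit the cofinality reduction to a single sequential colimit and the factorization lemma showing that $R$-modules are $R$-local, both of which the paper leaves implicit.
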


    \begin{proof}
      \eqref{locstS1} follows by noting that taking homotopy groups and sheafifying commute with filtered homotopy colimits.

      For \eqref{locstS2} note that, since the derived smash product commutes with homotopy colimits, we have a natural isomorphism in $\cal{SH}(K)$  
      \[\epsilon: \bb S[S^{-1}] \wedge \bb S[S^{-1}] \overset{\simeq}{\ra} \bb S[S^{-1}][S^{-1}].\]
      Moreover by the above point the natural map $ \iota_{S}(\bb S[S^{-1}]): \bb S[S^{-1}] \ra \bb S[S^{-1}][S^{-1}]$ induces an isomorphism on homotopy sheaves and so it is an isomorphism. So we have a multiplication map in $\cal {SH}(K)$
      \[ \mu_{\bb S[S^{-1}]} : \bb S[S^{-1}]\wedge \bb S[S^{-1}] \ra \bb S[S^{-1}] \] obtained by composing the first isomorphism with the inverse of $\iota_{S}(\bb S[S^{-1}])$ and it is thus an isomorphism in $\cal{SH}(K)$. The map $\mu_{\bb S[S^{-1}]}$ is right and left unital with respect to $e_{\bb S[S^{-1}]}=\iota_S(\bb S)$ thanks to the commutativity of \eqref{eqn:X_inv_vs_XwedgeS_inv}; moreover $\mu_{\bb S[S^{-1}]}$ makes $\bb S[S^{-1}]$ into an homotopy commutative ring spectrum since homotopy colimits commute with each other.
      
      \eqref{locstS3} For every spectrum $X$, the spectrum $X[S^{-1}]$ is a $\bb S[S^{-1}]$-module and so it is $\bb S[S^{-1}]$-local. The commutativity of \eqref{eqn:X_inv_vs_XwedgeS_inv} implies that we can identify $\iota_S(X)$ with $id_X\wedge \iota_S(\bb S)$, which is an $\bb S[S^{-1}]$-equivalence since the multiplication $\mu_{\bb S[S^{-1}]}$ is an isomorphism in $\cal{SH}(K)$.
      
      \eqref{locstS4} Obvious.

    \end{proof}

    \begin{cor}
    \label{cor:S-inv_Moore_sp_is_well_defined}
    Let $J\subseteq K^{MW}_\ast(K)$ be a multiplicative system and assume that $S_1, S_2\subseteq J$ are two possibly infinite sets of generator of $J$. Then there is a natural isomorphism of homotopy ring spectra $\bb S[S_1^{-1}]\simeq \bb S[S_2^{-1}]$ in $\cal {SH}(K)$. 
    \end{cor}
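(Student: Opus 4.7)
The plan is to deduce the corollary from uniqueness of Bousfield localizations. Both $\iota_{S_1}(\bb S)$ and $\iota_{S_2}(\bb S)$ are, by property \ref{locstS3} of Proposition \ref{prop:localization_at_S_inverted_moore_spectrum}, Bousfield localization maps with respect to $\bc{\bb S[S_1^{-1}]}$ and $\bc{\bb S[S_2^{-1}]}$ respectively. Hence it suffices to check the equality of Bousfield classes $\bc{\bb S[S_1^{-1}]}=\bc{\bb S[S_2^{-1}]}$ in $\cal{A}(K)$: the universal property of Bousfield localization (applied under $\bb S$) will then produce a canonical isomorphism $\phi\colon \bb S[S_1^{-1}] \overset{\simeq}{\ra} \bb S[S_2^{-1}]$ making $\phi\circ \iota_{S_1}(\bb S)=\iota_{S_2}(\bb S)$.

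To prove the equality of Bousfield classes, I would argue entirely on homotopy modules. For any spectrum $X$, the commutativity of diagram \eqref{eqn:X_inv_vs_XwedgeS_inv} yields $X\wedge \bb S[S^{-1}]\simeq X[S^{-1}]$, and property \ref{locstS1} then gives $\underline\pi_p(X\wedge \bb S[S^{-1}])_\ast \simeq \underline\pi_p(X)_\ast[S^{-1}]$ for every $p\in\bb Z$. Consequently $X$ is $\bb S[S^{-1}]$-acyclic if and only if $\underline\pi_p(X)_\ast[S^{-1}]=0$ for every $p$. For a module $M$ over $K^{MW}_\ast(K)$, inverting any set of elements agrees with inverting the multiplicative subset they generate, because a finite product is a unit exactly when each factor is. Since $S_1$ and $S_2$ both generate $J$, the localizations $M[S_1^{-1}]$, $M[S_2^{-1}]$ and $M[J^{-1}]$ coincide canonically; in particular $\underline\pi_p(X)_\ast[S_1^{-1}]=0$ if and only if $\underline\pi_p(X)_\ast[S_2^{-1}]=0$. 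This shows that the classes of $\bb S[S_1^{-1}]$-acyclic and $\bb S[S_2^{-1}]$-acyclic spectra coincide, hence $\bc{\bb S[S_1^{-1}]}=\bc{\bb S[S_2^{-1}]}$.

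Finally, I would verify that the resulting $\phi$ respects the homotopy commutative ring structures defined in property \ref{locstS2}. The multiplication $\mu_{\bb S[S^{-1}]}$ was constructed as the inverse of the self-localization map $\iota_S(\bb S[S^{-1}])$ precomposed with the canonical equivalence $\bb S[S^{-1}]\wedge \bb S[S^{-1}]\simeq \bb S[S^{-1}][S^{-1}]$, with unit $\iota_S(\bb S)$. As both pieces of data are uniquely characterized under $\bb S$ by the universal property of Bousfield localization at the common class, the canonical isomorphism $\phi$ is automatically unital and multiplicative. The main (mild) subtlety is that Construction \ref{constr:inv_many_hom_elt} depends on a choice of ordering of $S$; this is absorbed into property \ref{locstS1}, which is choice-independent, and thus the argument above makes no reference to any such ordering.
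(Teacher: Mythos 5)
Your proposal is correct and follows essentially the same three-step route as the paper: establish $\bc{\bb S[S_1^{-1}]}=\bc{\bb S[S_2^{-1}]}$ via the homotopy-module description of the localization, then invoke the universal property of Bousfield localization together with \ref{locstS3} to produce a canonical isomorphism $\phi$ under $\bb S$, and finally observe that $\phi$ is a ring map by the very construction of $\mu_{\bb S[S^{-1}]}$. The only cosmetic difference is that you detect the equality of Bousfield classes by characterizing the \emph{acyclic} spectra through \ref{locstS1}, whereas the paper's proof appeals to the characterization of \emph{local} spectra in \ref{locstS4}; these are trivially equivalent given the existence of the localization functor.
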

    \begin{proof}
    Point \eqref{locstS4} of Proposition \ref{prop:localization_at_S_inverted_moore_spectrum} implies that $\bc{\bb S[S_1^{-1}]}=\bc{\bb S[S_2^{-1}]}$. Combining with point three of Proposition \ref{prop:localization_at_S_inverted_moore_spectrum} we deduce that there is an isomorphism $\phi: \bb S[S_1^{-1}]\simeq \bb S[S_2^{-1}]$ under which the unit maps $\iota_{S_1}(\bb S)$ and $\iota_{S_2}(\bb S)$ coincide. Finally $\phi$ is a map of homotopy ring spectra by the very definition of the multiplication on $\bb S[S_i^{-1}]$.
    \end{proof}
    \begin{defin}
    \label{defin:mixed_moore_sp}
    Let $\cal R_\ast$ be a homotopy module and assume there exist an unramified ideal $\cal I \subseteq \cal K^{MW}_\ast$ and an unramified multiplicative system $\cal J\subseteq \cal K^{MW}_\ast$ such that the following conditions are verified:
    \begin{enumerate}
      \item $\cal I$ is the unramified ideal generated by a finite set of its global sections $\{f_1,\dots,f_r\} \subseteq \cal I(\spec K)$;
      \item $\cal J$ is the unramified multiplicative system generated by a possibly infinite set of its global sections $S:=\{g_j\}_{j\in J} \subseteq \cal J(\spec K)$.
      \item there is an isomorphism $\phi: \cal R_\ast \simeq (\cal K^{MW}_\ast/\cal I)[\cal J^{-1}]$.
    \end{enumerate}
    We define the Bousfield class
    \[\bc{\bb S \cal R_\ast} :=\bc{M(\underline f)\wedge \bb S[S^{-1}]}.\]
    \end{defin}
    \subsection{} 
    In definition \ref{defin:mixed_moore_sp}, the Bousfield class $\bc{\bb S \cal R_\ast}$ might depend on the choice of a generating set for $\cal I $ and $\cal J$, at least if $\cal J\not =1$ (see \ref{rmk:Moore_loc_well_def}). Since for our main results $\cal J$ will always be empty, this will cause no problem to us.
    \begin{cor}
    \label{cor:Cf_has_a_comp}
    Let $\{f_i\}_{i \in I}$ be a finite set of elements of $K^{MW}_\ast(\spec K)$ and let $S:=\{g_j\}_{j \in J}$ be a possibly infinite set of elements in $K^{MW}_\ast(\spec K)$. Denote by $M$ the spectrum $M(\underline f) \wedge \bb S [S^{-1}]$. Then $\bc{M}$ has a complement.
   \end{cor}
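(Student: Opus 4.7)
The plan is to reduce everything to complements of Bousfield classes associated to a single element and then combine them using the complement formulas \eqref{eqn:bc_complements_formulas}. First I would establish, for any $f:\Sigma^{p+q\alpha}\bb S \ra \bb S$ representing a class in $K^{MW}_\ast(\spec K)$, that $\bc{M(f)}$ and $\bc{\bb S[f^{-1}]}$ are mutually complementary. For the meet, smashing the defining cofibre sequence $\Sigma^{p+q\alpha}\bb S \overset{f}{\ra} \bb S \ra M(f)$ with $\bb S[f^{-1}]$ makes $f$ act as an isomorphism, forcing $M(f)\wedge \bb S[f^{-1}]\simeq 0$. For the join, if a spectrum $X$ satisfies both $X\wedge M(f)=0$ and $X\wedge \bb S[f^{-1}]=0$, the former makes $f$ act invertibly on $X$; hence every transition map in the colimit defining $X[f^{-1}]$ is an isomorphism, and $X\simeq X[f^{-1}]\simeq 0$.

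The analogous statement for the possibly infinite family $S=\{g_j\}_{j\in J}$ must be handled directly, since \eqref{eqn:bc_complements_formulas} only applies to pairs. I claim that $\bc{\bb S[S^{-1}]}$ has complement $\bigvee_{j\in J}\bc{M(g_j)}$. Because the derived smash product distributes over arbitrary wedges, the meet reduces to the single-element case once one knows each $g_j$ acts invertibly on $\bb S[S^{-1}]$, which is Proposition \ref{prop:localization_at_S_inverted_moore_spectrum}\eqref{locstS1}. For the join, if $X$ is simultaneously acyclic for $\bigvee_j M(g_j)$ and for $\bb S[S^{-1}]$, then every $g_j$ acts invertibly on $X$, so each stage of the filtered colimit $X[S_0^{-1}]\ra X[S_1^{-1}]\ra\cdots$ from Construction \ref{constr:inv_many_hom_elt} is an isomorphism; thus $X\simeq X\wedge \bb S[S^{-1}]\simeq 0$.

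With the two partial complements in hand, the full complement is produced by formal manipulation. Since $\bc{M(\underline f)}=\bigwedge_i\bc{M(f_i)}$ by the very definition of $M(\underline f)$, iteration of \eqref{eqn:bc_complements_formulas} yields $\bc{M(\underline f)}^c=\bigvee_{i=1}^r\bc{\bb S[f_i^{-1}]}$. A final application of $\bc{E\wedge F}^c=\bc{E}^c\vee \bc{F}^c$ to $E=M(\underline f)$ and $F=\bb S[S^{-1}]$ then produces
\[ \bc{M}^c = \bigvee_{i=1}^r \bc{\bb S[f_i^{-1}]} \;\vee\; \bigvee_{j\in J} \bc{M(g_j)}. \]
The only non-formal ingredient is the treatment of the infinite family $S$ in the second paragraph; everything else is a mechanical consequence of the single-element case and the lattice structure on Bousfield classes.
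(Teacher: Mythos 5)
Your proof is correct and follows essentially the same route as the paper: establish the single-element complementary pair $(\bc{C(f)},\bc{\bb S[f^{-1}]})$, handle the infinite family $S$ by a direct argument with Proposition \ref{prop:localization_at_S_inverted_moore_spectrum}, and then assemble via the complement formulas \eqref{eqn:bc_complements_formulas}. The only cosmetic difference is that where you conclude $X\simeq X[f^{-1}]\simeq 0$ by inspecting the colimit directly, the paper phrases the same fact as "$X$ is $\bb S[f^{-1}]$-local, hence being also $\bb S[f^{-1}]$-acyclic forces $X=0$."
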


    \begin{proof}
    We first prove that given an element $f\in K^{MW}_\ast(\spec K)$, $\bc{C(f)}$ is the complement of $\bc{\bb S[f^{-1}]}$. Indeed on one hand $C(f)\wedge \bb S[f^{-1}]=0$ since $f$ acts invertibly on $\bb S[f^{-1}]$ by \ref{prop:localization_at_S_inverted_moore_spectrum}\ref{locstS1}. On the other hand if we have a spectrum $X$ such that $X\wedge C(f)=0$ we conclude that $f$ acts invertibly on $X$ so that by \ref{prop:localization_at_S_inverted_moore_spectrum}\ref{locstS4} $X$ is $\bb S[f^{-1}]$-local. If follows that if we also assume that $X$ is $\bb S[f^{-1}]$-acyclic, then $X=0$. This shows that $\bc{C(f)}\vee \bc{\bb S[f^{-1}]}=\bc {\bb S}$.

    In particular, since $M(\underline f)=\wedge _{i \in I} C(f_i)$, the formulas \eqref{eqn:bc_complements_formulas} allow us to conclude that
    \[\bc{M(\underline f)}^c= \bc{\vee_{i\in I} \bb S[f_i^{-1}]}.\]

    We are thus reduced to prove that \[\bc{\bb S[S^{-1}]}^c=\bc{\vee_{j\in J} C(g_j)}.\]
    On one hand $\bb S[S^{-1}] \wedge \big (\vee_j C(g_j)\big)=0$ by Proposition \ref{prop:localization_at_S_inverted_moore_spectrum}\ref{locstS1}. On the other hand if a spectrum $X$ is $\big (\vee _j C(g_j) \big )$-acyclic then each of the $g_j$ acts invertibly on $X$, so that $X$ is $\bb S[S^{-1}]$-local by Proposition \ref{prop:localization_at_S_inverted_moore_spectrum}\ref{locstS4}. Thus, if we also assume that $X$ is $\bb S[S^{-1}]$-acylic we conclude that $X=0$. This implies that $\bc{\bb S[S^{-1}]} \vee \bc{\vee_{j\in J} C(g_j)}=\bc{\bb S}$ and this concludes.
    \end{proof}

    \begin{rmk}
    \label{rmk:Moore_lattice}
    In the discussion carried out around \cite[Proposition 2.13]{MR543337}, Bousfield observes that the assignment $A \mapsto \bb S A$ gives an embedding of the set of acyclicity classes of abelian groups into the set (thanks to Ohkawa's Theorem \cite{MR1712921}) of Bousfield classes $\cal A$. The image is denoted by $\cal{MBA}$ and is strictly smaller than $\cal{BA}$. Note that $\cal{MBA}$ is identified with the set of subsets of 
    \[ \{ \bb S \bb Q, \bb S \bb Z/2, \bb S \bb Z/3, \bb S \bb Z/5, \dots  \};  \]
    In other words $\cal{MBA}$ is in bijection with the set of subsets of $\spec(\bb Z)$. We believe one might try to prove a similar statement using as an input Thornton's computation of the homogeneous spectrum of $K^{MW}_\ast(K)$ \cite{MR3503978}. We do not pursue this problem here.     
    \end{rmk}

\section{Localization at some ring homology theories}
  \label{sec:localizations}
  In this section we prove our main results about homology localizations. 
  We dedicate \ref{sub:fracture_squares} to some technical results and \ref{sub:assumption_A1} to the statement of our main assumption. These are later used along \ref{sub:comparision_of_E_and_pi_0E-loc} and in the proof of Theorem \ref{thm:red_to_moore_spt}, which is the main result of this section. Throughout this section we work over a perfect base field $K$.

  \subsection{Fracture squares} 
    \label{sub:fracture_squares}
    
    \begin{prop}
      \label{prop:fracture_square_for_mod_eta_cohomologies}
      Let $E$ be a spectrum. Let $f$ be an element of $\pi_p(\bb S)_q(\spec K)$ and denote $E\wedge C(f)=:E/f$. Then for every spectrum $X \in \cal{SH}(K)$ we have a canonical homotopy pull-back square
      \begin{equation}
        \label{eqn:frc_square_for_mod_eta_cohomologies}
      \xymatrix{
        X_E \ar[r]^{\lambda^E_{E/f}} \ar[d]^{\lambda ^E_{E[f^{-1}]}} & X_{E/f} \ar[d]^{i} \\
        X_{E[f^{-1}]} \ar[r]^{c} & (X_{E/f})_{E[f^{-1}]}\\
      }
      \end{equation}
      Here $\lambda^E_{E/f}$ is the canonical map through which $\lambda_E(X): X \ra X_E$ factors, given that $\bc{E/f}\leq\bc{E}$. Similarly $\lambda^E_{E[f^{-1}]}$ is induced by the inequality $\bc{E[f^{-1}]} \leq \bc E$. The map $c=\lambda_{E[f^{-1}]}(\lambda_{E/f}(X))$ is the $E[f^{-1}]$-localization of the map $\lambda_{E/f}(X): X \ra X_{E/f}$, and finally $i=\lambda_{E[f^{-1}]}(X_{E/f})$.
    \end{prop}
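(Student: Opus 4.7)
The plan is to exhibit the pullback
\[Y\;:=\;X_{E/f}\times_{(X_{E/f})_{E[f^{-1}]}}^{h} X_{E[f^{-1}]}\]
as the $E$-localization of $X$. The canonical map $X\to Y$ induced by $\lambda_{E/f}(X)$ and $\lambda_{E[f^{-1}]}(X)$ factors uniquely through $X_E$ once $Y$ is $E$-local, and identifying $X_E$ with $Y$ then reduces to showing $X\to Y$ is an $E$-equivalence. As a first step I would establish the Bousfield-class decomposition
\[\bc{E}=\bc{E/f}\vee\bc{E[f^{-1}]},\qquad \bc{E/f}\wedge\bc{E[f^{-1}]}=\bc{0}.\]
Orthogonality is immediate from $E/f\wedge E[f^{-1}]\simeq E\wedge E\wedge C(f)\wedge\bb S[f^{-1}]$ together with $C(f)\wedge\bb S[f^{-1}]\simeq 0$, both of which appear in the proof of \ref{cor:Cf_has_a_comp}. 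For the join, that corollary gives $\bc{\bb S}=\bc{C(f)}\vee\bc{\bb S[f^{-1}]}$; smashing with $E$ and applying the distributivity law \eqref{eqn:wedge_dist_over_vee} yields the decomposition. Since $\bc{E/f},\bc{E[f^{-1}]}\leq\bc{E}$, every corner of the cospan defining $Y$ is $E$-local, hence so is $Y$.

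It remains to show that $X\to Y$ is an $E$-equivalence. Using the exactness of Bousfield localization on $\cal{SH}(K)$ (its kernel is closed under cofibres), the total fibre of the pullback square reads
\[F\;:=\;\hofib(X\to Y)\;\simeq\;\hofib\bigl({}_{E/f}X\to({}_{E/f}X)_{E[f^{-1}]}\bigr)\;\simeq\;{}_{E[f^{-1}]}\bigl({}_{E/f}X\bigr).\]
By the decomposition above, $F\wedge E\simeq 0$ reduces to the pair of vanishings $F\wedge E[f^{-1}]\simeq 0$ and $F\wedge E/f\simeq 0$. The first is built into the construction of $F$ as an $E[f^{-1}]$-acyclic object. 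The second is what I consider the main obstacle of the argument, and for it I would invoke the following key lemma: for every $W\in\cal{SH}(K)$, one has $W_{E[f^{-1}]}\wedge C(f)\simeq 0$. Granting the lemma, smashing the fibre sequence ${}_{E[f^{-1}]}({}_{E/f}X)\to{}_{E/f}X\to({}_{E/f}X)_{E[f^{-1}]}$ with $E/f=E\wedge C(f)$ annihilates both outer terms---the left by definition of ${}_{E/f}X$, the right by the lemma applied to $W={}_{E/f}X$---whence $F\wedge E/f\simeq 0$.

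The lemma itself follows from Spanier-Whitehead duality, essentially along the lines of the proof of \ref{prop:localization_at_mod_x_moore_spectrum}. The identification $D(C(f))\simeq \Sigma^{-(p+1)-q\alpha}C(f)$ gives
\[[A,\;W_{E[f^{-1}]}\wedge C(f)]\;\simeq\;[A\wedge D(C(f)),\;W_{E[f^{-1}]}]\]
for every $A\in\cal{SH}(K)$; since $C(f)\wedge\bb S[f^{-1}]\simeq 0$ the spectrum $A\wedge D(C(f))$ is $E[f^{-1}]$-acyclic, while $W_{E[f^{-1}]}$ is $E[f^{-1}]$-local, so the hom-group vanishes and the lemma is proved.
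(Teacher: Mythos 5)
Your proof is correct and follows essentially the same outline as the paper's: exhibit the pullback as $E$-local, split the problem over the complementary pair $\bc{E/f}$ and $\bc{E[f^{-1}]}$ coming from \ref{cor:Cf_has_a_comp}, and reduce the $E/f$-part to the vanishing $W_{E[f^{-1}]}\wedge C(f)\simeq 0$. The only notable variation is cosmetic: you package the reduction by computing the total fibre $F\simeq{}_{E[f^{-1}]}({}_{E/f}X)$ and prove the key vanishing by Spanier--Whitehead duality (the same trick used in \ref{prop:localization_at_mod_x_moore_spectrum}), whereas the paper argues pointwise via the two projections of the pullback and deduces the vanishing from the fact that $E[f^{-1}]$-local spectra are $\bb S[f^{-1}]$-local, so $f$ acts invertibly on their homotopy modules by \ref{prop:localization_at_S_inverted_moore_spectrum}\ref{locstS4}; both mechanisms are standard and equally valid here.
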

    \begin{proof}
      The proof is completely analogous to the topological case. We start by denoting by $P(X)$ the homotopy pull-back of the diagram
      \begin{equation}
        \label{eqn:part_frc_square_for_mod_eta_cohomologies}
      \xymatrix{
         & X_{E/f} \ar[d]^i \\
        X_{E[f^{-1}]} \ar[r]^c & (X_{E/f})_{E[f^{-1}]}.\\
      }
      \end{equation}
      The first step consists in showing that $P(X)$ is $E$-local. In order to achieve this, note that $E/f=E\wedge C(f)$ and $E[f^{-1}]=E\wedge \bb S[f^{-1}]$ so that all the objects appearing in the diagram \eqref{eqn:part_frc_square_for_mod_eta_cohomologies} are actually $E$-local. Since $E$-local objects are closed under homotopy inverse limits, we conclude that $P(X)$ is itself $E$-local.

      As a second step we consider the natural map $u: X \ra P(X)$ in $\cal{SH}(K)$ which is induced by the localization maps $\lambda_{E/f}(X): X \ra X_{E/f}$ and $\lambda_{E[f^{-1}]}(X): X\ra X_{E[f^{-1}]}$. We aim to prove that $u$ is an $E$-equivalence, and thanks to \ref{cor:Cf_has_a_comp} it suffices to show that $u$ is both an $E/f$-equivalence and a $E[f^{-1}]$-equivalence. In particular, by the 2-out-of-3 property of local equivalences, we reduce to show that
      \begin{itemize}
        \item the natural map $P(X)\overset{\alpha}{\ra} X_{E[f^{-1}]}$ is a $E[f^{-1}]$-equivalence;
        \item the natural map $P(X)\overset{\beta}{\ra} X_{E/f}$ is a $E/f$-equivalence.
      \end{itemize}
      We proceed in order and start with $\alpha$. Since smashing with $E[f^{-1}]$ preserves homotopy (co)fibre sequences and direct sums, it preserves homotopy pull-back squares. We can thus apply it to \ref{eqn:part_frc_square_for_mod_eta_cohomologies} and we need to check that 
      \[ \hofib(P(X) \overset{\alpha}{\ra} X_{E[f^{-1}]}) \wedge E[f^{-1}]=0. \] 
      But this is actually the case since 
      \[ \hofib(\alpha) \wedge E[f^{-1}] \overset{\approx}{\ra} \hofib (i)\wedge E[f^{-1}]\]
      and $i$ is an isomorphism in $E[f^{-1}]$-homology by definition.
      
      For $\beta$ we proceed similarly so we just need to show that $c$ is an $E/f$-equivalence, i.e.~that $c\wedge E/f$ is an equivalence. On one hand, if we look at the source of $c\wedge E/f$, we see that 
      \begin{equation}
        \begin{split}
        X_{E[f^{-1}]} \wedge E/f &\simeq  X_{E[f^{-1}]} \wedge (\hocofib(\Sigma^{1,1}E \overset{f\cdot}{\rra} E))\\            & \simeq X_{E[f^{-1}]} \wedge M(f) \wedge E.
        \end{split}
      \end{equation}
      On the other hand $X_{E[f^{-1}]}$ is $\bb S[f^{-1}]$-local, which by \ref{prop:localization_at_S_inverted_moore_spectrum}\ref{locstS4} is equivalent to the fact that $f$ acts invertibly on its homotopy modules, and this implies that $X_{E[f^{-1}]} \wedge M(f) \simeq 0$ forcing the source of $c\wedge E/f$ to be zero. The same argument can be made for the target of $c\wedge E/f$, so that this map is actually an isomorphism.
    \end{proof}

    \begin{cor}
      \label{cor:E/eta_loc_is_E_cofeta_loc}
      Let $E$ and $X$ be spectra in $\cal{SH}(K)$. Then $X_{E/f}\simeq (X_E)_{C(f)}$.
    \end{cor}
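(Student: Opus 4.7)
The plan is to verify that $(X_E)_{C(f)}$ has the universal properties characterizing $X_{E/f}$: namely that it is $E/f$-local and receives an $E/f$-equivalence from $X$. The identification will then follow from the uniqueness of Bousfield localizations given in \ref{prop:eq_cond_for_ex_triang_loc}.

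The equivalence half is immediate. The composite is
\[
X \xrightarrow{\lambda_E(X)} X_E \xrightarrow{\lambda_{C(f)}(X_E)} (X_E)_{C(f)},
\]
and smashing $\lambda_E(X)$ with $C(f)$ and $\lambda_{C(f)}(X_E)$ with $E$ shows that each factor becomes an isomorphism after smashing with $E/f=E\wedge C(f)$, so the composite is an $E/f$-equivalence.

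For locality, I would use \ref{prop:localization_at_mod_x_moore_spectrum} to rewrite $(X_E)_{C(f)}\simeq (X_E)^{\wedge}_f=\holim_n X_E\wedge C(f^n)$, and then show that $[Z,(X_E)^{\wedge}_f]=0$ for every $E/f$-acyclic spectrum $Z$. The starting observation is that $E/f$-acyclicity of $Z$ says exactly that $C(f)\wedge Z$ is $E$-acyclic, so the $E$-locality of $X_E$ gives $[C(f)\wedge Z, X_E]=0$. Applying Spanier-Whitehead duality for the dualizable spectrum $C(f)$ (with $D(C(f))\simeq \Sigma^{-(p+1)-q\alpha}C(f)$, as recalled inside the proof of \ref{prop:localization_at_mod_x_moore_spectrum}), this translates to $[Z,C(f)\wedge X_E]=0$. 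The same argument applies to $\Sigma^{\pm 1}Z$, which remains $E/f$-acyclic, so the vanishing holds after arbitrary suspension of $Z$ as well. An induction on $n$ using the fibre sequences \eqref{eqn:mod_x_mod_x^2_mod_x_cof_seq} smashed with $X_E$ and the long exact sequence of $[Z,-]$ then propagates this to $[Z,C(f^n)\wedge X_E]=0$ for every $n\geq 1$, and the Milnor $\lim^1$-sequence associated to the homotopy limit defining $(X_E)^{\wedge}_f$ finally yields $[Z,(X_E)^{\wedge}_f]=0$, as required.

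The main technical step is the locality argument; everything else is formal. The essential inputs are the identification of $C(f)$-localization with $f$-adic completion, the strong dualizability of the finite cell spectra $C(f^n)$, and the defining $E$-locality of $X_E$.
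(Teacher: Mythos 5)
Your proposal is correct, and it takes a genuinely different route from the paper's argument. The paper deduces the corollary from the fracture square of Proposition \ref{prop:fracture_square_for_mod_eta_cohomologies}: it forms the commuting square relating $X_E$, $(X_E)_{C(f)}$, $X_{E/f}$ and $(X_{E/f})_{C(f)}$, observes that $\lambda_{C(f)}(X_{E/f})$ is an isomorphism because $\bc{E/f}\leq\bc{C(f)}$, and then reduces the remaining map to the assertion that $c:X_{E[f^{-1}]}\ra (X_{E/f})_{E[f^{-1}]}$ is a $C(f)$-equivalence, which is immediate because both sides are $\bb S[f^{-1}]$-local and hence $C(f)$-acyclic. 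Your argument instead verifies the universal property of $X_{E/f}$ from scratch: the map $X\ra (X_E)_{C(f)}$ is an $E/f$-equivalence by smashing each factor of the composite with the appropriate piece of $E/f$, and the target is $E/f$-local via the identification $(X_E)_{C(f)}\simeq (X_E)^{\wedge}_f=\holim_n X_E\wedge C(f^n)$, Spanier--Whitehead duality for the dualizable spectrum $C(f)$, an induction along the fibre sequences \eqref{eqn:mod_x_mod_x^2_mod_x_cof_seq}, and the Milnor $\lim^1$-sequence. Your proof has the merit of being self-contained and not presupposing the fracture square (only the more elementary Proposition \ref{prop:localization_at_mod_x_moore_spectrum}); the paper's proof has the merit of being shorter once the fracture square is in hand and of identifying explicitly which square becomes a square of isomorphisms. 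One small clarifying remark: in the locality step the duality argument actually requires $E/f$-acyclicity to be stable under $\Sigma^{p+q\alpha}$-type suspensions (not merely $\Sigma^{\pm 1}$), both for passing from $[Z\wedge C(f),X_E]=0$ to $[Z,C(f)\wedge X_E]=0$ and in the induction on $n$; this is of course true since the class of acyclics is closed under all (de)suspensions, and you implicitly acknowledge it by invoking ``arbitrary suspension,'' but it is worth stating precisely.
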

    \begin{proof}
      We start by considering the following commutative diagram in $\cal{SH}(K)$:
      \begin{equation}
        \label{eqn:E/eta_loc_is_E_cofeta_loc}
      \xymatrix{
       X_E \ar[r]^{\lambda_1}\ar[d]^{\lambda_3} & (X_E)_{C(f)}\ar[d]^{\lambda_4}\\
       X_{E/f} \ar[r]^{\lambda_2} & (X_{E/f})_{C(f)}\\
      }
      \end{equation}
      where $\lambda_1=\lambda_{C(f)}(X_E)$, $\lambda_3=\lambda^E_{E/f}$ is the map introduced above, $\lambda_4=\lambda_{C(f)}(\lambda_3)$, and finally $\lambda_2=\lambda_{C(f)}(X_{E/f})$.
      
      Since $\bc{E/f}\leq\bc{C(f)}$, $\lambda_2$ is actually an isomorphism in $\cal{SH}(K)$ and we are left to prove that $\lambda_4$ is too. For this we apply the $C(f)$-localization functor to the square \eqref{eqn:frc_square_for_mod_eta_cohomologies} and we use \ref{prop:fracture_square_for_mod_eta_cohomologies} to reduce the proof to checking that 
      \[X_{E[f^{-1}]} \overset{c}{\ra} (X_{E/f})_{E[f^{-1}]} \] is a $C(f)$-equivalence. Now both the source and target of $c$ are $\bb S[f^{-1}]$-local, being in fact $E[f^{-1}]$-local. Thus, after $C(f)$-localization both source and target of $c$ become zero.
    \end{proof}

    \begin{cor}
      \label{cor:E/x,y-loc_is_E_cofx_cof_y_loc}
      Let $r$ be a positive integer and for every $i=1, \dots, r$ let $f_i\in \underline \pi_{p_i}(\bb S)_{q_i}(\spec K)$. Let $M:=C(f_1)\wedge \cdots \wedge C(f_r)$ be the Moore spectrum associated to the global sections $\{f_1, \dots ,f_r\}$. Then for every pair of spectra $E$ and $X$ 
      \begin{equation}
        X_{E\wedge M} \simeq (\cdots((X_E)_{C(f_1)})_{C(f_2)}\cdots)_{C(f_r)}\approx (X_E)_M.
      \end{equation}
    \end{cor}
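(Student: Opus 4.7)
The proof is a purely formal induction on $r$, with the previous Corollary \ref{cor:E/eta_loc_is_E_cofeta_loc} providing the entire content: that corollary allows us to peel off a single factor $C(f_i)$ from $M$, and the Moore spectrum $M$ is by construction nothing more than the iterated smash product $C(f_1)\wedge\cdots\wedge C(f_r)$. The base case $r=1$ is literally the statement of \ref{cor:E/eta_loc_is_E_cofeta_loc} together with the identification $M=C(f_1)$.

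For the inductive step I would apply \ref{cor:E/eta_loc_is_E_cofeta_loc} twice. To establish the first equivalence, I apply it with the role of $E$ played by $E\wedge C(f_1)\wedge\cdots\wedge C(f_{r-1})$ and the role of $f$ played by $f_r$, which yields
\[
X_{E\wedge M}\;\simeq\;\bigl(X_{E\wedge C(f_1)\wedge\cdots\wedge C(f_{r-1})}\bigr)_{C(f_r)},
\]
after which the inductive hypothesis rewrites the inner localization as the iterated one. To establish the second equivalence $(X_E)_M\approx(\cdots((X_E)_{C(f_1)})\cdots)_{C(f_r)}$, I reapply \ref{cor:E/eta_loc_is_E_cofeta_loc}, this time with $X_E$ playing the role of $X$, the spectrum $C(f_1)\wedge\cdots\wedge C(f_{r-1})$ playing the role of $E$, and $f_r$ playing the role of $f$, and then induct.

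There is no real obstacle here: the only thing to notice is that \ref{cor:E/eta_loc_is_E_cofeta_loc} is stated for \emph{any} pair of spectra $E$ and $X$, so one is free to plug in spectra which are themselves smash products of $C(f_i)$'s (possibly smashed with the original $E$) in either argument, and the iteration goes through with no hypothesis on the $f_i$ beyond what is already assumed.
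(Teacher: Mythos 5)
Your proof is correct and takes the same approach as the paper, whose entire proof is the single line ``Apply inductively Corollary \ref{cor:E/eta_loc_is_E_cofeta_loc}''; you have simply spelled out that induction. Both halves of the inductive step are sound: peeling off $C(f_r)$ from $E\wedge M$ via \ref{cor:E/eta_loc_is_E_cofeta_loc} with $E\wedge C(f_1)\wedge\cdots\wedge C(f_{r-1})$ in the role of $E$, and peeling off $C(f_r)$ from $(X_E)_M$ via the same corollary with $X_E$ in the role of $X$.
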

    
    \begin{proof}
      Apply inductively Corollary \ref{cor:E/eta_loc_is_E_cofeta_loc}.
    \end{proof}

  \subsection{Assumption} 
    \label{sub:assumption_A1}
    $E$ is an object of $\cal{SH}(K)$ satisfying the following properties:
    \begin{enumerate}
      \item $E$ is a homotopy commutative ring spectrum, i.e. a commutative monoid in $\cal{SH}(K)$;
      \item $E$ is $(-1)$-connective for Morel's homotopy $t$-structure;
      \item there is a finite set 
      \[\{f_i\}_{i \in I} \subseteq K^{MW}_\ast,\]
      a possibly infinite set 
      \[\{g_j\}_{j \in J}\subseteq K^{MW}_\ast,\]
      and an isomorphism of homotopy modules
      \[\varphi: \underline \pi_0 E \simeq (\cal K^{MW}_\ast/\cal I)[\cal J^{-1}],\]
      where $\cal I$ is the unramified sheaf of ideals generated by $\{f_i\}_{i \in I}$ and $\cal J$ is the unramified sheaf of multiplicative systems genrated by $\{g_j\}_{j \in J}$.
      \item The square 
      \[
      \xymatrix{
        \underline \pi_0 \bb S \ar[r] \ar[d]^{\simeq}_{\sigma} & \underline \pi_0 E \ar[d]^{\varphi}_{\simeq}\\
        \cal K^{MW}_\ast \ar[r] & (\cal K^{MW}_\ast/\cal I)[\cal J^{-1}]\\
      }
      \] is commutative.
      Here the upper horizontal map is that induced on $\underline \pi_0$ by the unit $e_E:\bb S \ra E$, the lower horizontal map is the canonical map, and $\sigma$ is Morel's isomorphism introduced in \ref{sub:homotopy_t_structure}.
    \end{enumerate}
    

  \subsection{Comparison of $E$- and $\pi_0E$-localization} 
    \label{sub:comparision_of_E_and_pi_0E-loc}
    
    \begin{prop}
      \label{prop:fund_ineq_of_loc}
      Let $E$ be a spectrum satisfying Assumption \ref{sub:assumption_A1} and let $H\underline \pi_0E$ be the Eilenberg-Maclane spectrum associated to the $0$-th homotopy module of $E$. Then:
      \[\bc{H\underline \pi_0E} \; \leq  \; \bc{E} \; \leq \; \bc{\bb S \underline \pi_0E}.\]
      \end{prop}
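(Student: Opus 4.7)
The strategy is to prove the two inequalities separately, each by exploiting the homotopy ring spectrum structure of $E$ together with the explicit form of $\underline\pi_0 E$ provided by Assumption \ref{sub:assumption_A1}. For the first inequality $\bc{H\underline\pi_0 E}\leq \bc E$, the key observation is that $H\underline\pi_0 E$ carries a natural homotopy $E$-module structure. Since $E$ is a homotopy commutative ring spectrum and $(-1)$-connective, the truncation map $E\ra P^0(E)=H\underline\pi_0 E$ is a map of homotopy commutative ring spectra by lax monoidality of $\tau_{\leq 0}$ (essentially the same argument used in \ref{sub:homotopy_t_structure} to endow $H\underline\pi_0\bb S$ with a ring structure). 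This makes $H\underline\pi_0 E$ a homotopy $E$-algebra, and the unit-action composition $H\underline\pi_0 E\overset{e_E\wedge \rm{id}}{\ra} E\wedge H\underline\pi_0 E\ra H\underline\pi_0 E$ equals the identity in $\cal{SH}(K)$, exhibiting $H\underline\pi_0 E$ as a retract of $E\wedge H\underline\pi_0 E$. Given any $E$-acyclic $Z$, the spectrum $H\underline\pi_0 E\wedge Z$ is then a retract of $H\underline\pi_0 E\wedge E\wedge Z=0$, so $Z$ is $H\underline\pi_0 E$-acyclic.

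For the second inequality $\bc E\leq \bc{\bb S\underline\pi_0 E}$, I plan to prove the stronger equality $\bc E=\bc{E\wedge M(\underline f)\wedge \bb S[S^{-1}]}$ and then invoke the general inequality $\bc{E\wedge X}\leq \bc X$ (the first absorption law) to conclude. For the $M(\underline f)$-factor, Assumption \ref{sub:assumption_A1}(4) forces the unit $\bb S\ra E$ to send each $f_i$ to $0$ in $\underline\pi_0 E$, so the self-map of $E$ given by multiplication by $f_i$ equals $\mu\circ(e_E(f_i)\wedge \rm{id}_E)$ and is null-homotopic. The defining cofibre sequence $\Sigma^{q_i\alpha}E\ra E\ra E\wedge C(f_i)$ therefore splits, giving $E\wedge C(f_i)\simeq E\vee \Sigma^{1+q_i\alpha}E$; iterating over $i=1,\dots,r$ presents $E\wedge M(\underline f)$ as a finite wedge of suspensions of $E$, whence $\bc{E\wedge M(\underline f)}=\bc E$. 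For the $\bb S[S^{-1}]$-factor, each $g_j$ becomes invertible in $\underline\pi_0 E$; since every $\underline\pi_n(E)$ is a $\underline\pi_0 E$-module via the ring structure, multiplication by $g_j$ is an isomorphism on each $\underline\pi_n(E)$ and hence an equivalence $\Sigma^{q_j\alpha}E\simeq E$. The filtered homotopy colimit defining $\bb S[S^{-1}]$ in Construction \ref{constr:inv_many_hom_elt} is then a colimit of equivalences, yielding $E\wedge \bb S[S^{-1}]\simeq E$. Combining, $E\wedge M(\underline f)\wedge \bb S[S^{-1}]$ is a finite wedge of suspensions of $E$ and the desired equality $\bc E=\bc{E\wedge \bb S\underline\pi_0 E}$ follows.

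The main subtlety I anticipate is the propagation step in the $\bb S[S^{-1}]$-argument, namely that invertibility of the class $e_E(g_j)$ in $\underline\pi_0 E$ forces multiplication by $g_j$ to be an equivalence on all of $E$, not merely on the heart. This rests on $E$ being a ring spectrum so that the natural $\underline\pi_0\bb S$-action on $\underline\pi_n E$ factors through $\underline\pi_0 E$ via the unit; once this factorization is clean, the conclusion is formal.
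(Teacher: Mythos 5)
Your proof is correct, and on the second inequality it takes a genuinely different route from the paper. For $\bc{H\underline\pi_0 E}\leq\bc E$ your retract argument (via $H\underline\pi_0 E$ being an $E$-module through the truncation map) is essentially the paper's argument presented a little more explicitly. For $\bc E\leq\bc{\bb S\underline\pi_0 E}$, both you and the paper reduce to showing $\bc E=\bc{E\wedge\bb S\underline\pi_0 E}$, but where the paper invokes the complement $\bc{\bb S\underline\pi_0 E}^c=\bc{M}$ from Corollary \ref{cor:Cf_has_a_comp} and the distributivity of $\wedge$ over $\vee$ to write $\bc E=\bc{E\wedge\bb S\underline\pi_0 E}\vee\bc{E\wedge M}$ and then kills $E\wedge M$ by inspecting homotopy modules, you observe the sharper fact that $f_i\cdot$ is already null-homotopic on $E$ (since $e_E(f_i)=0$ in $\pi_0(E)_{q_i}(\spec K)$ by Assumption \ref{sub:assumption_A1}(4) and the multiplication-by-$f_i$ map factors as $\mu_E\circ(e_E(f_i)\wedge\mathrm{id}_E)$), so the cofibre $E\wedge C(f_i)$ splits as $E\vee\Sigma^{1+q_i\alpha}E$; iterating gives $E\wedge M(\underline f)$ as an honest finite wedge of suspensions of $E$, while $E\wedge\bb S[S^{-1}]\simeq E$ since each $g_j$ acts invertibly on every homotopy module. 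Your route bypasses the Bousfield-complement machinery entirely and is more elementary; it also yields a stronger conclusion (an explicit equivalence, not merely an equality of Bousfield classes), though that extra information isn't needed here. The one point worth making explicit is that passing from $e_E(f_i)=0$ in $\underline\pi_0 E(\spec K)$ to the vanishing of the class in $[\Sigma^{q_i\alpha}\bb S,E]$ uses that $\spec K$ is a Nisnevich point, so sheafification does not change the value on the base; over a perfect field this is unproblematic and is the same identification the paper implicitly makes.
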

    
    \begin{proof}
      Since $E$ is a $(-1)$-connected spectrum by Assumption \ref{sub:assumption_A1}, the projection to the Postnikov truncation induces a map $p:E\ra H\underline \pi_0E$. Using the fibre sequence \eqref{eqn:postfibseq}, the multiplication map $\mu_E$ of $E$ is easily seen to descend to a multiplication map $\mu_{H\underline \pi_0E}$. Such a multiplication is homotopy associative and commutative since $\mu_E$ is. The map $\mu_{H\underline \pi_0E}$, together with the composition $p\circ e_E=:e_{H\underline \pi_0 E}$ of the projection $p$ and the unit $e_E: \bb S \ra E$, makes $H\underline \pi_0 E$ into a homotopy commutative ring spectrum. With these definitions $p$ is automatically a ring map. Using the commutative diagram
      \begin{equation}
        \xymatrix{
          H\underline \pi_0 E \wedge H\underline \pi_0 E \ar[rr]^{\mu} & & H\underline \pi_0 E\\
          E\wedge H\underline \pi_0 E \ar[u]^{p\wedge id} & & \bb S\wedge H\underline \pi_0 E \ar[u]^{\simeq}\ar[ull]^{e_{H\underline \pi_0 E}\wedge id}\ar[ll]^{e_E\wedge id},\\
        }
      \end{equation}
      it is immediately clear that if $X$ is $E$-acyclic then $X$ is $H\underline \pi_0 E$-acyclic too. It remains to show that $\bc{E}= \bc{E\wedge \bb S\underline \pi_0 E}$, which directly implies our claim that $\bc{E}\leq \bc{\bb S\underline \pi_0 E}.$

      For this, recall that
       \[\bc{\bb S \underline\pi_0E}=\bc{C(f_1)\wedge \cdots \wedge C(f_n) \wedge \bb S[S^{-1}]}.\] 
      From proposition \ref{cor:Cf_has_a_comp}, the Bousfield class $\bc{\bb S \underline \pi_0E}$ has a complement $\bc{\bb S \underline \pi_0E}^c=\bc{M}$, where 
        \[ M:=\big (\bigvee_{i\in I} \bb S[f_i^{-1}] \big ) \vee \big(\bigvee_{j\in J} C(g_j)\big).\]
      In particular
      \[ \bc{E} = \bc{E}\wedge \bc{(\bb S \underline \pi_0E \vee M)} = \bc{E\wedge \bb S \underline \pi_0E} \vee \bc{E\wedge M}.\]
        We observe that 
        \[
          E\wedge M \simeq \Big ( \bigvee_{i\in I}  E\wedge\bb S[f_i^{-1}] \Big ) \vee \Big( \bigvee_{j\in J} E\wedge C(g_j) \Big).
        \] 
      Since by Assumption \ref{sub:assumption_A1} $E$ is a ring spectrum, the homotopy modules $\underline \pi_kE$ are a modules over $\underline \pi_0 E$. Thus for every $i$, $f_i$ acts by zero $\underline \pi_0 E$. It follows that for every $i=1,\dots,r$ and every $k\in \bb Z$, $f_i$ acts by zero on $\underline \pi_k E$. This implies that
        \[ \bigvee_{i\in I}  E\wedge\bb S[f_i^{-1}] =0.\]
        Similarly, for every $j\in J$ the element $g_j$ acts invertibly on $\underline \pi_0 E$ and hence on each of the homotopy modules $\underline \pi_k E$ for $k \in \bb Z$. We deduce that
        \[\bigvee_{j\in J} E\wedge C(g_j)=0,\]
        and this concludes the proof.
    \end{proof}

    \begin{lemma}
      \label{prop:con_implies_HK^MW-loc}
      Let $X$ be any connective spectrum. Then $X$ is $H\cal K^{MW}_\ast$-local.
    \end{lemma}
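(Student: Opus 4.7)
The plan is to show $[C,X] = 0$ for every $H\cal K^{MW}_\ast$-acyclic spectrum $C$. Using Morel's isomorphism from \ref{sub:homotopy_t_structure} we identify $H\cal K^{MW}_\ast \simeq H\underline \pi_0 \bb S$, and recall that the heart of the homotopy $t$-structure is equivalent both to $\Pi_\ast(K)$ and to the category of discrete $H\underline \pi_0 \bb S$-modules. The first step is to handle the case $X = HM$ for a homotopy module $M \in \Pi_\ast(K)$: since $HM$ carries a canonical $H\underline \pi_0 \bb S$-module structure, the free-forgetful adjunction yields
\[
[C,\, HM]_{\cal{SH}(K)} \simeq [C \wedge H\underline \pi_0 \bb S,\, HM]_{\Mod{H\underline \pi_0 \bb S}},
\]
and the right-hand side vanishes because $C \wedge H\underline \pi_0 \bb S \simeq 0$ by hypothesis. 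Hence every Eilenberg-MacLane spectrum of the heart is $H\underline \pi_0 \bb S$-local.

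Next I would climb up the Postnikov tower. If $X$ is $k$-connective, then for every $n \geq k$ the Postnikov section $P^n(X)$ fits in a fibre sequence
\[
\Sigma^n H\underline \pi_n(X) \ra P^n(X) \ra P^{n-1}(X),
\]
and since the class of $H\underline \pi_0 \bb S$-local objects satisfies the 2-out-of-3 property in fibre sequences (Remark \ref{rmk:prop_of_loc_obj}), an induction on $n-k$ starting from $P^k(X) \simeq \Sigma^k H\underline \pi_k(X)$ shows that each truncation $P^n(X)$ is $H\underline \pi_0 \bb S$-local.

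Finally I would appeal to the convergence of the Postnikov tower over a perfect field to write $X \simeq \holim_n P^n(X)$. Since $H\underline \pi_0 \bb S$-local objects are closed under homotopy limits (Remark \ref{rmk:prop_of_loc_obj}), this forces $X$ itself to be $H\underline \pi_0 \bb S$-local. Equivalently, the Milnor short exact sequence
\[
0 \to \lim{}^1_n [C,\, \Sigma^{-1} P^n(X)] \to [C,\, X] \to \lim_n [C,\, P^n(X)] \to 0
\]
displays both outer terms as zero by the previous step, so that $[C, X] = 0$ directly.

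The main technical obstacle is the convergence statement $X \simeq \holim_n P^n(X)$, i.e.\ the left-completeness of Morel's homotopy $t$-structure; equivalently, the vanishing of $\holim_n P_{n+1}(X)$ where $P_{n+1}(X) \in \cal{SH}(K)_{\geq n+1}$. This is where the hypothesis that the base is a perfect field enters essentially. Once this input is granted, the rest of the argument is a routine combination of the module-theoretic adjunction in the base case with stability of local objects under extensions and homotopy limits.
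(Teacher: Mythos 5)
Your proof is correct and follows essentially the same route as the paper's own (terse) argument: the layers of the Postnikov tower are $H\underline\pi_0\bb S$-modules, hence local; local objects are closed under extensions and homotopy inverse limits of towers; and the tower converges to $X$. Your explicit justifications — the free--forgetful adjunction showing $E$-modules are $E$-local, and the remark that convergence of the Postnikov tower (which over a perfect field is ultimately a consequence of Morel's stable connectivity theorem, since for any compact generator $\Sigma^{p+q\alpha}\Sigma^\infty U_+$ the groups $[\Sigma^{p+q\alpha}\Sigma^\infty U_+, P_{n+1}X]$ vanish once $n\geq p$) is the only non-formal input — are welcome elaborations of points the paper leaves implicit, but the underlying argument is the same.
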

    \begin{proof}
      The statement follows directly by looking at the Postnikov tower, using that $H\cal K^{MW}_\ast$-local objects form a subcategory of $\cal{SH}(K)$ which is closed under homotopy inverse limits of towers, extensions, and containing $H\cal K^{MW}_\ast$-modules. We are also using here that the layers of the Postnikov tower are $H\cal K^{MW}_\ast$-modules.
    \end{proof}

    \begin{lemma}
      \label{lemma:HR/x_vs_H_R/x}
      Let $\cal R$ be an commutative monoid in homotopy modules and let $f \in \cal R_q(K)$ be a global section. Consider in addition the following two spectra:  
      \begin{equation}
        \hocofib \big ( \Sigma^{q\alpha}H\cal R_\ast \ra H \cal R_\ast \big )=: C_{H\cal R_\ast}(f)
      \end{equation}
      and 
      \begin{equation}
        H(\cal R/(f)_\ast).
      \end{equation}
      Then $\bc{C_{H\cal R_\ast}(f)} \leq \bc{H(\cal R/(f)_\ast)}$. 
    \end{lemma}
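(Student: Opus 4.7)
The plan is to combine a short Postnikov analysis with a retract argument. First, I would compute the homotopy modules of $C := C_{H\cal R_\ast}(f)$ from the long exact sequence associated with the defining cofibre sequence
\[
\Sigma^{q\alpha}H\cal R_\ast \xrightarrow{\;\cdot f\;} H\cal R_\ast \longrightarrow C.
\]
Since $\Sigma^{q\alpha}$ only shifts weight, $\underline\pi_p(\Sigma^{q\alpha}H\cal R_\ast) = 0$ for $p\neq 0$, and reading off the long exact sequence shows that $C$ is concentrated in simplicial degrees $0$ and $1$, with $\underline\pi_0 C \simeq \cal R_\ast/(f)$ and $\underline\pi_1 C \simeq \cal R_\ast[f]$, the $f$-torsion homotopy submodule of $\cal R_\ast$.

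Next, using the identification $H: \Pi_\ast(K) \simeq \cal{SH}(K)^{\heartsuit}$ recalled in \ref{sub:homotopy_t_structure}, the Postnikov truncation of $C$ yields a cofibre sequence
\[
\Sigma H(\cal R_\ast[f]) \longrightarrow C \longrightarrow H(\cal R_\ast/(f))
\]
in $\cal{SH}(K)$, with right-hand map the truncation $C\to \tau_{\leq 0}C$. Given any spectrum $X$ with $X\wedge H(\cal R_\ast/(f)) = 0$, smashing with $X$ produces an equivalence $X\wedge C \simeq X\wedge \Sigma H(\cal R_\ast[f])$, and it therefore suffices to show that $X\wedge H(\cal R_\ast[f]) = 0$.

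For this last step I would observe that $\cal R_\ast[f]$ is an $\cal R_\ast$-submodule of $\cal R_\ast$ (since if $fy=0$ and $r\in\cal R_\ast$, then $f(ry) = r(fy) = 0$) on which $f$ acts by zero, so it acquires the structure of an $\cal R_\ast/(f)$-module in $\Pi_\ast(K)$. Transporting along the symmetric monoidal equivalence $H$, the spectrum $H(\cal R_\ast[f])$ becomes a module over $H(\cal R_\ast/(f))$, and in particular it is a retract of $H(\cal R_\ast/(f))\wedge H(\cal R_\ast[f])$ via the unit and the action map. Smashing with $X$ exhibits $X\wedge H(\cal R_\ast[f])$ as a retract of a spectrum equivalent to zero, so it vanishes as required. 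The only non-formal ingredient is the lift of the $\cal R_\ast/(f)$-module structure on $\cal R_\ast[f]$ from $\Pi_\ast(K)$ to an honest module spectrum structure over $H(\cal R_\ast/(f))$; this is supplied by the identification of the heart with $\Pi_\ast(K)$ (equivalently, with discrete $H\underline\pi_0\bb S$-modules) recalled in \ref{sub:homotopy_t_structure}, so it should present no real difficulty.
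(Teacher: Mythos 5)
Your proof is correct and follows essentially the same route as the paper's: compute $\underline\pi_0 C$ and $\underline\pi_1 C$ from the long exact sequence, use the Postnikov fibre sequence $\Sigma H\underline\pi_1(C) \to C \to H\underline\pi_0(C) = H(\cal R_\ast/(f))$, and then observe that since $\underline\pi_1(C)$ is $f$-torsion it is an $\cal R_\ast/(f)$-module, so $H\underline\pi_1(C)\wedge X$ is a retract of $H(\cal R_\ast/(f))\wedge H\underline\pi_1(C)\wedge X$, which vanishes for $H(\cal R_\ast/(f))$-acyclic $X$. Your extra paragraph justifying the $\cal R_\ast/(f)$-module structure on the $f$-torsion submodule and its transport along $H$ just makes explicit a step the paper states without elaboration.
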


    \begin{proof}
      From the exact sequence of homotopy modules
      \begin{equation}
        0 \ra \cal K_{\ast-q} \ra \cal R_{\ast-q}\overset{\cdot f} \ra \cal R_\ast \ra \cal R/(f)_\ast \ra 0
      \end{equation}
      we deduce that the cofibre $C_{H\cal R_\ast}(f)$ of the multiplication by $f$ on $H\cal R_\ast$ has the following homotopy modules:
      \begin{equation}
        \underline \pi_k C_{H\cal R_\ast}(f)=
        \begin{cases}
          \cal R/(f)_\ast & \text{if $k=0$,}\\
          \cal K_\ast & \text{if $k=1$, }\\
          0 & \text{else.}
        \end{cases}
      \end{equation}
      In particular it follows that we have a fibre sequence 
      \begin{equation}
        \Sigma ^1 H\underline \pi_1(C_{H\cal R_\ast}(f)) \ra C_{H\cal R_\ast}(f) \ra H\underline\pi_0 (C_{H\cal R_\ast}(f)).
      \end{equation}
      relating $C_{H\cal R_\ast}(f)$ with its $0$-connected cover and its $\leq 0$ truncation. Observe that $\Sigma ^1 H\underline \pi_1(C_{H\cal R_\ast}(f))$ is a module over $H\underline\pi_0 C_{H\cal R_\ast}(f)=H(\cal R/(f)_\ast)$. More generally for every spectrum $X$, the spectrum $\Sigma ^1 H\underline \pi_1(C_{H\cal R_\ast}(f)) \wedge X$ is as well. In particular one has a commutative diagram in $\cal{SH}(K)$ of the form
      \begin{equation}
        \xymatrix{
        H(\cal R/(f)_\ast)  \wedge \Sigma ^1 H\underline \pi_1(C_{H\cal R_\ast}(f)) \wedge X \ar [r] &  \Sigma ^1 H\underline \pi_1(C_{H\cal R_\ast}(f)) \wedge X \\
        \bb S \wedge \Sigma ^1 H\underline \pi_1(C_{H\cal R_\ast}(f))\wedge X \ar[u] \ar[ur]^{\approx} & \\
        }
      \end{equation}
      from which we immediately conclude.
    \end{proof}

    \begin{cor}
      \label{cor:HK^MW_mod_stuff_vs_H_KMW_mod_stuff}
      Assume that $E$ is a homotopy commutative ring spectrum satisfying assumption \ref{sub:assumption_A1}. Then $\bc{H\cal K^{MW}_\ast\wedge \bb S{\underline\pi_0E_\ast}} \leq \bc{H\underline\pi_0E_\ast}$.
    \end{cor}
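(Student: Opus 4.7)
The plan is to bridge the two Bousfield classes through the sequence of homotopy modules $\cal K^{MW}_\ast,\, \cal K^{MW}_\ast/(f_1),\, \ldots,\, \cal K^{MW}_\ast/\cal I$ obtained by successively coning off the generators of $\cal I$, and then to invert $\cal J$ at the end. By Definition~\ref{defin:mixed_moore_sp} the left hand side is
\[\bc{H\cal K^{MW}_\ast \wedge C(f_1)\wedge \cdots \wedge C(f_r)\wedge \bb S[S^{-1}]}.\]

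I would begin with Lemma~\ref{lemma:HR/x_vs_H_R/x} applied to $\cal R=\cal K^{MW}_\ast$ and $f=f_1$, which, using the identification $H\cal R_\ast\wedge C(f)\simeq C_{H\cal R_\ast}(f)$, gives $\bc{H\cal K^{MW}_\ast\wedge C(f_1)} \leq \bc{H(\cal K^{MW}_\ast/(f_1))}$. Since Bousfield inequalities are preserved under smashing with an arbitrary spectrum, I smash with $C(f_2)\wedge\cdots\wedge C(f_r)\wedge \bb S[S^{-1}]$ and iterate. At stage $i$ the module $\cal K^{MW}_\ast/(f_1,\ldots,f_i)$ is again a commutative monoid in homotopy modules; moreover the action of $\cal K^{MW}_\ast$ on $H(\cal K^{MW}_\ast/(f_1,\ldots,f_i))$ factors through the quotient, so coning off $f_{i+1}$ is the same as coning off its image. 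After $r$ applications this produces
\[\bc{H\cal K^{MW}_\ast \wedge M(\underline f) \wedge \bb S[S^{-1}]} \;\leq\; \bc{H(\cal K^{MW}_\ast/\cal I)\wedge \bb S[S^{-1}]}.\]

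It then remains to identify the right hand side with $\bc{H\underline \pi_0 E}$. By Proposition~\ref{prop:localization_at_S_inverted_moore_spectrum}\ref{locstS1} the homotopy modules of $H(\cal K^{MW}_\ast/\cal I)\wedge \bb S[S^{-1}]$ are the $S$-inverted homotopy modules of $H(\cal K^{MW}_\ast/\cal I)$, hence they vanish for $p\neq 0$ and in degree $0$ recover $(\cal K^{MW}_\ast/\cal I)[\cal J^{-1}]\simeq \underline\pi_0 E$ through the isomorphism of Assumption~\ref{sub:assumption_A1}. Thus the spectrum lies in the heart of the homotopy $t$-structure, and full faithfulness of $H$ forces $H(\cal K^{MW}_\ast/\cal I)\wedge \bb S[S^{-1}]\simeq H\underline \pi_0 E$. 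Combining the two steps finishes the proof.

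I do not expect a serious obstacle: the only delicate verification is at the iterative step, where one must check that each intermediate quotient $\cal K^{MW}_\ast/(f_1,\ldots,f_i)$ is indeed a commutative monoid in homotopy modules (so that Lemma~\ref{lemma:HR/x_vs_H_R/x} applies) and that multiplication by $f_{i+1}$ on its Eilenberg--MacLane spectrum coincides with multiplication by the image section in the quotient. Both points follow from the unramified nature of the ideals involved and from the structure of the $H$-functor on the heart.
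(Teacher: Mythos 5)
Your proposal is correct and follows essentially the same route as the paper: both iterate Lemma~\ref{lemma:HR/x_vs_H_R/x} over the generators $f_1,\dots,f_r$ and handle the multiplicative system by smashing with $\bb S[S^{-1}]$. Your final step, computing the homotopy modules of $H(\cal K^{MW}_\ast/\cal I)\wedge \bb S[S^{-1}]$ to identify it with $H\underline\pi_0 E$, is a slightly more explicit justification of the concluding equality that the paper states without comment.
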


    \begin{proof}
      Assume for the moment that $J=\emptyset$. If $I=\{f_1\}$ is a singleton the statement is just \ref{lemma:HR/x_vs_H_R/x} in the case $\cal R_\ast=\cal K^{MW}_\ast$, so that $\bc{H\cal K^{MW}_\ast\wedge C(f_1)}\leq \bc{H({\cal K^{MW}_\ast/f_1})} $. Since by construction $\bb S \underline \pi_0E_\ast=C(f_1)\wedge \cdots \wedge C(f_r)$ we can proceed by smashing with one $C(f_i)$ at the time. Indeed 
      \[\bc{H\cal K^{MW}_\ast\wedge C(f_1)}\wedge \bc{ C(f_2)} \leq \bc{H({\cal K^{MW}_\ast/f_1})} \wedge \bc{C(f_2)}\]
      so that 
      \[ \bc{H\cal K^{MW}_\ast\wedge C(f_1) \wedge  C(f_2)} \leq \bc{H({\cal K^{MW}_\ast/f_1}) \wedge C(f_2)}\]
      and finally, using \ref{lemma:HR/x_vs_H_R/x} with $\cal R_\ast=\cal K^{MW}_\ast/(f_1)$ and $f=f_2$, we get that
      \[ \bc{H(\cal K^{MW}_\ast/(f_1)) \wedge C(f_2)} \leq \bc{ H(\cal K^{MW}_\ast/(f_1,f_2))}\]
      so we conclude that 
      \[\bc{H\cal K^{MW}_\ast\wedge C(f_1) \wedge  C(f_2)} \leq  \bc{ H(\cal K^{MW}_\ast/(f_1,f_2))}.\]
      In a finite number of iterations we conclude that $\bc{H\cal K^{MW}_\ast\wedge M(\underline f)} \leq \bc{H\underline \pi_0 M(\underline f)_\ast}$ where $M(\underline f)=C(f_1)\wedge \cdots \wedge C(f_r)$. Finally we observe that 
      \[\bc{H\cal K^{MW}_\ast \wedge \bb S\underline \pi_0 E_\ast} = \bc{H\cal K^{MW}_\ast \wedge M(\underline f)\wedge \bb S[S^{-1}]} \leq \bc {H(\underline \pi_0 M(\underline f)_\ast) \wedge \bb S [S^{-1}]} = \bc{H\underline \pi_0 E_\ast }.\]
    \end{proof}

    \begin{thm}
    \label{thm:red_to_moore_spt}
      Let $E$ be a connective homotopy commutative ring spectrum satisfying assumption \ref{sub:assumption_A1} in the special case that $J=\emptyset$. Then for every connective spectrum $X$ we have that
      \[X_{\bb S \underline \pi_0E}\simeq X_{E}.\]
     
    \end{thm}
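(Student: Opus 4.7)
The plan is to prove the stronger claim that $X_{\bb S \underline \pi_0 E}$ is itself $E$-local. Granted this, I observe that the canonical map $X \to X_{\bb S \underline \pi_0 E}$ is an $\bb S \underline \pi_0 E$-equivalence and hence, by Proposition \ref{prop:fund_ineq_of_loc} (which supplies $\bc{E} \leq \bc{\bb S \underline \pi_0 E}$), also an $E$-equivalence; uniqueness of $E$-localization then identifies $X_E$ with $X_{\bb S \underline \pi_0 E}$.

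To establish that $X_{\bb S \underline \pi_0 E}$ is $E$-local I will descend the chain of Bousfield inequalities
$$\bc{H\cal K^{MW}_\ast \wedge \bb S \underline \pi_0 E} \; \leq \; \bc{H \underline \pi_0 E} \; \leq \; \bc{E},$$
where the left inequality is Corollary \ref{cor:HK^MW_mod_stuff_vs_H_KMW_mod_stuff} and the right one is again Proposition \ref{prop:fund_ineq_of_loc}. An inequality $\bc{A} \leq \bc{B}$ forces every $A$-local spectrum to be $B$-local, so it suffices to prove the a priori stronger statement that $X_{\bb S \underline \pi_0 E}$ is $H\cal K^{MW}_\ast \wedge \bb S \underline \pi_0 E$-local.

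This last statement will follow from the fracture-square machinery of \ref{sub:fracture_squares}. In the case $J = \emptyset$ we identify $\bb S \underline \pi_0 E$ with a Moore spectrum $M(\underline f) = C(f_1) \wedge \cdots \wedge C(f_r)$, so Corollary \ref{cor:E/x,y-loc_is_E_cofx_cof_y_loc} (applied with $H\cal K^{MW}_\ast$ playing the role of $E$ and with $M = \bb S \underline \pi_0 E$) produces a canonical equivalence
$$X_{H\cal K^{MW}_\ast \wedge \bb S \underline \pi_0 E} \simeq \big(X_{H\cal K^{MW}_\ast}\big)_{\bb S \underline \pi_0 E}.$$
Here the connectivity hypothesis on $X$ enters decisively: Lemma \ref{prop:con_implies_HK^MW-loc} asserts that a connective spectrum is $H\cal K^{MW}_\ast$-local, so $X_{H\cal K^{MW}_\ast} \simeq X$ and the right-hand side collapses to $X_{\bb S \underline \pi_0 E}$. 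Hence $X_{\bb S \underline \pi_0 E}$ coincides with its own $H\cal K^{MW}_\ast \wedge \bb S \underline \pi_0 E$-localization, which furnishes the desired locality and completes the argument.

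The technical heart of this strategy is already concentrated in Corollary \ref{cor:HK^MW_mod_stuff_vs_H_KMW_mod_stuff}, whose proof smashes in one cofiber $C(f_i)$ at a time and crucially exploits the hypothesis $J = \emptyset$; together with the fracture-square Corollary \ref{cor:E/x,y-loc_is_E_cofx_cof_y_loc}, it is the only non-formal ingredient. After these are in hand, the main obstacle is really just the bookkeeping needed to verify that the four-step reduction above does indeed land inside the class of $E$-local spectra, and everything else is a short chase through the formalism of Bousfield classes and the universal property of localization.
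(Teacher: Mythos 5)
Your proposal is correct and follows essentially the same route as the paper's proof: reduce to showing $X_{\bb S \underline \pi_0 E}$ is $E$-local via Proposition \ref{prop:fund_ineq_of_loc}, then use Lemma \ref{prop:con_implies_HK^MW-loc}, the fracture-square Corollary \ref{cor:E/x,y-loc_is_E_cofx_cof_y_loc}, and the chain of Bousfield inequalities through $\bc{H\cal K^{MW}_\ast \wedge \bb S\underline\pi_0 E}$ and $\bc{H\underline\pi_0 E}$ (Corollary \ref{cor:HK^MW_mod_stuff_vs_H_KMW_mod_stuff}, Proposition \ref{prop:fund_ineq_of_loc}). The only difference is cosmetic ordering: you state the Bousfield-class chain before identifying $X_{\bb S\underline\pi_0 E}$ with $X_{H\cal K^{MW}_\ast \wedge \bb S\underline\pi_0 E}$, whereas the paper makes the identification first and then invokes the chain.
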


    \begin{rmk}
    \label{rmk:}
      For this statement we could hope to have that \[ \bc{\bb S \underline \pi_0E} = \bc{E} \] i.e. that the connectivity of $X$ is not needed for the above theorem to hold. But already in algebraic topology there are counterexamples to such a statement.
    \end{rmk}

    \begin{proof}
      Thanks to proposition \ref{prop:fund_ineq_of_loc} we know that any connective spectrum $X$ the localization map $X \ra X_{\bb S \underline \pi_0E}$ is an $E$-equivalence so we only have to check that $X_{\bb S \underline \pi_0E}$ is $E$-local. Now consider that 
      \[
      X \ra X_{H\cal K^{MW}_\ast}
      \] is an equivalence by proposition \ref{prop:con_implies_HK^MW-loc} so that 
      \[
      X_{\bb S \underline \pi_0E} \ra (X_{H\cal K^{MW}_\ast})_{\bb S \underline \pi_0E}
      \] is an equivalence too.
      In particular, by combining this with the result of corollary \ref{cor:E/eta_loc_is_E_cofeta_loc} we deduce that
      \[
      X_{\bb S \underline \pi_0E} \overset{\approx}{\ra} (X_{H\cal K^{MW}_\ast})_{\bb S \underline \pi_0E}  \approx X_{H\cal K^{MW}\wedge \bb S \underline \pi_0E}.
      \] 
      Finally we apply Proposition \ref{prop:fund_ineq_of_loc} and \ref{lemma:HR/x_vs_H_R/x} to deduce that $X_{H\cal K^{MW}\wedge \bb S \underline \pi_0E}$ is $E$-local and this concludes.
    \end{proof}

    \begin{rmk}
      We believe that the results of this section, together with those of section \ref{sec:moore_spectra}, can be generalized to the derived category of modules over a connective $\bf E_\infty$-ring spectrum $A$. 

      One would start by defining Moore spectra as cofibres of self-maps of $A$, and by comparing the homology localization they define in $D(\Mod{A})$ with the derived formal completion over $A$. Using the same procedure we used above, it should be possible to prove the following. For a connective $A$-module $X$ and for a $(-1)$-connective homotopy commutative $A$-algebra $E$, the Bousfield localization of $X$ at $E$ in $D(\Mod{A})$ is identified canonically with the derived formal completion of $X$ at the Moore spectrum associated to $\pi_0 (E)$. 

    \end{rmk}


\section{Examples and applications} 
  \label{sec:examples}
  In thi section we include some easy corollaries and applications of the results in section \ref{sec:moore_spectra} and \ref{sec:localizations}.
  \subsection{Algebraic Cobordism} 
    \label{ssub:subs_ex_of_loc_mgl}
    Let $MGL$ be the spectrum representing Voevodsky's algebraic cobordism. In \cite{panin_pimenov_rondigs:a_universality_theorem_for_voevodsky_MGL} the authors construct it as a commutative monoid in $\spt_T^{\Sigma}(K)$. Recall that $MGL$ is $(-1)$-connective and that the unit map $e:\bb S \ra MGL$ induces on $\underline \pi_0$ the quotient map $\cal K^{MW}_\ast \ra \cal K^{MW}_\ast/(\eta)=\cal K^{M}_\ast$. Indeed, as shown in \cite[Theorem 3.8]{hoyois:from_algebraic_cobordism_to_motivic_cohomology}, $e$ factors through the projection $\bb S \ra C(\eta)$, and the induced map
    \[e': C(\eta) \ra MGL\] 
    is an isomorphism on $\underline \pi_k $ for every $k\leq 0$. In particular $MGL$ satisfies assumption \ref{sub:assumption_A1}. We conclude that for every connective spectrum $X$, the $MGL$-localization map is canonically identified with the $\eta$-completion map $\chi_\eta(X): X \ra X^\wedge _\eta$.

  \subsection{Motivic Cohomology}
    \label{ssub:ex_of_loc_HZ}
    Let $H\bb Z$ be the spectrum representing Voevodsky's Motivic cohomology with integral coefficients.
    Recall that we have a category of motives $\cal {DM}(K,\bb Z)$ which is related to $\cal {SH}(K)$ by an adjuction
    \begin{equation}
    \label{eqn:add-forget_transfers}
    \bf L\bb Z_{tr}:\cal {SH}(K) \rightleftarrows \cal{DM}(K,\bb Z):\bf R u_{tr}.
    \end{equation}
    The functor $\bf L\bb Z_{tr}(-)$ is induced on spectra by the functor that associates with every smooth $K$-variety $V$ its presheaf with transfers $\bb Z_{tr}(V)$ on $\sm{K}$. The functor $\bf R u_{tr}$ simply forgets transfers. In \cite[Section 4]{hoyois:from_algebraic_cobordism_to_motivic_cohomology} the author argues that the functor $\bf Ru_{tr}$ is lax symmetric monoidal. Hence $H\bb Z$, which is defined as $\bf R u_{tr}(\bb Z)$ is a homotopy commutative ring spectrum. For the record, the discussion in \cite[Remark 4.7]{hoyois:from_algebraic_cobordism_to_motivic_cohomology} shows how to construct a model of $H\bb Z$ as a commutative monoid in motivic symmetric spectra; we will not need this.

    With this definition, the fact that $H\bb Z$ is $(-1)$-connected follows from the representability of motivic cohomology in $\cal DM(K,\bb Z)$ \cite[Proposition 14.16]{mazza_voevodsky_weibel;lecture_notes_on_motivic_cohomology} combined with \cite[Theorem 3.6]{mazza_voevodsky_weibel;lecture_notes_on_motivic_cohomology}.

    Thanks to the theorem of Suslin-Nesterenko and Totaro, see for instance \cite[Theorem 5.1]{mazza_voevodsky_weibel;lecture_notes_on_motivic_cohomology}, we have an isomorphism
    \[\lambda: \bigoplus_ {n\in \bb Z} K^M_n(K) \overset{\simeq}{\ra} \bigoplus_{n \in \bb Z} H^n(\spec K,\bb Z(n)).\]
    Furthermore, if we denote by $e$ the unit of the ring spectrum $H\bb Z$, we can form the following diagram
    \[
    \xymatrix{
    \pi_0(\bb S)_\ast \ar[r]^e &  \pi_0(H\bb Z)_\ast\\
    K^{MW}_\ast \ar[u]^{\simeq} \ar[r] &  K^{MW}_\ast/(\eta)=K^M_\ast \ar[u]^\lambda_{\simeq}, \\
    }
    \]
    and a direct check on the generators of $K^{MW}_\ast$ shows that the square commutes.
    Thanks to Theorem \ref{thm:red_to_moore_spt}, we deduce that the $H\bb Z$-localization of a connective spectrum $X$ is identified with the $\eta$-completion map $X \ra X^\wedge _\eta$.

    In a similar fashion let $E=H\bb Z/\ell$ be the spectrum representing motivic cohomology with modulo $\ell$ coefficients. The same considerations allow us to conclude that the $H\bb Z/\ell$-localization map $\lambda_{H\bb Z/\ell}(X): X \ra X_{H\bb Z/\ell}$ of a connective spectrum $X$ is identified with the formal completion map $\chi_{\ell,\eta}(X): X \ra X^\wedge_{\ell,\eta}$.

  \subsection{Slice completion}
    We recall that, for a spectrum $X$, a cell presentation of $X$ is a tower of maps 
    \[ 0=X_0 \subseteq X_1 \subseteq X_2 \subseteq \cdots\subseteq X_n \subseteq \cdots X\]
    satisfying the following properties:
    \begin{enumerate}
    \item there is a weak equivalence $\hocolim_n X_n \ra X$;
    \item for every $n$ there exists a set of indices $I_n$, and for each $i \in I_n$ a map $f_i: \bb S^{p_i+q_i\alpha} \ra X_{n}$ such that the map $X_n\subseteq X_{n+1}$ fits in a homotopy push-out square
          \begin{equation}
            \xymatrix{
              X_n \ar[r] & X_{n+1}\\
              \vee_{i \in I_n} \bb S ^{p_i+q_i\alpha} \ar[r] \ar[u]^{\vee {f_i}} & \ast. \ar[u]\\
            }
          \end{equation}
    \end{enumerate}
    In this situation, the map $\bb S^{p_i+q_i\alpha} \ra \ast$ is called \emph{a cell of dimension $p_i+1$ and weight $q_i$}; the map $f_i: \bb S^{p_i+q_i\alpha} \ra X_n$ is called the \emph{attaching map} of the cell.
    A cell presentation is called \emph{of finite type} if there exists an integer $k$ such that none of the cells has dimension smaller than $k$ and, in any case, for every integer $p$ there is only a finite set of cells whose dimension is equal to $p$.
    Observe that this definition works not only for spectra, but in the category of modules over any $\bf A_\infty$ ring spectrum $E$, by substituting, in the above definition, the sphere spectrum $\bb S$ with $E$.
    We recall that for every spectrum $X \in \cal{SH}(K)$, Voevodsky defined the so called \emph{slice tower}
    \[X \ra \cdots \ra f^qX \ra \cdots \ra f^1X \ra f^0X \ra f^{-1}X \ra \cdots  \] 
    whose homotopy inverse limit is called \emph{slice completion} $X_{sc}$ of $X$. We refer to \cite[Section 3]{RoSpOst} for a more detailed description of the tower. For convenience of the reader we state Theorem 3.5 of \cite{RoSpOst}.

    \begin{thm}
      Let $K$ be a field of characteristic $0$. Suppose that $X$ is a spectrum having a cell presentation of finite type. Then in the commutative square
      \[
      \xymatrix{
         X \ar[rr]_{\chi_\eta(X)} \ar[d]^{\sigma(X)} & & X^\wedge_\eta \ar[d]^{\sigma(X)^\wedge _\eta}\\
         X_{sc} \ar[rr]^{\chi_\eta(X_{sc})} & &  (X_{sc})^{\wedge}_\eta\\
      }
      \]
      the maps $\chi_\eta(X_{sc})$ and $\sigma(X)^\wedge_\eta$ are isomorphisms in $\cal{SH}(K)$. In particular there is a natural isomorphism $X_{sc} \simeq X^\wedge _\eta$ in $\cal {SH}(K,\Lambda)$ under which the slice completion map $\sigma(X)$ and the $\eta$-completion map $\chi_{\eta}(X)$ are identified.
    \end{thm}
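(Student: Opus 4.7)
The statement has two parts, and my plan is to treat them separately and then combine. The first part, that $\chi_\eta(X_{sc})$ is an isomorphism, amounts to showing that $X_{sc}$ is $\eta$-complete. The second part, that $\sigma(X)^\wedge_\eta$ is an isomorphism, is the content of the convergence of the slice filtration for $X$ after $\eta$-completion; I would reduce it to the assertion that $\sigma(X)$ is an $H\bb Z$-homology equivalence and then invoke Theorem \ref{thm:red_to_moore_spt} (as recalled in \ref{ssub:ex_of_loc_HZ}).

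For the first part, I would argue as follows. Each slice $s_q X$ is a module over Voevodsky's motivic cohomology spectrum $H\bb Z$; this is a fundamental property of the slice filtration in characteristic zero. Since $\underline \pi_0 H\bb Z \simeq \cal K^M_\ast = \cal K^{MW}_\ast/(\eta)$, multiplication by $\eta$ acts as zero on every $H\bb Z$-module. By the fibre sequence
\[\holim_n \Sigma^{-n\alpha}(s_q X) \ra s_q X \ra (s_q X)^{\wedge}_\eta\]
from Construction \ref{const:modx_1_x_n_Moore_sp} (with $f = \eta$), the tower on the left has all transition maps zero, hence zero homotopy limit, so each slice is $\eta$-complete. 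The slice completion $X_{sc}$ is built as a homotopy inverse limit of finite iterated extensions of slices; since $\eta$-complete spectra are closed under cofibre sequences (by the two-out-of-three property of local objects in \ref{rmk:prop_of_loc_obj}) and under homotopy inverse limits, $X_{sc}$ is $\eta$-complete.

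For the second part, the cell presentation of finite type in particular implies $X$ is connective, so by \ref{ssub:ex_of_loc_HZ} the $\eta$-completion map $\chi_\eta(X)$ coincides with the $H\bb Z$-localization map $\lambda_{H\bb Z}(X)$. Therefore it suffices to prove that $\sigma(X)$ is an $H\bb Z$-equivalence, or equivalently that $\holim_q f_q X$ (the fibre of $\sigma(X)$) becomes zero after smashing with $H\bb Z$. Using the finite-type hypothesis on the cells, one can control the connectivity of the smashed tower $f_q X \wedge H\bb Z$: the weight and dimension of the cells enter together in such a way that the effective covers $f_q X$ become increasingly highly connective after smashing with $H\bb Z$, forcing $\holim_q (f_q X \wedge H\bb Z) = 0$. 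Granting this, $\sigma(X)$ is an $H\bb Z$-equivalence, hence (by the identification above) $\sigma(X)^\wedge_\eta$ is an isomorphism.

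The main obstacle is precisely the connectivity estimate in the last paragraph: it is here that one genuinely needs the finite-type hypothesis and the characteristic zero assumption. Over a characteristic-zero field one has Levine's convergence results for the slice spectral sequence and the compatibility of the slice filtration with the homotopy $t$-structure on $H\bb Z$-modules, which together imply that each $f_q X \wedge H\bb Z$ is $c(q)$-connective with $c(q) \to \infty$ as $q \to \infty$ provided $X$ admits a finite-type cell presentation. This step is what fails without either assumption, and it is the only non-formal input in the proof.
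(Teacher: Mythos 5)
This theorem is quoted from R\"ondigs--Spitzweck--\O stv\ae r \cite{RoSpOst}; the paper states it for the reader's convenience and gives no proof, so there is no in-paper argument to compare against.

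Your sketch is broadly in the spirit of that proof (slices are $H\bb Z$-modules in characteristic zero, hence $\eta$-complete; a connectivity estimate handles the tail of the slice tower), but it has two genuine gaps. For the first part you claim $X_{sc}$ is a homotopy limit of \emph{finite} iterated extensions of slices. In fact each tower term $f^q X$ is filtered by the slices $s_p X$ for all $p \leq q$, hence is in general an \emph{infinite} iterated extension, and the two-out-of-three property for $C(\eta)$-local objects does not propagate $\eta$-completeness through such an infinite tower. The finite-type hypothesis, which you do not invoke in this step, is needed already here: it is what forces $s_p X$ to become highly connective as $p \to -\infty$ and thereby forces each $f^q X$ to be $\eta$-complete.

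For the second part you reduce to showing $\big(\holim_q f_q X\big)\wedge H\bb Z = 0$ and then argue $\holim_q\big(f_q X \wedge H\bb Z\big) = 0$ by a connectivity estimate; but smashing with $H\bb Z$ does not commute with homotopy inverse limits, so the second vanishing does not imply the first. The correct route is to show that $(f_q X)^\wedge_\eta$ becomes increasingly connective and then exploit that $\eta$-completion, being an iterated $\holim$ against the \emph{finite} spectra $C(\eta^n)$, does commute with $\holim_q$, giving $\big(\holim_q f_q X\big)^\wedge_\eta = \holim_q (f_q X)^\wedge_\eta = 0$. In both parts the essential non-formal input --- a connectivity bound for the $\eta$-completed effective covers of a finite-type cell spectrum over a characteristic-zero field --- is precisely what you defer; since that is the entire content of the cited theorem, the sketch does not yet constitute a proof.
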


    By combining \ref{ssub:ex_of_loc_HZ} with the previous result, we deduce that for a cell spectrum $X$ of finite type, $H\bb Z\wedge X=0$ if and only if $X_{sc}=0$; more generally the slice completion of a cell spectrum of finite type $X$ only depends on the $H\bb Z$-localization $X_{H\bb Z}$.

  \subsection{Motives of spectra}
    \label{ssub:ex_on_motives_of_spectra}

    Let $K$ be a field which is not formally real and $X$ be a connective spectrum. Set $Y=X[1/2]$ and assume that $Y\wedge H\bb Z=0$. Then by the assumption on $X$, Theorem \ref{thm:red_to_moore_spt}, and Proposition \ref{prop:localization_at_mod_x_moore_spectrum} we have that
    \[0=Y_{H\bb Z}=Y_{M(\eta)}=Y^\wedge_\eta.\]
    However Lemma \ref{lem:Xonehalf_is_eta_complete} implies that $\chi_{\eta}(Y): Y \ra Y^\wedge_\eta$ is an isomorphism in $\cal {SH}(K)$ and hence $X[1/2]=Y=0$. Let us now assume that $K$ has finite $2$-cohomological dimension. By running the same argument and using \ref{lemma:compact_is_eta_complete} instead of \ref{lem:Xonehalf_is_eta_complete} we deduce that, if $X$ is a dualizable object with $H\bb Z\wedge X=0$, then $X=0$ in $\cal{SH}(K)$. 

    Recall that the adjunction \eqref{eqn:add-forget_transfers} factors as
    \[
     \xymatrix{
     \cal{SH}(K) \ar@<.5ex>[rr]^{\bf L\bb Z_{tr}} \ar@<.5ex>[dr]^{\wedge H\bb Z} & &  \cal{DM}(K,\bb Z) \ar@<.5ex>[ll]^{\bf R u_{tr}} \ar@<.5ex>[dl]^{\Psi}\\
     & D(\Mod{H\bb Z}) \ar@<.5ex>[ul]^{\bf R u} \ar@<.5ex>[ru]^{\Phi}  & \\
     }
    \]
    where $\bf L\bb Z_{tr}(-)\simeq \Phi(H\bb Z\wedge -)$ and $\bf R u_{tr}\simeq \bf R u(\Psi(-))$.
    If $K$ has characteristic $0$, then \cite[Theorem 1]{MR2435654} implies that $(\Phi,\Psi)$ is a pair of adjoint equivalences. The previous observations prove the following statement. 
    \begin{cor}
      Let $K$ be a field of characteristic $0$ which is not formally real. Then the functor $\bf L \bb Z_{tr}$ is conservative on connective $\bb S[1/2]$-local spectra. If in addition $cd_2(K)<\infty$ then $\bf L \bb Z_{tr}$ is conservative on dualizable objects.
    \end{cor}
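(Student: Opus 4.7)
The plan is to translate both conservativity claims into a single vanishing criterion and then invoke the results established in the paragraph preceding the corollary. Each of the two subcategories in question---connective $\bb S[1/2]$-local spectra and dualizable objects---is closed under cofibres in $\cal{SH}(K)$: connectivity, $\bb S[1/2]$-locality, and dualizability all pass to cofibres in fibre sequences. Consequently, for the exact functor $\bf L\bb Z_{tr}$, conservativity on either subcategory is equivalent to the implication that an object $X$ in the subcategory with $\bf L\bb Z_{tr}(X)=0$ must vanish. Using the factorization $\bf L\bb Z_{tr}(-) \simeq \Phi(H\bb Z \wedge -)$ recalled just before the corollary, together with the fact that $\Phi$ is an equivalence in characteristic $0$ (by the cited theorem of Röndigs--Østvær), one has $\bf L\bb Z_{tr}(X)=0$ if and only if $H\bb Z \wedge X = 0$. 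Hence it suffices to establish, under the respective hypotheses, that $H\bb Z$-acyclicity forces $X$ to be zero.

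For the first claim, I would take $X$ connective and $\bb S[1/2]$-local with $H\bb Z \wedge X = 0$. Being $\bb S[1/2]$-local means $X \simeq X[1/2]$, so $X$ itself plays the role of the object $Y = X[1/2]$ in the preceding paragraph of \ref{ssub:ex_on_motives_of_spectra}: the chain $0 = Y_{H\bb Z} = Y_{M(\eta)} = Y^\wedge_\eta$ established there via Theorem \ref{thm:red_to_moore_spt} and Proposition \ref{prop:localization_at_mod_x_moore_spectrum}, combined with Lemma \ref{lem:Xonehalf_is_eta_complete} (which supplies $\eta$-completeness of $Y$ from the hypothesis that $K$ is not formally real), forces $X = Y = 0$.

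For the second claim I would run exactly the same argument on a dualizable $X$. Dualizable objects in $\cal{SH}(K)$ are automatically bounded below---being retracts of finite cell spectra whose cells sit in finitely many bidegrees---so they are connective in the sense of the paper and Theorem \ref{thm:red_to_moore_spt} still applies. The only modification is that $\eta$-completeness of $X$ is now supplied by Lemma \ref{lemma:compact_is_eta_complete}, using the hypothesis $cd_2(K) < \infty$ in place of the hypothesis that $-1$ is a sum of squares. There is no serious obstacle: all the analytic content sits in the paragraph preceding the statement, and the corollary is essentially a bookkeeping translation between vanishing of $H\bb Z$-homology and vanishing of $\bf L\bb Z_{tr}$, via the equivalence $\Phi$ available in characteristic zero.
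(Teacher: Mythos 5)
Your proposal is correct and follows essentially the same route as the paper: reduce conservativity to the vanishing statement ``$H\bb Z\wedge X=0$ implies $X=0$'' via the equivalence $\Phi$ in characteristic $0$, then combine Theorem \ref{thm:red_to_moore_spt} and Proposition \ref{prop:localization_at_mod_x_moore_spectrum} to identify $X_{H\bb Z}$ with $X^\wedge_\eta$, and finally invoke Lemma \ref{lem:Xonehalf_is_eta_complete} (resp.\ Lemma \ref{lemma:compact_is_eta_complete}) for $\eta$-completeness. The only genuine addition you make is to spell out that dualizable objects are connective; the paper uses this implicitly when it says ``running the same argument,'' since Theorem \ref{thm:red_to_moore_spt} requires connectivity.
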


    \begin{rmk}
      \label{rmk:what_to_do_for_cons_in_char_p}
      Our method can be actually pushed to show that, when $K$ is a field of characteristic $p>0$, the functor $\bf L \bb Z_{tr}$ is conservative on connective objects, up to power-of-$2$-torsion. Indeed one could endow $D(\Mod{H\bb Z})$ with the homotopy $t$-structure and try to modify Corollary $4.1.5$ in \cite{MR3038720} to prove that $\Psi$ induces an equivalence between the hearts of the $t$-structures. Hence Corollary 4 of \cite{Bacmann_Conservativity} would imply that $\Phi$ is conservative on connective objects. It would be then immediate to deduce the conservativity of $\bf L \bb Z_{tr}$ from the conservativity of $H\bb Z \wedge - $, at least up to possible $2$-torsion. Luckily all these conservativity statements have already been proven integrally by Bachmann in \cite{Bacmann_Conservativity}. Furthermore, with $\bb Z[1/2]$-coefficients, such statements follow pretty trivially from \cite{MR3038720}, as we now explain.
    \end{rmk}
    
    \begin{rmk}
      \label{rmk:our_cons_res_vs_Bachmann}
      In \cite[Theorem 1]{Bacmann_Conservativity} Bachmann proves that over a perfect field $K$ of exponential characteristic $p\not = 2$ and with $cd_2(K)<\infty$, the functor $\bf L \bb Z_{tr}$ is conservative on effective and connective spectra where $p$ acts invertibly. Let us concentrate on the case of a base field of characteristic $0$. Even though our assumptions are strictly weaker than those of \cite[Theorem 1]{Bacmann_Conservativity}, if we content ourselves with a $\bb S[1/2]$-local conservativity statement, Bachmann's strategy only needs our assumptions in order to work. We will briefly sketch his strategy. The first ingredient Bachmann needs is that the Witt ring $W(K)$ is of exponent $2$: it follows from \cite[Ch. 2, Theorem 7.1]{MR770063} that this is verified if and only if $-1$ is a sum of squares in $K$. This gives the relation $2^n\eta=0 \in K^{MW}_\ast(K)$ which we used above. The second fact he needs is that $\cal{DM}(K,\bb Z)$ is endowed with an analogue of the homotopy $t$-structure. He further needs that, with respect to the homotopy $t$-structure on $\cal {SH}(K)$ and $\cal{DM}(K,\bb Z)$, the functors $\bf L \bb Z_{tr},\bf R u_{tr}$ are respectively right-exact and exact and that the right adjoint $\bf R u_{tr}$ induces as equivalence between $\cal{DM}(K,\bb Z)^{\heartsuit}$ and the category of $\cal K_\ast^{M}$-modules in $\Pi_\ast(K)$. In particular if $K$ is not formally real, the forgetful functor $\Mod{\cal K^M_\ast} {\ra} \Pi_\ast(K)$ induces an equivalence after inverting $2$ in the coefficients. Most of these facts follow in fact from \cite{MR3038720}. The final ingredient needed is that $\bf L \bb Z_{tr}(-)$ is conservative on connective objects which is proven in \cite[Corollorary 4]{Bacmann_Conservativity}. From this discussion  it follows that our conservativity statement does not add any interesting case to those already covered in \cite{Bacmann_Conservativity}.
    \end{rmk}

  \subsection{Milnor-Witt Motivic Cohomology}
    \label{ex:Loc-at_MWMC}
    Let now $K$ be an infinite perfect field of exponential characteristic $p\not = 2$. In this situation, the formalism we recalled in \ref{ssub:ex_of_loc_HZ} has a quadratic analogue. We have a category of Chow-Witt motives $\widetilde{\cal {DM}}(K,\bb Z)$ with a pair of adjoint functors $(\bf L \tilde {\bb Z}_{tr}, \tilde u_{tr})$ which are the stabilizations of the functors which respectively add and forget generalized  transfers. The adjunction factors as
    \begin{equation}
     \xymatrix{
     \cal{SH}(K) \ar@<.5ex>[rr]^{\bf L\tilde{\bb Z}_{tr}} \ar@<.5ex>[dr]^{\tilde{H\bb Z}\wedge} & &  \widetilde{\cal{DM}}(K,\bb Z) \ar@<.5ex>[ll]^{\bf R \tilde u_{tr}} \ar@<.5ex>[dl]^{\tilde\Psi}\\
     & D(\Mod{\tilde{H\bb Z}}) \ar@<.5ex>[ul]^{\bf R \tilde u} \ar@<.5ex>[ru]^{\tilde\Phi}  & \\
     }
    \end{equation}
    and when $K$ is of characteristic $0$ \cite{Bac-Fas}, or after inverting $p$ in the coefficients, the functors $(\tilde\Phi,\tilde\Psi)$ are inverse equivalences of categories.
    Arguments analogous to those in \ref{ssub:ex_of_loc_HZ} allow to conclude that $\tilde H \bb Z$ is $(-1)$-connective. 
    However, this time, the unit map $\bb S \ra \tilde {H} \bb Z$ induces the identity on $\underline\pi_0$. It follows by Theorem \ref{thm:red_to_moore_spt} that for every connective spectrum $X$, the $\tilde {H} \bb Z$-localization is the identity. Hence the functor
    \[ \tilde{H\bb Z} \wedge -: \cal{SH}(K) \ra D(\Mod{\tilde{H\bb Z}})\] 
    associating with $X$ its free $\tilde{H}\bb Z$-module $\tilde {H}\bb Z\wedge X$ is conservative. 
    Thus if the base field is of characteristic zero, the functor $\bf L\tilde {\bb Z}_{tr}$ is conservative on connective spectra.

    Similarly we can use Theorem \ref{thm:red_to_moore_spt} to deduce that the $\tilde {H} \bb Z/\ell$-localization of connective spectra is identified with the $\ell$-adic completion.

    \begin{rmk}
      \label{rmk:cwconscharp}
      Also in the case of Chow-Witt motives, it should be possible to deduce the conservativity in another way along the lines of \ref{rmk:what_to_do_for_cons_in_char_p}. We can indeed endow the category of Chow-Witt motives $\widetilde{\cal{DM}}(K)$ with an analogue of the homotopy $t$-structure (see \cite{AnaNesh:framed_and_mw_transf}). Next we believe that one could mimic \cite{MR3038720} to show that $\bf R \tilde {u}_{tr}$ induces an equivalence between the hearts of the homotopy $t$-structure on $\widetilde{\cal{DM}}(K)$ and on $\cal {SH}(K)$. In particular this would imply that $\bf L \tilde{\bb Z}_{tr}$ is conservative on connective spectra even in positive characteristic $p\not =2$. 
    \end{rmk}


  \subsection{Connective Johnson-Wilson theories}
    Let $BP_{(\ell)}$ be the spectrum representing $\ell$-local Brown-Peterson cohomology. $BP_{(\ell)}$ is constructed for instance in \cite{MR1876212}. Let instead $BP^\wedge_{\ell}$ be the $\ell$-complete version of the spectrum, i.e. $BP^\wedge_{\ell}:=(BP_{(\ell)})_{M(\ell)}$. Let us denote simply by $BP$ either one of $BP_{(\ell)}$ or $BP^\wedge_{\ell}$. In the $\ell$-local situation we can construct a tower of homotopy commutative ring spectra 
    \begin{equation}
      \label{eqn:BP_trunc_tower}
      BP \ra \cdots \ra BP\bc{n} \ra \cdots BP\bc 1 \ra BP\bc 0
    \end{equation}
    where, by definition,
    \[BP\bc n := BP/(v_{n+1},v_{n+2},\cdots) .\]
    In the $\ell$-complete situation we just localize the whole tower at the Moore spectrum $M(\ell)$. The spectrum $BP\bc n$ is called the $\ell$-local (resp. $\ell$-complete) $n$-truncated Johnson-Wilson spectrum. 
    Since $H\bb Z/\ell \wedge BP<n>$ splits as a wedge of shifts of $H\bb Z/\ell$, we deduce that
    \[\bc{H\bb Z/\ell \wedge BP<n>}=\bc{H\bb Z/\ell}.\]
    Combining with \ref{ssub:ex_of_loc_HZ} we deduce that the $H\bb Z/\ell \wedge BP<n>$-localization of a connective spectrum $X$ is $X^\wedge_{\eta,\ell}$.

\section{The $E$-based motivic Adams-Novikov spectral sequence} 
  \label{sec:the_E_based_MANSS}
  In this section we briefly recall the construction of the motivic Adams-Novikov spectral sequence based on a generalized homology theory represented by a ring spectrum $E$. The actual construction takes place in \ref{sub:construction_of_the_spectral_sequence} by means of the standard Adams tower. In \ref{sub:some_gereral_remark_on_convergence} we review various kinds of convergence that one can expect from the spectral sequence.

  \subsection{Construction of the spectral sequence} 
    \label{sub:construction_of_the_spectral_sequence}
    Let $E$ be a homotopy commutative ring spectrum in $\cal{SH}(K)$. We start by considering fibre sequence
    \begin{equation}
      \label{eqn:Ebar-S-E}
      \overline E \overset{\bar e}{\ra} \bb S \overset{e}{\ra}E
    \end{equation}
     where $\overline E:=\hofib(e:\bb S\ra E)$. We set the notation $\overline {E}^1=\overline E$ and $\overline E^0=\bb S$. By induction, assuming we have already defined $\overline {E}^n$, we obtain a new fibre sequence by applying $-\wedge \overline E^n$ to the fibre sequence \eqref{eqn:Ebar-S-E}: we get the fibre sequence
    \begin{equation}
      \label{eqn:fund_fib_seq_E^n+1_E^n_EE^n}
    \xymatrix{
      \overline E \wedge \overline E^n \ar[r]^{\bar e\wedge id} &  \bb S\wedge \overline E^n\ar[r]^{e \wedge id} &  E\wedge \overline E^{n}\\
    }.
    \end{equation}
    We set $\overline E^{n+1}:=\overline E \wedge \overline E^n$ and as well $W_n:=E\wedge \overline E^{n}$. Furthermore we name the maps $\bar e^n:=\bar e\wedge id_{\overline E^n}$ and $e^n:=e\wedge id_{\overline E^n}$. In this way we have produced a tower $\{\overline E^n\}_{n \in \bb N}$ over $\bb S$ fitting in the following diagram
    \begin{equation}
      \label{eqn:Adams_tower_over_S}
      \xymatrix{
        \bb S =\overline E^0 \ar[d]_e & \overline E^1 \ar[d]_{e^2} \ar[l]_{\bar e^1} & \overline E^2 \ar[d]_{e^3} \ar[l]_{\bar e^2} & \cdots \ar[l]_{\bar e^3} \\
        E = W_0 \ar@{-->}[ur] & E\wedge \overline E^1 = W_1 \ar@{-->}[ur] & E\wedge \overline E^2=W_2 \ar@{-->}[ur] & \cdots\\
      }
    \end{equation}
    where each dashed arrow is pictured to remind that the triangle it bounds is a fibre sequence.
    Given any spectrum $X$ we can smash every part of the previous construction with $X$ and get a tower $\{X\wedge \overline E^n\}_{n\in \bb N}$ over $X$ and actually a whole digram similar to \eqref{eqn:Adams_tower_over_S}. 

    \begin{defin}
      \label{defin:ANSS_first_const}
      Let $X$ and $Y$ be spectra in $\cal{SH}(K)$. We call $E$-based motivic Adams-Novikov spectral sequence the spectral sequence associated to the exact couple 
      \begin{equation}
      \label{eqn:ANSS_first_Ex_couple}
        \xymatrix{
        [\Sigma^{\bullet}Y,X\wedge \overline E^\bullet] \ar[rr]^{j} & & [\Sigma^{\bullet}Y,X\wedge \overline E^\bullet] \ar[dl]^k\\
         & [\Sigma^{\bullet}Y, X\wedge E\wedge \overline E^\bullet]. \ar[ul]^{i} & \\
        }
      \end{equation}
      Here the map
      \[j: [\Sigma^{p}Y,X\wedge \overline E^q] \ra [\Sigma^{p}Y,X\wedge \overline E^{q-1}]\]
      is the natural map induced by $\overline e^q$ and has bi-degree $(0,-1)$;
      \[k: [\Sigma^{p}Y,X\wedge \overline E^q]  \ra [\Sigma^{p}Y, X\wedge E\wedge \overline E^q]\]
      is the natural map induced by $e^q$ and has bi-degree $(0,0)$. Finally the map
      \[i: [\Sigma^{p}Y, X\wedge E\wedge \overline E^q] \ra [\Sigma^{p+1}Y,X\wedge \overline E^{q+1}] \]
      is the map induced by the dashed map in \eqref{eqn:Adams_tower_over_S} and has bi-degree $(-1,1)$.
    \end{defin}

    \begin{rmk}
      \label{rmk:why_alt_constr_of_SS}
      If on one hand this is perfectly fine as a construction of the spectral sequence, on the other hand we prefer to have a construction where the source of the exact couple is a tower under $X$ rather than a tower over $X$. Our motivation is twofold: on one side we will need some notation later on which is implicitly related to the alternative construction that we are about to give; on the other side what follows will give a more precise insight on what the target of the spectral sequence should be. In particular the following alternative construction fits perfectly in the formalism of conditional convergence developed by Boardman in \cite{MR1718076} and which is implicitly present in the early works of Bousfield and Adams that inspired this work.
    \end{rmk}
    
    With this aim in mind we proceed with the construction. We start by introducing, for every integer $n\geq 0$, the cofibre sequence  
    \begin{equation}
      \label{eqn:fund_fib_seq_E^n_to_E_n}
      \overline  E^n \ra \bb S \ra \overline E_{n-1}
    \end{equation}
    where the map on the left is the composition $\bar e^1 \circ \cdots \circ \bar e^n$. In particular we get that $\overline E_{-1}=0$ and that $\overline E_0=E$. Moreover we can choose maps $f_n: \overline E_n \ra \overline E_{n-1}$ making the following diagram commutative
    \begin{equation}
      \label{eqn:fund_comp_of_fib_seq_E^n_to_En}
      \xymatrix{
      \overline E^n \ar[r] & \bb S \ar[r]  & \overline E_{n-1}\\
      \overline E^{n+1} \ar[u]^{\bar e^{n+1}} \ar[r] & \bb S \ar[r] \ar@{=}[u] & \overline E_n \ar[u]^{f_n}.\\
      }
    \end{equation}
    In particular we can chose an identification of $W_n=\hocofib(\bar e^{n+1})\simeq \hofib({f_n})$ in $\cal{SH}(S)$ and thus we can choose a map $l_n:W_n\ra \overline E_n$ that makes the following diagram into a fibre sequence
      \begin{equation}
        \label{eqn:fund_fib_seq_Wn_En_En-1}
        E\wedge \overline E^{n}=W_n \overset{l_n}{\ra} \overline E_n \overset{f_n}{\ra} \overline E_{n-1} \overset{\partial_n}{\ra} \Sigma^1 W_n.
      \end{equation}
     We thus get a new diagram
    \begin{equation}
      \label{eqn:Adams_tower_under_S}
      \xymatrix{
        \cdots  & W_3 \ar[d]^{l_3} &  W_2 \ar[d]^{l_2} & W_1 \ar[d]^{l_1} & W_0 \ar@{=}[d]^{l_0} & \\
        \cdots \ar[r] & \overline E_3 \ar[r]_{f_3} \ar@{-->}[ul] & \overline E_2 \ar[r]_{f_2} \ar@{-->}[ul] & \overline E_1 \ar[r]_{f_1} \ar@{-->}[ul]  & \overline E_0 \ar[r]_{f_0} \ar@{-->}[ul]& 0 \ar@{-->}[ul].\\
      }
    \end{equation}
    where each (small) solid triangle is a fibre sequence and the maps $f_n$ form a tower under $\bb S$.

    As we did above, given any spectrum $X$ we can build similar diagrams by applying $X\wedge -$ to \eqref{eqn:Adams_tower_under_S}: we obtain
    \begin{equation}
      \label{eqn:Adams_tower_under_X}
      \xymatrix{
        \cdots  & X \wedge W_3 \ar[d]^{l_3} & X\wedge  W_2 \ar[d]^{l_2} & X \wedge W_1 \ar[d]^{l_1} & X\wedge W_0 \ar@{=}[d]^{l_0} & \\
        \cdots \ar[r] & X\wedge \overline E_3 \ar[r]_{f_3} \ar@{-->}[ul] & X \wedge \overline E_2 \ar[r]_{f_2} \ar@{-->}[ul]_{\partial_3} &X \wedge \overline E_1 \ar[r]_{f_1} \ar@{-->}[ul]_{\partial_2}  & X\wedge \overline E_0 \ar[r]_{f_0} \ar@{-->}[ul]_{\partial_1}& 0 \ar@{-->}[ul]_{\partial_0=0}.\\
      }
    \end{equation}
    and the solid triangles bounded by the triple of maps $(l_n,f_n,\partial_n)$ form fibre sequences.
    \begin{defin}
    \label{defin:std_adams_tower}
      The tower under $X$
      \begin{equation}
        \label{eqn:std_adams_tower}
        \cdots \ra X\wedge\overline E_n \ra \cdots\ra X \wedge \overline E_1 \ra X \wedge \overline E_0 \ra \ast
      \end{equation}
      is called the \emph{Standard $E$-Adams tower}. The \emph{$E$-nilpotent completion} of $X$, which we denote by $X^\wedge_E$ is thus defined as
      \[ X^\wedge_E:=\holim_n X\wedge\overline E_n .\]
      The natural map $\alpha_E(X): X \ra X^\wedge _E$ is called the $E$-nilpotent completion map of $X$.
    \end{defin}

    For every spectrum $Y$ we can apply the functor $[Y,-]$ to \eqref{eqn:Adams_tower_under_X} and get an exact couple 
    \begin{equation}
      \label{eqn:ANSS_second_ex_couple}
      \xymatrix{
        [\Sigma^{\bullet}Y,X\wedge \overline E_\bullet] \ar[rr]^{j} & & [\Sigma^{\bullet}Y,X\wedge \overline E_\bullet] \ar[dl]^k\\
         & [\Sigma^{\bullet}Y, X\wedge W_\bullet]. \ar[ul]^{i} & \\
        }
    \end{equation}
    Here the map 
    \[j: [\Sigma^{p}Y,X\wedge \overline E_n] \ra [\Sigma^{p}Y,X\wedge \overline E_{n-1}]\]
    is the natural map induced by $f_n$ and has bi-degree $(0,-1)$; the map
    \[k: [\Sigma^{p}Y,X\wedge \overline E_n]  \ra [\Sigma^{p-1}Y, X\wedge W_{n+1}]\]
    is the natural map induced by the dashed arrow $\partial_{n+1}$ and has bi-degree $(-1,1)$. Finally the map
    \[i: [\Sigma^{p}Y, X\wedge W_n] \ra [\Sigma^{p-1}Y,X\wedge \overline E_{n}] \]
    is the map induced by $l_n$ and has bi-degree $(0,0)$.

    The spectral sequence obtained form the exact couple \eqref{eqn:ANSS_second_ex_couple} is the \emph{$E$-based motivic Adams-Novikov spectral sequence}. Note that this is an example of the general procedure described in IX.4 of \cite{MR0365573} for associating the so called \emph{Homotopy Spectral Sequence} to a tower of fibrations over a given space. In our specific example the tower we used is $\{X\wedge \overline E_n, f_n\}$. Following the indexing scheme suggested by Bousfield-Kan, the spectral sequence is the family
    \begin{equation}
      \label{eqn:motivicANSS}
      \Big \{E_r^{s,t}(Y,X;E), d_r^{s,t}: E_r^{s,t}(Y,X;E) \ra E_r^{s+r,t+r-1}(Y,X;E) \Big \}
    \end{equation}
    where, $s \in \bb N$, $t\in \bb Z$ and $r\geq 1$. For $r=1$ we have 
    \begin{equation}
      E_1^{s,t}(Y,X;E):=[\Sigma^{t-s} Y, X\wedge W_s]
    \end{equation}
    with differential
    \begin{equation}
      d_1^{s,t}(Y,X;E)  :E_1^{s,t}(Y,X;E) \ra E_1^{s+1,t}(Y,X;E)
    \end{equation}
    defined by the rule $x\mapsto \partial_{s+1}\circ l_s \circ x$.
    For $r>1$ we set
    \begin{equation}
    \label{eqn:derived_ex_coup_D}
    [\Sigma^{p} Y, X\wedge \overline E_n]^{(r)}:=\rm{Im}([\Sigma^{p} Y, X\wedge \overline E_{n+r}] \overset{f_{n}\circ\cdots \circ f_{n+r}}{\rrra} [\Sigma^{p} Y, X\wedge \overline E_n] )
    \end{equation}
    and 
    \begin{equation}
    [\Sigma^{p} Y, X\wedge W_n]^{(r)}:=\frac{\rm{Ker} \Big ( [\Sigma^{p}Y,X\wedge W_n] \overset{l_n}{\ra} \medslant{ [\Sigma^{p} Y, X\wedge \overline E_n]}{ [\Sigma^{p} Y, X\wedge \overline E_n]^{(r)} }\Big )}{ \partial_n \Big ( \rm{Ker} \big ( [\Sigma^{p+1}Y,X\wedge \overline E_{n-1}] \ra [\Sigma^{p+1}Y,X\wedge \overline E_{n-r-1}]  \big ) \Big )  }.
    \end{equation}
    Similarly, for $s\in \bb N$ and $t\in \bb Z$, we set
    \[E_r^{s,t}(Y,X;E):=[\Sigma^{t-s} Y, X\wedge W_s]^{(r-1)}\]
    with differential
    \[ d_r^{s,t}: E_r^{s,t}(Y,X;E) \ra E_r^{s+r,t+r-1}(Y,X;E)\]
    defined by the composition
    \[ \partial_{s+r}\circ l_s:  [\Sigma^{t-s} Y, X\wedge W_s]^{(r-1)} \ra  [\Sigma^{t-s} Y, X\wedge \overline E_s]^{(r-1)} \ra [\Sigma^{t-s-1} Y, X\wedge W_{s+r}]^{(r-1)}. \]


  \subsection{Some general remark on convergence} 
      \label{sub:some_gereral_remark_on_convergence}
      Here we recall some further notation regarding the spectral sequence in order to deal with some convergence issue.

      \subsection{}
      Since we will be considering only the tower constructed in \eqref{eqn:Adams_tower_under_X} we will simplify the notation and replace $X\wedge \overline E_s$ by $X_s$ and $\holim_s X_s$ by $X_\infty$. Moreover since no role, at least for the moment, is going to be played by the weight $u$ or the spectrum $Y$, we hide them from the notation and leave them implicit. As a result we will write, for instance, 
      \[
         \pi_{k}(X_\infty)
      \] instead of 
      \[
        [\Sigma_{S^1}^{k} Y,\holim_s X\wedge \overline E_s ].
      \]
      For the same reasons we rename $F_n:=X\wedge W_n$.
      
      \subsection{}
      First of all note that we have an analogue of Milnor's sequence:
      \begin{equation}
      \label{eqn:Milnor_seq_conv}
       0\ra \varprojlim_s {}^1 \pi_{k+1}(X_s) \ra \pi_k(X_\infty) \ra \varprojlim _s \pi_k(X_s) \ra 0
      \end{equation}
      for every integers $k \in \bb Z$. Now, both the groups on the centre and on the right hand side of \eqref{eqn:Milnor_seq_conv} inherit a canonical filtration 
      \begin{align}
      F^i\pi_{k}(X_\infty)&:=\ker(\pi_{k}(X_\infty) \ra \pi_{k}( X_{i-1}))\\
      F^i\varprojlim_s\pi_k(X_s)&:=\ker( \varprojlim_s\pi_k(X_s)\ra \pi_{k}( X_{i-1})).
      \end{align}
      The filtration on $\varprojlim_s\pi_k(X_s)$ is by design complete and separated; moreover by construction of the Adams tower $X_{-1}=0$ and so the filtration is exhaustive as well. We wish to have an analogous understanding of the filtration on $\pi_k X_\infty$: at the moment we only know that it is exhaustive for the same reason as above.

      \subsection{}
      On one hand we can start by setting $Q_s\pi_k X:=\im(\pi_k(X_\infty) \ra \pi_k(X_{s})) \subseteq \pi_k(X_s)$: we then gain exact sequences for every $k$ and $s$ as follows
      \begin{equation}
        \label{eqn:F^sQ_s_sequence}
        0 \ra F^{s+1}\pi_{k}(X_\infty) \ra \pi_{k}(X_\infty) \ra Q_s(\pi_kX) \ra 0;
      \end{equation}
      moreover the composition with $f_s: X_s \ra X_{s-1}$ gives, by restriction, well defined maps $Q_s \pi_k X\ra Q_{s-1} \pi_k X$. Now it is immediate to see that $\varprojlim_s Q_s \pi_k X = \varprojlim_s \pi_k(X_s)$ so that, in view of the Milnor sequence \eqref{eqn:Milnor_seq_conv} we must have  
      \begin{gather}
        \label{eqn:F^s_lim^1}
        \cap_s F^s\pi_k (X_\infty)=\varprojlim_s{}^1 \pi_{k+1}(X_s);  \\
        \varprojlim_s{}^1 F^s\pi_k(X_\infty)= 0 = \varprojlim_s{}^1 Q_s\pi_kX.
      \end{gather}
      In particular we conclude that the filtration on $\pi_k X_\infty$ is complete and exhaustive, while the separatedness is obstructed by the left term in the Milnor sequence \eqref{eqn:Milnor_seq_conv}.
      
      \subsection{}
      On the other hand we can take the sequences \eqref{eqn:F^sQ_s_sequence} and by the five-lemma get short exact sequences
      \begin{equation}
       \label{eqn:small_e_infty_term}
        0 \ra F^{s}\pi_{k}(X_\infty)  /F^{s+1}\pi_{k}(X_\infty) \ra Q_s \pi_k(X) \ra Q_{s-1} \pi_k(X) \ra 0
      \end{equation}
      for varying $k\in \bb Z$ and $s\in \bb N$. The same argument with $\varprojlim_s \pi_k X_s$ gives an identification of the graded pieces
      \[F^{s}\pi_{k}(X_\infty)  /F^{s+1}\pi_{k}(X_\infty) = F^{s}\varprojlim_s \pi_k X_s /F^{s+1}\varprojlim_s \pi_k X_s\]
       The group $F^{s}\pi_{k}(X_\infty)  /F^{s+1}\pi_{k}(X_\infty)$ is denoted by $e_\infty^{s,s+k}$ in IX, 5.3 of \cite{MR0365573} and is called the \emph{small $E$-infinity term}. It will turn out in the following paragraph that we actually have $e_\infty^{s,s+k}\subseteq E_\infty^{s,s+k}$.

      \subsection{}In order to better understand $e_\infty$ terms we compare them and the $Q_s$'s with something slightly bigger. Let us thus define $\pi_kX_s^{(r)}: = \im (\pi_k(X_{s+r}) \ra \pi_k(X_s))$ (this is indeed part of the $r$-th derived exact couple deduced in \eqref{eqn:derived_ex_coup_D}) and observe that the inclusion of $Q_s\pi_k X \subseteq \pi_k(X_s)$ factors through $$Q_s\pi_k X \subseteq \varprojlim_r \pi_kX_s^{(r)} \subseteq \pi_k(X_s).$$ In particular after taking inverse limits with respect to $s$ we get that 
      \[ \varprojlim_s Q_s\pi_kX = \varprojlim_s \varprojlim_r \pi_kX_s^{(r)}=\varprojlim_s\pi_k (X_s).\]
      Furthermore, as happened above, the tower maps $f_s$ induce, by restriction, surjective maps $f_s: \pi_k X_s^{(r-1)} \ra \pi_k X_{s-1}^{(r)}$. When $r>s\geq0$ we can identify the kernel of such a map with $E_r^{s, s+k}$. Indeed it follows by the very construction of the spectral sequence \eqref{eqn:motivicANSS} that the kernel of $f_s: \pi_k X_s \ra \pi_k X_{s-1}$ coincides with the quotient of $\pi_k F_s$ by the image of $\partial_s: \pi_{k+1}X_{s-1} \ra \pi_k F_s$. The image of such a map is actually identified with $B_r^{s,s+k}$ when $r>s$. In other words every element $x$ in $\ker(\pi_k X_s \ra \pi_k X_{s-1})$ lifts uniquely to an element of $\pi_kF_s/B_r^{s,s+k}$. Moreover the further condition that $x$ belongs to $\pi_kX_s^{(r)}$ is equivalent to the condition that $x$ is in the kernel of the differential in each of the first $r$ pages, i.e. that it belongs to $Z_r^{s,s+k}$. In conclusion we get the following exact sequence
      \begin{equation}
        \label{eqn:bigE_infty_term}
         0 \ra E_r^{s, s+k} \ra \pi_k X_s^{(r-1)} \overset{f_n}{\ra} \pi_k X_{s-1}^{(r)} \ra 0.
      \end{equation}

      \subsection{}
      We can now combine our last two observations in the following diagram obtained from \eqref{eqn:small_e_infty_term} and \eqref{eqn:bigE_infty_term}:
      \begin{equation}
      \xymatrix{
        0 \ar[r] & F^{s}\pi_{k}(X_\infty)  /F^{s+1}\pi_{k}(X_\infty) \ar[r] \ar@{^{(}->}[d] & Q_s \pi_k(X) \ar[r] \ar@{^{(}->}[d] & Q_{s-1} \pi_k(X) \ar[r] \ar@{^{(}->}[d] &  0 & & & \\
        0 \ar[r] & E_\infty^{s, s+k} \ar[r] & \varprojlim_r \pi_k X_s^{(r-1)} \ar[r] & \varprojlim_r \pi_k X_{s-1}^{(r)}  \ar`r[d]`[l]`^d[lll] `[dll] [dll]  \\ 
          & \varprojlim_{r}^1 E_r^{s, s+k}  \ar[r] & \varprojlim_r^1 \pi_k X_s^{(r-1)} \ar[r] & \varprojlim_r^1 \pi_k X_{s-1}^{(r)} \ar[r] & 0. \\
        }
      \end{equation}

      In the following part we share the terminology conventions used by Boardmann \cite[Sec. 5]{MR1718076} and Bousfield \cite{MR0365573,MR551009}.

      \subsection{}
      In first place we observe that by the very design of the Adams tower, the spectral sequence is conditionally convergent to $\varprojlim_s \pi_k X_s$. Moreover, since by construction the filtration on $\varprojlim_s \pi_k(X_s)$ is always complete separated and exhaustive, the only condition missing in order to have strong convergence to $\varprojlim_s \pi_k(X_s)$ is that the graded group $\rm{Gr}^s(\varprojlim_s \pi_k(X_s))$ associated with the filtration introduced above coincides with the $E_\infty$ term. Since by construction this graded group is the same as $\rm{Gr}^s( \pi_k(X_\infty))$ we deduce that strong convergence to $\varprojlim_s \pi_k(X_s)$ is equivalent to the condition that $e_\infty^{s,s+k}=E_\infty^{s,s+k}$ for every $s\in \bb N$.

      \subsection{}
      We need a last piece of information which is hidden in the filtration $\pi_k X_s^{(r)} \subseteq \pi_k X_s$ obtained by letting $r$ vary in $\bb N$. With a bit of work, which is carried out in Theorem 3.4 of \cite{MR1718076}, one can deduce the existence of a short exact sequence
      \begin{equation}
        \label{eqn:MLsequence}
        0 \ra \varprojlim_s {}^1 \big( \varprojlim_r  \pi_k X_s ^{(r)} \big) \ra \varprojlim_s {}^1 (\pi_k X_s) \ra \varprojlim_s \big( \varprojlim_r{} ^1 \pi_k X_s^{(r)} \big)\ra 0
      \end{equation}
      where the first non trivial map from the right is induced by the inclusions $\pi_k X_s^{(r)}\subseteq \pi_kX_s$.

      The discussion we have set up naturally leads to the following criterion.
      \begin{cor}
       \label{cor:RE_inf_vs_strong_convergence}
       For every fixed integer $k$ the following are equivalent:
        \begin{enumerate}
          \item the $E$-based motivic Adams-Novikov spectral sequence is strongly convergent (in degree $k$) to $\varprojlim_s \pi_k(X_s)$ and $\varprojlim_s^1 \pi_k X_s=0$;
          \item for every $s\in \bb N$ we have that  $e_\infty^{s,s+k}=E_\infty^{s,s+k}$ and, in addition, $\varprojlim_s^1 \pi_k X_s=0$;
          \item for every $s\in \bb N$ we have that $\varprojlim_{r}^1 E_r^{s, s+k}=0$.
        \end{enumerate}
      \end{cor}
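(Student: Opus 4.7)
The plan is to combine the large diagram displayed just before the corollary with the sequence \eqref{eqn:MLsequence} and the standard Milnor six-term exact sequence applied in the variable $r$. I would organise the argument around the two equivalences $(1)\Leftrightarrow (2)$ and $(2)\Leftrightarrow (3)$.

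First I would establish $(1)\Leftrightarrow (2)$. The filtration $F^{\bullet}\varprojlim_s\pi_k(X_s)$ is automatically Hausdorff and complete, since the filtered group is itself the inverse limit along the very same tower, and it is exhaustive because $X_{-1}=0$. Hence strong convergence to $\varprojlim_s\pi_k(X_s)$ in degree $k$ reduces to the statement that, on associated graded pieces, the inclusion $F^s\pi_k(X_\infty)/F^{s+1}\pi_k(X_\infty)\hookrightarrow E_\infty^{s,s+k}$ is an isomorphism for every $s$. From the top two rows of the big diagram this inclusion is precisely the embedding $e_\infty^{s,s+k}\hookrightarrow E_\infty^{s,s+k}$, so strong convergence amounts to the equalities $e_\infty^{s,s+k}=E_\infty^{s,s+k}$ for every $s\in\bb N$. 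Attaching the common side condition $\varprojlim_s{}^1\pi_k(X_s)=0$ on both sides yields $(1)\Leftrightarrow (2)$.

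Second, I would prove $(2)\Leftrightarrow (3)$. The key input is the six-term Milnor sequence obtained from \eqref{eqn:bigE_infty_term} by taking $\varprojlim_r$:
\begin{equation*}
0\to \varprojlim_r E_r^{s,s+k}\to \varprojlim_r\pi_k X_s^{(r-1)}\to \varprojlim_r\pi_k X_{s-1}^{(r)}\to \varprojlim_r{}^1 E_r^{s,s+k}\to \varprojlim_r{}^1\pi_k X_s^{(r-1)}\to \varprojlim_r{}^1\pi_k X_{s-1}^{(r)}\to 0,
\end{equation*}
which is precisely the lower portion of the big diagram. The snake lemma applied to the top two rows of the same diagram identifies $E_\infty^{s,s+k}/e_\infty^{s,s+k}$ as a subquotient controlled by $\varprojlim_r{}^1\pi_k X_s^{(r-1)}$, while \eqref{eqn:MLsequence} presents $\varprojlim_s{}^1\pi_k X_s$ as an extension of $\varprojlim_s\varprojlim_r{}^1\pi_k X_s^{(r)}$ by $\varprojlim_s{}^1\varprojlim_r\pi_k X_s^{(r)}$. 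A bookkeeping chase through these three exact sequences shows that the family of vanishings $\varprojlim_r{}^1 E_r^{s,s+k}=0$ simultaneously forces both $e_\infty^{s,s+k}=E_\infty^{s,s+k}$ and $\varprojlim_s{}^1\pi_k X_s=0$; conversely, assuming $(2)$, the same chase recovers the vanishing of $\varprojlim_r{}^1 E_r^{s,s+k}$ at each $s$.

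The main obstacle will be the diagram chase in the step $(2)\Leftrightarrow (3)$, in which derived inverse limits attached to two different indexing variables (over the page $r$ and over the filtration level $s$) must be tracked simultaneously. The resulting statement can be viewed as a motivic incarnation of Boardman's classical criterion \cite[Theorem~7.1]{MR1718076} relating vanishing of $RE_\infty$ to strong convergence of a conditionally convergent spectral sequence.
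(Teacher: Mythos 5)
The paper gives no explicit proof of this corollary — it merely says the preceding discussion "naturally leads to" the criterion — and your proposal is essentially the diagram chase the paper leaves implicit. The step $(1)\Leftrightarrow(2)$ is exactly the paper's remark that the filtration on $\varprojlim_s \pi_k(X_s)$ is automatically exhaustive, complete, and separated, so strong convergence reduces to $e_\infty^{s,s+k}=E_\infty^{s,s+k}$; your treatment of it is correct.

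For $(2)\Leftrightarrow(3)$, the ingredients you cite are the right ones, but one claim is off. The snake lemma applied to the top two rows does \emph{not} exhibit $E_\infty^{s,s+k}/e_\infty^{s,s+k}$ as a subquotient controlled by $\varprojlim_r^1\pi_k X_s^{(r-1)}$: writing $A_s=Q_s\pi_kX$, $B_s=\varprojlim_r\pi_kX_s^{(r)}$, what one actually gets is an injection $E_\infty^{s,s+k}/e_\infty^{s,s+k}\hookrightarrow B_s/A_s$, and $B_s/A_s$ is not directly a $\varprojlim^1$. The chase that works instead runs by induction on $s$ from the anchor $X_{-1}=0$. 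For $(3)\Rightarrow(2)$: vanishing of $\varprojlim_r^1 E_r^{s,s+k}$ makes the transition maps $B_s\to B_{s-1}$ surjective and gives $\varprojlim_r^1\pi_kX_s^{(r)}\cong\varprojlim_r^1\pi_kX_{s-1}^{(r)}$; starting from $s=-1$ these $\varprojlim^1$ groups all vanish, hence by \eqref{eqn:MLsequence} $\varprojlim_s^1\pi_kX_s=0$, and surjectivity of the tower $\{B_s\}_s$ forces $A_s=B_s$ (both are quotients of the common inverse limit $L=\varprojlim_s\pi_kX_s$), hence $e_\infty=E_\infty$. For $(2)\Rightarrow(3)$: from $A_s'=B_s'$ and $A_{-1}=B_{-1}=0$ one shows $A_s=B_s$ by induction, hence $B_s\to B_{s-1}$ is surjective; then the six-term sequence gives $\{\varprojlim_r^1\pi_kX_s^{(r)}\}_s$ surjective with vanishing inverse limit (by \eqref{eqn:MLsequence}), and for an $\bb N$-indexed surjective tower of abelian groups this forces every term, and hence $\varprojlim_r^1 E_r^{s,s+k}$, to vanish. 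This last fact — that an $\bb N$-indexed tower with surjective transition maps has surjective projections from its inverse limit — is what makes the two inductions close, and it should be stated explicitly rather than left inside a "bookkeeping chase."
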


      \subsection{}
      Similarly we can apply the terminology above to address convergence to $\pi_k X_\infty$. As we have seen, the filtration on this group is always exhaustive, and in \eqref{eqn:F^s_lim^1} we have observed that it is complete but might fail to be separated. The failure of separatedness is measured by $\varprojlim_s^1 \pi_{k+1}X_s$.
       \begin{defin}
         \label{defin:complete_convergece}
         The $E$-based motivic Adams-Novikov spectral sequence is said to converge completely in degree $k$ if the following conditions are satisfied:
         \begin{enumerate}
           \item for every $s\in \bb N$ we have that  $e_\infty^{s,s+k}=E_\infty^{s,s+k}$;
           \item $\varprojlim_s^1 \pi_{k+1} X_s=0$.
         \end{enumerate}
       \end{defin}
       \begin{defin}
       \label{defin:Mittag-Leffler}
       The $E$-based motivic Adams-Novikov spectral sequence is said to be \emph{Mittag-Leffler in degree $k$} if for every $s \in \bb N$ we have that the tower $\{E_{r\geq s}^{s,s+k}\}_r$ is Mittag-Leffler, i.e. if for every $s \in \bb N$ there is an integer $r(s)$ such that $s\leq r(s) <\infty$ and $E_{r(s)}^{s,s+k}=E_\infty^{s,s+k}$. We say that the spectral sequence is \emph{strongly Mittag-Leffler in degree $k$} if it is Mittag-Leffler in degree $k$ and, in addition, there exists an $s_0$ such that for every $s\geq s_0$ the term $E_\infty^{s,s+k}$ vanishes.  
     \end{defin}
     \begin{rmk}
       \label{rmk:Mittag-Leffler_and_strongML}
       The condition of being Mittag-Leffler in degree $k$ is actually equivalent to the requirement that the tower $\{\pi_k(X_s)\}_s$ is Mittag-Leffler: this follows directly by diagram \eqref{eqn:bigE_infty_term} and the fact that $X_{-1}=0$. Being Mittag-Leffler in degree $k$ is also equivalent to the fact that, for every $s \in \bb N$, each term $E_r^{s,s+k}$ has only a finite number of non-zero differentials leaving it.
     \end{rmk}

\section{Nilpotent Resolutions} 
  \label{sec:nilpotent_resolutions}
  In this section we introduce $E$-nilpotent and $\underline \pi_0 E$-nilpotent resolutions of a spectrum $X$: this takes place in \ref{sub:E-nilpotent-resol} and \ref{sub:R_nilpotent_resolutions} respectively. We use these constructions to produce in a more efficient way the $E$-nilpotnent completion $X^\wedge _E$. As an application we obtain a convergence statement (see Theorem \ref{thm:convergence_mod_stuff}) for the $E$-based motivic Adams-Novikov spectral sequence.
  
  \subsection{$E$-Nilpotent resolutions} 
    \label{sub:E-nilpotent-resol}
    \begin{defin}
    \label{defin:E-nilpotence}
    Let $E$ be a homotopy commutative ring spectrum in $\cal{SH}(K)$. We define the subcategory of \emph{$E$-nilpotent} spectra as the smallest full subcategory $\mathrm{Nilp}(E) \subseteq \cal{SH}(K)$ satisfying the following properties:
    \begin{enumerate}
      \item $E \in \mathrm{Nilp}(E)$;
      \item $\mathrm{Nilp}(E)$ is an ideal, i.e. given any $X\in \cal{SH}(K)$ and any $F \in \mathrm{Nilp}(E)$      then $X\wedge F \in \mathrm{Nilp}(E)$;
      \item $\mathrm{Nilp}(E)$ has the 2-out-of-3 property on fibre sequences, i.e. given a fibre sequence
        \[X\ra Y\ra Z\]
        in $\cal{SH}(K)$ where any two of the three objects $X,Y,Z$ are in $\mathrm{Nilp}(E)$, then the third is in $\mathrm{Nilp}(E)$ as well;
      \item $\mathrm{Nilp}(E)$ is closed under retracts;
    \end{enumerate}
     \end{defin}

    \begin{rmk}
      \label{rmk:mods_over_E-nilp_ringsp_are_E-nilp}
      If $R$ is a homotopy associative ring spectrum and $M$ is an $R$-module then the action map $R\wedge M \ra M$ in split by the unit. So if $R$ is in $\mathrm{Nilp}(E)$ then $R\wedge M$ is also $E$-nilpotent and hence $M$ is $E$-nilpotent too. 
    \end{rmk}

    \begin{lemma}
      \label{lemma:E-nilp_implies_E-loc}
        If $E$ is a homotopy commutative ring spectrum and $X$ is any $E$-nilpotent spectrum, then $X$ is $E$-local  
    \end{lemma}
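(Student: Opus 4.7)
The plan is to show that $\mathrm{Loc}(E)$ contains a convenient generating family for $\mathrm{Nilp}(E)$ and is closed under the operations used to build $\mathrm{Nilp}(E)$ out of that family.

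First I would verify that for every $X \in \cal{SH}(K)$, the spectrum $E\wedge X$ is $E$-local. Given an $E$-acyclic spectrum $C$ and a map $f: C \to E\wedge X$, I would exhibit $f$ itself as the composition
\[ C \xrightarrow{\simeq} \bb S \wedge C \xrightarrow{e \wedge \mathrm{id}_C} E \wedge C \xrightarrow{\mathrm{id}_E \wedge f} E \wedge E \wedge X \xrightarrow{\mu \wedge \mathrm{id}_X} E\wedge X, \]
which is equal to $f$ by the left unit axiom applied to the canonical $E$-module structure on $E\wedge X$. Since $E\wedge C \simeq 0$, this composition vanishes and therefore $f=0$.

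Next I would re-characterize $\mathrm{Nilp}(E)$ as the smallest full subcategory $\cal{C} \subseteq \cal{SH}(K)$ which contains every spectrum of the form $E\wedge X$, is closed under retracts, and satisfies the $2$-out-of-$3$ property in fibre sequences. One inclusion is immediate from the definition of $\mathrm{Nilp}(E)$. For the other, I would observe that such a $\cal{C}$ is automatically an ideal: for fixed $Y \in \cal{SH}(K)$, the functor $Y \wedge (-)$ preserves fibre sequences and retracts and sends the generator $E\wedge X$ to $E\wedge (Y\wedge X)$, another generator, so the subclass $\{N \in \cal{C} : Y\wedge N \in \cal{C}\}$ verifies all three closure axioms and contains the generators, hence exhausts $\cal{C}$. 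Since $\cal{C}$ contains $E$ (take $X = \bb S$), this forces $\cal{C} \supseteq \mathrm{Nilp}(E)$.

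Combining the two steps concludes the proof: $\mathrm{Loc}(E)$ contains every $E\wedge X$ by the first step and is closed under retracts and $2$-out-of-$3$ in fibre sequences by Remark \ref{rmk:prop_of_loc_obj}, so $\mathrm{Loc}(E) \supseteq \cal{C} = \mathrm{Nilp}(E)$. The only mild obstacle, and the reason for the detour through the re-characterization, is that $\mathrm{Loc}(E)$ is not \emph{a priori} an ideal, so one cannot simply check the four defining closure conditions of $\mathrm{Nilp}(E)$ directly against $\mathrm{Loc}(E)$; passing to the smaller generating family $\{E\wedge X\}$ circumvents this issue.
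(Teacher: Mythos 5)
Your proof is correct and takes essentially the same route as the paper: both rest on the facts that spectra of the form $E\wedge X$ are $E$-local, that $\mathrm{Nilp}(E)$ is generated from these under retracts and fibre sequences, and that $\mathrm{Loc}(E)$ is closed under exactly those operations. Where the paper builds an explicit increasing filtration $C_0\subseteq C_1\subseteq\cdots$ of $\mathrm{Nilp}(E)$ and leaves the verification that its union satisfies the ideal axiom to the reader, you make that step precise by showing the ideal axiom is redundant via the subclass $\{N\in\cal{C}: Y\wedge N\in\cal{C}\}$, and you also spell out the unit-splitting argument for the $E$-locality of $E\wedge X$ that the paper delegates to the remark that $E$-modules are $E$-local.
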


    \begin{proof}
      The proof goes exactly as in \cite[Lemma 3.8]{MR551009}. We filter $\rm{Nilp}(E)$ by inductively constructed subcategories $C_i$. $C_0$ is defined as the full subcategory of $\cal{SH}(K)$ whose objects are the spectra isomorphic to $E\wedge X$ for some $X \in \cal{SH}(K)$. If $i\geq 1$ we set $C_i$ to be the full subcategory of $\cal{SH}(K)$ of those spectra that are isomorphic to a retract of an object in $C_{i-1}$ or an extension of objects in $C_{i-1}$. It is formal to check that the union of the $C_i$'s coincides with $\rm{Nilp}(E)$.
      Indeed, thanks to \ref{rmk:mods_over_E-nilp_ringsp_are_E-nilp} we have that $C_0\subset \rm{Nilp}(E)$, and since $E$-nilpotent objects are closed under retractions and extensions, we get by induction that each of the $C_n$ is contained in $\rm{Nilp}(E)$. Now the $C_n$'s form an increasing sequence of subcategories of $\rm{Nilp}(E)$ and we need to check that their union, which we denote by $C$, is the whole $\rm{Nilp}(E)$. 
      However this is clear: by construction $C$ satisfies all the four axioms of \ref{defin:E-nilpotence} so we must have $C\supseteq \rm{Nilp}(E)$, and so $\rm{Nilp}(E) = C$. For proving the $E$-locality: $E$-modules are $E$-local, so $C_0\subseteq \rm{Loc}(E)$; since $E$-local objects are closed under extensions and retractions $C_i\subseteq \rm{Loc}(E)$, and hence $\rm{Nilp}(E) = \cup_i C_i\subseteq \rm{Loc}(E)$.
     \end{proof}
    \begin{rmk}
      \label{rmk:havin_a_nilp_loc}
      Let us fix a homotopy commutative ring spectrum $E$. It follows from the previous discussion  that $\rm{Nilp}(E) \subseteq \rm{Loc}(E)$ but the converse inclusion does not in general hold, even in the classical setting of the topological stable category $\cal{SH}$. There are however cases in which $\rm{Loc}(E)=\rm{Nilp}(E)$.
    \end{rmk}
    \begin{defin}
      Let $E$ be a homotopy associative ring spectrum. A spectrum $X$ is called $E$-pre-nilpotent if $X_E$ is $E$-nilpotent.
    \end{defin}
    \begin{prop}
      \label{prop:pre-nilp_sphere}
      Let $E$ be a commutative ring spectrum. Then the following are equivalent
      \begin{enumerate}[label=P.\arabic*]
        \item \label{prenilp_a} $\bb S$ is $E$-pre-nilpotent, i.e. $\bb S_E$ is $E$-nilpotent;
        \item \label{prenilp_b} For every spectrum $X$, $\bb S_E\wedge X$ is $E$-nilpotent;
        \item \label{prenilp_c} Every spectrum $X$ is $E$-pre-nilpotent, i.e. $X_E$ is $E$-nilpotent for every $X$;
        \item \label{prenilp_d} $\rm{Nilp}(E)=\rm{Loc}(E)$.
      \end{enumerate}
      Moreover the following are equivalent:
      \begin{enumerate}[label=S.\arabic*]
        \item \label{prenilp_e} For every spectrum $X$, the map $\lambda_E(\bb S)\wedge id_X: X \ra \bb S_E\wedge X$ is the $E$-localization of $X$;
        \item \label{prenilp_f} The multiplication map of the $E$-local sphere $\bb S_E \wedge \bb S_E \ra\bb S_E$ is an isomorphism and the natural inequality $\bc E \leq \bc {\bb S_E}$ is an equality.
        \end{enumerate} 
      In addition the statement \ref{prenilp_b} implies \ref{prenilp_e}.
      Furthermore if $E$ has a multiplication map $E\wedge E\ra E$ which is an isomorphism, then the unit $e:\bb S \ra E$ coincides with the localization map $\lambda_E(\bb S)$ and condition \ref{prenilp_a} holds.
    \end{prop}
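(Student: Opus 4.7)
The plan is to close a cycle P.1 $\Rightarrow$ P.2 $\Rightarrow$ P.3 $\Rightarrow$ P.4 $\Rightarrow$ P.1 for the first group of equivalences, then to establish S.1 $\Leftrightarrow$ S.2 together with the implication P.2 $\Rightarrow$ S.1, and finally to handle the concluding assertion about multiplications that are isomorphisms.

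For the cycle: P.1 $\Rightarrow$ P.2 is immediate from the ideal property of $\mathrm{Nilp}(E)$ in Definition \ref{defin:E-nilpotence}. For P.2 $\Rightarrow$ P.3 I would observe that the map $X \simeq \bb S \wedge X \ra \bb S_E \wedge X$ is an $E$-equivalence (smashing preserves $E$-equivalences), while its target is $E$-local by Lemma \ref{lemma:E-nilp_implies_E-loc} combined with P.2; thus this map exhibits $\bb S_E \wedge X$ as $X_E$, which is then $E$-nilpotent. For P.3 $\Rightarrow$ P.4, the inclusion $\mathrm{Nilp}(E) \subseteq \mathrm{Loc}(E)$ is Lemma \ref{lemma:E-nilp_implies_E-loc}, while the reverse uses $X \simeq X_E$ for $E$-local $X$ combined with P.3. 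Finally P.4 $\Rightarrow$ P.1 is immediate since $\bb S_E \in \mathrm{Loc}(E) = \mathrm{Nilp}(E)$.

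For S.2 $\Rightarrow$ S.1: the assumption that $\bb S_E \wedge \bb S_E \ra \bb S_E$ is an isomorphism makes $\bb S_E \wedge X$ into a $\bb S_E$-module, hence $\bb S_E$-nilpotent and therefore $\bb S_E$-local by Lemma \ref{lemma:E-nilp_implies_E-loc}; the equality $\bc{E} = \bc{\bb S_E}$ upgrades this to $E$-locality, and combined with the $E$-equivalence $\lambda_E(\bb S) \wedge \mathrm{id}_X$ this gives the $E$-localization of $X$. Conversely, applying S.1 to $X = \bb S_E$ shows that $\bb S_E \ra \bb S_E \wedge \bb S_E$ is the $E$-localization of the already $E$-local $\bb S_E$, hence an isomorphism; composing with the multiplication $\bb S_E \wedge \bb S_E \ra \bb S_E$ yields the identity by right unitality, so the multiplication is also an isomorphism. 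The equality of Bousfield classes follows directly from S.1, which identifies $E$-acyclic spectra with those $X$ satisfying $\bb S_E \wedge X = 0$. The implication P.2 $\Rightarrow$ S.1 follows by exactly the same argument as P.2 $\Rightarrow$ P.3, since $E$-nilpotence implies $E$-locality.

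For the concluding statement, assume $\mu: E \wedge E \ra E$ is an isomorphism. Right unitality forces $\mu \circ (e \wedge \mathrm{id}_E) = \mathrm{id}_E$, so $e \wedge \mathrm{id}_E$ is an isomorphism; equivalently $e: \bb S \ra E$ is an $E$-equivalence. Since $E$ is tautologically $E$-nilpotent and hence $E$-local, $e$ is the $E$-localization of $\bb S$, so $\bb S_E \simeq E$ is $E$-nilpotent, yielding P.1. The main technical subtlety throughout is the implicit ring structure on $\bb S_E$: whenever we need a multiplication on $\bb S_E$ we rely on the standard fact that $E$-localization lifts the commutative ring structure from $\bb S$ to $\bb S_E$, producing the multiplication map featuring in S.2 together with its unital and (homotopy) associative properties.
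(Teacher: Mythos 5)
Your proof is correct and follows essentially the same structure as the paper's: the factorization of $\lambda_E(X)$ through $\bb S_E \wedge X$, the ideal/closure properties of $\mathrm{Nilp}(E)$, and the use of the (homotopy) ring structure on $\bb S_E$ to get $\bb S_E$-module hence $E$-local targets. The one small difference is in S.2 $\Rightarrow$ S.1: the paper first shows $\lambda_E(\bb S)\wedge \mathrm{id}_X$ is the $\bb S_E$-localization (where the iso hypothesis on the multiplication is what makes the map a $\bb S_E$-equivalence) and then transports along $\bc E = \bc{\bb S_E}$, whereas you verify $E$-locality of the target directly and observe the map is automatically an $E$-equivalence; note that in your route the iso hypothesis on $\mu$ plays no real role (only the ring structure and the Bousfield-class equality are used), so the phrase ``the assumption that $\bb S_E\wedge\bb S_E\ra\bb S_E$ is an isomorphism makes $\bb S_E\wedge X$ into a $\bb S_E$-module'' is a slight conflation — the module structure already comes from the ring structure — but this does not affect the validity of the argument.
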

    \begin{proof}
       We start by observing that the localization map $\lambda_E(X)$ factors as
      \begin{equation}
      \label{eqn:E-loc_vs_smash_with_loc_sphere}
      \xymatrix{
      X \ar[r]^{\lambda_E(X)} \ar[d]_{\lambda_E(\bb S)\wedge id_X} & X_E\\
      \bb S_E\wedge X. \ar[ur]_{\tilde\lambda(X)} & \\
      }
      \end{equation}
      Since all the maps in the diagram are $E$-equivalences, $\tilde \lambda$ is an isomorphism if and only if $\bb S_E\wedge X$ is $E$-local. 

      $E$-nilpotent objects are closed under smashing with arbitrary spectra, so that $\bb S_E$ is $E$-nilpotent if and only if for every spectrum $X$, $\bb S_E\wedge X$ is $E$-nilpotent too (\ref{prenilp_a}$\Leftrightarrow$\ref{prenilp_b}). In view of \eqref{eqn:E-loc_vs_smash_with_loc_sphere}, if $\bb S_E\wedge X$ is $E$-nilpotent then it is $E$-local and hence $\tilde\lambda$ is an isomorphism (\ref{prenilp_b}$\Rightarrow$\ref{prenilp_c}). Clearly \ref{prenilp_c} $\Rightarrow$ \ref{prenilp_a} and \ref{prenilp_c} $\Leftrightarrow$ \ref{prenilp_d}. 

      Using \eqref{eqn:E-loc_vs_smash_with_loc_sphere} we immediately deduce that \ref{prenilp_b}$\Rightarrow$\ref{prenilp_e}.

      By applying \ref{prenilp_e} to $X=\bb S_E$, in view of \eqref{eqn:E-loc_vs_smash_with_loc_sphere}, we deduce that the multiplication map $\tilde\lambda(\bb S_E)$ of $\bb S_E$ is an isomorphism. On the other hand, by smashing the fundamental fibre sequence
      \[{}_E\bb S \ra \bb S \ra \bb S_E\]
      with a spectrum $X$, we deduce that if $X$ is $\bb S_E$-acyclic, then it is also $E$-acyclic. This means that $\bc E\leq \bc{\bb S_E}$. The reverse equality is immediate from \ref{prenilp_e}, so \ref{prenilp_e} implies \ref{prenilp_f}. Assume now \ref{prenilp_f}. Since the multiplication of $\bb S_E$ is an isomorphism, for every spectrum $X$ the map $\lambda_{E}(\bb S)\wedge id_X: X \ra \bb S_E\wedge X$ is the $\bb S_E$-localization of $X$; however $\bc{\bb S_E}=\bc E$ so that \ref{prenilp_f} implies \ref{prenilp_e}.

      If $E$ is a ring spectrum with the property that the multiplication $E\wedge E \ra E$ is an isomorphism, then the unit $e: \bb S\ra E$ is an $E$-equivalence; since $E$ is $E$-nilpotent, and thus $E$-local, we conclude.
    \end{proof}

    \begin{defin}
    \label{defin:smashing_localization}
      We say that a spectrum $E$ induces a smashing localization if the map $\tilde \lambda_E(X): \bb S_E \wedge X \ra X_E $ of \ref{eqn:E-loc_vs_smash_with_loc_sphere} is an isomorphism in $\cal{SH}(K)$.
    \end{defin}

    \begin{ex}
      Let $E=\bb S[S^{-1}]$ where $S\subseteq K^{MW}_\ast(K)$ is a possibly infinite subset. Then Proposition \ref{prop:localization_at_S_inverted_moore_spectrum} implies that the multiplication map $\bb S[S^{-1}]\wedge \bb S[S^{-1}] \ra \bb S[S^{-1}]$ is an isomorphism. Hence $\bb S[S^{-1}]$ defines a smashing localization.

      Let $E=H\bb Q$ be the spectrum representing Voevodsky's motivic cohomology with rational coefficients. Combining Proposition 14.1.6 with Corollary 16.1.7 of \cite{cisdeg_triang_mixed}, we deduce that the multiplication map of $H \bb Q$ is an isomorphism. In particular the localization at $H\bb Q$ is smashing. 
    \end{ex}
    \subsubsection{}
      \label{subs:Relation between localization and nilpotent completion}
      We wish to point out how Definition \ref{defin:std_adams_tower} and Lemma \ref{lemma:E-nilp_implies_E-loc} imply that, for every spectrum $X$, the $E$-nilpotent completion $X^{\wedge}_E$ is $E$-local. Indeed
      \[X^\wedge_E=\holim_n \big(  \cdots \overset{f_{n+1}}{\ra} X\wedge \overline E_n \overset{f_n}{\ra} \cdots \ra  X\wedge \overline E_0 \ra 0 \big), \]
      and each of the maps in the tower sits in the fibre sequence
        \[
           E \wedge \overline E^n \wedge X \ra \overline E_{n}\wedge X \overset{f_n}{\ra} \overline E_{n-1}\wedge X
        \]
      that we have deduced from \eqref{eqn:fund_fib_seq_Wn_En_En-1}. As a consequence, by induction, each of the terms in the tower is $E$-nilpotent, hence $E$-local, and thus $X^\wedge_E$ is $E$-local too. In particular the natural map $\alpha_E(X) : X \ra X^{\wedge}_E$ factors as 
      \begin{equation}
      \label{eqn:alphabetagamma}
        \xymatrix{
        X \ar[rr]^{\alpha_E(X)} \ar[dr]_{\lambda_E(X)} & & X^{\wedge}_E \\
        & X_E \ar[ur]_{\beta_E(X)} & \\
        }.
      \end{equation}
      It follows that $\alpha_E(X)$ is an $E$-equivalence if and only if the induced map $\beta_E(X)$ is an equivalence.

      \subsubsection{}
      We wish to point out another fact. On one hand, if $X\ra Y$ is an $E$-equivalence, then it induces an isomorphism of the Standard $E$-Adams Towers (\ref{defin:std_adams_tower}) associated to $X$ and $Y$, so that the natural map induced on homotopy inverse limits $X^\wedge_E \ra Y^\wedge_E$ is an equivalence.
      On the other hand the composition of $\alpha_E(X)$ with the projection to the $0$-th term of the tower
      \[  X \ra X^\wedge_E \ra X\wedge \overline E_0=X\wedge E\] is identified with $id_X\wedge e$, where $e: \bb S\ra E$ is the unit of the homotopy ring spectrum $E$. Thus, after smashing with $E$, 
      the map $\alpha_E(X)\wedge E: X\wedge E \ra X^\wedge_E\wedge E$ has a splitting which is functorial in $X$. So if $f: X\ra Y$ is a map inducing an equivalence on $E$-nilpotent completions $X^\wedge _E \ra Y^\wedge _E$, then $f$ is an $E$-equivalence. We conclude that $\alpha_E(X)$ is an $E$-equivalence if and only if the induced map 
      \[\alpha_E(X)^\wedge_E: X^\wedge_E\ra (X^\wedge _E)^\wedge_E\] is an equivalence.    

    \begin{defin}
      \label{defin:E-nilp_resol}
      For a spectrum $X\in \cal{SH}(K)$, an \emph{$E$-nilpotent resolution of $X$} is a tower of spectra under $X$
      \[X \ra \cdots  \ra X_n \ra X_{n-1} \ra \cdots \ra X_0 \]
      satisfying the following two properties:
      \begin{enumerate}
        \item $X_n \in \mathrm{Nilp}(E)$ for every $n \in \bb N$;
        \item for any $Y\in \mathrm{Nilp}(E)$ the canonical map $\colim_n [X_n,Y] \ra [X,Y]$ is an isomorphism.
      \end{enumerate}
    \end{defin}

    \begin{prop}
      \label{prop:E-nilp_resol_are_unique_and_com_the_nilp_comp}
      Let $X \in \cal{SH}(K)$ be any spectrum. Then:
      \begin{enumerate}
        \item \label{ENR1} the standard Adams tower $\{ \overline{E}_n\wedge X\}$ is an $E$-nilpotent resolution of $X$;
        \item \label{ENR2} there exists a unique $E$-nilpotent resolution of $X$ up to unique isomorphism in the category of pro-towers under $X$;
        \item \label{ENR3} given any $E$-nilpotent resolution $\{ X_n \}_n$ of $X$ we have that $\holim_n X_n \simeq X^{\wedge}_{E}$.
      \end{enumerate}
    \end{prop}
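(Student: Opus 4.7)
I would treat part (1) as the substantive step and deduce (2) and (3) from its universal property together with formalities about pro-towers.

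For the nilpotence part of (1), an induction on $n$ using the fibre sequence $W_n\wedge X \to \overline E_n\wedge X \to \overline E_{n-1}\wedge X$ obtained from \eqref{eqn:fund_fib_seq_Wn_En_En-1} suffices: the base case $\overline E_0\wedge X=E\wedge X$ lies in $\mathrm{Nilp}(E)$ by the ideal axiom of Definition \ref{defin:E-nilpotence}, each $W_n\wedge X=E\wedge \overline E^n\wedge X$ lies in $\mathrm{Nilp}(E)$ for the same reason, and the 2-out-of-3 axiom closes the step.

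The universal property is the heart of the matter. Smashing the cofibre sequence \eqref{eqn:fund_fib_seq_E^n_to_E_n} with $X$ produces $\overline E^{n+1}\wedge X \to X \to \overline E_n\wedge X$; applying $[-,Y]$ and taking the filtered colimit over $n$, which is exact on abelian groups, reduces the desired isomorphism $\colim_n [\overline E_n\wedge X,Y]\simeq [X,Y]$ to the vanishing
\[
\colim_n [\Sigma^k\overline E^{n+1}\wedge X,Y]=0 \quad \text{for } k=0,1,
\]
with transition maps induced by $\bar e\wedge \mathrm{id}$. Let $\cal C\subseteq \cal{SH}(K)$ denote the class of $Y$ for which this vanishing holds at every integer shift and for every $X$. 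Exactness of filtered colimits makes $\cal C$ closed under retracts and under fibre-sequence 2-out-of-3. A straightforward induction on the construction shows that $\mathrm{Nilp}(E)$ is the smallest subcategory closed under these two operations and containing all spectra of the form $E\wedge Z$, so it is enough to verify $E\wedge Z\in \cal C$ for every $Z$. This reduces to the $E$-module unit axiom: any $f:\overline E^n\wedge X\to E\wedge Z$ factors as
\[
f=(\mu\wedge \mathrm{id}_Z)\circ(\mathrm{id}_E\wedge f)\circ(e\wedge \mathrm{id}_{\overline E^n\wedge X}),
\]
so precomposing with $\bar e\wedge \mathrm{id}$ yields a composite whose inner portion $\overline E^{n+1}\wedge X\to E\wedge \overline E^n\wedge X$ equals $(e\circ \bar e)\wedge \mathrm{id}$, which vanishes since $e\circ \bar e=0$ by the very definition of $\overline E=\hofib(e)$. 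Consequently the transition maps in the colimit system are identically zero, and the colimit vanishes.

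For (2), given a second $E$-nilpotent resolution $\{X'_m\}$ of $X$, the universal property of (1) applied to each $X'_m\in \mathrm{Nilp}(E)$ produces for every $m$ some $n(m)$ and a lift $X_{n(m)}\to X'_m$ of the structure map $X\to X'_m$; the indeterminacy in both $n(m)$ and the choice of lift washes out in the pro-category, and these lifts assemble into a morphism of pro-towers under $X$. Running the symmetric construction and using the uniqueness clause of the colimit identifies the two composites with the identity, giving a canonical pro-isomorphism. For (3), the homotopy inverse limit of a tower of spectra depends only on its pro-isomorphism class (Appendix \ref{sec:pro_spectra}), so the pro-isomorphism from (2) between $\{X_n\}$ and the standard Adams tower gives $\holim_n X_n\simeq \holim_n \overline E_n\wedge X=X^\wedge_E$. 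The main obstacle I expect is the diagram chase in (1) showing that the transition maps on $[\overline E^n\wedge X, E\wedge Z]$ vanish identically; once that is in hand, the rest is formal from the closure properties of $\cal C$ and the appendix on pro-spectra.
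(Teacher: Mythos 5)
Your proposal is correct and follows essentially the same route as the paper: part (1) by reducing the universal mapping property to $\colim_n [\overline E^n\wedge X,Y]=0$, establishing this for $Y=E\wedge Z$ by showing the transition maps vanish via the $E$-module retraction (you compute this directly, whereas the paper phrases it as surjectivity of $(e\wedge\mathrm{id})^\ast$ in the long exact sequence coming from $\overline E^{n+1}\to\overline E^n\to E\wedge\overline E^n$, but these are the same argument), then closing up under retracts and extensions — which is exactly the paper's $C_i$-filtration of $\mathrm{Nilp}(E)$ in disguise; and parts (2) and (3) by the same pro-category formalities.
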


    \begin{proof}
     We start with \eqref{ENR1}. As we already observed in Remark \ref{subs:Relation between localization and nilpotent completion}, the terms $\overline E_n\wedge X$ of the tower are $E$-nilpotent. Let $Y$ be any spectrum. By smashing the fibre sequence \eqref{eqn:fund_fib_seq_E^n_to_E_n} with $X$ and applying $[-,Y]$ we get a long exact sequence
     \begin{equation}
       \cdots \ra [\overline E_n\wedge X,Y] \ra [X,Y] \ra [\overline E^{n+1}\wedge X,Y] \ra \cdots
     \end{equation}
     Moreover, for varying $n$, the map of fibre sequences \eqref{eqn:fund_comp_of_fib_seq_E^n_to_En} induces a map of long exact sequences
     \begin{equation}
      \xymatrix{
       \cdots \ar[r] & [\overline E_n\wedge X,Y] \ar[r] & [X,Y] \ar[r] & [\overline E^{n+1}\wedge X,Y] \ar[r] & \cdots\\
       \cdots \ar[r] & [\overline E_{n-1}\wedge X,Y] \ar[r]\ar[u]^{f_n^\ast} & [X,Y] \ar[r] \ar@{=}[u] & [\overline E^{n}\wedge X,Y] \ar[r]\ar[u]^{\bar e_{n+1}^\ast} & \cdots .\\
      }
     \end{equation}
      We deduce that we only need to show that $\varinjlim _n [\overline E^n\wedge X,Y]=0$ for every $E$-nilpotent spectrum $Y$. We will proceed by induction on the family of subcategories $C_i$ that we used in the proof of \ref{lemma:E-nilp_implies_E-loc}. Assume thus that $Y\in C_0$, i.e. that $Y\simeq E\wedge Z$ for some spectrum $Z$: we will show that the transition maps in the colimit vanish, hence the colimit vanishes too. For this we look at the fibre sequence \eqref{eqn:fund_fib_seq_E^n+1_E^n_EE^n}: it gives a long exact sequence 
     \begin{equation}
       \cdots \ra [E\wedge \overline E^n\wedge X,E\wedge Z] \overset{(e\wedge \rm{id}_X)^\ast}{\rra} [\overline E^n\wedge X, E\wedge Z] \overset{(\overline e^{n+1}\wedge \rm{id}_X)^\ast}{\rra}[\overline E^{n+1} \wedge X, E\wedge Z] \ra \cdots
     \end{equation}
     where the map $(e\wedge \rm{id_X})^\ast$ is surjective since $E\wedge Z$ is an $E$-module. The transition map in the direct limit is thus $0$. Now observe that the property that $\varinjlim _n [\overline E^n\wedge X,Y]=0$ is stable in the $Y$ variable under retracts and extensions: this implies that if every $Y \in C_{i-1}$ satisfies this property then also every $Y \in C_i$ does as well. Since the union of the $C_i$'s exhaust $\rm{Nilp}(E)$ the first point is done.

     \eqref{ENR2}. Existence is \eqref{ENR1}. Let thus $X_\bullet$ and $W_\bullet$ be $E$-nilpotent resolutions of $X$. For every pair of non-negative integers $k,r$ let $p^r_k: X_r \ra X_k$ and $q^r_k: W_r \ra W_k$ be the transition maps in the towers $\{X_\bullet\}$ and $\{W_\bullet\}$ respectively. Similarly let $p^\infty_k: X \ra X_k$ and $q^\infty_k: X \ra W_k$ be the respective projection maps. We show the existence of a unique isomorphism of pro-objects $\lambda: \{X_\bullet\} \ra \{W_\bullet\}$ such that the diagram of pro-objects
     \[
     \xymatrix{
      & X \ar[dl]_{p^\infty_\bullet} \ar[dr]^{q^\infty_\bullet} & \\
     \{X_\bullet\} \ar[rr]^{\lambda} & & \{W_\bullet\}\\
     }
     \]
     commutes. Since $W_k$ is $E$-nilpotent and $\{X_\bullet\}$ is an $E$-nilpotent resolution of $X$, we have \[\Hom_{}(\{X_\bullet\},\{W_\bullet\})=\varprojlim_k[X,W_k]\] and we chose $\lambda$ to be the map of pro-objects corresponding to the compatible system $\{q^\infty_k\}$ of the projections. More explicitly, we can chose a re-indexing function $k\mapsto n_k$ and maps $\lambda_{n_k}: X_{n_k} \ra W_k$ in $\cal{SH}(K)$ such that:
     \begin{enumerate}
       \item \label{lif} for every $k \in \bb N$, $\lambda_{n_k}$ is a lifting to $X_{n_k}$ of the projection $q^\infty_k: X \ra W_k$, i.e. \[ \lambda_{n_k}\circ p^\infty_{n_k}=q^\infty_k; \]
       \item \label{com} for every $k \in \bb N$, the diagram 
               \[
               \xymatrix@=18pt{
               X \ar@{=}[dd] \ar[rrr]^(.3){p^\infty_{n_k}} \ar[drr]_{p^\infty_{n_{k+1}}} & & & X_{n_k}\ar[dd]^{\lambda_{n_k}}\\
                & & X_{n_{k+1}} \ar[ur]_{p^{n_{k+1}}_{n_{k}}} \ar[dd]^(.35){\lambda_{n_{k+1}}} & \\
               X \ar '[rr]^(.5){q^\infty_{k}} [rrr] \ar[drr]_{q^\infty_{k+1}} & & & W_k\\
               & & W_{k+1} \ar[ur]_{q^{k+1}_{k}}& \\
               }
               \] 
               commutes.
     \end{enumerate}

     Note that, by construction, $\lambda_{n_k}$ is the unique map having as source some level of the tower $\{X_\bullet\}$ and satisfying \eqref{lif}, up to increasing the level $n_k$. Hence $\{\lambda_{n_\bullet}\}=\lambda$ as maps of pro-objects. Reversing the role of $\{X_\bullet\}$ and $\{W_\bullet\}$, for every $k \in \bb N$, one constructs maps $\mu_{m_k}: W_{m_k} \ra X_{n_k}$ in $\cal{SH}(S)$ satisfying 
     \[ \mu_{m_k}\circ q^\infty_{m_k}=p^\infty_{n_k}, \]
     and such maps are unique up to increasing the level $m_k$. As a consequence both the maps $\mu_{m_k}\circ \lambda_{n_k},q^{m_k}_k : W_{m_k} \ra W_k$ are lifts to $W_{m_k}$ of the projection $q^\infty_{k}:X\ra W_k$.  But then, up to raising $m_k$, they must coincide, since $W_k$ is $E$-nilpotent resolution of $X$. By running the same argument, with the roles of $\{X_\bullet\}$ and $\{W_\bullet\}$ interchanged, one concludes. Point \eqref{ENR3} follows by combining \eqref{ENR1} and \eqref{ENR2} with the observation that, thanks to the Milnor sequence, the homotopy inverse limit of a tower of spectra only depends on the pro-isomorphism class of the tower.

    \end{proof}

  \subsection{$\cal R$-nilpotent resolutions} 
    \label{sub:R_nilpotent_resolutions}
    \begin{defin}
    \label{defin:core_of_MW_algebras}
    A \emph{Milnor-Witt algebra} is a commutative monoid in the symmetric monoidal category $\Pi_\ast(K)$ of homotopy modules over the field $K$.
    If $\cal R$ is a Milnor-Witt algebra, we define the \emph{core} of $\cal R$ to be 
      \begin{equation}
        \xymatrix{
          c\cal R:= \rm{Eq}\Big( \cal R \ar@<.3ex>[rr]^{x\mapsto x\otimes 1} \ar@<-.3ex>[rr]_{x\mapsto 1\otimes x} & & \cal R \otimes \cal R \Big)\\
        }.
      \end{equation}
    We call $\cal R$ \emph{solid} if $c\cal R=\cal R$.
    
    \end{defin}

    \begin{lemma}
      \label{lemma:props_of_solid_rings}
      Let $\cal R$ be a Milnor-Witt algebras with the property that the multiplication map of $\mu_{\cal R} : \cal R\otimes \cal R \ra \cal R$ of $\cal R$ is an isomorphism. Then:
      \begin{enumerate}
        \item \label{PSR1} $\cal R$ is solid.
        \item \label{PSR2} For every $\cal R$-module $\cal M$ in homotopy modules, the action map $\cal R\otimes \cal M\ra \cal M$ is an isomorphism. In particular a homotopy module $\cal M$ has at most one  $\cal R$-module structure.
        \item \label{PSR3} Every map of homotopy modules $\phi: \cal M\ra \cal N$ where $\cal M$ and $\cal N$ are $\cal R$-modules is $\cal R$-linear. In particular the category of $\cal R$-modules is a full subcategory of $\Pi_\ast(K)$.
      \end{enumerate}
    \end{lemma}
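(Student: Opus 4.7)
The argument is entirely formal, built on the observation that \eqref{PSR1} unlocks \eqref{PSR2} and \eqref{PSR3}. For \eqref{PSR1}, both insertions
\[ \iota_L : \cal R \to \cal R \otimes \cal R,\; x \mapsto x\otimes 1, \qquad \iota_R : \cal R \to \cal R \otimes \cal R,\; x\mapsto 1\otimes x \]
are right inverses of $\mu_{\cal R}$ by the two sides of the unit axiom. Since $\mu_{\cal R}$ is assumed to be an isomorphism it admits a unique right inverse, forcing $\iota_L = \iota_R$. Hence the equalizer defining $c\cal R$ is all of $\cal R$.

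For \eqref{PSR2}, let $\cal M$ carry an $\cal R$-module structure with action map $a : \cal R \otimes \cal M \to \cal M$, and introduce the candidate inverse $\sigma_{\cal M} : \cal M \to \cal R\otimes \cal M$, $m\mapsto 1\otimes m$ (i.e. the tensor of the unit of $\cal R$ with the identity of $\cal M$, composed with the unitor). The unit axiom for the $\cal R$-module $\cal M$ gives $a\circ \sigma_{\cal M} = \mathrm{id}_{\cal M}$. For the other composition, pick an arbitrary element $r\otimes 1 \otimes m \in \cal R\otimes \cal R\otimes \cal M$; by \eqref{PSR1}, $r\otimes 1 = 1\otimes r$ in $\cal R\otimes \cal R$, so $r\otimes 1\otimes m = 1\otimes r\otimes m$, and applying $\mathrm{id}_{\cal R}\otimes a$ to both sides yields
\[ r\otimes m \;=\; 1\otimes (r\cdot m) \;=\; \sigma_{\cal M}(a(r\otimes m)) \]
in $\cal R\otimes \cal M$. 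This shows $\sigma_{\cal M}\circ a = \mathrm{id}_{\cal R\otimes \cal M}$, and hence $a$ is an isomorphism with inverse $\sigma_{\cal M}$. The uniqueness of the $\cal R$-module structure is then immediate, since the map $\sigma_{\cal M}$ depends only on the underlying homotopy module $\cal M$ and on the unit of $\cal R$, not on the choice of action; any two actions are therefore forced to agree with the unique inverse of $\sigma_{\cal M}$.

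For \eqref{PSR3}, let $\phi:\cal M\to \cal N$ be an arbitrary morphism of homotopy modules between two $\cal R$-modules. Using the identity $r\otimes m = 1\otimes (r\cdot m)$ in $\cal R\otimes \cal M$ established in \eqref{PSR2}, we compute
\[ (\mathrm{id}_{\cal R}\otimes \phi)(r\otimes m) \;=\; (\mathrm{id}_{\cal R}\otimes \phi)(1\otimes (r\cdot m)) \;=\; 1\otimes \phi(r\cdot m) \;=\; \sigma_{\cal N}(\phi(r\cdot m)). \]
Post-composing with $a_{\cal N}$ and using $a_{\cal N}\circ \sigma_{\cal N} = \mathrm{id}_{\cal N}$ gives $a_{\cal N}\circ(\mathrm{id}_{\cal R}\otimes \phi) = \phi\circ a_{\cal M}$, i.e.\ $\phi$ is $\cal R$-linear. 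No step is a real obstacle: once \eqref{PSR1} is dispatched by the one-line uniqueness-of-right-inverse argument, \eqref{PSR2} and \eqref{PSR3} are essentially bookkeeping using the key move $r\otimes m = 1\otimes(r\cdot m)$.
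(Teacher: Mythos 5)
Your proof is correct, and it fills in details the paper leaves implicit: the paper dismisses \eqref{PSR1} as "clear," while you supply the clean uniqueness-of-right-inverse argument, which is exactly the right reason. The key move in both your proof and the paper's is the same: the insertion map (your $\sigma_{\cal M}$, or $r\otimes m\mapsto r\otimes1\otimes m$ in the paper's notation) is forced to be a two-sided inverse. Where you diverge is in the packaging. For \eqref{PSR2} the paper invokes the coequalizer presentation of $\cal R\otimes_{\cal R}\cal M$ and shows the two arrows $\mu_{\cal R}\otimes\mathrm{id}_{\cal M}$ and $\mathrm{id}_{\cal R}\otimes a$ coincide (because they share the isomorphism $\sigma$ as inverse), concluding that $q$ and hence $a=\bar a\circ q$ is an isomorphism; you instead prove $\sigma_{\cal M}\circ a=\mathrm{id}$ directly from the identity $r\otimes1=1\otimes r$ that \eqref{PSR1} gives you. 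For \eqref{PSR3} the paper appeals to the free--forget adjunction combined with the observation from \eqref{PSR2} that every module is free on its underlying homotopy module, whereas you carry out the $\cal R$-linearity check by hand using the same identity $r\otimes m=1\otimes(r\cdot m)$. Your route is more elementary and self-contained; the paper's is more structural and would generalize more easily to other monoidal settings. One minor caveat: the element-chasing should be read as a computation with morphisms in the abelian symmetric monoidal category $\Pi_\ast(K)$ (e.g.\ $\sigma_{\cal M}\circ a=(\mathrm{id}_{\cal R}\otimes a)\circ(\iota_R\otimes\mathrm{id}_{\cal M})$ and $\mathrm{id}=(\mathrm{id}_{\cal R}\otimes a)\circ(\iota_L\otimes\mathrm{id}_{\cal M})$, then use $\iota_L=\iota_R$), since tensor products of homotopy modules involve sheafification; but every identity you write is a naturality or unit/associativity constraint, so the argument is sound when translated into map-level language.
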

    \begin{proof}
      \eqref{PSR1} is clear. For \eqref{PSR2} observe that, given a $\cal R$-module $\cal M$ in homotopy modules, we have a coequalizer diagram defining the monoidal product on the category of $\cal R$-modules
      \[
      \xymatrix{
      \cal R \otimes \cal R \otimes \cal M \ar@<.4ex>[r]^{a} \ar@<-.4ex>[r]_{\mu_{\cal R}} & \cal R\otimes \cal M \ar[r]^q & \cal R\otimes_{\cal R}\cal M. \\
      }
      \]
      Moreover the action map $a: \cal R \otimes \cal M \ra \cal M$ induces an isomorphism $\bar a: \cal R \otimes_{\cal R} \cal M\ra \cal M$. The map $\cal R\otimes \cal M \ra \cal R\otimes\cal R\otimes \cal M$ defined by $r\otimes m \mapsto r\otimes 1 \otimes m$ is an inverse of both $\mu_{\cal R}$ and $a$, so that $a=\mu_{\cal R}$ and $q$ is as isomorphism. In particular every $\cal R$-module $\cal M$ is isomorphic to the free $\cal R$-module on the homotopy module $\cal M$. Point \eqref{PSR3} follows by combining the free-forget adjunction and point \eqref{PSR2}.
    \end{proof}

    \begin{rmk}
      \label{rmk:class_of_solid_MW_algebras}
      It seems reasonable to expect that, as it happens in Algebraic Topology, $\cal R$ is solid if and only if the multiplication $\cal R \otimes \cal R  \ra \cal R $ is an isomorphism. Similarly we expect $c\cal R $ to be solid, and hence that $c\cal R$ is maximal solid sub-algebra of $\cal R$. Ideally one could hope to prove an analogue of  \cite[Proposition 3.5]{MR0308107}, giving thus a classification of solid $\cal K^{MW}$-algebras. We believe that a useful tool for this task would be the classification of homogeneous prime ideals of $\cal K^{MW}$ which is carried out in \cite{MR3503978}.
    \end{rmk}

    \begin{rmk}
      \label{rmk:AssA1_implies_solidity_of_pi_0}
      Assume $E$ is a ring spectrum satisfying the assumption \ref{sub:assumption_A1}. Then the multiplication map of $\cal R=\underline \pi_0(E)$ is an isomorphism. In particular by \ref{lemma:props_of_solid_rings} $\underline \pi_0E$ is a solid Milnor-Witt algebra. For showing this, consider first the special case that $J=\emptyset$ so that $\underline \pi_0E\simeq \cal K^{MW}/(f_1,\dots,f_r)$ for some $f_i \in K^{MW}_{q_i}(K)$. Denote by $\cal I$ the unramified ideal generated by the $f_i$'s. Then we have an exact sequence 
      \begin{equation}
        \bigoplus_{i \in I} \cal K^{MW}_{\ast-q_i} \overset{{(f_1,\dots,f_r)}}{\rrra} \cal K^{MW}_\ast \ra \cal K^{MW}/\cal I\ra 0,
      \end{equation}
      and since $(\cal K^{MW}/\cal I)$ is a $\cal K^{MW}$-module and the tensor product of homotopy modules is right exact, the commutative diagram
      \begin{equation}
        \label{eqn:ass_implies_solidity}
         \xymatrix{
         \bigoplus_{i \in I} \cal K^{MW}_{\ast} \otimes \cal K^{MW}/\cal I  \ar[rr]^{(f_1,\dots,f_r)=0} &  & \cal K^{MW}_\ast \otimes \cal K^{MW}/\cal I \ar[r]\ar[d]^{\simeq} & \cal K^{MW}/\cal I\otimes \cal K^{MW}/\cal I \ar[r]\ar@{-->}[dl]^{\mu_{\cal R}} & 0\\
         & & \cal K^{MW}/\cal I & \\
         }
       \end{equation} 
       allows to conclude. Now if $J\not = \emptyset$, we can deduce the statement from \eqref{eqn:ass_implies_solidity} since filtered direct limits are exact.
    \end{rmk}

    \begin{defin}
      \label{defin:S-nilpotent_hom_module_and_spectra}
        Let $\cal R$ be a solid Milnor-Witt algebra. We say that a homotopy module $\cal M$ is \emph{$\cal R$-nilpotent} if it has a finite filtration $\cal M = \cal M_0 \supseteq \cal M_1 \supseteq \cdots \supseteq \cal M_r$ such that $\cal M_i/\cal M_{i+1}$ has an $\cal R$-module structure for every $i$. A spectrum $X$ is called \emph{$\cal R$-nilpotent} if for each $k\in \bb Z$ the homotopy module $\underline\pi_k(X)$ is $\cal R$-nilpotent and for all but a finite set of $k \in \bb Z$, $\underline \pi_k X=0$. We denote the full subcategory of $\cal R$-nilpotent spectra by $\mathrm{Nilp}(\cal R)$.
    \end{defin}

    \begin{defin}
      \label{defin:S-nilp_resol}
      Let $X$ be an object in $\cal{SH}(K)$ and $\cal R$ be a solid Milnor-Witt algebra. An \emph{$\cal R$-nilpotent resolution of $X$} is a tower of spectra under $X$
      \[X \ra \cdots  \ra X_n \ra X_{n-1} \ra \cdots \ra X_0 \]
      satisfying the following two properties:
      \begin{enumerate}
        \item $X_n \in \mathrm{Nilp}(\cal R)$ for every $n \in \bb N$;
        \item for every $Y\in \mathrm{Nilp}(\cal R)$ the canonical map $\colim_n [X_n,Y] \ra [X,Y]$ is an isomorphism.
      \end{enumerate}
    \end{defin}
  
     We would like to prove an analogue of Proposition \ref{prop:E-nilp_resol_are_unique_and_com_the_nilp_comp} for $\cal R_\ast$-nilpotent resolutions where $\cal R_\ast=c\underline \pi_0(E)_\ast$ and $E$ is a $(-1)$-connected homotopy commutative ring spectrum. We are not able to accomplish this, so we immediately assume, in addition, that the multiplication map $\underline \pi_0 E \otimes \underline \pi_0E \ra \underline \pi_0E$ is an isomorphism. In particular $\cal R=c\underline \pi_0E=\underline \pi_0 E$. In any case, this will be slightly more involved than \ref{prop:E-nilp_resol_are_unique_and_com_the_nilp_comp}, and will require some general preliminary lemmas.
    \begin{lemma}
      \label{lemma:S-nilp_in_sequences}
      Let $\cal R$ be a Milnor-Witt algebra whose multiplication $\mu_{\cal R}$ is an isomorphism. If 
      \[\cal A\ra \cal B\ra \cal C\ra \cal D\ra \cal E\]
      is an exact sequence of homotopy modules where $\cal A,\cal B,\cal D,\cal E$ are $\cal R$-nilpotent, then $\cal C$ is as well.
    \end{lemma}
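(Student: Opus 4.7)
The plan is to decompose the five-term exact sequence into two short exact sequences and reduce to three elementary closure properties of $\cal R$-nilpotence: closure under extensions, quotients, and subobjects.

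First, I would set $\cal K := \mathrm{im}(\cal B \to \cal C)$ and $\cal I := \mathrm{im}(\cal C \to \cal D)$. Exactness of $\cal A \to \cal B \to \cal C \to \cal D \to \cal E$ then produces three diagrams that drive the argument: the cokernel presentation $\cal A \to \cal B \to \cal K \to 0$ exhibiting $\cal K$ as a quotient of $\cal B$; the short exact sequence $0 \to \cal K \to \cal C \to \cal I \to 0$; and the kernel presentation $0 \to \cal I \to \cal D \to \cal E$ exhibiting $\cal I$ as a subobject of $\cal D$.

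Next, I would establish the three closure properties. Extension closure is handled by concatenating filtrations: given $0 \to \cal M' \to \cal M \to \cal M'' \to 0$ with both ends $\cal R$-nilpotent, the pullback to $\cal M$ of a filtration of $\cal M''$ followed by a filtration of $\cal M'$ provides a filtration of $\cal M$ whose successive quotients are $\cal R$-modules. For quotient closure, given $\cal M \twoheadrightarrow \cal N$ and a filtration $\cal M = \cal M_0 \supseteq \cdots \supseteq \cal M_r = 0$ of $\cal M$, the images of the $\cal M_i$ in $\cal N$ filter $\cal N$ and their successive quotients are quotients of $\cal M_i/\cal M_{i+1}$. For subobject closure, given $\cal N \hookrightarrow \cal M$, the intersections $\cal N \cap \cal M_i$ filter $\cal N$ and their successive quotients embed in $\cal M_i/\cal M_{i+1}$.

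The main obstacle is the sub/quotient step: we must know that a submodule or quotient of an $\cal R$-module (in homotopy modules) is again an $\cal R$-module. This is where the assumption that $\mu_{\cal R}$ is an isomorphism plays its essential role. By Lemma \ref{lemma:props_of_solid_rings}, this assumption makes $\cal R \otimes -$ an idempotent right-exact endofunctor on $\Pi_\ast(K)$ whose fixed points are exactly the $\cal R$-modules, and forces every homotopy-module map between $\cal R$-modules to be $\cal R$-linear. I would verify stability under sub and quotient by combining the solidity of $\cal R$ with the concrete shape of the Milnor-Witt algebras of interest to us (see \ref{defin:mixed_moore_sp}): being an $\cal R$-module translates, via the unit of $\cal R \otimes -$, into a collection of element-level conditions (vanishing of the action of some $f_i$'s and invertibility of the action of some $g_j$'s) that are plainly stable under passage to subobjects and quotients.

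Finally, combining the three closure properties: quotient closure applied to $\cal B \twoheadrightarrow \cal K$ shows $\cal K$ is $\cal R$-nilpotent, subobject closure applied to $\cal I \hookrightarrow \cal D$ shows $\cal I$ is $\cal R$-nilpotent, and extension closure applied to $0 \to \cal K \to \cal C \to \cal I \to 0$ delivers the $\cal R$-nilpotence of $\cal C$.
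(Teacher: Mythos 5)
Your decomposition of the five-term sequence is exactly the one the paper uses (``breaking up in shorter pieces''): set $\cal K = \coker(\cal A\to\cal B)$, $\cal I=\ker(\cal D\to \cal E)$, and handle $0\to\cal K\to\cal C\to\cal I\to 0$ by extension closure, which your concatenated-filtration argument proves correctly. The problem is the intermediate lemmas you propose to prove.

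You assert closure of $\cal R$-nilpotence under arbitrary subobjects and arbitrary quotients, and reduce that to the claim that a sub-homotopy-module (respectively quotient) of an $\cal R$-module is again an $\cal R$-module. When you unpack ``$\cal R$-module'' into element-level conditions you say these are ``plainly stable under passage to subobjects and quotients''. This is where the argument breaks. Vanishing of the $f_i$-action is indeed stable, but \emph{invertibility} of the $g_j$-action is not: the toy model is $\bb Z\subset \bb Z[1/p]$ (a sub-$\bb Z$-module of a solid $\bb Z[1/p]$-module on which $p$ is not invertible, and indeed $\bb Z$ is not even $\bb Z[1/p]$-nilpotent), or $\bb Z[1/p]/\bb Z$ on the quotient side ($p$ is surjective but not injective). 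So the sub/quotient closure statements you want are false whenever the multiplicative system $\cal J$ in \ref{defin:mixed_moore_sp} is nonempty; and the lemma is invoked in that generality (e.g.\ via \ref{prop:PnEn_tower_is_an_S-nilp_resol} in the proof of \ref{thm:convergence_inv_stuff}).

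The paper's intermediate statement, Lemma \ref{lemma:S-nilp_in_sequences-ker_and_coker}, is deliberately weaker and is what your decomposition actually needs: the kernel and cokernel of a map $\phi:\cal C^0\to\cal C^1$ are $\cal R$-nilpotent \emph{provided both $\cal C^0$ and $\cal C^1$ are}. Note this is available to you: $\cal K$ is the cokernel of $\cal A\to\cal B$ with both $\cal R$-nilpotent, and $\cal I$ is the kernel of $\cal D\to\cal E$ with both $\cal R$-nilpotent. The essential point, which saves the day even when $\cal J\neq\emptyset$, is that the subobjects arising are not arbitrary but are kernels of maps that by \ref{lemma:props_of_solid_rings} are automatically $\cal R$-linear between $\cal R$-modules, and the kernel (resp.\ cokernel) of an $\cal R$-linear map of $\cal R$-modules does inherit an $\cal R$-module structure (one checks, for instance, that if $g$ acts invertibly on $\cal M$ and $\cal N$ and $\phi(x)=0$, then the unique preimage $y$ of $x$ under $g$ also satisfies $\phi(y)=0$ by $\cal R$-linearity and injectivity of $g$ on $\cal N$). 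The paper encodes this through the spectral sequence of a filtered complex, after refining both filtrations so that $\phi$ is filtration-preserving. Your filtration-refinement idea is close in spirit, but you would need to track that the graded pieces of $\ker\phi$ and $\coker\phi$ are obtained as homology of $\cal R$-linear maps of $\cal R$-modules, not as arbitrary subquotients of $\cal R$-modules.
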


    \begin{proof}
      By breaking up the exact sequence in shorter pieces, the statement follows by combining \ref{lemma:S-nilp_in_sequences-extensions} and  \ref{lemma:S-nilp_in_sequences-ker_and_coker}.      
    \end{proof}

    \begin{lemma}
      \label{lemma:S-nilp_in_sequences-extensions}
      If $0 \ra \cal A \ra \cal B \ra \cal C \ra 0$ is a short exact sequence of homotopy modules and both $\cal A$ and $\cal C$ are $\cal R$-nilpotent, then $\cal B$ is too.
    \end{lemma}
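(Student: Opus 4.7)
The plan is to construct a finite filtration of $\cal B$ by splicing together (a pullback of) the filtrations on $\cal A$ and $\cal C$; this is the standard Jordan--H\"older-style argument applied in the abelian category $\Pi_\ast(K)$ of homotopy modules. Let $\iota\colon \cal A \hookrightarrow \cal B$ and $\pi\colon \cal B \twoheadrightarrow \cal C$ denote the given maps, and fix finite filtrations $\cal A = \cal A_0 \supseteq \cal A_1 \supseteq \cdots \supseteq \cal A_s = 0$ and $\cal C = \cal C_0 \supseteq \cal C_1 \supseteq \cdots \supseteq \cal C_t = 0$ witnessing the $\cal R$-nilpotence of $\cal A$ and $\cal C$, so that each subquotient $\cal A_j/\cal A_{j+1}$ and $\cal C_i/\cal C_{i+1}$ carries an $\cal R$-module structure.

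For $0 \leq i \leq t$ I would define $\cal B_i := \pi^{-1}(\cal C_i)$, computed as a preimage in $\Pi_\ast(K)$; for $1 \leq j \leq s$ I would define $\cal B_{t+j} := \iota(\cal A_j)$. The two recipes agree at $i = t$, since $\pi^{-1}(0) = \ker \pi = \iota(\cal A) = \iota(\cal A_0)$, so we obtain a well-defined decreasing chain $\cal B = \cal B_0 \supseteq \cal B_1 \supseteq \cdots \supseteq \cal B_{t+s} = 0$.

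Next I would identify the subquotients. For $0 \leq i < t$, the third isomorphism theorem in $\Pi_\ast(K)$ gives a canonical isomorphism $\cal B_i/\cal B_{i+1} \cong \cal C_i/\cal C_{i+1}$, which is an $\cal R$-module by hypothesis on the filtration of $\cal C$. For $0 \leq j < s$, the monomorphism $\iota$ descends to an isomorphism $\cal B_{t+j}/\cal B_{t+j+1} \cong \cal A_j/\cal A_{j+1}$, again an $\cal R$-module. Hence every graded piece of the constructed filtration carries an $\cal R$-module structure, so $\cal B$ is $\cal R$-nilpotent.

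There is essentially no obstacle here: the only structural input needed is that $\Pi_\ast(K)$ is abelian (being the heart of Morel's homotopy $t$-structure recalled in \ref{sub:homotopy_t_structure}), so that preimages, images, and the third isomorphism theorem are all available. Note in particular that the argument does not use the solidity hypothesis on $\cal R$, only the notion of $\cal R$-nilpotence in \ref{defin:S-nilpotent_hom_module_and_spectra}.
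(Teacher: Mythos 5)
Your proof is correct and follows exactly the approach the paper sketches in a single line: take the preimage under $\pi$ of the filtration on $\cal C$ and splice it at $\ker\pi = \iota(\cal A)$ with (the image of) the filtration on $\cal A$. You have just spelled out the details the paper leaves implicit, and your added observation that solidity of $\cal R$ plays no role here is accurate.
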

    \begin{proof}
      A suitable filtration on $\cal B$ can be obtained by combining the filtration on $\cal A$ and the pre-image in $\cal B$ of the filtration on $\cal C$.
    \end{proof}

    \begin{lemma}
      \label{lemma:S-nilp_in_sequences-ker_and_coker}
      If $\phi: \cal C^0\ra \cal C^1$ is a map of homotopy modules and both $\cal C^0$ and $\cal C^1$ are $\cal R$-nilpotent, then both $\cal H^0=\rm{Ker} \phi$ and $\cal H^1=\rm{Coker}\phi$ are $\cal R$-nilpotent too.
    \end{lemma}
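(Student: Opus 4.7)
The plan is to proceed by induction on the filtration length $n_0$ of $\cal C^0$, combining the snake lemma in the abelian category $\Pi_\ast(K)$ with the extension closure from \ref{lemma:S-nilp_in_sequences-extensions}. The base case $n_0=0$ is trivial. For the inductive step I pick $\cal C^0_1\subseteq\cal C^0$ with $\cal C^0/\cal C^0_1$ an $\cal R$-module and $\cal C^0_1$ of length $n_0-1$; by induction, $\phi_1:=\phi|_{\cal C^0_1}\colon \cal C^0_1\to\cal C^1$ has $\cal R$-nilpotent kernel and cokernel. Applying the snake lemma to
\begin{equation*}
\xymatrix{
0 \ar[r] & \cal C^0_1 \ar[r] \ar[d]^{\phi_1} & \cal C^0 \ar[r] \ar[d]^{\phi} & \cal C^0/\cal C^0_1 \ar[r] \ar[d]^{\bar\phi} & 0\\
0 \ar[r] & \im\phi_1 \ar[r] & \cal C^1 \ar[r] & \coker\phi_1 \ar[r] & 0\\
}
\end{equation*}
(with $\bar\phi$ well-defined since $\phi(\cal C^0_1)=\im\phi_1$) and using that the leftmost vertical map is surjective onto its target, one obtains an isomorphism $\coker\phi\simeq\coker\bar\phi$ together with a short exact sequence $0\to \ker\phi_1\to\ker\phi\to\ker\bar\phi\to 0$.

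Controlling $\bar\phi\colon \cal C^0/\cal C^0_1\to\coker\phi_1$, which goes from an $\cal R$-module to an $\cal R$-nilpotent module, is the content of an auxiliary claim I would establish first: \emph{if $\psi\colon \cal M\to\cal N$ has $\cal M$ an $\cal R$-module and $\cal N$ an $\cal R$-nilpotent module, then $\ker\psi$ and $\coker\psi$ are $\cal R$-nilpotent}. This I would prove by a separate induction on the filtration length of $\cal N$. The base case, in which $\cal N$ is itself an $\cal R$-module, uses the third point of \ref{lemma:props_of_solid_rings}: any homotopy module map between $\cal R$-modules is automatically $\cal R$-linear, so $\ker\psi$ is stable under the action on $\cal M$ and $\coker\psi$ inherits the action from $\cal N$, both being $\cal R$-modules. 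For the step one picks $\cal N_1\subset \cal N$ with $\cal N/\cal N_1$ an $\cal R$-module, forms the composite $\bar\psi\colon \cal M\to \cal N/\cal N_1$ whose kernel is an $\cal R$-module by the base case, and applies the inductive hypothesis to the restriction $\psi|_{\ker\bar\psi}\colon \ker\bar\psi\to\cal N_1$; a second snake lemma combined with \ref{lemma:S-nilp_in_sequences-extensions} then closes the auxiliary claim. Applied to $\bar\phi$, this yields $\cal R$-nilpotence of $\ker\bar\phi$ and $\coker\bar\phi$, after which \ref{lemma:S-nilp_in_sequences-extensions} promotes the short exact sequence above to the conclusion that $\ker\phi$ and $\coker\phi$ are $\cal R$-nilpotent.

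The main obstacle I anticipate is that in $\Pi_\ast(K)$, arbitrary sub- or quotient homotopy modules of an $\cal R$-module need not be $\cal R$-modules: the classical pathology that $\bb Z\subset\bb Q$ is not a $\bb Q$-submodule persists here, since inverting the multiplicative system $\cal J$ does not force sub-homotopy-modules to be closed under the inverse $\cal J$-action. This rules out any naive induced-filtration argument directly on $\ker\phi$ or $\coker\phi$, and is precisely the reason the proof must be routed through the snake lemma: only after the reduction brings us to maps \emph{between} genuine $\cal R$-modules does the third point of \ref{lemma:props_of_solid_rings} force $\cal R$-linearity, and this is what ultimately supplies the $\cal R$-module structures at the bottom of the recursion.
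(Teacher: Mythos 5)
Your proof is correct, but it takes a genuinely different route from the paper. The paper first arranges for $\phi$ to respect the filtrations on $\cal C^0$ and $\cal C^1$ (which can be done cheaply, e.g. by shifting the filtration on $\cal C^1$ so that it starts only in filtration degree $\geq \mathrm{length}(\cal C^0_\bullet)$), and then invokes the spectral sequence of a finite filtered complex of homotopy modules: the $E_1$-page entries are $\cal R$-modules, so by the third point of \ref{lemma:props_of_solid_rings} all differentials are $\cal R$-linear, hence the $E_\infty$ term consists of $\cal R$-modules and the induced filtration on $\cal H^0$ and $\cal H^1$ exhibits them as $\cal R$-nilpotent. You instead run a two-level induction on filtration length with repeated snake lemma applications, first establishing the auxiliary claim (the case where the source is already an $\cal R$-module, by induction on the filtration length of the target), then using it as the effective base case of the main induction on the filtration length of $\cal C^0$. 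Both proofs ultimately bottom out at the same key fact from \ref{lemma:props_of_solid_rings}\eqref{PSR3} that maps between $\cal R$-modules are automatically $\cal R$-linear. The paper's argument is more compact once one has the filtered-complex spectral sequence at hand; your argument is more elementary, avoids the filtration-compatibility preparation, and explicitly isolates the real subtlety — that sub- and quotient homotopy modules of an $\cal R$-module are not automatically $\cal R$-modules — which is also your stated motivation for routing everything through the snake lemma.
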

    \begin{proof}
      Up to increasing the length of the filtrations $\cal C^0_i$ and $\cal C^1_i$ we can assume that $\phi$ respects the filtrations. As a consequence $(\cal C^\bullet,\phi)$ is a filtered complex of homotopy modules. In particular, $\phi$ induces a map on graded objects 
      \[ \phi_{i/i+1}:\cal C^0_i/\cal C^0_{i+1} \ra \cal C^1_i/\cal C^1_{i+1}.\]
      For every $i$, since both source and target of $\phi_{i/i+1}$ are $\cal R$-modules by assumption, $\phi_{i/i+1}$ is $\cal R$-linear by \ref{lemma:props_of_solid_rings}. As a consequence $\cal H^p(\phi_{i/i+1})$ is an $\cal R$-module for every $i$ and $p$. Furthermore there is a finite filtration $F^\bullet$ on $\cal H^p(\cal C^\bullet)$ defined by
      \[F^i\cal H^p:=\rm{Im}\big (\cal H^p(\cal C^\bullet_i) \ra \cal H^p(\cal C^\bullet)\big )\]

      In this setting, the spectral sequence associated to a filtered complex of homotopy modules 
      \[ E_1^{p,q}=\cal H^{p+q}\big ( \cal C^\bullet_p/ \cal C^\bullet_{p+1}\big ) \Rightarrow \cal H^{p+q}(\cal C^\bullet)\]
      converges to the filtration $F^\bullet$ on $\cal H^\bullet (\cal C^\bullet)$, i.e. for every $p$ and $q$
      \[ E_\infty^{p,q}=F^p \cal H^{p+q}/F^{p+1} \cal H^{p+q}.\]
      As observed above, every term in $E_1$ is a $\cal R$-module and so the differentials are $\cal R$-linear by \ref{lemma:props_of_solid_rings}. It follows that $E^{p,q}_\infty$ is an $\cal R$-module and thus the filtration $F^\bullet$ makes $\cal H^0(\cal C^\bullet)$ and $\cal H^1(\cal C^\bullet)$ into $\cal R$-nilpotent modules.
    \end{proof}
    
    \begin{lemma}
    \label{lemma:mul_by_f_in_a_B_nilp_A-mod}
      Let $\cal R$ be a Milnor-Witt algebra whose multiplication map $\mu_{\cal R}$ is an isomorphism and let $f \in K^{MW}_\ast(K)$. Denote by $\cal S:=\cal R/(f)$. Then the multiplication map $\mu_{\cal S}$ of $\cal S$ is an isomorphism. Furthermore for every $\cal R$-nilpotent homotopy module $\cal M$ both $\ker(f\cdot: \cal M_{\ast-p} \ra \cal M_{\ast})$ and $\coker(f \cdot: \cal M_{\ast-p} \ra \cal M_\ast)$ are $\cal S$-nilpotent.
    \end{lemma}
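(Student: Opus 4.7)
The plan is to handle the two assertions in turn: first the claim that $\mu_{\cal S}$ is an isomorphism, which I approach directly by tensoring the defining exact sequence of $\cal S$ with $\cal S$ itself; and second the preservation of $\cal S$-nilpotence by the functors $\ker(f\cdot-)$ and $\coker(f\cdot-)$, which I prove by induction on the length of a filtration witnessing $\cal R$-nilpotence.

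For the first claim, I would start from the defining right exact sequence $\cal R_{\ast-p} \overset{\cdot f}{\ra} \cal R \ra \cal S \ra 0$ and tensor on the right with $\cal S$. Right exactness of the tensor product in $\Pi_\ast(K)$ yields
\[ \cal R_{\ast-p}\otimes \cal S \overset{(\cdot f)\otimes 1}{\ra} \cal R\otimes \cal S \ra \cal S\otimes \cal S \ra 0. \]
Since $\cal S$ is naturally an $\cal R$-module and $\mu_{\cal R}$ is an isomorphism, Lemma \ref{lemma:props_of_solid_rings}\ref{PSR2} supplies canonical identifications $\cal R\otimes \cal S \simeq \cal S$ and $\cal R_{\ast-p}\otimes \cal S \simeq \cal S_{\ast-p}$ under which $(\cdot f)\otimes 1$ becomes the zero map, as $f$ acts by zero on $\cal S$ by construction. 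This forces $\cal S\otimes \cal S \simeq \cal S$, and a brief diagram chase identifies the resulting isomorphism with $\mu_{\cal S}$.

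For the second claim, I would induct on the length $r$ of a filtration $\cal M = \cal M_0 \supseteq \cdots \supseteq \cal M_r = 0$ witnessing that $\cal M$ is $\cal R$-nilpotent. The base case $r\leq 1$ reduces to the situation where $\cal M$ is an $\cal R$-module: then both $\cal M[f]$ and $\cal M/f\cal M$ carry inherited $\cal R$-actions on which $f$ manifestly acts by zero, hence an $\cal S$-module structure, making them $\cal S$-nilpotent by the first part. For the inductive step I apply the snake lemma to the commutative diagram with exact rows
\[
\xymatrix@R=12pt{
0 \ar[r] & \cal M_{1,\ast-p} \ar[r]\ar[d]^{\cdot f} & \cal M_{\ast-p} \ar[r]\ar[d]^{\cdot f} & (\cal M/\cal M_1)_{\ast-p} \ar[r]\ar[d]^{\cdot f} & 0\\
0 \ar[r] & \cal M_1 \ar[r] & \cal M \ar[r] & \cal M/\cal M_1 \ar[r] & 0\\
}
\]
to obtain a six-term exact sequence
\[ 0 \ra \cal M_1[f] \ra \cal M[f] \ra (\cal M/\cal M_1)[f] \ra \cal M_1/f \ra \cal M/f \ra (\cal M/\cal M_1)/f \ra 0. \]
By the inductive hypothesis the two $\cal M_1$-terms are $\cal S$-nilpotent, and the two $\cal M/\cal M_1$-terms are $\cal S$-modules since $\cal M/\cal M_1$ is an $\cal R$-module. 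Splitting this six-term sequence into two five-term pieces and invoking Lemma \ref{lemma:S-nilp_in_sequences} with $\cal S$ in place of $\cal R$, which is legitimate by the first part of the present lemma, then yields the $\cal S$-nilpotence of $\cal M[f]$ and of $\cal M/f$, closing the induction. The only genuinely delicate point I anticipate is verifying that the isomorphism $\cal S\otimes \cal S \simeq \cal S$ produced in the first step really coincides with the abstract multiplication $\mu_{\cal S}$; the rest is formal homological algebra in $\Pi_\ast(K)$ together with a direct appeal to Lemma \ref{lemma:S-nilp_in_sequences}.
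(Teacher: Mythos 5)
Your proof is correct and follows essentially the same route as the paper: the first claim is established by tensoring the presentation $\cal R_{\ast-p}\overset{\cdot f}{\to}\cal R\to\cal S\to 0$ against $\cal S$ and using right-exactness together with the vanishing of the $f$-action, and the second is an induction on the length of a nilpotence filtration driven by the snake lemma and Lemma~\ref{lemma:S-nilp_in_sequences} applied to $\cal S$. The diagram chase identifying the isomorphism $\cal S\to\cal S\otimes\cal S$ with $\mu_{\cal S}^{-1}$ is exactly the factorization $\bar\mu_{\cal R}=\mu_{\cal S}\circ p$ that the paper writes out explicitly, so your flagged "delicate point" is handled the same way there.
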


    \begin{proof}
      First of all we note that the multiplication $\mu_{\cal R}: \cal R\otimes \cal R \ra \cal R$ induces a map on quotients 
      \[\bar\mu_{\cal R}:=\mu_{\cal R} \otimes_{\cal R} \cal R/f:  \cal R\otimes \cal R \otimes_{\cal R} \cal R/f \ra \cal R\otimes_{\cal R}\cal R/f.\]
      Such a map factors as
      \[
      \xymatrix{
      \cal R \otimes \cal R/f \ar[r]^{\bar\mu_{\cal R}}\ar[d]^p & \cal R/f \\
      \cal R/f \otimes \cal R/f \ar[ur]_{\mu_{\cal S}}\\
      }
      \]
      where $\mu_{\cal S}$ is the multiplication map of $\cal S$ and $p$ is induced by the projection $\cal R\ra \cal R/f$. Now $p$ is an isomorphism because the tensor product of homotopy modules is right-exact and because $f$ acts by zero on $\cal R/f$. Finally, since $\mu_{\cal R}$ is an isomorphism, $\mu_{\cal S}$ is an isomorphism as well.
      Let $0=\cal M_n \subseteq \cal M_{n-1} \subseteq \cdots \subseteq \cal M_0=\cal M$ be a filtration of $\cal M$ by homotopy modules $\cal M_i$ whose associated graded pieces are $\cal R$-modules. 
      If $n=1$ then $\cal M$ is an $\cal R$-module and thus both kernel and cokernel of $(f \cdot)$ are $\cal S$-modules. If $n>1$ one can proceed by induction. Indeed, since the multiplication map $f \cdot: \cal M \ra \cal M$ is $\cal K^{MW}$-linear it respects the filtration. As a consequence, for every $p\geq 1$, $f\cdot$ induces a map of short exact sequences
      \begin{equation}
        \xymatrix{
        0 \ar[r] & \cal M_{p-1} \ar[r] \ar[d]^{f \cdot} & \cal M_{p} \ar[r] \ar[d]^{f \cdot} & \cal M_p/\cal M_{p-1} \ar[r] \ar[d]^{f \cdot} & 0\\
        0 \ar[r] & \cal M_{p-1} \ar[r] & \cal M_p \ar[r] & \cal M_p/\cal M_{p-1} \ar[r] & 0.\\
        }
      \end{equation}
      Thus by combining the snake lemma together with \ref{lemma:S-nilp_in_sequences} and the inductive assumption we conclude.
    \end{proof}

    \begin{notat}
      For the rest of the section we fix a $(-1)$-connected homotopy commutative ring spectrum $E$. We assume that the multiplication map $\mu_{\underline \pi_0 E}: \underline \pi_0 E \otimes \underline \pi_0 E \ra \underline \pi_0E$, induced on $\underline \pi_0$ by $\mu_E:E\wedge E \ra E$, is an isomorphism. We also denote by $\cal R$ the homotopy module $\underline \pi_0 E$.
    \end{notat}
    
    \begin{lemma}
      \label{lemma:S-nilp_implies_E-nilp}
      If $X$ is an $\cal R$-nilpotent spectrum then it is also an $E$-nilpotent spectrum.
    \end{lemma}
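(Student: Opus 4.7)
The plan is to reduce the statement in several steps to the case of the Eilenberg--MacLane spectrum $H\cal R = H\underline \pi_0 E$, which will turn out to be $E$-nilpotent almost by construction. Throughout I will rely on three formal properties of $\rm{Nilp}(E)$: it is an ideal, it satisfies the $2$-out-of-$3$ property on fibre sequences, and it is closed under retracts; together with $0 \in \rm{Nilp}(E)$ (a retract of any object), these also give closure under suspension and desuspension via the fibre sequences $Y \to 0 \to \Sigma Y$.

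First I would use the finite Postnikov tower of $X$. Since $\underline \pi_k X = 0$ outside some bounded interval $[a,b]$, the fibre sequences $\Sigma^k H \underline \pi_k X \to P^k X \to P^{k-1} X$ (with $P^{a-1} X = 0$) build $X \simeq P^b X$ by finitely many extensions, so by $2$-out-of-$3$ it suffices to prove that $H\cal M$ is $E$-nilpotent for every $\cal R$-nilpotent homotopy module $\cal M$. Given such an $\cal M$ with filtration $0 = \cal M_n \subseteq \cdots \subseteq \cal M_0 = \cal M$ whose graded pieces are $\cal R$-modules, the short exact sequences among the $\cal M_i$ induce fibre sequences $H\cal M_{i+1} \to H\cal M_i \to H(\cal M_i/\cal M_{i+1})$, and an induction on $n$ reduces the problem further to the case in which $\cal M$ is itself an $\cal R$-module.

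I would then show that $H\cal R$ itself lies in $\rm{Nilp}(E)$ by invoking the truncation map $p \colon E \to H\cal R$, which as in the proof of Proposition \ref{prop:fund_ineq_of_loc} is a map of homotopy commutative ring spectra. Through $p$ together with $\mu_{H\cal R}$, the spectrum $H\cal R$ acquires an $E$-action $E \wedge H\cal R \to H\cal R$ which is split by $e_E \wedge \mathrm{id}_{H\cal R}$, so $H\cal R$ is a retract of $E \wedge H\cal R \in \rm{Nilp}(E)$. Finally, for an arbitrary $\cal R$-module $\cal M$, Lemma \ref{lemma:props_of_solid_rings} gives an isomorphism $\cal R \otimes \cal M \simeq \cal M$ in $\Pi_\ast(K)$; translated to the heart of the homotopy $t$-structure, this says that the Postnikov truncation $H\cal R \wedge H\cal M \to H(\cal R \otimes \cal M) \simeq H\cal M$ is split by the unit inclusion $H\cal M \simeq \bb S \wedge H\cal M \to H\cal R \wedge H\cal M$, exhibiting $H\cal M$ as a retract of $H\cal R \wedge H\cal M$, which lies in $\rm{Nilp}(E)$ because $\rm{Nilp}(E)$ is an ideal.

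The step I expect to be most delicate is this last retract argument: checking that the composition $H\cal M \to H\cal R \wedge H\cal M \to H\cal M$ is the identity uses essentially the hypothesis that $\mu_{\cal R}$ is an isomorphism, since otherwise the action map $\cal R \otimes \cal M \to \cal M$ would only be an epimorphism and the splitting on the nose would not be available. The earlier reductions are routine once one knows that $\rm{Nilp}(E)$ is closed under the relevant formal operations.
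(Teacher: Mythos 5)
Your proof is correct and follows the same line of reasoning as the paper's: reduce via the finite Postnikov tower to Eilenberg--MacLane spectra of $\cal R$-nilpotent homotopy modules, reduce further via the finite filtration to $H\cal M$ for $\cal M$ an $\cal R$-module, show $H\cal R$ itself is $E$-nilpotent using the canonical ring map $E\to H\cal R$, and finish by a retract argument exhibiting $H\cal M$ as a retract of $H\cal R\wedge H\cal M$.

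One concrete point is off, however. In your last paragraph you assert that the retract argument ``uses essentially the hypothesis that $\mu_{\cal R}$ is an isomorphism, since otherwise the action map $\cal R\otimes\cal M\to\cal M$ would only be an epimorphism and the splitting on the nose would not be available.'' This is not the case. The composite
\[
H\cal M\;\xrightarrow{\;e_{H\cal R}\wedge\mathrm{id}\;}\;H\cal R\wedge H\cal M\;\xrightarrow{\;\mathrm{action}\;}\;H\cal M
\]
is the identity purely by the unit axiom of the $H\cal R$-module structure on $H\cal M$ (which $H$ produces from the $\cal R$-module structure on $\cal M$ by lax monoidality); this is exactly what Remark~\ref{rmk:mods_over_E-nilp_ringsp_are_E-nilp} records, for any homotopy associative ring spectrum and any module over it, with no solidity hypothesis whatsoever. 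In particular you do not need the action map $\cal R\otimes\cal M\to\cal M$ to be an isomorphism to get a strict splitting --- the unit always provides one. The hypothesis that $\mu_{\underline\pi_0 E}$ is an isomorphism is indeed imposed in this part of the paper, but for other reasons (e.g.\ the uniqueness and $\cal R$-linearity statements of Lemma~\ref{lemma:props_of_solid_rings}, used in Lemmas~\ref{lemma:S-nilp_in_sequences}--\ref{lemma:mul_by_f_in_a_B_nilp_A-mod}), not for the retract step you single out. The paper's proof just cites Remark~\ref{rmk:mods_over_E-nilp_ringsp_are_E-nilp} to dispatch this step immediately and then verifies $H\cal R\in\rm{Nilp}(E)$ last, which is the same content as yours in a slightly different order.
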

    \begin{proof}
      Since $X$ is a $\cal R$-nilpotent spectrum there is an integer $k$ such that the canonical map to its Postnikov truncation $X\ra P^k(X)$ is an isomorphism in $\cal {SH}(K)$. Now using to the fundamental fibre sequence 
      \[
        \Sigma^nH\underline \pi_n (X) \ra P^n(X) \ra P^{n-1}(X),
      \]
      the fact that $E$-nilpotent spectra have the 2-out-of-3 property in fibre sequences and the connectivity of $X$, we reduce to show that for every integer $n$ the spectra $H \underline\pi_n(X)$ are $E$-nilpotent.
      By assumption for every $n \in \bb Z$, $\underline \pi_n(X)$ is $\cal R$-nilpotent so it has a finite filtration with $\cal R$-modules as quotients. So again using the 2-out-of-3 property of $E$-nilpotent objects and that $H:\Pi_\ast(K) \ra \cal{SH}(K)$ maps short exact sequences to fibre sequences, we can reduce to proving that if $\cal M$ is an homotopy module with a structure of an $\cal R$-module then $H\cal M$ is $E$-nilpotent. In order to prove this claim we immediately observe that $H\cal M$ is an $ H\cal R$-module because $H$ is lax monoidal, so by \ref{rmk:mods_over_E-nilp_ringsp_are_E-nilp} we immediately reduce to $\cal M=\cal R=\underline \pi_0E$. Now, since $E$ is $(-1)$-connective, there is a canonical ring map $E\ra H\underline\pi_0(E)$ which makes $H\underline\pi_0(E)$ into an $E$-module and hence $H\underline\pi_0(E)$ is $E$-nilpotent.

    \end{proof}


  \subsection{Application to the Motivic Adams-Novikov spectral sequence}
  
    \begin{prop}
      \label{prop:PnEn_tower_is_an_S-nilp_resol}
      Let $X \in \cal{SH}(K)$ be a connective spectrum. Then the tower $\{P^n(\overline{E}_n\wedge X)\}_n$ is a $\cal R$-nilpotent resolution of $X$.
    \end{prop}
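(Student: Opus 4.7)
The plan is to verify the two defining conditions of a $\cal R$-nilpotent resolution from Definition \ref{defin:S-nilp_resol}. For the first axiom, I would show that each $P^n(\overline E_n \wedge X)$ is $\cal R$-nilpotent. Since $\underline \pi_k P^n(\overline E_n \wedge X)$ equals $\underline \pi_k(\overline E_n \wedge X)$ for $k \leq n$ and vanishes for $k > n$, two things need to be checked: that the spectrum is bounded below, and that each surviving homotopy module is $\cal R$-nilpotent. Boundedness below would follow by a short induction on $n$ using the fibre sequence $W_n \to \overline E_n \to \overline E_{n-1}$ of \eqref{eqn:fund_fib_seq_Wn_En_En-1}, combined with the fact that $E$, $\overline E$ and hence $W_n = E \wedge \overline E^n$ are all bounded below (the latter because the motivic homotopy $t$-structure is compatible with smash products). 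Smashing with the connective $X$ preserves this.

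For $\cal R$-nilpotence of the homotopy modules, I would again induct on $n$. The base case $\overline E_0 \wedge X = E \wedge X$ is an $E$-module, so each $\underline \pi_k(E \wedge X)$ is naturally a $\underline \pi_\ast E$-module, hence a $\cal R$-module by restriction of scalars via the unit $\cal R = \underline \pi_0 E \to \underline \pi_\ast E$; in particular it is $\cal R$-nilpotent. The same argument shows that $\underline \pi_k(W_n \wedge X)$ is a $\cal R$-module for every $n$, since $W_n \wedge X$ carries an $E$-module structure. The long exact sequence attached to the fibre sequence $W_n \wedge X \to \overline E_n \wedge X \to \overline E_{n-1} \wedge X$, paired with Lemma \ref{lemma:S-nilp_in_sequences}, then propagates $\cal R$-nilpotence from $\overline E_{n-1} \wedge X$ to $\overline E_n \wedge X$.

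For the second axiom, let $Y$ be a $\cal R$-nilpotent spectrum, and pick an integer $b$ with $\underline \pi_k Y = 0$ for $k > b$. The universal property of the truncation yields $[P^n(\overline E_n \wedge X), Y] \cong [\overline E_n \wedge X, Y]$ whenever $n \geq b$. Lemma \ref{lemma:S-nilp_implies_E-nilp} ensures that such a $Y$ is also $E$-nilpotent, so Proposition \ref{prop:E-nilp_resol_are_unique_and_com_the_nilp_comp}, which identifies the standard Adams tower as an $E$-nilpotent resolution, gives $\colim_n [\overline E_n \wedge X, Y] \cong [X, Y]$. Combining these identifications, and checking that the natural map $X \to P^n(\overline E_n \wedge X)$ (factoring through $X \to \overline E_n \wedge X$) induces the comparison, completes this axiom.

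The main technical obstacle will be the boundedness step: $\overline E$ is itself only $(-2)$-connective in general, so iterating the construction could naively deteriorate connectivity. One must therefore argue that the pieces $W_n = E \wedge \overline E^n$ remain bounded below uniformly enough that the fibre sequence $W_n \to \overline E_n \to \overline E_{n-1}$ preserves boundedness below and, after truncation by $P^n$, yields only finitely many non-trivial homotopy modules. This reduces to the standard compatibility $\cal{SH}(K)_{\geq a} \wedge \cal{SH}(K)_{\geq b} \subseteq \cal{SH}(K)_{\geq a+b}$ of the motivic $t$-structure with the smash product, which is enough to push the induction through.
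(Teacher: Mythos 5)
Your proposal follows essentially the same route as the paper: both arguments establish $\cal R$-nilpotence of each $\underline\pi_k(\overline E_n \wedge X)$ by induction on $n$ via the fibre sequence $W_n \wedge X \to \overline E_n\wedge X \to \overline E_{n-1}\wedge X$ together with Lemma \ref{lemma:S-nilp_in_sequences}, and both handle the mapping property by combining Lemma \ref{lemma:S-nilp_implies_E-nilp}, Proposition \ref{prop:E-nilp_resol_are_unique_and_com_the_nilp_comp}, and the universal property of the Postnikov truncation for $n$ large relative to the co-connectivity of $Y$. Your explicit worry about the degrading connectivity of $\overline E^{\wedge n}$ is well-placed --- the paper dispatches it with the phrase ``by the connectivity of $X$'' --- and your resolution is correct: no uniform bound is needed, since for each fixed $n$ the spectrum $\overline E_n\wedge X$ is bounded below and $P^n$ cuts off from above, leaving finitely many non-trivial homotopy modules.
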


    \begin{proof}
      We first need to check that $P^n(\overline{E}_n\wedge X) \in \mathrm{Nilp}(\cal R)$ for every $n\in \bb Z$. By the connectivity of $X$, each of the $P^n(\overline{E}_n\wedge X)$ has only a finite number of non-trivial homotopy modules, so we only need to check that $\underline \pi_k(\overline{E}_n\wedge X)_\ast$ is $\cal R$-nilpotent for every pair of integers $k,n$. Recall that for every $n$ in $\bb N$ we have a fibre sequence of the form
      \begin{equation}
        \label{eqn:the_main_fib_seq_in_ANSS}
        X \wedge \overline{E}^n\wedge E \ra X \wedge \overline{E}_n \overset{f_n}{\ra} X \wedge \overline{E}_{n-1}   
      \end{equation}
      obtained from \eqref{eqn:fund_fib_seq_Wn_En_En-1} by smashing with $X$. By construction, for $n=0$, such a fibre sequence is the cone sequence of the identity of $X\wedge E$. In particular for every $k\in \bb Z$, $\underline \pi_k( X\wedge \overline E_0)$ is a $\underline \pi_0E$-module and thus it is $\cal R$-nilpotent. We can now proceed by induction on $n$. Note that $\underline \pi_k(X\wedge \overline E^n\wedge E)$ is a $\underline \pi_0 E$-module and thus is $\cal R$-nilpotent. This observation, once combined with the inductive assumption, allows to apply Lemma \ref{lemma:S-nilp_in_sequences} to the long exact sequence of homotopy modules associated to the fibre sequence \eqref{eqn:the_main_fib_seq_in_ANSS}. 

      As a second step we need to prove that for any $Y\in \mathrm{Nilp}(\cal R)$ the canonical map 
      \[ \colim_n [P^n(\bar{E_n}\wedge X),Y] \ra [X,Y]\] is an isomorphism. For this we first observe that we have a commutative diagram 
      \begin{equation}
        \xymatrix{
          \colim_n [P^n(\overline E_n\wedge X),Y] \ar[dr]^{\phi} \ar[r] & [X,Y] \ar[d]\\
           & \colim_n[\overline E_n\wedge X,Y],\\
        } 
      \end{equation}
      where the diagonal map $\phi$ is induced by the projection maps onto the Postnikov truncations. The fact that $Y$ is $E$-nilpotent by \ref{lemma:S-nilp_implies_E-nilp}, and that the standard Adams tower $\{\overline E_n \wedge X\}$ is a $E$-nilpotent resolution by \ref{prop:E-nilp_resol_are_unique_and_com_the_nilp_comp}, implies that the map $\colim_n[\overline E_n\wedge X,Y] \ra [X,Y]$ is an isomorphism. We are left to show that $\phi$ in an isomorphism too. Since there exists an integer $k$ for which $Y\simeq  P^k(Y)$,  every map $f: \overline E_n\wedge X \ra Y$ factors uniquely through $\tilde f: P^n(\overline E_n\wedge X) \ra Y$ at least when $n\geq k$. Passing to direct limits we deduce that $\phi$ is an isomorphism.      
    \end{proof}

    \begin{prop}
      \label{prop:S-nilp_resol_are_unique_and_com_the_nilp_comp}
      For every connective spectrum $X \in \cal{SH}(K)$ the following holds:
      \begin{enumerate}
        \item \label{SNR1} The Postnikov truncation of the standard Adams tower $\{ P_n(\overline{E_n}\wedge X)\}_n$ is an $\cal R$-nilpotent resolution of $X$;
        \item \label{SNR2} There exists a unique $\cal R$-nilpotent resolution of $X$ up to unique isomorphism in the category of pro-towers under $X$;
        \item \label{SNR3} Given any $\cal R$-nilpotent resolution $\{ X_n \}_n$ of $X$ we have that $\holim_n X_n \simeq X^{\wedge}_{E}$
      \end{enumerate}
    \end{prop}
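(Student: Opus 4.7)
Part (\ref{SNR1}) is exactly the content of Proposition \ref{prop:PnEn_tower_is_an_S-nilp_resol}, so no further work is needed. For (\ref{SNR2}), I would reproduce the argument of part (\ref{ENR2}) of Proposition \ref{prop:E-nilp_resol_are_unique_and_com_the_nilp_comp} verbatim: the only external inputs it uses are the two defining axioms of a resolution (terms lying in the appropriate nilpotence subcategory, and the mapping property $\colim_n[X_n,Y]\simeq[X,Y]$ when $Y$ is in that subcategory), both of which hold for $\cal R$-nilpotent resolutions by Definition \ref{defin:S-nilp_resol}. The inductive lifting of maps, together with the uniqueness of such lifts in pro-objects under $X$, then produces mutually inverse isomorphisms between any two given $\cal R$-nilpotent resolutions.

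The real content lies in (\ref{SNR3}). By (\ref{SNR2}) combined with the Milnor-sequence pro-invariance of homotopy limits of towers (see Appendix \ref{sec:pro_spectra}), it suffices to treat the standard $\cal R$-nilpotent resolution $\{P^n(\overline E_n\wedge X)\}_n$ of (\ref{SNR1}) and show that its homotopy limit is $X^\wedge_E$. The natural Postnikov cofibre sequence
\[P_{n+1}(\overline E_n\wedge X) \to \overline E_n\wedge X \to P^n(\overline E_n\wedge X)\]
assembles into a short exact sequence of towers: the leftmost term is $n$-connective and hence factors canonically through $P_n(\overline E_{n-1}\wedge X)$, while $P^n$ maps to $P^{n-1}$ via the standard natural transformation composed with the Adams tower map. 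Since $\holim$ preserves fibre sequences, one obtains
\[\holim_n P_{n+1}(\overline E_n\wedge X) \to \holim_n \overline E_n\wedge X \to \holim_n P^n(\overline E_n\wedge X).\]
The middle term equals $X^\wedge_E$ by (\ref{ENR3}) of Proposition \ref{prop:E-nilp_resol_are_unique_and_com_the_nilp_comp}, so it is enough to prove that the leftmost term vanishes.

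This last vanishing is a pure connectivity argument: since $P_{n+1}(\overline E_n\wedge X)\in\cal{SH}(K)_{\geq n+1}$, for every $U\in\sm K$ and all $p,q\in\bb Z$ the group $[\Sigma^{p+q\alpha}\Sigma^\infty U_+,P_{n+1}(\overline E_n\wedge X)]$ vanishes as soon as $n\geq p$. Thus the tower of these groups is eventually zero, both $\varprojlim_n$ and $\varprojlim_n^{1}$ vanish, and Milnor's exact sequence forces every homotopy sheaf of $\holim_n P_{n+1}(\overline E_n\wedge X)$ to be trivial. The main technical subtlety I anticipate is verifying that the Postnikov cofibre sequences are strictly compatible across the levels of the tower, so as to yield a genuine morphism of towers rather than only level-wise fibre sequences; this is a clean but bookkeeping-heavy application of the functoriality of $P_{n+1}$ and $P^n$ on $\cal{SH}(K)$, together with the canonical natural transformations $P_{n+1}\to P_n$ and $P^n\to P^{n-1}$.
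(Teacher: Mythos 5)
Your proposal is correct and follows essentially the same route as the paper: part (1) quotes Proposition \ref{prop:PnEn_tower_is_an_S-nilp_resol}, part (2) transports the argument of (\ref{ENR2}) in Proposition \ref{prop:E-nilp_resol_are_unique_and_com_the_nilp_comp}, and part (3) reduces to the standard tower by (2), then identifies $\holim_n P^n(\overline E_n\wedge X)$ with $\holim_n \overline E_n\wedge X=X^\wedge_E$. Your fibre-sequence-of-towers computation with the Milnor sequence is exactly the unwinding of the paper's one-line invocation that the Postnikov projection $\{\overline E_n\wedge X\}\to\{P^n(\overline E_n\wedge X)\}$ is a pro-weak equivalence and that $\holim$ only depends on the pro-weak homotopy type (\ref{subs:proj_to_post_is_pwe}).
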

    \begin{proof}
      Point \eqref{SNR1} is \ref{prop:PnEn_tower_is_an_S-nilp_resol}. The proof of point \eqref{SNR2} is the same as that given for the second point of \ref{prop:E-nilp_resol_are_unique_and_com_the_nilp_comp}. For point \eqref{SNR3}: we just need to combine \eqref{SNR1}, \eqref{SNR2}, the observation that, thanks to the Milnor sequence, the homotopy inverse limit of a tower of spectra only depends on the pro-weak homotopy type (see \ref{defin:pro-we} for the definition) of the tower, and finally the fact the projection to the Postnikov tower $\{\overline E_n \wedge X\} \ra \{P^n(\overline E_n\wedge X)\}$ is a pro-weak-equivalence (see \ref{subs:proj_to_post_is_pwe}).
    \end{proof}
    
    \begin{lemma}
    \label{lemma:PnMoore_tower_is_an_S-nilp_resol}
      Let $E$ be a homotopy commutative ring spectrum satisfying Assumption \ref{sub:assumption_A1} in the special case that $J=\emptyset$. Then for every connective spectrum $X$ the tower $P^n(C(f_1^n)\wedge \cdots \wedge C(f_r^n)\wedge X)$ is a $\cal R$-nilpotent resolution of $X$.
    \end{lemma}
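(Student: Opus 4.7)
The plan is to verify the two conditions of Definition \ref{defin:S-nilp_resol} in turn. For the first condition, that each $P^n(C(f_1^n)\wedge\cdots\wedge C(f_r^n)\wedge X)$ lies in $\mathrm{Nilp}(\cal R)$, I would argue by induction on $r$: writing $\cal R_k = \cal K^{MW}_\ast/(f_1,\dots,f_k)$, the inductive claim is that every homotopy module of $C(f_1^n)\wedge\cdots\wedge C(f_k^n)\wedge X$ is $\cal R_k$-nilpotent. Setting $Z_{k-1} = C(f_1^n)\wedge\cdots\wedge C(f_{k-1}^n)\wedge X$, the cofibre sequence $\Sigma^{n(p_k+q_k\alpha)}Z_{k-1} \xrightarrow{f_k^n} Z_{k-1} \to C(f_k^n)\wedge Z_{k-1}$ exhibits each $\underline{\pi}_\ast(C(f_k^n)\wedge Z_{k-1})$ as an extension of a kernel and a cokernel of $f_k^n$ acting on an $\cal R_{k-1}$-nilpotent module; these subquotients are both $\cal R_{k-1}$-nilpotent and annihilated by $f_k^n$. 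Refining any $\cal R_{k-1}$-filtration by the $f_k$-adic filtration on each piece shows that such modules are $\cal R_k$-nilpotent, and Lemma \ref{lemma:S-nilp_in_sequences-extensions} closes the induction. Connectivity of $X$ together with the Postnikov truncation $P^n$ ensures that only finitely many homotopy modules are non-zero, giving $\cal R$-nilpotence as a spectrum.

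For the second condition, I would show that the canonical map $\colim_n [P^n(C(f_1^n)\wedge\cdots\wedge C(f_r^n)\wedge X), Y] \to [X,Y]$ is an isomorphism for every $Y\in \mathrm{Nilp}(\cal R)$. The reduction step is to the case $Y = H\cal M$ with $\cal M$ an $\cal R$-module: decomposing $Y$ via its finite Postnikov tower and refining each $\underline{\pi}_k Y$ by its $\cal R$-module filtration, the property in question is preserved by fibre sequences (via the five lemma applied to the relevant long exact sequences on $[-, Y]$), reducing to the atomic case. For $Y = H\cal M$ and $n\geq 0$, the truncation $P^n$ has no effect on mapping spaces into $H\cal M$ (the fibre of $Z \to P^n Z$ is $n$-connective while $H\cal M$ is $0$-truncated), so it suffices to show $\colim_n [C(f_1^n)\wedge\cdots\wedge C(f_r^n)\wedge X, H\cal M] \simeq [X,H\cal M]$.

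This final step I would prove by a further induction on $r$. At each stage the cofibre sequence $\Sigma^{n(p_i+q_i\alpha)}Z\xrightarrow{f_i^n}Z\to C(f_i^n)\wedge Z$, combined with the vanishing of the $f_i$-action on $H\cal M$ (which holds precisely because $\cal M$ is an $\cal R$-module), yields a short exact sequence
\begin{equation*}
0 \to [\Sigma^{n(p_i+q_i\alpha)+1} Z, H\cal M] \to [C(f_i^n)\wedge Z, H\cal M] \to [Z, H\cal M] \to 0.
\end{equation*}
Comparing these sequences for consecutive values of $n$ using diagram \eqref{eqn:mod_x^n_mod_x^n-1_fib_seqs}, the transition maps on the leftmost term are induced by the map $\Sigma^{(n-1)(p_i+q_i\alpha)}f_i$, hence become multiplication by $f_i$ on $H\cal M$ and therefore vanish; the error terms thus die in the direct colimit, while the right-hand quotient $[Z,H\cal M]$ is controlled by the inductive hypothesis. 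I expect the main obstacle to be the bookkeeping in this last step, namely ensuring the compatibility of the various short exact sequences and verifying that the relevant transition maps really factor through the zero $f_i$-action on $H\cal M$; the essential input is this vanishing of the $f_i$-action on atomic $H\cal M$, which is why the reduction to genuine $\cal R$-modules (not merely $\cal R$-nilpotent ones) in the second paragraph is indispensable.
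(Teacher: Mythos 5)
Your proof is correct, but it departs from the paper's argument in a substantive way, chiefly in the verification of the mapping property. For the first condition the two proofs are close in spirit: both induct on $r$, relying on Lemmas \ref{lemma:S-nilp_in_sequences} and \ref{lemma:mul_by_f_in_a_B_nilp_A-mod}; the paper first handles $n=1$ and then extends to arbitrary powers via the cofibre sequences $\Sigma^{q_i\alpha}C(f_i)\to C(f_i^{n_i})\to C(f_i^{n_i-1})$, while you work with $f_k^n$ directly and introduce the $f_k$-adic filtration — this amounts to the same reductions, packaged differently. For the second condition, however, the paper stays with an arbitrary $Y\in\mathrm{Nilp}(\cal R)$: it gets surjectivity from the fact that some power $f_i^N$ eventually acts as the zero self-map of $Y$ (so maps $X\to Y$ factor through $C_N\wedge X$), and injectivity by factoring a map killed on $X$ through $\Sigma^{1+Nq_1\alpha}X$ and then precomposing with $\Sigma^1 f_1^{N-n}$ to annihilate it. You instead run a dévissage in $Y$: using that the colimit is exact and the five lemma, you reduce along the finite Postnikov tower and the $\cal R$-module filtration of each $\underline\pi_k Y$ to the atomic case $Y=\Sigma^k H\cal M$ with $\cal M$ a genuine $\cal R$-module, where the $f_i$-action is literally zero. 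Then the long exact sequence for $\Sigma^{nq_i\alpha}Z\xrightarrow{f_i^n\cdot}Z\to C(f_i^n)\wedge Z$ collapses to a short exact sequence whose left term has zero transition maps in $n$, so it dies in the colimit. Your version buys a sharper, unconditional vanishing (no appeal to "$f_i^N$ acts by zero on $Y$ for $N\gg 0$", which in the paper is left implicit and actually requires a small finite-Postnikov-tower nilpotence argument). The cost is the extra bookkeeping of the five-lemma reduction and the implicit cofinality step (passing from the diagonal tower indexed by $n$ to the $r$-fold tower indexed by $(n_1,\dots,n_r)$ to make the induction on $r$ clean); you correctly flag both of these as the places that need care, and they do go through.
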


    \begin{proof}
      We need to check that for every pair of integers $k,n$ the homotopy module $\underline \pi_k(C(f_1^n)\wedge \cdots \wedge C(f_r^n)\wedge X)$ is $\underline\pi_0 E$-nilpotent. We accomplish this by induction on $n$, the base case being that of $n=1$. Assume thus that $n=1$: we proceed by induction on $r$. If $r=0$ the homotopy modules $\underline\pi_k(X)$ are $\underline\pi_0(E)=\cal K^{MW}$-modules and in particular they are $\cal K^{MW}$-nilpotent. If $I=\{f_1, \dots, f_{a+1}\}$ we can apply Lemma \ref{lemma:mul_by_f_in_a_B_nilp_A-mod} to the homotopy modules $\cal M := \underline \pi_k (C(f_1)\wedge \cdots \wedge C(f_a)\wedge X)$ and $\cal M^{'}:=\underline \pi_{k-1}(C(f_1)\wedge \cdots \wedge C(f_a)\wedge X)$ which are $\cal R=\cal K^{MW}/(f_1,\dots,f_a)$-nilpotent by the inductive assumption. It follows that the external homotopy modules of the exact sequence
        \begin{equation}
          \label{eqn:}
          0 \ra \coker (f\cdot: \cal M \rightarrow \cal M) \ra \underline \pi_k(C(f_1)\wedge \cdots \wedge C(f_{a+1})\wedge X)) \ra \ker (f\cdot : \cal M^{'}\rightarrow \cal M^{'}) \ra 0 
        \end{equation}
       are $\cal S=\cal K^{MW}/(f_1,\dots, f_{a+1})$-nilpotent and by Lemma \ref{lemma:S-nilp_in_sequences-extensions} we conclude that the central homotopy module is $\cal S$-nilpotent too. Given now any $r$-tuple of positive integers $(n_1,\dots,n_r)$ we can show that $\underline \pi_k(C(f_1^{n_1})\wedge \cdots \wedge C(f_r^{n_r})\wedge X)$ is $\underline\pi_0 E$-nilpotent by induction, using the cofibre sequences
      \begin{equation}
        \Sigma^{p_i\alpha}C(f_i) \ra C(f_i^{n_i}) \ra C(f_i^{n_i-1}) 
      \end{equation}
      for $i=1,\dots,r$ and Lemma \ref{lemma:S-nilp_in_sequences} iteratively to reduce to the above base case.

      Let us assume now that $Y$ is a $\underline \pi_0(E)$-nilpotent spectrum: we need to check that the natural map 
      \[ \psi:\colim_n[P^n(C(f_1^n)\wedge \cdots \wedge C(f_r^n)\wedge X), Y] \ra [X,Y]\]
      is an isomorphism.
      Let us denote by $C_n$ the spectrum $C(f_1^n)\wedge \cdots \wedge C(f_r^n)$. Since $\underline \pi_k(Y)$ is $\pi_0(E)$-nilpotent then every element $f_i \in I$ acts on such a homotopy module nilpotently. It follows thus that there is an integer $N>>0$ for which each of the $f^N_i$ acts by $0$ on each the homotopy modules of $Y$. Hence every map $f:X \ra Y$ factors through $X \ra P^N(C_N\wedge X)$ giving a map $\tilde f \in [P^N(C_N\wedge X),Y]$ and hence an element in the colimit. 

      It remains to show that $\psi$ is injective. Assume $r=1$. Let $\tilde f \in [ P^n(C_n\wedge X ), Y]$ be a map with the property that $f:=\tilde f_{|X}=0$. We actually need to show the following: the composition of $\tilde f$ with a finite number of the tower maps $p_k: P^k (C_k\wedge X) \ra P^{k-1}(C_{k-1}\wedge X)$, which are induced on Postnikov towers by the $p_k$'s of  \eqref{eqn:mod_x^n_mod_x^n-1_fib_seqs}, is the zero map. By the co-connectivity of $Y$ we reduce to prove the analogous statement without Postnikov towers, so we can actually assume that $\tilde f  \in [C_n\wedge X, Y]$. We have a diagram of cofibre sequences deduced from \eqref{eqn:mod_x^n_mod_x^n-1_fib_seqs}
      \begin{equation}
      \xymatrix{
      \Sigma ^{Np_1+Nq_1\alpha}X  \ar[rr]^{f_1^{N}\cdot} \ar[d]_{f_1^{N-n}\cdot} & & X \ar[r] \ar@{=}[d]& C_N \wedge X \ar[r] \ar[d] & \Sigma ^{1+Np_1+Nq_1}X \ar[d] ^{\Sigma^1 f_1^{N-n}\cdot}\\
      \Sigma^{np_1+nq_1\alpha}X  \ar[rr]^{f_1^{n}\cdot}                          & & X \ar[r]           & C_n \wedge X \ar[r]        & \Sigma^{1+np_1+nq_1} X\\
      }
      \end{equation}

      from which we deduce that $\tilde f$ factors through some $\tilde f' \in [\Sigma ^{1+np_1+nq_1}X,Y]$. Finally for $N>>0$ the composition $\tilde f'\circ \Sigma^1 f_1^{N-n}=0$ since $Y$ is $\cal R$-nilpotent: this concludes the case $r=1$. The general case is easily obtained by applying iteratively the previous argument.

    \end{proof}

    \begin{thm}
      \label{thm:convergence_mod_stuff}
      Let $E$ be a homotopy commutative ring spectrum satisfying assumption \ref{sub:assumption_A1} in the special case that $J=\emptyset$. Then for every connective spectrum $X$ the natural map $\alpha_E(X): X \ra X^{\wedge}_E$ is an $E$-equivalence. In particular the map $\beta_E(X): X_E\ra X^{\wedge}_E$ of \eqref{eqn:alphabetagamma} is an equivalence. 
    \end{thm}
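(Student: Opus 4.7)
The plan is to compute $X^\wedge_E$ through a well-chosen $\cal R$-nilpotent resolution, and then recognise the resulting homotopy inverse limit by means of our description of $E$-localizations of connective spectra in \ref{thm:red_to_moore_spt}. To set things up, I would combine \ref{prop:S-nilp_resol_are_unique_and_com_the_nilp_comp}, which says that $\holim_n X_n \simeq X^\wedge_E$ for any $\cal R$-nilpotent resolution $\{X_n\}$ of a connective $X$, with \ref{lemma:PnMoore_tower_is_an_S-nilp_resol}, which exhibits one such resolution explicitly as $\{P^n(C_n\wedge X)\}_n$, where $C_n := C(f_1^n)\wedge\cdots\wedge C(f_r^n)$. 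This produces the equivalence
\[X^\wedge_E \;\simeq\; \holim_n P^n(C_n\wedge X).\]

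Next I would strip off the Postnikov truncations. The canonical map of towers $\{C_n\wedge X\} \to \{P^n(C_n\wedge X)\}$ has fibre $P_{n+1}(C_n\wedge X)$ at level $n$, which is $(n+1)$-connective; in particular, for each fixed integer $k$ the induced map of towers $\{\underline\pi_k(C_n\wedge X)\} \to \{\underline\pi_k(P^n(C_n\wedge X))\}$ is an isomorphism for $n\ge k$, so the map of towers is a pro-weak equivalence in the sense of Appendix \ref{sec:pro_spectra}. Since homotopy inverse limits of towers of spectra depend only on the pro-weak equivalence class (essentially by the Milnor $\varprojlim^1$ sequence), the induced map on $\holim$'s is an equivalence, and the cofinality argument of \ref{const:modx_1_x_n_Moore_sp} then identifies the resulting limit with the iterated adic completion $X^\wedge_{f_1}\cdots{}^\wedge_{f_r}$.

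To close the loop, I would invoke \ref{prop:localization_at_mod_x_1-x_n_moore_spectrum} to identify the iterated adic completion with the $M(\underline f) = \bb S\underline\pi_0 E$-localization of $X$, and then \ref{thm:red_to_moore_spt} to identify the latter with $X_E$, using the connectivity of $X$ (note that Assumption \ref{sub:assumption_A1} already guarantees, via \ref{rmk:AssA1_implies_solidity_of_pi_0}, the solidity hypothesis needed in \ref{prop:S-nilp_resol_are_unique_and_com_the_nilp_comp}). Chasing the universal property of $\lambda_E(X)$ through these canonical equivalences shows that the resulting equivalence $X^\wedge_E \simeq X_E$ carries $\alpha_E(X)$ to $\lambda_E(X)$; hence $\alpha_E(X)$ is an $E$-equivalence. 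The factorisation \eqref{eqn:alphabetagamma} now forces $\beta_E(X)$ to be an $E$-equivalence between $E$-local spectra, hence an isomorphism in $\cal{SH}(K)$.

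The main obstacle is the pro-weak-equivalence step between the truncated and the untruncated Moore towers: one has to verify uniformly in $n$ that the $(n+1)$-connective fibre $P_{n+1}(C_n\wedge X)$ makes no contribution at any fixed homotopy degree, which rests on checking that the connectivity of $C_n\wedge X$ remains bounded below as $n$ grows. Granted this, the remainder of the proof is a clean concatenation of previously established ingredients.
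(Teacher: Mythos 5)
Your proof is correct, and it takes a genuinely shorter route than the paper's. Both arguments begin the same way, using Proposition \ref{prop:S-nilp_resol_are_unique_and_com_the_nilp_comp} together with Lemma \ref{lemma:PnMoore_tower_is_an_S-nilp_resol} to obtain the pro-isomorphism between the two $\cal R$-nilpotent resolutions $\{P^n(\overline E_n\wedge X)\}$ and $\{P^n(C_n\wedge X)\}$. After that the two approaches diverge: you strip the Postnikov truncations (the projection being a pro-weak-equivalence by \ref{subs:proj_to_post_is_pwe}, and $\holim$ being preserved by it) and thereby identify $X^\wedge_E$ with $X^\wedge_{f_1}\cdots{}^\wedge_{f_r}$ \emph{compatibly with the maps from $X$}, so that $\alpha_E(X)$ becomes $\chi_{\underline f}(X)$; then \ref{prop:localization_at_mod_x_1-x_n_moore_spectrum} and \ref{thm:red_to_moore_spt} close the argument. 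This chain does keep track of the maps from $X$ correctly, because the pro-isomorphism $\varphi$ is characterised by its compatibility with the projections from the constant tower $\{X\}$, and holim applied to $\varphi$ inherits this compatibility; the concluding deduction ($\alpha_E(X)=(\mathrm{iso})\circ\lambda_E(X)$ implies $\alpha_E(X)$ is an $E$-equivalence, hence $\beta_E(X)$ is an isomorphism between $E$-local spectra) is exactly the standard one.

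The paper's proof, by contrast, never invokes Theorem \ref{thm:red_to_moore_spt}. After constructing $\psi$ it smashes the diagrams with $E$ and runs a substantially longer pro-object chase (showing that the towers $\{E\wedge F^{(r)}_\bullet\}$ and $\{E\wedge\overline E^n\wedge X\}$ are pro-contractible, applying the 2-out-of-3 principle \ref{cor:2-3prozero}, and concluding from a pro-weak-equivalence of \emph{constant} towers). What this heavier machinery buys is reusability: exactly the same argument is then run, with $\bb S[S^{-1}]$ in place of $M(\underline f)$, to prove Theorem \ref{thm:convergence_inv_stuff}, where $I=\emptyset$ and no analogue of Theorem \ref{thm:red_to_moore_spt} is available. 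Your shortcut would not carry over to that case.

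Finally, the worry raised in your last paragraph is unnecessary. The fibre $P_{n+1}(Z)\to Z \to P^n(Z)$ is $n$-connective for \emph{any} $Z$, purely by the definition of the Postnikov truncation, so for each fixed homotopy degree $p$ the map $\underline\pi_p(C_n\wedge X)\to \underline\pi_p(P^n(C_n\wedge X))$ is an isomorphism once $n>p$, regardless of any uniform lower bound on the connectivity of $C_n\wedge X$. (In any case that uniform bound does hold: $X$ is connective and each $C_n$ is $(-1)$-connective.) So that step is automatic, and the rest of the proof goes through as you describe.
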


    \begin{proof}
      Let $C_n:=C(f_1^n)\wedge \cdots\wedge C(f_r^n)$. Proposition \ref{prop:PnEn_tower_is_an_S-nilp_resol} and Lemma \ref{lemma:PnMoore_tower_is_an_S-nilp_resol} imply that both the tower $\{P^n(\overline E_n\wedge X)\}_n$ and the tower $\{P^n(C_n\wedge X)\}$ are $\cal R$-nilpotent resolutions of $X$. Thanks to Proposition \ref{prop:S-nilp_resol_are_unique_and_com_the_nilp_comp} there is a unique pro-isomorphism $\varphi$ between such towers which is compatible with the projection map from the constant tower $\{X\}_n$. As a consequence we can choose an isomorphism 
      \[\psi: \holim_n P^n(\overline E_n\wedge X) \overset{\simeq}{\ra}  \holim_n P^n(C_n\wedge X)\]
      between their homotopy inverse limits which fits in the commutative square
      \[
      \xymatrix{
      \holim_n P^n(\overline E_n\wedge X) \ar[r]^{\psi}_{\simeq} \ar[d] & \holim_n P^n(C_n\wedge X) \ar[d] \\
      \{ P^n(\overline E_n\wedge X) \} \ar[r]^{\phi} & \{P^n(C_n\wedge X)\}, \\
      }
      \] 
      where the vertical maps are the natural projections. The projections to the respective Postnikov towers induce identifications of the source and target of $\psi$ with $X^{\wedge}_E$ and $X^{\wedge}_{f_1,\dots,f_r}$ respectively: this follows from the third point of \ref{prop:S-nilp_resol_are_unique_and_com_the_nilp_comp} and \ref{subs:proj_to_post_is_pwe}. In conclusion $\psi$ fits in the commutative square of pro-objects
        \begin{equation}
          \label{eqn:E-loc_equals_E-nilp:first_sq}
          \xymatrix{
          \{ X^{\wedge}_E\}_n \ar[r]^{\psi}_{\simeq} \ar[d] & \{ X^\wedge_{f_1,\dots,f_r}\}_n \ar[d]\\
          \{ P^n(\overline E_n\wedge X) \}_n \ar[r]^{\simeq}_\varphi & \{ P^n(C_n\wedge X) \}_n\\
          }
        \end{equation}
        where the vertical maps are induced by the projections to the terms of the respective towers. After smashing the towers in \eqref{eqn:E-loc_equals_E-nilp:first_sq} with $E$ we obtain a new commutative square 
        \begin{equation}
          \label{eqn:E-loc_equals_E-nilp:first_sq_E}
          \xymatrix{
          \{ E\wedge X^{\wedge}_E\}_n \ar[r] \ar[d] & \{ E\wedge X_{\bb S\underline\pi_0 E}\}_n \ar[d]\\
          \{ E\wedge P^n(\overline E_n\wedge X) \}_n \ar[r] & \{ E\wedge P^n(C_n\wedge X) \}_n.\\
          }
        \end{equation}
        of pro-objects. Here the lower horizontal map is a pro-isomorphism, since pro-isomorphisms are preserved under smashing with any spectrum by \ref{cor:Ewedge_pres_pro-isoms}. We claim that the right vertical map of \eqref{eqn:E-loc_equals_E-nilp:first_sq_E} is a pro-weak equivalence. 

        For showing this claim, consider that the vertical map on the right hand side of \eqref{eqn:E-loc_equals_E-nilp:first_sq}
        \begin{equation}
          \label{eqn:E-loc_eq_E-nilp:first_sq_rhs}
          \{X^\wedge_{f_1,\dots,f_r}\}_n \ra \{ P^n(C_n\wedge X)\}_n
        \end{equation}
        factors as the composition of two maps: the projection 
        \begin{equation}
          \label{eqn:E-loc_eq_E-nilp:first_sq_proj}
          \{X^{\wedge}_{f_1,\dots,f_r}\}_n  \ra \{ C_n\wedge X\}_n,
        \end{equation}
        and the projection to the Postnikov tower 
        \begin{equation}
          \label{eqn:E-loc_eq_E-nilp:first_sq_pt}
          \{ C_n\wedge X\}_n \ra \{ P^n(C_n\wedge X)\}_n.
        \end{equation}
        The map \eqref{eqn:E-loc_eq_E-nilp:first_sq_pt} is a pro-weak-equivalence by \ref{subs:proj_to_post_is_pwe} and stays a pro-weak-equivalence after smashing with $E$ by \ref{lemma:E_wedge_preserves_pro_equiv}. 
        Concerning \eqref{eqn:E-loc_eq_E-nilp:first_sq_proj}, we observe that by construction we have a diagram of pro-spectra,
        \begin{equation}
          \label{eqn:E-loc_eq_E-nilp:first_sq_proj_dec}
        \xymatrix{
           \{ X^{\wedge}_{f_1,\dots,f_r} \}_n\ar[r] & \{ C_n\wedge X \}_n \\
           \{ X_{\bb S_{\underline\pi_0 E}} \}_n \ar[u]^{\simeq}& \{X\}_n \ar[ul] \ar[u] \ar[l]^(0.4){\lambda_{\bb S_{\underline \pi_0E}}(X)}\\ 
           }
        \end{equation}
        where the upper triangle commutes by construction and the lower triangle commutes by \ref{prop:localization_at_mod_x_1-x_n_moore_spectrum}. Since $\lambda_{\bb S\underline \pi_0E}(X)$ is an $E$-equivalence by \ref{prop:fund_ineq_of_loc}, in order to finish the proof of the above claim we are left to show that the right vertical map of \eqref{eqn:E-loc_eq_E-nilp:first_sq_proj_dec} induces, after smashing with $E$, a pro-weak equivalence of towers.

        To accomplish this task we consider the tower $\{F^{(r)}_\bullet\}$ defined as the level-wise fibre of the tower of maps $\{X\}_\bullet \ra \{C_\bullet\wedge X\}$. We now show that $\{E\wedge F^{(r)}_n\}_n$ is pro-weakly-equivalent to $0$.
        For this, remember that in the tower $\{C_n\wedge X\}_n$ the transition maps
        \[\psi_n: C_{n+1}=C(f_1^{n+1})\wedge\cdots\wedge C(f_r^{n+1}) \ra C(f_1^{n})\wedge\cdots\wedge C(f_r^{n})=C_n \] 
        are defined as $\psi_n=p_{n+1}(f_r)\circ \cdots \circ p_{n+1}(f_1)$, where for every positive integer $n$, the maps $p_{n}(f_i)$ are those defined in \ref{const:modx_1_x_n_Moore_sp} and displayed in \eqref{eqn:mod_x^n_mod_x^n-1_fib_seqs}. 
        If $r=1$ we can chose the transition maps of $\{F^{(1)}_\bullet\}$ to be a suitable suspension of $(f_1\cdot)$ as in \eqref{eqn:mod_x^n_mod_x^n-1_fib_seqs}. Since $f_1\cdot$ acts trivially on the towers of homotopy modules $\{\underline\pi_k(E\wedge F^{(1)}_\bullet)\}$, we deduce that $\{E\wedge F^{(1)}_\bullet\}$ is pro-weakly-equivalent to $0$. If $r>1$ one can argue by induction. Indeed, using the octahedral axiom, we can find a tower of fibre sequences
        \[ \{F^{(s-1)}_\bullet\} \ra \{F^{(s)}_\bullet\} \ra \{G^{(s)}_\bullet\}  \]
        having the following properties: the tower $\{E\wedge F^{(s-1)}_\bullet\}$ is pro-weakly-equivalent to $0$ by the inductive assumption, and the tower $\{E\wedge G^{(s)}_\bullet\}$ is pro-weakly-equivalent to $0$ by the case $r=1$ treated above (see Definition \ref{defin:pro-we} ).
        Hence, thanks to \ref{cor:2-3prozero}, we conclude that \eqref{eqn:E-loc_eq_E-nilp:first_sq_proj} becomes a pro-weak-equivalence after smashing with $E$.
        
        Let us consider now the commutative square of towers:
         \begin{equation}
          \label{eqn:E-loc_equals_E-nilp:second_sq}
          \xymatrix{
          \{ X \}_n \ar[r] \ar[d] & \{ X^{\wedge}_E\}_n  \ar[d] & \\
           \{\overline E_n\wedge X\}_n \ar[r] & \{ P^n(\overline E_n\wedge X) \}_n \\
          }
        \end{equation}
        and note that, in order to conclude, we only need to show that upon taking $E$-homology the map $E\wedge X\ra E\wedge X^{\wedge}_E$ is an equivalence. This map is the upper horizontal arrow of the diagram 
        \begin{equation}
          \label{eqn:E-loc_equals_E-nilp:second_sq_E}
          \xymatrix{
          \{ E\wedge X \}_n \ar[r] \ar[d] & \{E\wedge X^{\wedge}_E\}_n  \ar[d] & \\
           \{E\wedge (\overline E_n\wedge X)\}_n \ar[r] & \{ E\wedge P^n(\overline E_n\wedge X) \}_n \\
          }
        \end{equation}
        which is obtained from \eqref{eqn:E-loc_equals_E-nilp:second_sq} by smashing with $E$. In \eqref{eqn:E-loc_equals_E-nilp:second_sq_E} the right vertical map is a pro-weak equivalence as it follows from the previous part of the argument. The lower horizontal map is a pro-weak equivalence too by \ref{lemma:E_wedge_preserves_pro_equiv}. Finally the left vertical map of \eqref{eqn:E-loc_equals_E-nilp:second_sq_E} is a pro-weak equivalence as well, as we now explain. We have a tower of fibre sequences
          \begin{equation}
          \label{eqn:bo}
            \overline E^n \wedge X \ra X \ra \overline E_{n-1}\wedge X
          \end{equation}
        which we obtain, for varying $n$, from diagram \eqref{eqn:fund_comp_of_fib_seq_E^n_to_En} upon smashing with $X$. The left vertical map in the square \eqref{eqn:E-loc_equals_E-nilp:second_sq} is the map of pro-objects induced by the right hand side maps of \eqref{eqn:bo}. We claim that, after smashing \eqref{eqn:bo} with $E$, the tower $\{ \overline E^n \wedge X\}_n$ on the left hand side of \eqref{eqn:bo} becomes pro-weakly equivalent to zero. Indeed by the very inductive definition of $\overline E^n$ we have fibre sequences deduced from \eqref{eqn:fund_fib_seq_E^n+1_E^n_EE^n}
        \[ \overline E^{n+1}\wedge X \ra \overline E^n\wedge X \ra E\wedge \overline E^n\wedge X.\] 
        Here, the map on the right hand side is $e_E\wedge \rm{id}$ where $e_E: \bb S \ra E$ is the unit of the ring spectrum $E$; the map on the left hand side is $\bar e\wedge id$ (see \eqref{eqn:fund_fib_seq_E^n+1_E^n_EE^n}) and appears as the transition map in the tower $\{E\wedge \overline E^n \wedge X\}_n$.
        After smashing with $E$ we have an induced long exact sequences of homotopy modules
        \[ \cdots \ra \underline \pi_k(E\wedge \overline E^{n+1}\wedge X) \ra \underline \pi_k(E\wedge \overline E^n\wedge X) \ra \underline \pi_k(E\wedge E\wedge \overline E^n\wedge X) \ra \cdots\]
        and the map $\underline \pi_k(E\wedge \overline E^n\wedge X) \ra \underline \pi_k(E\wedge E\wedge \overline E^n\wedge X)$ is split by the multiplication of $E$. It follows that the previous map, which is the same as the transition map in the tower $\{E\wedge \overline E^n\wedge X\}_n$ is zero on every homotopy module and hence such a tower is pro-weakly equivalent to the zero tower. Using corollary \ref{cor:2-3prozero}, we deduce that the upper horizontal map of \eqref{eqn:E-loc_equals_E-nilp:second_sq_E} is a pro-weak-equivalence. Both source and target of this map are constant towers so the map is actually an isomorphism of spectra and this concludes the proof.
    \end{proof}

    \begin{rmk}
      \label{rmk:HKOcomparison}
      Let $p$ be the exponential characteristic of the base field $K$. The spectral sequence we construct in Section \ref{sec:the_E_based_MANSS}, in the special case when $E=H\bb Z/\ell$  and $\ell$ is a prime, is known as the \emph{(homological) motivic Adams Spectral sequence}. The convergence of such a spectral sequence was analysed in the work of Hu-Kriz-Ormsby \cite{MR2811716,MR2774158}. One of the main results of \cite{MR2811716} is that for a connective spectrum $X$ which has a cell presentation of finite type, in case $p=1$, the $H\bb Z/\ell$-nilpotent completion map is naturally identified with the adic completion map $\chi_{\eta,\ell}(X): X \ra X^{\wedge}_{\eta,\ell}$. We have not checked up to what extent one can weaken the assumption on the characteristic of the base field. By combining Theorem \ref{thm:red_to_moore_spt}, Theorem \ref{thm:convergence_mod_stuff} and Example \ref{ssub:ex_of_loc_HZ} we deduce a stronger version of such a convergence statement. Namely that the $H\bb Z/\ell$-nilpotent completion map is naturally identified with the adic completion map $\chi_{\eta,\ell}(X): X \ra X^{\wedge}_{\eta,\ell}$ for every connective spectrum $X$, independently on the characteristic of the base field.

      We wish to make a couple of observations. In first place we have no restriction on the characteristic of the base field: our approach works equally fine even when $\ell=\rm{char}(K)=p>1$. Unfortunately an explicit description of the dual motivic Steenrod algebra $\pi_{\ast,\ast}(H\bb Z/\ell \wedge H\bb Z/\ell)$ is still not available when $p=\ell$. It is worth mentioning that some progress in this direction has however been obtained in \cite{FrankSpitz:MDSAinPosChar}. Once this ingredient will be available, our convergence result will allow to start the computation of $\pi_{\ast,\ast}(\bb S^\wedge_{\eta,\ell})$ even when $\ell=p$.

      Another fact we would like to underline is that our result does not need that $X$ be cell of finite type. Many spectra do not satisfy this assumption: for instance a finite separable extension $L/K$ is not expected to give a cell object of $\cal{SH}(K)$.

    \end{rmk}

    \begin{lemma}
      \label{lem:Pn_inverted_Moore_tow_is_S_nilp_resol}
      Let $E$ be a homotopy commutative ring spectrum satisfying Assumption \ref{sub:assumption_A1} in the special case that $I=\emptyset$. Then for every connective spectrum $X$ the tower $\{P^n(\bb S[I^{-1}]\wedge X)\}_n$ is a $\underline\pi_0E$-nilpotent resolution of $X$.
    \end{lemma}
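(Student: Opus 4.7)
The plan is to mirror the proof of Lemma \ref{lemma:PnMoore_tower_is_an_S-nilp_resol}, noting that under Assumption \ref{sub:assumption_A1} with $I=\emptyset$ the situation simplifies substantially because the relevant localization is smashing rather than given by an inverse tower of cofibres. Fix the set $S=\{g_j\}_{j\in J}\subseteq K^{MW}_\ast(K)$ appearing in the assumption, so that $\cal R:=\underline\pi_0 E \simeq \cal K^{MW}[\cal J^{-1}]$. Proposition \ref{prop:localization_at_S_inverted_moore_spectrum} provides a homotopy commutative ring structure on $\bb S[S^{-1}]$ whose unit induces on $\underline\pi_0$ the canonical localization map $\cal K^{MW} \ra \cal K^{MW}[\cal J^{-1}]$; under the isomorphism $\varphi$ of Assumption \ref{sub:assumption_A1} this identifies $\underline\pi_0(\bb S[S^{-1}])$ with $\cal R$.

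First I would verify that each $P^n(\bb S[S^{-1}]\wedge X)$ lies in $\mathrm{Nilp}(\cal R)$. By \ref{prop:localization_at_S_inverted_moore_spectrum}\ref{locstS1}, the homotopy modules of $\bb S[S^{-1}]\wedge X$ are $\underline\pi_\ast(X)[S^{-1}]$, so smashing with $\bb S[S^{-1}]$ preserves connectivity and $P^n(\bb S[S^{-1}]\wedge X)$ has only finitely many non-trivial homotopy modules. Furthermore, $\bb S[S^{-1}]\wedge X$ is a module over the ring spectrum $\bb S[S^{-1}]$, so each of its homotopy modules (and those of every Postnikov truncation) is canonically a $\cal R$-module, hence trivially $\cal R$-nilpotent in the sense of \ref{defin:S-nilpotent_hom_module_and_spectra}.

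For the universal property, let $Y\in \mathrm{Nilp}(\cal R)$. Boundedness of $Y$ gives an integer $k$ with $Y\simeq P^k(Y)$. The $\cal R$-nilpotent filtration on each homotopy module of $Y$ has $\cal R$-module quotients, and each $g_j$ acts invertibly on $\cal R$, hence invertibly on every graded piece, hence invertibly on every homotopy module of $Y$. Proposition \ref{prop:localization_at_S_inverted_moore_spectrum}\ref{locstS4} then says $Y$ is $\bb S[S^{-1}]$-local. For $n\ge k$, the $(\le k)$-truncation of $Y$ yields an isomorphism
\[
  [P^n(\bb S[S^{-1}]\wedge X),Y] \overset{\simeq}{\ra} [\bb S[S^{-1}]\wedge X,Y],
\]
while \ref{prop:localization_at_S_inverted_moore_spectrum}\ref{locstS3} identifies $X\ra \bb S[S^{-1}]\wedge X$ as the $\bb S[S^{-1}]$-localization of $X$, so the right hand side pairs isomorphically with $[X,Y]$. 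Passing to the colimit in $n$ concludes the argument.

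The main ``obstacle'' is essentially bookkeeping: one must check that the two identifications $\underline\pi_0 E\simeq \cal R$ and $\underline\pi_0(\bb S[S^{-1}])\simeq \cal R$ are compatible, so that the $\bb S[S^{-1}]$-module structure on $\bb S[S^{-1}]\wedge X$ really gives the intrinsic $\cal R$-module structure in homotopy, and that $Y$ being $\cal R$-nilpotent implies $Y$ is $\bb S[S^{-1}]$-local rather than merely $E$-local. Both facts are guaranteed by Assumption \ref{sub:assumption_A1}(4), which makes the composite $\cal K^{MW}\ra \underline\pi_0(\bb S[S^{-1}])\ra \underline\pi_0 E$ coincide with the canonical projection to $(\cal K^{MW}/\cal I)[\cal J^{-1}]=\cal K^{MW}[\cal J^{-1}]$. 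Beyond this compatibility check, no delicate convergence or pro-isomorphism argument is required, in contrast with \ref{lemma:PnMoore_tower_is_an_S-nilp_resol} where the iterated cofibres $C(f_i^n)$ forced a careful inductive argument.
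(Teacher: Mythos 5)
Your proof is correct and takes essentially the same route as the paper: you identify each layer of the Postnikov tower of $\bb S[S^{-1}]\wedge X$ as having $\underline\pi_0E$-module homotopy modules via \ref{prop:localization_at_S_inverted_moore_spectrum}\ref{locstS1} and Assumption \ref{sub:assumption_A1}(4), then verify the mapping property using boundedness of $Y$ together with the $\bb S[S^{-1}]$-locality criterion \ref{prop:localization_at_S_inverted_moore_spectrum}\ref{locstS4} and the identification of $X\ra \bb S[S^{-1}]\wedge X$ as the $\bb S[S^{-1}]$-localization in \ref{prop:localization_at_S_inverted_moore_spectrum}\ref{locstS3}. The paper's own proof is terser, dismissing the mapping property as an ``immediate consequence of the universal property of the Postnikov truncations''; you have usefully spelled out the missing half of that argument (that $\cal R$-nilpotence of $Y$ forces each $g_j$ to act invertibly on $\underline\pi_\ast Y$, so $Y$ is $\bb S[S^{-1}]$-local), and correctly read $\bb S[I^{-1}]$ in the statement as $\bb S[S^{-1}]$ with $S=\{g_j\}_{j\in J}$.
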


    \begin{proof}
      Since the unit $\bb S \ra E$ induces an isomorphism $\underline \pi_0 \bb S[I^{-1}] \overset{\simeq}{\ra} \underline \pi_0E $ we immediately conclude that the homotopy modules $\underline\pi_k(X\wedge \bb S[I^{-1}])$ are all $\underline\pi_0E$-modules and hence they are $\underline\pi_0E$-nilpotent. The mapping property of $\pi_0E$-nilpotent resolutions is an immediate consequence of the universal property of the Postnikov truncations.
    \end{proof}

    \begin{thm}
      \label{thm:convergence_inv_stuff}
      Let $E$ be an homotopy commutative ring spectrum satisfying assumption \ref{sub:assumption_A1} in the special case that $I=\emptyset$. Then for every connective spectrum $X$ the natural map $\lambda_E(X): X \ra X^{\wedge}_E$ is a $E$-equivalence. In particular the map $\beta_E(X): X_E\ra X^{\wedge}_E$ of \eqref{eqn:alphabetagamma} is an isomorphism in $\cal{SH}(K)$. 
    \end{thm}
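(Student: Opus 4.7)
The plan is to mirror the strategy of Theorem \ref{thm:convergence_mod_stuff}, which in the present ``inverting'' case is considerably cleaner because smashing with the localised sphere $\bb S[\cal J^{-1}]$ is exact and preserves connectivity. Set $\cal R=\underline \pi_0 E$; since $I=\emptyset$, Assumption \ref{sub:assumption_A1} gives $\cal R\simeq \cal K^{MW}_\ast[\cal J^{-1}]$, so that $\bb S\underline \pi_0 E$ is the Moore spectrum $\bb S[\cal J^{-1}]$. Proposition \ref{prop:PnEn_tower_is_an_S-nilp_resol} then supplies the canonical $\cal R$-nilpotent resolution $\{P^n(\overline E_n\wedge X)\}_n$ of $X$, while Lemma \ref{lem:Pn_inverted_Moore_tow_is_S_nilp_resol} supplies the alternative $\cal R$-nilpotent resolution $\{P^n(\bb S[\cal J^{-1}]\wedge X)\}_n$.

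The first main step is to invoke the uniqueness clause in Proposition \ref{prop:S-nilp_resol_are_unique_and_com_the_nilp_comp}(\ref{SNR2}) to obtain a canonical pro-isomorphism between these two towers in the category of pro-towers under $X$, so that the induced equivalence of homotopy inverse limits is automatically compatible with the structure maps from the constant tower $\{X\}$. Part (\ref{SNR3}) of the same proposition identifies the left-hand limit with $X^\wedge_E$. For the right-hand side, since each generator $g_j\in\cal J$ has $S^1$-degree zero, smashing with $\bb S[\cal J^{-1}]$ preserves connectivity; consequently $\bb S[\cal J^{-1}]\wedge X$ is connective, its Postnikov tower converges, and $\holim_n P^n(\bb S[\cal J^{-1}]\wedge X)\simeq \bb S[\cal J^{-1}]\wedge X$. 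Putting this together yields a canonical equivalence $X^\wedge_E\simeq \bb S[\cal J^{-1}]\wedge X$ under which the nilpotent completion map $\alpha_E(X)$ is identified with the unit $\iota_{\cal J}(X)\colon X\to \bb S[\cal J^{-1}]\wedge X$.

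To finish, I would apply Proposition \ref{prop:localization_at_S_inverted_moore_spectrum}(\ref{locstS3}): the map $\iota_{\cal J}(X)$ is by construction the $\bb S[\cal J^{-1}]$-localisation of $X$, hence in particular a $\bb S\underline \pi_0 E$-equivalence. Proposition \ref{prop:fund_ineq_of_loc} provides the inequality $\bc{E}\leq\bc{\bb S\underline \pi_0 E}$, so that every $\bb S\underline \pi_0 E$-equivalence is automatically an $E$-equivalence. Therefore $\alpha_E(X)$ is an $E$-equivalence; combined with the factorisation \eqref{eqn:alphabetagamma} and the fact that $X^\wedge_E$ is already $E$-local (cf.\ \ref{subs:Relation between localization and nilpotent completion}), this forces $\beta_E(X)$ to be an $E$-equivalence between $E$-local objects, hence an isomorphism in $\cal{SH}(K)$.

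The principal subtlety I anticipate is the bookkeeping required to verify that the pro-isomorphism supplied by Proposition \ref{prop:S-nilp_resol_are_unique_and_com_the_nilp_comp}(\ref{SNR2}) really identifies $\alpha_E(X)$ with $\iota_{\cal J}(X)$, rather than producing merely an abstract equivalence of the two target spectra; this is handled by exploiting that the pro-isomorphism is given under $X$. Once that compatibility is in place, the argument is essentially a one-line consequence of the preparatory results. In particular, in contrast to Theorem \ref{thm:convergence_mod_stuff}, no appeal to the pro-spectrum machinery of Appendix \ref{sec:pro_spectra} should be needed, because smashing with a filtered homotopy colimit of $\bb G_m$-desuspensions of $\bb S$ commutes with all the operations involved.
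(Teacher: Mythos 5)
Your proposal is correct, and it agrees with the paper's proof up to the point of producing the pro-isomorphism $\varphi$ between the two $\cal R$-nilpotent resolutions and the induced square of pro-towers identifying $X^\wedge_E$ with $X\wedge\bb S[\cal J^{-1}]$. From there the two arguments diverge. The paper's proof smashes that square with $E$ and invokes the pro-weak-equivalence machinery of the Appendix (lemmas \ref{lemma:E_wedge_preserves_pro_equiv} and \ref{cor:2-3prozero}), then defers to the second square from the proof of Theorem \ref{thm:convergence_mod_stuff}, including the analysis of the tower $\{\overline E^n\wedge X\}_n$ under $E$-smashing, to conclude that $E\wedge X\to E\wedge X^\wedge_E$ is an isomorphism. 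You instead exploit the special features of the $I=\emptyset$ case: the generators of $\cal J$ live in $\pi_0$ and hence have $S^1$-degree zero, so $\bb S[\cal J^{-1}]$ is $(-1)$-connective, $X[\cal J^{-1}]$ is connective, and its Postnikov tower converges, giving $\holim_n P^n(X\wedge\bb S[\cal J^{-1}])\simeq X[\cal J^{-1}]$. Tracking compatibility with the maps from $X$, the pro-isomorphism of resolutions then identifies $\alpha_E(X)$ with $\iota_{\cal J}(X)$. Proposition \ref{prop:localization_at_S_inverted_moore_spectrum}\ref{locstS3} exhibits $\iota_{\cal J}(X)$ as the $\bb S[\cal J^{-1}]$-localization map (hence a $\bb S[\cal J^{-1}]$-equivalence), and Proposition \ref{prop:fund_ineq_of_loc} gives $\bc E\leq\bc{\bb S\underline\pi_0 E}=\bc{\bb S[\cal J^{-1}]}$, so $\alpha_E(X)$ is an $E$-equivalence; the standard argument via \eqref{eqn:alphabetagamma} then forces $\beta_E(X)$ to be an isomorphism. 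Your route is shorter, avoids the $E$-smashing and second-square bookkeeping, and yields the slightly stronger explicit identification $X^\wedge_E\simeq X[\cal J^{-1}]$ rather than merely the $E$-equivalence. One minor correction to your last remark: the uniqueness statement \ref{prop:S-nilp_resol_are_unique_and_com_the_nilp_comp}\eqref{SNR2}--\eqref{SNR3} does itself rely on the pro-tower formalism of the Appendix, so it is not literally true that the Appendix is dispensed with; what you avoid is the additional pro-weak-equivalence bookkeeping after smashing with $E$.
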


    \begin{proof}
      The proof proceeds along the same lines as the proof of \ref{thm:convergence_mod_stuff}. More precisely we start by observing that both the towers $\{P^n(\overline E_n\wedge X)\}_n$ and $\{P^n(X\wedge \bb S[S^{-1}])\}_n$ are $\underline\pi_0E$-nilpotent resolutions of $X$ by \ref{prop:PnEn_tower_is_an_S-nilp_resol} and \ref{lem:Pn_inverted_Moore_tow_is_S_nilp_resol} respectively. We deduce, as in the proof of \ref{thm:convergence_mod_stuff}, that there is an isomorphism $\psi: X^\wedge _E \ra X\wedge \bb S[S^{-1}]$ making the following square of pro-spectra
      \begin{equation}
          \label{eqn:Inv-E-loc_equals_E-nilp:first_sq}
          \xymatrix{
          \{ X^{\wedge}_E\}_n \ar[r]^(0.45)u \ar[d] & \{ X\wedge \bb S[S^{-1}]\}_n \ar[d]\\
          \{ P^n(\overline E_n\wedge X) \}_n \ar[r]^(0.45){\simeq}_(0.45)\varphi & \{ P^n(X\wedge \bb S[S^{-1}]) \}_n\\
          }
        \end{equation}
      commutative. After smashing \eqref{eqn:Inv-E-loc_equals_E-nilp:first_sq} with $E$ the previous diagram, the lower horizontal map remains a pro-isomorphism. The vertical map on the right hand side of \eqref{eqn:Inv-E-loc_equals_E-nilp:first_sq} is a pro-weak-equivalence by \ref{subs:proj_to_post_is_pwe}, and stays a pro-weak-equivalence after smashing with $E$ by \ref{lemma:E_wedge_preserves_pro_equiv}. These two observations show that, after smashing with $E$, also the left vertical map of \eqref{eqn:Inv-E-loc_equals_E-nilp:first_sq} is a pro-weak-equivalence. The remaining part of the proof follows step by step the proof of Theorem \ref{thm:convergence_mod_stuff}.
    \end{proof}

\appendix
\section{Pro-spectra} 
  \label{sec:pro_spectra}
  \stepcounter{section}
  
  We recall in this section some facts on pro-spectra that we have used during the text. For our purposes we only need to deal with pro-objects in $\cal {SH}(K)$ which are indexed on the natural numbers; we do not need the full formalism of a model structure on pro-objects in $\spt_T^{\Sigma}(K)$. This section is independent of the rest of the paper and does not use any of the results proven in the previous part.

  \begin{defin}
  \label{defin:pro-spectrum}
    The category of motivic pro-spectra, which we denote by $\rm{Pro}(\cal {SH}(K))$, is the category of pro-object in $\cal {SH}(K)$. More precisely, an objects of $\rm{Pro}(\cal {SH}(K))$ is a diagram of spectra $X_\bullet$ in $\cal {SH}(K)$ indexed on a co-filtered category $I$. Given two motivic pro-spectra $X_\bullet$ and $Y_\bullet$ which are indexed on $I$ and $J$ respectively, we set
    \[\Hom_{\rm{Pro}}(X_\bullet,Y_\bullet):=\varprojlim_{j\in J} \varinjlim _{i \in I}[X_i,Y_j].\]
    The composition law is defined in the obvious way. In order to avoid confusion we will distinguish between a diagram $X_\bullet$ and the pro-object defined by $X_\bullet$, which we denote by $\{X_\bullet\}$. A map of diagrams $X_\bullet \ra Y_\bullet$ is called \emph{pro-isomorphism} if it induces an isomorphism of the associated pro-objects $\{X_\bullet\} \ra \{Y_\bullet\}$.
  \end{defin}
  
  We are interested in the subcategory of pro-towers. For the rest of this section we denote by $\bf N$ the category with objects $\{0,1,2,\dots\}$ and where $\Hom(n,m)=\ast$ if $n\geq m$ and $\Hom(n,m)=\emptyset$ if $n<m$. In a picture,
  \[\bf N= \{\cdots \ra n \ra \cdots \ra 2 \ra 1\ra 0\}.\]
  \begin{defin}
  \label{defin:pro-tower}
    A pro-tower in $\cal{SH}(K)$ is a motivic pro-spectrum which is indexed on the category $\bf N$. The category of pro-towers in $\cal{SH}(K)$, denoted by $\rm{Tow}(\cal{SH}(K))$, is the full subcategory of $\rm{Pro}(\cal{SH}(K))$ whose objects are pro-towers.
  \end{defin}

  \subsection{} 
    The category of pro-towers has a fairly explicit description as a suitable localization of the category of diagrams $\underline{\rm{Fun}}(\bf N, \cal{SH}(K))$. We recall such a description.

    A re-indexing function is a function $n_\bullet: \bb N \ra \bb N$ such that, $\forall k\in \bb N$, $n_k\geq \max\{n_{k-1},k\}$. The set of re-indexing functions is denoted by $\cal N$. Every re-indexing function $n_\bullet$ defines a re-indexing functor
     \[\rho_{n_\bullet}: \underline{\rm{Fun}}(\bf N, \cal{SH}(K) ) \ra \underline{\rm{Fun}}(\bf N, \cal {SH}(K)).\]
    On objects, $\rho_{n_\bullet}$ associates with every diagram $X_\bullet$ the diagram $X_{n_\bullet}$ where, for every $k$, the tower map $X_{n_k} \ra X_{n_{k-1}}$ is the composition of the tower maps $X_{n_k} \ra X_{n_k-1} \ra \cdots \ra X_{n_{k-1}}$. In \cite{MR1428551} $X_{n_\bullet}$ is called $n_\bullet$-spaced tower. On morphisms $\rho_{n_\bullet}$ is defined in the obvious way.
  
    Given two re-indexing functions $n_\bullet,m_\bullet$ one can define a composition $n\circ m:=k \mapsto n_{m_k}$. This operation is associative and unital, where the unit is the identity function $i$ of $\bb N$. The set $\cal N$ of re-indexing functions, with the operation $\circ$, is thus an associative monoid with unit given by the identity function. We can also endow $\cal N$ with a partial order by setting $n_\bullet \leq m_\bullet$ if for every $k \in \bb N$, $n_k \leq m_k$. Clearly for every pair $n_\bullet, m_\bullet$, $n\circ m \geq n_\bullet,m_\bullet$, and for every $n_\bullet \in \cal N$ we have that $n_\bullet\geq id_\bullet$. 

    Given two re-indexing functions $m_\bullet, n_\bullet$ such that $n_\bullet \leq m_\bullet$, we define a natural transformation of diagrams $\fk p_n^m:X_{m_\bullet} \ra X_{n_\bullet}$ by defining $X_{m_k} \ra X_{n_k}$ to be the composition of the tower maps $X_{m_k} \ra X_{m_k-1} \ra \cdots \ra X_{n_{k}}$. In \cite{MR1428551} the maps $\fk p_n^m$ are called \emph{basic self-tower maps} of $X$. It follows that the map of sets 
    \[\cal N \ra \rm{End( \underline{\rm{Fun}}(\bf N, \cal{SH}(K)))}\]
    upgrades to a functor when we see $\cal N$ as the category associated to the partially ordered set ($\cal N,\geq$). 

    The category of pro-towers in $\cal{SH}(K)$ is thus equivalent to the localization of the functor category $\underline{\rm{Fun}}(\bf N, \cal {SH}(K))$ at the basic self-tower maps. In particular given any re-indexing function $n_\bullet$ and any tower $X_\bullet$, the natural map $X_{n_\bullet} \ra X_\bullet$ represents the identity of the pro-tower $\{X_\bullet\}$. From these observations we deduce the following criterion.
    \begin{cor}
    \label{cor:crit_pro_iso}
      A map of pro-towers $f: \{X_\bullet \} \ra \{ Y_\bullet \}$ is an isomorphism in $\rm{Tow}(\cal{SH}(K))$ if and only if the following condition is satisfied: up to re-indexing $X_\bullet$, so that $f$ can be represented by a natural transformation $k\mapsto f_k: X_k \ra Y_k$, for every $k\in \bb N$ there is an integer $n_k >> k$ and a map $Y_{n_k} \ra X_k$ making both the triangles
      \[
      \xymatrix{
        X_{n_k} \ar[r]^{f_{n_k}} \ar[d] & Y_{n_k}\ar[d] \ar[dl]\\
        X_k \ar[r]_{f_k} & Y_k\\
      }
      \] to commute. In particular a tower $\{Z_\bullet\}$ is pro-isomorphic to $0$ if and only if for every $k \in \bb N$ there exists an integer $n_k>>k$ such that the tower map $X_{n_k} \ra X_k$ is zero.
    \end{cor}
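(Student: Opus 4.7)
The plan is to prove both parts of the corollary by unpacking the explicit presentation of $\rm{Tow}(\cal{SH}(K))$ as the localization of $\underline{\rm{Fun}}(\bf N, \cal{SH}(K))$ at the basic self-tower maps described right before the statement. The key computational tool is that, for any two towers $X_\bullet$ and $Y_\bullet$, the hom set in the pro-category can be computed as the double limit
\[
\Hom_{\rm{Tow}}(\{X_\bullet\},\{Y_\bullet\}) = \varprojlim_k \varinjlim_n [X_n,Y_k],
\]
and representing an element of this set amounts to choosing a re-indexing function $n_\bullet\in\cal N$ and a natural transformation of diagrams $X_{n_\bullet}\ra Y_\bullet$, with two such transformations giving the same pro-map iff they agree after further re-indexing.

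For the forward direction of the first claim, assume $f:\{X_\bullet\}\ra\{Y_\bullet\}$ is a pro-isomorphism. Up to re-indexing we may represent $f$ as a strict natural transformation $f_k: X_k\ra Y_k$. Its inverse $g$ is represented, after a further re-indexing $m_\bullet\in\cal N$, by a compatible system $g_k: Y_{m_k}\ra X_k$. The relations $g\circ f=\mathrm{id}$ and $f\circ g=\mathrm{id}$ in the pro-category translate, after absorbing one more re-indexing $n_\bullet\geq m_\bullet$ with $n_\bullet\geq i$, into the commutativity in $\cal{SH}(K)$ of the two solid triangles required in the statement, where the diagonal filler is the composition $Y_{n_k}\ra Y_{m_k}\overset{g_k}{\ra} X_k$. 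For the reverse direction, the maps $Y_{n_k}\ra X_k$ (for a suitable re-indexing function $n_\bullet$, which exists by the partial-order/monoid structure on $\cal N$) assemble into a morphism $g:\{Y_\bullet\}\ra\{X_\bullet\}$ of pro-objects, and the commutativity of the two triangles says exactly that $g\circ f$ and $f\circ g$ agree, after re-indexing, with the basic self-tower maps $\fk p^{n}_{id}$, which by construction represent the identity of $\{X_\bullet\}$ and $\{Y_\bullet\}$ respectively.

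For the second claim, note that $\{Z_\bullet\}\simeq 0$ in $\rm{Tow}(\cal{SH}(K))$ means that the identity of $\{Z_\bullet\}$ agrees with the zero map in $\varprojlim_k\varinjlim_n[Z_n,Z_k]$. Since the identity is represented by the basic self-tower maps $\fk p^n_{id}: Z_n\ra Z_k$, this equality is equivalent to the condition that for each $k$ there exists $n_k\geq k$ such that $\fk p^{n_k}_{id}: Z_{n_k}\ra Z_k$, which is precisely the composition of the tower maps from level $n_k$ down to level $k$, is null-homotopic. Alternatively this is just the first part applied to the zero map $\{Z_\bullet\}\ra\{Z_\bullet\}$, comparing it with the identity.

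The only mildly subtle point is keeping track of the re-indexing bookkeeping: the naive statement that an isomorphism of pro-towers can be represented level-wise by maps admitting strict inverses up to re-indexing requires iterated re-indexings to ensure the two triangles commute at the same level $n_k$. This is handled by the fact that $(\cal N,\leq)$ is filtered (any finite collection of re-indexing functions admits a common upper bound via $n\circ m$), so finitely many required re-indexings can always be amalgamated, turning a statement that a priori only holds after \emph{some} re-indexing into one that holds after a single common re-indexing $n_\bullet$. Once this is absorbed, both directions of both claims are essentially formal consequences of the calculus of fractions implicit in Definition \ref{defin:pro-tower}.
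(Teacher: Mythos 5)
Your overall strategy — unpacking $\rm{Tow}(\cal{SH}(K))$ as the localization at basic self-tower maps and computing $\Hom$-sets as $\varprojlim_k\varinjlim_n[-,-]$ — is the one the paper intends; the paper actually states this corollary without a written proof, treating it as a formal deduction from the preceding discussion. Your forward direction and the second (acyclicity) claim are fine.

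There is, however, a genuine gap in the reverse direction. You assert that the maps $g_k\colon Y_{n_k}\to X_k$ "assemble into a morphism $g\colon\{Y_\bullet\}\to\{X_\bullet\}$ of pro-objects," attributing this to re-indexing bookkeeping and the filteredness of $\cal N$. But the $g_k$ need \emph{not} strictly form a natural transformation $Y_{n_\bullet}\to X_\bullet$. The two triangles only give $f_k\circ\bigl[(X_l\to X_k)g_l-g_k(Y_{n_l}\to Y_{n_k})\bigr]=0$ and $\bigl[(X_l\to X_k)g_l-g_k(Y_{n_l}\to Y_{n_k})\bigr]\circ f_{n_l}=0$, and neither of these lets you cancel $f$. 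What is true — and what actually has to be checked — is that the two composites agree after precomposition with the tower map $Y_{n_{n_l}}\to Y_{n_l}$: rewrite that map as $f_{n_l}g_{n_l}$ using the lower triangle at level $n_l$, then apply the upper triangles at levels $l$ and $k$ together with naturality of $f_\bullet$; both composites collapse to $(X_{n_l}\to X_k)\circ g_{n_l}$. This shows the classes $[g_k]$ form a compatible element of $\varprojlim_k\varinjlim_n[Y_n,X_k]$, after which $\bb N$-indexed straightening supplies a strict level representative and your concluding step goes through. This compatibility verification uses both triangle relations and naturality essentially, and is not delivered by the monoid/filteredness structure of $\cal N$ alone; that structure only handles aligning the levels, which you do address.

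If you prefer to avoid constructing $g$ altogether, the cleaner route is to show directly that $f_*\colon\Hom(\{Z_\bullet\},\{X_\bullet\})\to\Hom(\{Z_\bullet\},\{Y_\bullet\})$ is bijective for every tower $\{Z_\bullet\}$: injectivity uses only the upper triangle (any $\alpha$ with $f_*\alpha=0$ satisfies $\alpha_k=[g_k\circ f_{n_k}\circ a_{n_k}]=0$), while surjectivity is exactly the compatibility computation sketched above.
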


  \begin{cor}
    \label{cor:Ewedge_pres_pro-isoms}
    Let $E$ be a motivic spectrum. Then the functor $E\wedge-$ preserves pro-isomorphisms of towers.
  \end{cor}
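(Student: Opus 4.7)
The plan is to reduce the statement directly to the criterion for being a pro-isomorphism given in Corollary \ref{cor:crit_pro_iso}. Let $f : \{X_\bullet\} \to \{Y_\bullet\}$ be a pro-isomorphism of towers in $\mathcal{SH}(K)$. After a harmless re-indexing of $X_\bullet$ (which changes neither $\{X_\bullet\}$ nor $\{E \wedge X_\bullet\}$ as pro-objects, since re-indexing produces basic self-tower maps that are formal identities in the pro-category) we may represent $f$ by a natural transformation $f_\bullet : X_\bullet \to Y_\bullet$. By Corollary \ref{cor:crit_pro_iso}, for every $k \in \mathbb{N}$ we can then find an integer $n_k \gg k$ together with a map $g_k : Y_{n_k} \to X_k$ in $\mathcal{SH}(K)$ fitting in a diagram whose two triangles commute.

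First I would apply the functor $E \wedge -$ to this diagram. Since smashing with $E$ is a functor on $\mathcal{SH}(K)$, it takes commutative diagrams to commutative diagrams; in particular the two triangles remain commutative after smashing with $E$, yielding for every $k$ a map $\mathrm{id}_E \wedge g_k : E \wedge Y_{n_k} \to E \wedge X_k$ together with the two required factorisations of the tower maps of $\{E \wedge X_\bullet\}$ and $\{E \wedge Y_\bullet\}$ through it. Second, I would observe that the natural transformation representing $E \wedge f$ on these towers is simply $\mathrm{id}_E \wedge f_\bullet$, obtained by applying $E \wedge -$ level-wise to $f_\bullet$, so no further re-indexing of $\{E \wedge X_\bullet\}$ or $\{E \wedge Y_\bullet\}$ is required.

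Combining these two observations, the map $\mathrm{id}_E \wedge f : \{E \wedge X_\bullet\} \to \{E \wedge Y_\bullet\}$ satisfies the hypothesis of Corollary \ref{cor:crit_pro_iso}, and is therefore an isomorphism of pro-objects. There is no real obstacle here: the argument is purely formal, relying only on functoriality of $E \wedge -$ on $\mathcal{SH}(K)$ and the characterisation of pro-isomorphisms of towers already established. The only mildly delicate point is keeping track of the compatibility between the re-indexing used to represent $f$ as an honest natural transformation and the one inherited by $E \wedge f$, but this is automatic because $E \wedge -$ commutes with the re-indexing functors $\rho_{n_\bullet}$ on diagram categories.
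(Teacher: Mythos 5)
Your proposal is correct and follows precisely the argument the paper intends: the paper states this as an immediate consequence of the criterion in \ref{cor:crit_pro_iso} without writing out a proof, and applying $E\wedge-$ level-wise to the commuting-triangle data supplied by that criterion, together with your (correct) remark that $E\wedge-$ commutes with the re-indexing functors $\rho_{n_\bullet}$ so that the representability of $f$ by an honest natural transformation is preserved, is exactly what makes the statement immediate.
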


  \begin{defin}
  \label{defin:pro-we}
    A map of pro-towers $f: \{ X_\bullet\} \ra \{Y_\bullet\}$ is a \emph{pro-weak-equivalence} if, for every integer $p$, the induced map $\{\underline \pi_p (X_\bullet)\} \ra \{\underline \pi_p (Y_\bullet)\}$ is a pro-isomorphism of homotopy modules. A pro-tower $\{X_\bullet \}$ is \emph{pro-contractible} if, for every integer $p$, the pro-homotopy module $\{\pi_p(X_\bullet)\}$ is pro-isomorphic to $0$.
  \end{defin}
  
  \subsection{}
  \label{subs:proj_to_post_is_pwe}
  Of course every pro-isomorphism is a pro-weak-equivalence. Moreover, given any tower $X_\bullet$ in $\cal{SH}(K)$, the projection to the Postnikov tower 
  \[ X_k \ra P^k(X_k)\]
  induces a pro-weak-equivalence of towers $\{X_\bullet\} \overset{\pi_k}{\ra} \{ P^\bullet (X_\bullet)\}.$
  Note that in general the projection map $\{\pi_\bullet\}$ does not need to be a pro-isomorphism.

  \subsection{}
  Since $\Pi_\ast(K)$ is an abelian category, then $\rm{Pro}(\Pi_\ast(K))$ is an abelian category by Proposition 4.5 of \cite[Appendix]{MR0245577}. Moreover the full subcategory of pro-towers $\rm{Tow}(\Pi_\ast(K))$ is closed under finite limits and colimits: this can be proved following the lines of the proof of Proposition 2.7 in \cite{MR1428551}. In particular $\rm{Tow}(\Pi_\ast(K))$ is an abelian category. It follows that a map of pro-towers $\{f_\bullet\} :\cal \{M_\bullet\} \ra \cal \{N_\bullet\}$ is an isomorphism if and only if both $\ker(\{f_\bullet\})$ and $\coker(\{f_\bullet\})$ are pro-isomorphic to $0$. In particular we conclude the following.
  \begin{cor}
  \label{cor:2-3prozero}
    Let $X_\bullet, Y_\bullet, Z_\bullet$ be diagrams in $\cal{SH}(K)$ indexed on $\bf N$. Assume we have maps of diagrams $f_\bullet: X_\bullet \ra Y_\bullet$ and $g_\bullet : Y_\bullet \ra Z_\bullet$ such that, for every $n\in \bf N$, the digram 
    \[X_n \overset{f_n}{\ra} Y_n \overset{g_n}{\ra} Z_n\]
    is a fibre sequence. Then the map $g_\bullet$ is a pro-weak-equivalence if and only if the tower $\{X_\bullet\}$ is pro-contractible.
  \end{cor}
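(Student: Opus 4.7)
The plan is to reduce the statement to a purely algebraic calculation inside the abelian category $\rm{Tow}(\Pi_\ast(K))$ introduced in the paragraph immediately preceding the corollary. The key input is that for each level $n$ the fibre sequence $X_n \to Y_n \to Z_n$ gives rise to the usual long exact sequence of homotopy modules
\[
\cdots \to \underline\pi_{p+1}(Z_n) \overset{\partial_n}{\to} \underline\pi_p(X_n) \to \underline\pi_p(Y_n) \overset{\underline\pi_p(g_n)}{\to} \underline\pi_p(Z_n) \to \underline\pi_{p-1}(X_n) \to \cdots,
\]
and that this assembles, as $n$ varies, into a sequence of natural transformations between diagrams $\bf N \to \Pi_\ast(K)$. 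Since limits and colimits in $\rm{Tow}(\Pi_\ast(K))$ are computed levelwise, exactness at every level survives in the passage to pro-objects, so one obtains a long exact sequence in the abelian category $\rm{Tow}(\Pi_\ast(K))$.

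For the forward implication, I would assume $g_\bullet$ is a pro-weak-equivalence, i.e.\ each $\{\underline\pi_p(g_\bullet)\}$ is an isomorphism in $\rm{Tow}(\Pi_\ast(K))$. Then in the five-term exact piece
\[
\{\underline\pi_{p+1}(Y_\bullet)\} \to \{\underline\pi_{p+1}(Z_\bullet)\} \overset{\partial}{\to} \{\underline\pi_p(X_\bullet)\} \to \{\underline\pi_p(Y_\bullet)\} \to \{\underline\pi_p(Z_\bullet)\}
\]
the map on the right has trivial kernel in $\rm{Tow}(\Pi_\ast(K))$, so $\{\underline\pi_p(X_\bullet)\}$ equals the image of $\partial$. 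But the image of $\partial$ is the cokernel of $\{\underline\pi_{p+1}(g_\bullet)\}$, which is $0$. Hence $\{\underline\pi_p(X_\bullet)\}$ is pro-zero for every $p$, i.e.\ $\{X_\bullet\}$ is pro-contractible.

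For the reverse implication, I would assume $\{\underline\pi_p(X_\bullet)\}=0$ in $\rm{Tow}(\Pi_\ast(K))$ for all $p$. The same five-term piece then shows that $\{\underline\pi_p(g_\bullet)\}$ has trivial kernel (since the image of $\{\underline\pi_p(X_\bullet)\}\to\{\underline\pi_p(Y_\bullet)\}$ is zero), and the piece
\[
\{\underline\pi_p(Y_\bullet)\}\overset{\{\underline\pi_p(g_\bullet)\}}{\to}\{\underline\pi_p(Z_\bullet)\}\to \{\underline\pi_{p-1}(X_\bullet)\}
\]
shows that its cokernel injects into $0$. Because $\rm{Tow}(\Pi_\ast(K))$ is abelian, a map with vanishing kernel and cokernel is an isomorphism, so $\{\underline\pi_p(g_\bullet)\}$ is a pro-isomorphism for every $p$, proving that $g_\bullet$ is a pro-weak-equivalence.

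I do not expect a serious obstacle here; the one point worth checking carefully is the claim that a sequence of natural transformations of diagrams which is exact at every level gives an exact sequence in $\rm{Tow}(\Pi_\ast(K))$. This follows from the levelwise formula for kernels and cokernels of tower maps in the abelian category $\underline{\rm{Fun}}(\bf N,\Pi_\ast(K))$, which persists after localizing at the basic self-tower maps because that localization is an exact functor. Once this is granted, the argument above goes through by pure diagram chase inside an abelian category.
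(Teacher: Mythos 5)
Your proposal is correct, and it is exactly the argument the paper implicitly supplies: the corollary is stated without proof, as a consequence of the preceding paragraph establishing that $\rm{Tow}(\Pi_\ast(K))$ is abelian and that a pro-tower map is an isomorphism iff its kernel and cokernel are pro-zero. Your worry at the end (that a levelwise-exact sequence of diagrams yields an exact sequence of pro-towers) is the right thing to flag, and your justification is sound: kernels and cokernels in $\rm{Tow}(\Pi_\ast(K))$ are computed by re-indexing a pro-morphism to a levelwise map and then taking levelwise (co)kernels, which is precisely why the pro-object functor $\underline{\rm{Fun}}(\bf N,\Pi_\ast(K))\to \rm{Tow}(\Pi_\ast(K))$ is exact; this is the content of the Artin--Mazur and Isaksen references the paper cites. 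Once that is granted, the two implications are read off from the long exact sequence by the diagram chase you give.
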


  \begin{lemma}
  \label{lemma:E_wedge_preserves_pro_equiv}
    Assume $\{ X_\bullet\}$ is a pro-tower of motivic spectra. Then for every connective spectrum $E$, the projection to the Postnikov tower induces a pro-weak-equivalence 
    \[\{E\wedge X_n\}_{n\in \bf N} \ra \{ E\wedge P^n (X_n)\}_{n \in \bf N}.\]
  \end{lemma}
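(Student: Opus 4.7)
The plan is to show, for each fixed integer $p$, that the induced map of towers of homotopy modules
\[\{\underline \pi_p(E\wedge X_n)\}_{n\in \bf N} \longrightarrow \{\underline \pi_p(E\wedge P^n(X_n))\}_{n\in \bf N}\]
becomes a level-wise isomorphism after a shift depending on $p$ and the connectivity of $E$; since truncating (or re-indexing by a shift) a tower does not change the underlying pro-object, this will yield a pro-isomorphism for every $p$, hence a pro-weak equivalence in the sense of Definition \ref{defin:pro-we}. To start, I would fix an integer $k\geq 0$ such that $E\in \cal{SH}(K)_{\geq -k}$, which is possible since $E$ is connective by assumption.

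Next, I would exploit the canonical fibre sequence $P_{n+1}(X_n) \to X_n \to P^n(X_n)$ coming from the homotopy $t$-structure as in \eqref{eqn:postfibseq}. Since smashing with $E$ preserves fibre sequences, we obtain a fibre sequence
\[E\wedge P_{n+1}(X_n) \to E\wedge X_n \to E\wedge P^n(X_n).\]
The crucial input is the compatibility of the homotopy $t$-structure with the smash product over a perfect field, namely $\cal{SH}(K)_{\geq a}\wedge \cal{SH}(K)_{\geq b}\subseteq \cal{SH}(K)_{\geq a+b}$; this is a standard consequence of Morel's stable $\bb A^1$-connectivity theorem recalled in \ref{sub:homotopy_t_structure}, verified on the generators $\Sigma^{p+q\alpha}\Sigma^\infty U_+$ and extended by closure under homotopy colimits. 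Since $P_{n+1}(X_n)\in \cal{SH}(K)_{\geq n+1}$ by construction, this estimate gives $E\wedge P_{n+1}(X_n)\in \cal{SH}(K)_{\geq n+1-k}$, hence $\underline\pi_m(E\wedge P_{n+1}(X_n))=0$ for all $m\leq n-k$.

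Feeding this vanishing into the long exact sequence of homotopy modules of the fibre sequence above, for any fixed $p\in \bb Z$ and every $n\geq p+k$ the two consecutive terms $\underline\pi_p(E\wedge P_{n+1}(X_n))$ and $\underline\pi_{p-1}(E\wedge P_{n+1}(X_n))$ vanish, so the map
\[\underline\pi_p(E\wedge X_n)\overset{\simeq}{\longrightarrow} \underline\pi_p(E\wedge P^n(X_n))\]
is an isomorphism of homotopy modules. Re-indexing both towers by $n\mapsto n+p+k$ (which represents the identity morphism of the associated pro-objects, cf.\ the discussion preceding \ref{cor:crit_pro_iso}) converts the map into a level-wise isomorphism, and is in particular a pro-isomorphism. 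Since this argument runs for every $p$, Definition \ref{defin:pro-we} yields the claimed pro-weak equivalence. The main and only nontrivial ingredient is the connectivity estimate for the smash product, which relies essentially on the hypothesis that $K$ is perfect; once that is in hand, the remainder is a routine long-exact-sequence bookkeeping.
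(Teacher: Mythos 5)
Your proof is correct and follows essentially the same route as the paper's: smash the Postnikov fibre sequence $P_{n+1}(X_n)\to X_n\to P^n(X_n)$ with $E$, use the compatibility of Morel's homotopy $t$-structure with the monoidal product to get a connectivity lower bound $n+1-k$ on the fibre term $E\wedge P_{n+1}(X_n)$, and read off from the long exact sequence that $\underline\pi_p(E\wedge X_n)\to\underline\pi_p(E\wedge P^n(X_n))$ is an isomorphism once $n$ is large relative to $p$. The paper concludes by invoking the filling-in-triangles criterion of Corollary \ref{cor:crit_pro_iso}, whereas you re-index by a constant shift to obtain a level-wise isomorphism; both are standard ways to recognize a pro-isomorphism of towers and give the same conclusion.
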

  \begin{proof}
      Consider the fundamental fibre sequence 
     \[ E\wedge P_{n_k+1}(X_{n_k}) \ra E\wedge X_{n_k}\ra E\wedge P^{n_k}(X_{n_k}). \] 
      If $E$ is $c$-connective then $E\wedge P_{n_k+1}(X_{n_k})$ is $(c+n_k+1)$-connective. So once we have fixed an integer $p \in \bb Z$, for every $k \in \bb N$ we can chose an integer $n_k$ such that $c+n_k+1>p$ and so the projection to the Postnikov tower induces an isomorphism of homotopy modules
      \[ \underline \pi_p (E\wedge X_{n_k}) \ra \underline \pi_p(E\wedge P^{n_k}(X_{n_k})).\]
      By applying \ref{cor:crit_pro_iso} we conclude.
   \end{proof} 
      

\bibliography{locandcomp_paperversion}
\bibliographystyle{alpha}{}
\end{document}